\numberwithin{equation}{section}
\newcommand{\N}{\mathbb{N}}
\newcommand{\Q}{\mathbb{Q}}
\newcommand{\R}{\mathbb{R}}
\newcommand{\Z}{\mathbb{Z}}
\newcommand{\mm}{{\mbox{\boldmath$m$}}}
\newcommand{\ppi}{{\mbox{\boldmath$\pi$}}}
\newcommand{\sppi}{{\mbox{\scriptsize\boldmath$\pi$}}}
\newcommand{\sfd}{{\sf d}}
\newcommand{\sfh}{{\sf h}}
\newcommand{\Kliminf}{K\kern-3pt-\kern-2pt\mathop{\rm lim\,inf}\limits}  
\newcommand{\supp}{\mathop{\rm supp}\nolimits}   
\renewcommand{\d}{{\mathrm d}}
\newcommand{\restr}[1]{\lower3pt\hbox{$|_{#1}$}}
\newcommand{\la}{\left<}                  
\newcommand{\ra}{\right>}
\newcommand{\eps}{\varepsilon}  
\newcommand{\nchi}{{\raise.3ex\hbox{$\chi$}}}
\newcommand{\limi}{\varliminf}
\newcommand{\lims}{\varlimsup}
\newcommand{\gopt}{{\rm{OptGeo}}}                   
\newcommand{\prob}[1]{\mathscr P(#1)}                   
\newcommand{\probt}[1]{\mathscr P_2(#1)}                   
\newcommand{\e}{{\rm{e}}}                           
\newcommand{\geo}{{\rm Geo}}                      
\renewcommand{\mm}{\mathfrak m}                                
\newcommand{\weakgrad}[1]{|\nabla #1|_w} 
\renewenvironment{proof}{\removelastskip\par\medskip   
\noindent{\em proof} \rm}{\penalty-20\null\hfill$\square$\par\medbreak}
\newenvironment{sketch}{\removelastskip\par\medskip   
\noindent{\em Sketch of the proof} \rm}{\penalty-20\null\hfill$\square$\par\medbreak}
\newtheorem{theorem}{Theorem}[section]
\newtheorem{corollary}[theorem]{Corollary}
\newtheorem{lemma}[theorem]{Lemma}
\newtheorem{proposition}[theorem]{Proposition}
\newtheorem*{theoremno}{Theorem}
\newtheorem{definition}[theorem]{Definition}
\newcommand{\bd}{{\mathbf\Delta}}
\newcommand{\s}{{\rm S}}
\newcommand{\test}[1]{{\rm Test}(#1)}
\renewcommand{\b}{{\rm b}}
\newcommand{\X}{{\rm F}}
\newcommand{\h}{{\sfh}}
\newcommand{\mau}{{\sf T}}
\newcommand{\mad}{{\sf S}}
\renewcommand{\ae}{{\textrm{\rm{-a.e.}}}}
\newcommand{\CD}{{\sf CD}}
\newcommand{\RCD}{{\sf RCD}}
\newcommand{\dom}{{\rm DH}}
\newcommand{\lip}{{\rm lip}}
\renewcommand{\u}{\mathcal U}
\renewcommand{\weakgrad}[1]{|\nabla #1|}
\title{An overview on the proof of the splitting theorem in non-smooth context}
\begin{document}

\author{
   Nicola Gigli\
   \thanks{Universit\'e de Nice. email: \textsf{nicola.gigli@unice.fr}}
   }

\maketitle




\tableofcontents

\section*{Introduction}
\addcontentsline{toc}{section}{\protect\numberline{} Introduction}

In the recent paper \cite{Gigli13} the Cheeger-Colding-Gromoll splitting theorem has been generalized to the abstract class of metric measure spaces with Riemannian Ricci curvature bounded from below, the analysis being  based on some definitions and results contained in \cite{Gigli12}. These two papers add up to almost 200 pages and as such they are not suitable for getting a quick idea of the techniques used to work in the non-smooth setting. This is the aim of this note:  to provide an as short as possible yet comprehensive proof of the splitting in such abstract framework. The focus here is thus to prove:
\begin{theoremno}
Let $(X,\sfd,\mm)$ be a  $\RCD(0,N)$ space containing a line, i.e. such that there is a map $\bar\gamma:\R\to X$ satisfying
\[
\sfd(\bar\gamma_t,\bar\gamma_s)=|t-s|,\qquad\forall t,s\in\R.
\]
Then $(X,\sfd,\mm)$  is isomorphic to the product of the Euclidean line $(\R,\sfd_{\rm Eucl},\mathcal L^1)$ and another space $(X',\sfd',\mm')$, where the product distance $\sfd'\times\sfd_{\rm Eucl}$ is defined as
\begin{equation}\tag{\#}
\label{eq:intropit}
\sfd'\times\sfd_{\rm Eucl}\big((x',t),(y',s)\big)^2:=\sfd'(x',y')^2+|t-s|^2,\qquad\forall x',y'\in X',\ t,s\in\R.
\end{equation}
Moreover:
\begin{itemize}
\item if $N\geq 2$, then $(X',\sfd',\mm')$ is a  $\RCD(0,N-1)$ space,
\item if $N\in[1,2)$, then $X'$ is just a point.
\end{itemize}
Here `isomorphic' means that there is a measure preserving isometry between the spaces.
\end{theoremno}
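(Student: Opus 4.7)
The overall approach follows the template of Cheeger--Colding--Gromoll, but every smooth ingredient must be replaced by the corresponding tool available in the $\RCD$ framework. My plan would proceed in four main steps.

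\textbf{Step 1: Busemann functions and harmonicity.} Given the line $\bar\gamma$, I would define the two Busemann functions
\[
b^\pm(x):=\lim_{t\to+\infty}\big(t-\sfd(x,\bar\gamma_{\pm t})\big),
\]
both of which exist by monotonicity, are $1$-Lipschitz, and satisfy $b^+(x)+b^-(x)\geq0$ with equality on the line itself. On an $\RCD(0,N)$ space the Laplacian comparison for the distance function yields $\bd\,\sfd(\cdot,y)\leq\tfrac{N-1}{\sfd(\cdot,y)}\mm$ in the measure-valued sense; passing to the limit along $t\to+\infty$ one obtains $\bd b^\pm\geq0$. Hence $b^++b^-$ is a non-negative superharmonic function vanishing on the line, so by the strong maximum principle (available on $\RCD$ spaces) it vanishes identically. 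Thus $b:=b^+=-b^-$ is harmonic, i.e.\ $\bd b=0$.

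\textbf{Step 2: Gradient has unit norm and Bochner forces vanishing Hessian.} Being a $1$-Lipschitz limit of the $1$-Lipschitz functions $t-\sfd(\cdot,\bar\gamma_t)$, one has $\weakgrad{b}\leq1$ $\mm$-a.e.; on the other hand evaluating $b$ along curves parallel to the line shows $\weakgrad{b}\geq1$, so $\weakgrad{b}^2\equiv 1$. Now I would apply the Bochner inequality of Gigli (the self-improvement that includes the Hessian term),
\[
\tfrac12\bd\weakgrad{b}^2\geq\bigl(|{\rm Hess}\,b|_{\rm HS}^2+\langle\nabla b,\nabla\bd b\rangle\bigr)\mm.
\]
Since $\weakgrad{b}^2$ is a constant and $\bd b=0$, the left side is zero and the middle term is zero, forcing ${\rm Hess}\,b\equiv0$ $\mm$-a.e. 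This is the non-smooth analogue of the statement that $\nabla b$ is a parallel unit vector field.

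\textbf{Step 3: From vanishing Hessian to a splitting.} The hard part is converting ${\rm Hess}\,b=0$ into an actual isometric splitting $X\simeq\R\times X'$. Here I would use the calculus of derivations and flows from \cite{Gigli12}: a vector field whose symmetric covariant derivative vanishes generates a one-parameter group of measure-preserving isometries. Concretely, let $(\X_t)_{t\in\R}$ denote the regular Lagrangian flow of $\nabla b$; the vanishing of ${\rm Hess}\,b$ together with $\weakgrad{b}=1$ and $\bd b=0$ should give that each $\X_t$ is an isometry of $(X,\sfd,\mm)$, that $b(\X_t(x))=b(x)+t$, and that trajectories realise the distance, so that
\[
\sfd(\X_t(x),\X_s(y))^2=\sfd(\X_{(t+s)/2}(x),\X_{(t+s)/2}(y))^2+|t-s|^2.
\]
Letting $X'$ be the level set $\{b=0\}$ with induced distance $\sfd'$ and the disintegration of $\mm$ with respect to $b$ as reference measure $\mm'$, the map $(x',t)\mapsto\X_t(x')$ furnishes a measure-preserving isometry between the product $(\R\times X',\sfd_{\rm Eucl}\times\sfd',\mathcal L^1\otimes\mm')$ and $(X,\sfd,\mm)$.

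\textbf{Step 4: Curvature bound on the factor.} Finally, to transfer the $\RCD(0,N-1)$ property to $X'$ when $N\geq2$ (and to conclude $X'$ is a point when $N<2$), I would invoke the tensorisation/de-tensorisation results for $\CD$ and $\RCD$ bounds: since $(\R,\sfd_{\rm Eucl},\mathcal L^1)$ is $\RCD(0,1)$ and the product is $\RCD(0,N)$, a stability argument on the Bakry--Émery inequality (or on the $\CD^*$ condition, using that the heat flow decouples on a product) yields that $X'$ must be $\RCD(0,N-1)$, respectively trivial when $N\in[1,2)$. The main obstacle in the whole scheme is Step 3: producing a genuine isometric splitting from the distributional identity ${\rm Hess}\,b=0$, because one has no a priori smooth manifold structure on which to integrate the flow and prove the metric identity above; this is exactly where the vector-field calculus of \cite{Gigli12} is indispensable.
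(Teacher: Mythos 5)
Your Step 1 matches the paper, but Steps 2--3 rely on a tool the paper explicitly excludes and leave the central difficulty unresolved. In Step 2 you invoke a Bochner inequality with a Hessian term, $\tfrac12\bd\weakgrad b^2\geq(|{\rm Hess}\,b|^2_{\rm HS}+\langle\nabla b,\nabla\bd b\rangle)\mm$, to conclude ${\rm Hess}\,b\equiv 0$. The paper states in the introduction that ``currently neither the Bochner identity nor the Hessian are available in the non-smooth worlds'' and that ``suitable modifications of the original technique need to be developed''; the entire architecture of Sections \ref{se:dist}--\ref{se:dual} exists precisely to avoid this step. You then acknowledge that turning ${\rm Hess}\,b=0$ into an actual isometric flow (Step 3) is ``the hard part'' but do not supply the argument: asserting that the vector-field calculus ``should give'' the flow is an isometry is not a proof, and in fact that calculus is not what the paper uses. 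The paper's route is: build the flow from optimal transport via $\X_t(x)\in\partial^c(tb)(x)$ (using that $tb$ is $c$-concave, Theorem~\ref{thm:basemetric}, and the existence of optimal maps, Theorem~\ref{thm:exp}); prove measure preservation from the $\CD(0,N)$ entropy convexity plus $\bd b=0$ (Proposition~\ref{thm:enin} and Theorem~\ref{thm:gfpresmeas}), not from any Hessian identity; prove the flow preserves the Dirichlet energy by deriving the Euler equations $\h_t(\la\nabla b,\nabla f\ra)=\la\nabla b,\nabla\h_t f\ra$ from the Bakry--\'Emery contraction estimate \eqref{eq:BE}, which serves as the integrated/semigroup substitute for the pointwise Bochner inequality (Proposition~\ref{prop:eulerb} and Theorem~\ref{prop:crucial}); and only then pass from Sobolev/energy invariance to metric isometry via the duality Theorem~\ref{thm:dual}, whose key input is the metric-Brenier-type result of Proposition~\ref{prop:sobtolip}. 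Even granted the isometric flow, the ``Pythagoras'' identity is a further nontrivial argument (Section~\ref{se:pit}) using tensorization of Sobolev spaces on $X'\times\R$ and density of an algebra $\mathcal A$ of product-type functions, not a formal consequence of the trajectory identity you write (which, as written, only makes sense when $\b(x)=\b(y)$).

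On Step 4: there is no off-the-shelf ``de-tensorization'' for $\RCD(0,N)$, and the statement does not follow from a stability argument on Bakry--\'Emery alone. The paper obtains the dimension drop by an explicit computation: it pushes the pair $(\mu_0,\mu_1)$ on $X'$ to $(\mu_0\times\nu_0,\mu_1\times\nu_1)$ on $X'\times\R$, applies the pointwise density inequality of Proposition~\ref{prop:varphi} in the $N$-dimensional $\CD$ form, and optimizes over $\alpha,\beta$ to recover the $\CD(0,N-1)$ inequality on $X'$ (Theorem~\ref{thm:dimm1}, following Cavalletti--Sturm). The case $N\in[1,2)$ is handled by a Hausdorff-dimension estimate coming from Bishop--Gromov, again a direct computation rather than an abstract de-tensorization. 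You should be explicit about these computations rather than appeal to a nonexistent black box.
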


Given that one of the scopes of this paper is to be reasonably short, all the necessary definitions and intermediate results are stated in the form needed to get the splitting theorem, without any aim of covering general situations as done in \cite{Gigli12}, \cite{Gigli13}. Also, the proof of some statements are only sketched: in these cases the main idea for the proof is given, but technical details are only briefly mentioned.   On the other hand, the exposition here is quite self-contained in the sense that all the recently introduced tools of differential calculus on metric measure spaces are recalled and discussed. The preliminary notions that are required are contained in sections labeled as `things to know'. Here is their list together with relative references:
\begin{itemize}
\item[-] The definition of Sobolev space $W^{1,2}$ of real valued Sobolev functions defined on a metric measure space. There is a quite large literature concerning this now classical object, see for instance \cite{Heinonen07} and references therein. Here we recall a definition proposed in \cite{AmbrosioGigliSavare11} - equivalent to the previous ones - which best suits our discussion.
\item[-] Some knowledge of optimal transport and of the curvature dimension condition in sense of Lott-Sturm-Villani. General references for these topics are \cite{Villani09} and \cite{AmbrosioGigli11}. We shall also make use of the recently proved (\cite{RajalaSturm12}, \cite{Gigli12a}) generalization of Brenier-McCann's theorems about optimal maps in a way that simplifies the original arguments given in \cite{Gigli13}.
\item[-] The strong maximum principle for superminimizers, proved in the context of metric measure spaces with doubling measures and supporting a weak local Poincar\'e inequality by Bjorn-Bjorn in \cite{Bjorn-Bjorn11}. Very shortly and roughly said, the argument of the proof is based on a  generalization of De Giorgi-Moser-Nash techniques for regularity  of solutions to elliptic  PDEs.
\item[-] The Gaussian estimates for the heat kernel and the Bakry-\'Emery contraction rate for the heat flow. The Gaussian estimates have been proved by Sturm in \cite{Sturm96III}, again as  generalization of  De Giorgi-Moser-Nash techniques. The Bakry-\'Emery estimate is instead a consequence of the lower bound on the Ricci curvature (in a smooth world the two are in fact equivalent) and has been proved in \cite{Gigli-Kuwada-Ohta10} on Alexandrov spaces with an approach which has been then generalized to $\RCD(K,\infty)$ spaces in \cite{AmbrosioGigliSavare11}.
\item[-] The fact that the product of two $\RCD(K,\infty)$ spaces is again $\RCD(K,\infty)$. This natural but surprisingly non-obvious result has been proved in \cite{AmbrosioGigliSavare11-2}, see also \cite{Sturm06II} for the case of $\CD(K,\infty)$ spaces.
\end{itemize}

We now turn to the description of the statement of our main result and of the general plan for its proof. 

The original version of the splitting theorem is a classical and celebrated result in Riemannian geometry proved by Cheeger-Gromoll in \cite{Cheeger-Gromoll-splitting}. Among its numerous generalizations, a crucial one has been obtained by Cheeger-Colding in \cite{Cheeger-Colding96} which extends the splitting to spaces which are measured-Gromov-Hausdorff limits of smooth Riemannian manifolds.

In \cite{Lott-Villani09} and \cite{Sturm06I},\cite{Sturm06II} Lott-Villani on one side and Sturm on the other independently proposed a definition of `having Ricci curvature bounded from below by $K$ and dimension bounded above by $N$' for metric measure spaces, these being called $\CD(K,N)$ spaces (in \cite{Lott-Villani09} only the cases $K=0$ or $N=\infty$ were considered). Here $K$ is a real number and $N$ a real number $\geq 1$, the value $N=\infty$ being also allowed. In the technically simpler case $N=\infty$ the $\CD(K,\infty)$ condition simply reads as the $K$-convexity w.r.t. the distance $W_2$ of the entropy functional relative to the reference measure.

The crucial properties of their definition are the compatibility with the smooth Riemannian case and the stability w.r.t. measured-Gromov-Hausdorff convergence. Due to such stability property and to the almost rigidity result granted by Cheeger-Colding version of the splitting, it is natural to ask whether the splitting theorem holds on $\CD(0,N)$ spaces. Unfortunately this is not the case:  as shown by Cordero-Erasquin, Sturm and Villani (see the last theorem in \cite{Villani09}), the metric measure space $(\R^d,\sfd_{\|\cdot\|},\mathcal L^d)$, where $\mathcal L^d$ is the Lebesgue measure and $\sfd_{\|\cdot\|}$ is the distance induced by the norm $\|\cdot\|$, is always a $\CD(0,d)$ space, regardless of the choice of the norm (see also \cite{Ohta09} for the curved Finsler case). In particular, if we take $d=2$ and consider a norm not coming from a scalar product, we see that although the space contains a line (many, in fact) the splitting cannot hold, because ``Pythagoras' theorem" stated in formula \eqref{eq:intropit} fails. 

The fact that geometric properties like the splitting fail in the class of $\CD(K,N)$ spaces has been source of some criticism, especially in the community of geometers (see e.g. \cite{Petrunin11}).  The question is then whether there exists another - more restrictive - synthetic notion of lower Ricci bound which  retains the stability properties and rules out Finsler-like geometries.

A proposal in this direction has been made in \cite{AmbrosioGigliSavare11-2} by the author, Ambrosio and Savar\'e for the case $N=\infty$, where the class $\RCD(K,\infty)$ has been introduced. The basic idea  is to enforce the $\CD(K,\infty)$ condition with the requirement that the heat flow is linear (see also  \cite{AmbrosioGigliMondinoRajala12}). We briefly recall the genesis of this definition. In the celebrated paper \cite{JKO98}, Jordan-Kinderlehrer-Otto showed that the heat flow can be seen as gradient flow of the relative entropy w.r.t. the $W_2$ distance on probability measures. On $\CD(K,\infty)$ spaces, the information that we have, which is in fact the only information available, is that the relative entropy is $K$-convex w.r.t. the distance $W_2$ and is therefore quite natural to study its gradient flow w.r.t. $W_2$. This has been done by the author in \cite{Gigli10}, where it has been shown that such gradient flow is unique. Notice that according to the analysis done by Ohta-Sturm in \cite{Ohta-Sturm12}, despite the fact that the normed space  $(\R^d,\sfd_{\|\cdot\|},\mathcal L^d)$ is $\CD(0,\infty)$, the distance $W_2$ never decreases along two heat flows unless the norm comes from a scalar product, in this sense the stated uniqueness result is non-trivial and obtained with a very ad-hoc argument. In \cite{Gigli10} it has been also proved that such gradient flow is stable w.r.t. mGH-convergence of compact spaces (see \cite{AmbrosioGigliSavare11-2} and \cite{Gigli-Mondino-Savare13} for generalizations). On the Euclidean space, there is at least one other way of seeing the heat flow as gradient flow: the classical viewpoint of gradient flow in $L^2$ of the Dirichlet energy. The fact that these two gradient flows produce the same evolution has been generalized in various directions. Among others, one important contribution to the topic has been made by Ohta-Sturm in \cite{Sturm-Ohta-CPAM}, where they proved that the two approaches produce the same evolution on Finlser manifolds, leading in non-Riemannian manifolds   to a non-linear evolution. It is therefore reasonable to ask whether the same sort of identification holds on general $\CD(K,\infty)$ spaces. In such setting, the role of the Dirichlet form is taken by the functional $f\mapsto E(f):=\frac12\int|\nabla f|^2\,\d\mm$, where the object $|\nabla f|$ is the 2-minimal weak upper gradient behind the definition of Sobolev functions, see Section \ref{se:sob}. Notice that $E$ is in general not a quadratic form, in line with the case of Finsler geometries. Following the strategy proposed in \cite{Gigli-Kuwada-Ohta10} for the case of Alexandrov spaces, in \cite{AmbrosioGigliSavare11} it has been proved that indeed on $\CD(K,\infty)$ spaces the two gradient flows produce the same evolution, which we can therefore undoubtedly call heat flow.

With this understanding of the heat flow, the definition of $\RCD(K,\infty)$ spaces as $\CD(K,\infty)$ spaces where such flow is linear comes out quite naturally: not only in the smooth case it singles out Riemannian manifolds from Finsler ones, but in the non-smooth world also provides a natural bridge between optimal transport theory and Sobolev calculus. Indeed, to require that the heat flow is linear is  equivalent to require that the energy functional $E$ is a quadratic form or, which is the same, that the Sobolev space $W^{1,2}$ built on our metric measure space is Hilbert. Also, the fact that on $\RCD(K,\infty)$ spaces the energy $E$ is a Dirichlet energy, allows to make connections with the the Bakry-\'Emery $\Gamma_2$ calculus, which furnishes a way to speak about lower Ricci curvature bounds for diffusion operators in the abstract context of Dirichlet forms. It turns out that the two approaches to lower Ricci curvature bounds, via optimal transport and via $\Gamma_2$ calculus, are in fact equivalent in high generality (\cite{Gigli-Kuwada-Ohta10}, \cite{AmbrosioGigliSavare11-2}, \cite{AmbrosioGigliSavare12}).

Then the appropriate finite dimensional notion of $\RCD(K,N)$ space can be introduced as\footnote{Bacher and Sturm introduced in \cite{BacherSturm10} a different notion of curvature-dimension bounds: the so called reduced-curvature-dimension, denoted as $\CD^*(K,N)$. This condition has better local-to-global properties but might produce slightly worse constants in some inequalities (an issue mitigated by the work of Cavalletti-Sturm \cite{Cavalletti-Sturm12}). Hence, one can also define the $\RCD^*(K,N)$ condition as $\CD^*(K,N)\cap \RCD(K,\infty)$. This has been the approach in \cite{Erbar-Kuwada-Sturm13} and \cite{AmbrosioMondinoSavare13}, where the link between this notion, the `dimensional' $\Gamma_2$-calculus and the `dimensional'  Bochner inequality $\Delta\frac{|\nabla f|^2}{2}\geq \frac{(\Delta f)^2}N+\nabla\Delta f\cdot\nabla f+K|\nabla f|^2$ has been established. In the particular case $K=0$ the two notions $\CD(0,N)$ and $\CD^*(0,N)$ coincide, so for what concerns the splitting theorem this distinction does not really matter.}:
\[
\RCD(K,N):=\CD(K,N)\cap \RCD(K,\infty),
\]
and the question becomes whether in this new class of spaces geometric rigidity results like the splitting hold.

Let us informally notice that  in principle it should be not too hard to prove the splitting (and the other expected geometric properties) in the non-smooth setting:  it should be sufficient to `just' follow the arguments giving  the proof in the smooth case. In a sense, \emph{if we were able to make analysis on non-smooth spaces as we are on smooth manifolds, then we would be able to deduce the same results}.
 
The problem in doing so is not really, or at least not just, the fact that the setting is non-smooth, because we already know by Cheeger-Colding that   the splitting holds in non-smooth limits of Riemannian manifolds. The point is rather the lack of all the analytic tools available in the smooth world which allow to `run the necessary computation'. Worse than this, a priori one doesn't even have the algebraic vocabulary needed to formulate those identities/inequalities that he needs. To give an example, recall that  a first ingredient of the proof of the splitting in the smooth setting is the Laplacian comparison estimate for the distance function $\Delta \sfd(\cdot,x_0)\leq \frac{N-1}{\sfd(\cdot,x_0)}$ valid in the weak sense (either viscosity or barrier or distribution sense) on manifolds with non-negative Ricci curvature and dimension bounded from above by $N$. Hence any attempt to prove the splitting in the non-smooth setting should reasonably start from proving the same inequality. However, before doing so one needs to define what such inequality means in a setting where a priori differentiation operators are not available. In other words the  path is the following:
\begin{itemize}
\item[1)] First there is \emph{algebra}, i.e. we need to develop a machinery which allows us to formally manipulate differential objects in the same way as we do in the smooth setting.
\item[2)] Then it comes \emph{analysis}, i.e. we need to show that in presence of a curvature-dimension bound, for these differential objects the same kind of estimates valid in the smooth world hold.
\item[3)] Finally there is \emph{geometry}, i.e.  once the analytic setup is established, we can try to mimic the arguments valid in the smooth world to deduce the desired geometric consequences.
\end{itemize}
These notes have been written following this  heuristic plan.

For what concerns the first `algebraic' step, it is worth to underline that the differential calculus must be developed without relying on any sort of analysis in charts, because lower bounds on the Ricci seem not sufficient to directly produce existence of charts (compare with the case of Alexandrov spaces where Perelman \cite{PerelmanDC}, improving earlier results by Otsu and Shioya \cite{OtsuShioya}, proved the existence of charts with $DC$ regularity, i.e. coordinates are Difference of Convex functions). Recall that on non-smooth limits of Riemannian manifolds, Cheeger-Colding proved in \cite{Cheeger-Colding97III} (see also \cite{Cheeger-Colding97I}, \cite{Cheeger-Colding97II}) the existence of charts with Lipschitz regularity, but their approach is based on the fact that the spaces they consider are limit of smooth manifolds, so that, very shortly said, they run the necessary computations in the smooth setting,  obtain estimates stable under convergence and then pass to the limit. As such, this technique is not applicable in the $\RCD(K,N)$ class.

There are various approaches to differential calculus on metric measure spaces, most notably by Cheeger \cite{Cheeger00} and Weaver \cite{Weaver01}, but these frameworks do not describe how to integrate by parts in a non-smooth setting. This topic has been investigated in  \cite{Gigli12} where it is has been shown how it leads to the notion of measure valued Laplacian and how to get the  natural Laplacian comparison estimates for the distance on $\CD(K,N)$ spaces.

In this direction,  a good example about how to implement the `strategy' outlined above is the Abresch-Gromoll inequality \cite{AbreschGromoll90}: in \cite{Gigli-Mosconi12}  it has been proved  how the original argument, which is based not on the smooth structure of the manifolds, but only on the Laplacian comparison estimates, the linearity of the Laplacian itself and the weak maximum principle, can be repeated verbatim on $\RCD(K,N)$ spaces leading to the same result.

For the splitting things are not so easy, essentially due to the fact that currently neither the Bochner identity nor the Hessian are available in the non-smooth worlds. Because of this, suitable modifications of the original technique need to be developed, see in particular Sections \ref{se:dist} and \ref{se:dual}.

\emph{I wish to thank Luigi Ambrosio for comments on a preliminary version of the paper}

\section*{Notation}
\addcontentsline{toc}{section}{\protect\numberline{} Notation}

In order to prove the splitting theorem we will need some intermediate constructions like the Busemann function, its gradient flow etc.  To simplify the exposition, we collect here all the objects that we will build  and references to where they are defined. 

\underline{These notations will be fixed throughout all the text}.

\bigskip

\begin{tabular}{lp{12cm}}
$(X,\sfd,\mm)$& our $\RCD(0,N)$ space containing a  line. In Section \ref{se:infhil}  we introduce  infinitesimal Hilbertianity (Definition \ref{def:infhil}), then in Section \ref{se:cd} we define the curvature-dimension bound (Definition \ref{def:cd}) and finally from Section \ref{se:basebus} on we assume the existence of a line.\\
$\b^+,\b^-$  & the two Busemann functions associated to the line in $X$. See the beginning of Section \ref{se:basebus}. \\
$\b$ & Busemann function associated to the line in $X$ and defined as $\b:=\b^+=-\b^-$ once it has been proved that $\b^++\b^-\equiv 0$. See Theorem \ref{thm:bharm} and equation \eqref{eq:defb}.\\
$\X_t$ & gradient flow of $\b$ defined $\mm$-a.e.. See Proposition \ref{prop:gfb} for its introduction and Theorem \ref{thm:gfpresmeas} for the proof of the measure preservation.\\
$\bar\X_t$ & continuous representative of $\X_t$. Introduced in Theorem \ref{thm:gfpresdist} where it is also proved that  provides a family of isometries.\\
$(X',\sfd')$ & quotient metric space obtained from $(X,\sfd)$ by identifying orbits under the action of the flow $\bar\X_t$. See Definition \ref{def:xp}.\\
$\pi$ & natural projection map from $X$ to $X'$. See Definition \ref{def:xp}.\\
$\iota$ & right inverse of $\pi$ identified by $\pi(\iota(x'))=x'$ and $\b(\iota(x'))=0$ for every $x'\in X'$. See Definition \ref{def:xp}.\\
$\mm'$ & natural `quotient' measure on $(X',\sfd')$. See Definition \ref{def:mmp}.\\
$\mau,\mad$& natural maps from $X'\times\R$ to $X$ and viceversa given by \linebreak $\mau(x',t):=\bar\X_{-t}(\iota(x'))$ and $\mad(x):=(\pi(x),\b(x))$. See Definition \ref{def:maumad}.\\
\end{tabular}

\bigskip

Notice that the proof of our main result is scattered along the text and the necessary intermediate constructions. The crucial and almost final step is in Theorem \ref{thm:pitagora}, where we prove that the maps $\mau,\mad$ are isomorphisms of the spaces  $(X,\sfd,\mm)$ and $(X'\times\R,\sfd'\times\sfd_{\rm Eucl},\mm'\times\mathcal L^1)$. The fact that  the quotient space $(X',\sfd',\mm')$ has non-negative Ricci curvature is proved in Corollary \ref{cor:xp}, while  the dimension reduction is  given by Theorem \ref{thm:dimm1}.

\section{Algebraic manipulation of basic differential objects}
\subsection{\underline{Things to know:}\  Sobolev spaces over metric measure spaces}\label{se:sob}
We will only work on proper metric spaces $(X, \sfd)$, i.e. such that closed balls are compact. By $C([0,1], X)$ we denote the space of continuous curves on $[0,1]$ with values in $X$, which we endow with the $\sup$ norm. For $t\in[0,1]$ the map  $\e_t:C([0,1],X)\to  X$ is the evaluation at time $t$ defined by
\[
\e_t(\gamma):=\gamma_t,
\]
Given a non-trivial closed interval $I\subset \R$,  a curve $\gamma:I\to X$ is said absolutely continuous provided there exists $f\in L^1(I)$ such that
\begin{equation}
\label{eq:defac}
\sfd(\gamma_t,\gamma_s)\leq \int_t^sf(r)\,\d r,\qquad\forall t,s\in I,\ t<s.
\end{equation}
It turns out (see e.g. \cite[Theorem 1.1.2]{AmbrosioGigliSavare08}) that if $\gamma$ is absolutely continuous the limit
\[
|\dot\gamma_t|:=\lim_{h\to 0}\frac{\sfd(\gamma_{t+h},\gamma_t)}{|h|},
\]
exists for a.e. $t\in I$, defines an $L^1$ function on $I$ called metric speed and this function is the minimal $f$ in the a.e. sense which can be chosen in the right hand side of \eqref{eq:defac}. In the following we will write the expression $\int_a^b|\dot\gamma_t|^2\,\d t$ even for curves which are not absolutely continuous on $[a,b]$, in this case its value is taken $+\infty$ by definition.

To define the notion of Sobolev function we need to add some structure to the metric space $(X,\sfd)$: a Radon non-negative measure $\mm$. The definition that we shall present is taken from \cite{AmbrosioGigliSavare11} (along the presentation given in \cite{Gigli12}), where also the proof of the equivalence with the notions introduced in \cite{Cheeger00} and \cite{Shanmugalingam00} is given. See also \cite{AmbrosioGigliSavare11-3}.
\begin{definition}[Test Plans]
Let $\ppi\in\prob{C([0,1], X)}$. We say that $\ppi$ is a test plan provided 
\[
(\e_t)_\sharp\ppi\leq C\mm,\qquad\forall t\in[0,1],
\]
for some constant $C>0$, and 
\[
\iint_0^1|\dot\gamma_t|^2\,\d t\ppi(\gamma)<\infty.
\]
\end{definition}
Notice that  according to the convention $\int_0^1|\dot\gamma_t|^2\,\d t=+\infty$ if $\gamma$ is not absolutely continuous, any test plan must be concentrated on absolutely continuous curves.
\begin{definition}[The Sobolev class $\s^2(X,\sfd, \mm)$]
The Sobolev class $\s^2(X,\sfd,\mm)$ (resp.\linebreak $\s^2_{\rm loc}(X,\sfd,\mm)$) is the space of all Borel functions $f:X\to\R$ such that there exists a non-negative $G\in L^2(X, \mm)$ (resp. $G\in L^2_{\rm loc}(X, \mm)$) for which it holds
\begin{equation}
\label{eq:defsob}
\int|f(\gamma_1)-f(\gamma_0)|\,\d\ppi(\gamma)\leq \iint_0^1G(\gamma_t)|\dot\gamma_t|\,\d t\,\d\ppi(\gamma),\qquad\forall \ppi\textrm{ test plan}.
\end{equation}
\end{definition}
It turns out that for $f\in \s^2(X,\sfd,\mm)$ there exists a minimal $G$ in the $\mm$-a.e. sense for which \eqref{eq:defsob} holds: we will denote it by $\weakgrad f$ and call it minimal weak upper gradient. Notice that in fact both the notation and the terminology are misleading, because being this object defined in duality with speed of curves, it is closer to the norm of a cotangent vector rather to a tangent one. Yet, from the next section on we are going to make the assumption that the space is `infinitesimally Hilbertian' which in a sense allows to identify  differential and gradients (see in particular the symmetry relation \eqref{eq:simm}), so that it is quite safe to denote by $\weakgrad f$ the minimal weak upper `gradient'.
The minimal weak upper gradient $\weakgrad f$ is a local object, in the sense that for $f\in\s^2_{\rm loc}(X,\sfd,\mm)$ we have
\begin{align}
\label{eq:nullgrad}
\weakgrad f&=0,\qquad&&\textrm{on }f^{-1}(N),\qquad\forall N\subset \R\textrm{, Borel with }\mathcal L^1(N)=0,\\
\label{eq:localgrad}
\weakgrad f&=\weakgrad g,\qquad&&\mm\textrm{-a.e.\ on }\{f=g\},\ \forall f,g\in\s^2_{\rm loc}(X,\sfd,\mm).
\end{align}
Also, for any  $\ppi$ test plan and $t<s\in[0,1]$ it holds
\begin{equation}
\label{eq:localplan}
|f(\gamma_s)-f(\gamma_t)|\leq \int_t^s\weakgrad f(\gamma_r)|\dot\gamma_r|\,\d r,\qquad\ppi\ae\ \gamma.
\end{equation}

In particular, the definition of Sobolev class can be directly localized to produce the notion of Sobolev function defined on an open set $\Omega\subset  X$:
\begin{definition}\label{def:parigi}
Let $\Omega\subset X$ be an open set. A Borel function $f:\Omega\to \R$ belongs to $\s^2_{\rm loc}(\Omega,\sfd, \mm)$ provided for any Lipschitz function $\nchi: X\to \R$ with $\supp(\nchi)\subset \Omega$ it holds $f\nchi\in \s^2_{\rm loc}(X, \sfd,\mm)$. In this case, the function $\weakgrad f:\Omega\to [0,\infty]$ is $\mm$-a.e. defined by
\[
\weakgrad{f}:=\weakgrad{(\nchi f)},\qquad \mm\ae \ {\rm on}\ \nchi\equiv1,
\] 
for any $\nchi$ as above. Notice that thanks to \eqref{eq:localgrad} this is a good definition. The space $\s^2(\Omega)\subset\s^2_{\rm loc}(\Omega)$ is the set of $f$'s such that $\weakgrad f\in L^2(\Omega, \mm)$.
\end{definition}

The basic calculus properties of Sobolev functions are collected below.  $\Omega\subset  X$ is open and all the (in)equalities are intended $\mm$-a.e. on $\Omega$.

\noindent\underline{Lower semicontinuity of minimal weak upper gradients}. Let $(f_n)\subset \s^2(\Omega, \sfd,\mm)$ and $f:\Omega\to\R$ be such that $f_n(x)\to f(x)$ as $n\to\infty$ for $\mm$-a.e. $x\in\Omega$. Assume that $(\weakgrad {f_n})$ converges to some $G\in L^2(\Omega, \mm)$ weakly in $L^2(\Omega, \mm)$.

Then 
\begin{equation}
\label{eq:lscwug}
f\in \s^2(\Omega)\qquad\text{ and }\qquad\weakgrad f\leq  G,\quad \mm\ae.
\end{equation}

\noindent\underline{Weak gradients and local Lipschitz constants}. For any $f:\Omega\to\R$ locally Lipschitz it holds
\begin{equation}
\label{eq:lipweak}
\weakgrad f\leq \lip^\pm(f)\leq \lip(f),
\end{equation}
where the functions  $\lip^\pm(f),\lip(f):\Omega\to\R^+$ denote the slopes and the  local Lipschitz constant defined by
\[
\lip^+(f)(x):=\lims_{y\to x}\frac{(f(y)-f(x))^+}{\sfd(y,x)},\qquad\lip^-(f)(x):=\lims_{y\to x}\frac{(f(y)-f(x))^-}{\sfd(y,x)},
\]
and $\lip(f):=\max\{\lip^-(f),\lip^+(f)\}$ at points $x\in\Omega$ which are not isolated, 0 otherwise.

\noindent\underline{Vector space structure}. $\s^2_{\rm loc}(\Omega, \sfd,\mm)$ is a vector space and 
\begin{equation}
\label{eq:vectorstru}
\weakgrad{(\alpha f+\beta g)}\leq |\alpha|\weakgrad f+|\beta|\weakgrad g,\qquad\textrm{for any $f,g\in\s^2_{\rm loc}(\Omega, \sfd,\mm)$, $\alpha,\beta\in\R$,}
\end{equation}
similarly for  $\s^2(\Omega,\sfd, \mm)$.

\noindent\underline{Algebra structure}.  $\s^2_{\rm loc}(\Omega, \sfd,\mm)\cap L^\infty_{\rm loc}(\Omega, \mm)$ is an algebra and
\begin{equation}
\label{eq:leibbase}
\weakgrad{(fg)}\leq |f|\weakgrad g+g\weakgrad f,\qquad\textrm{for any $f,g\in \s^2_{\rm loc}(\Omega, \sfd,\mm)\cap L^\infty_{\rm loc}(\Omega,\mm)$,}
\end{equation}
and analogously  for the space  $\s^2(\Omega,\sfd, \mm)\cap L^\infty(\Omega, \mm)$. Similarly, if $f\in \s^2_{\rm loc}(\Omega, \sfd,\mm)$ and $g:\Omega\to\R$ is locally Lipschitz, then $fg\in\s^2_{\rm loc}(\Omega,\sfd,\mm)$ and the bound \eqref{eq:leibbase} holds.

\noindent\underline{Chain rule}. Let  $f\in \s^2_{\rm loc}(\Omega,\sfd,\mm)$ and $\varphi:\R\to \R$ Lipschitz. Then $\varphi\circ f\in \s^2_{\rm loc}(\Omega, \sfd, \mm)$ and
\[
\weakgrad{(\varphi\circ f)}=|\varphi'|\circ f\weakgrad f,
\]
where $|\varphi'|\circ f$ is defined arbitrarily at points where $\varphi$ is not differentiable (observe that the identity \eqref{eq:nullgrad} ensures that on $f^{-1}(\mathcal N)$ both $\weakgrad{(\varphi\circ f)}$ and $\weakgrad f$ are 0 $ \mm$-a.e., $\mathcal N$ being the negligible set of points of non-differentiability of $\varphi$). In particular, if $f\in \s^2(\Omega, \sfd, \mm)$, then $\varphi\circ f\in \s^2(\Omega, \sfd, \mm)$ as well.

\medskip

Finally, we remark that from the definition of Sobolev class it is easy to produce the one of Sobolev space $W^{1,2}(\Omega, \sfd, \mm)$ for $\Omega\subset X$ open: it is sufficient to put 
\begin{equation}
\label{eq:w12}
W^{1,2}(\Omega, \sfd,\mm):=L^2(\Omega, \mm)\cap\s^2(\Omega, \sfd, \mm)
\end{equation}
the corresponding $W^{1,2}$-norm being given by
\begin{equation}
\label{eq:w12norm}
\|f\|_{W^{1,2}(\Omega)}^2:=\|f\|^2_{L^2(\Omega)}+\|\weakgrad f\|^2_{L^2(\Omega)}=\int_\Omega f^2+\weakgrad f^2\,\d \mm.
\end{equation}
It is obvious that $\|\cdot\|_{W^{1,2}(\Omega)}$ is a norm on $W^{1,2}(\Omega)$. The completeness of the space is then a consequence of the lower semicontinuity property \eqref{eq:lscwug}, see for instance the argument in \cite{Cheeger00}. Hence $W^{1,2}(\Omega, \sfd, \mm)$ is always a Banach space although  in general not an Hilbert space.

\medskip

\emph{To simplify the notation, in the following we will often write $W^{1,2}(X)$, $\s^2_{\rm loc}(X)$, $\s^2_{\rm loc}(\Omega)$ ecc. in place of  $W^{1,2}(X,\sfd,\mm)$, $\s^2_{\rm loc}(X,\sfd, \mm)$, $\s^2_{\rm loc}(\Omega, \sfd,\mm)$. Similarly, we will write $L^p( X)$, $L^p(\Omega)$, $L^p_{\rm loc}(\Omega)$ in place of $L^p(X,\mm)$, $L^p(\Omega, \mm)$, $L^p_{\rm loc}(\Omega, \mm)$.}

\medskip

 In \cite{AmbrosioGigliSavare11} the following approximation result has been proved, previously known statements required the measure to be doubling and the space to support a 1-2 weak local Poincar\'e inequality (see  e.g. the argument in Theorem 5.1 of  \cite{Bjorn-Bjorn11} which gives a Lusin's type approximation under this further assumptions):
\begin{theorem}[Density in energy of Lipschitz functions in $W^{1,2}(X)$]\label{thm:energylip}
Let $(X,\sfd,\mm)$ be a proper metric measure space.

Then Lipschitz functions are dense in energy in $W^{1,2}(X)$, i.e. for any $f\in W^{1,2}(X)$ there exists a sequence $(f_n)\subset W^{1,2}(X)$ of Lipschitz functions such that $f_n\to f$, $\weakgrad {f_n}\to\weakgrad f$  in $L^2(X)$.

Furthermore, these $f_n$'s can be chosen with compact support for every $n\in\N$ and to satisfy $\lip(f_n)\to\weakgrad f$ in $L^2(X)$ as $n\to\infty$.
\end{theorem}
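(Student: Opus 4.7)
The plan is to identify the minimal weak upper gradient energy with the $L^2$-relaxation of the pointwise-Lipschitz energy, following the strategy of \cite{AmbrosioGigliSavare11}. Define the Cheeger-type functional on $L^2(X,\mm)$ by
\[
\Ch(f) := \inf\left\{\liminf_{n\to\infty}\tfrac12\int \lip(f_n)^2\,\d\mm \;:\; f_n\text{ Lipschitz},\ f_n\to f\text{ in }L^2(X)\right\}.
\]
By construction $\Ch$ is convex and $L^2$-lower semicontinuous, and the theorem reduces to proving the identity $\Ch(f)=\tfrac12\int \weakgrad f^2\,\d\mm$ for every $f\in W^{1,2}(X)$, together with strong $L^2$-convergence $\lip(f_n)\to \weakgrad f$ along a recovery sequence. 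The trivial inequality $\tfrac12\int\weakgrad f^2\,\d\mm\le \Ch(f)$ follows by combining \eqref{eq:lipweak} with \eqref{eq:lscwug}: an $L^2$-bounded recovery sequence $(f_n)$ yields (up to subsequences) $\weakgrad{f_n}\weakto G$ in $L^2$ with $G\le \liminf \lip(f_n)$, and \eqref{eq:lscwug} gives $f\in\s^2$ with $\weakgrad f\le G$.

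The core of the proof is the reverse bound $\Ch(f)\le \tfrac12\int \weakgrad f^2\,\d\mm$, which I would obtain through the Hopf--Lax semigroup
\[
Q_t f(x):=\inf_{y\in X}\Big\{f(y)+\tfrac{1}{2t}\sfd(x,y)^2\Big\}.
\]
For bounded continuous $f$, $Q_t f$ is locally Lipschitz, converges to $f$ uniformly on compacta as $t\downarrow 0$, and obeys the pointwise Hamilton--Jacobi subsolution inequality $\frac{\d}{\dt}Q_t f+\tfrac12\lip(Q_t f)^2\le 0$. The decisive duality step is to show that for $f\in W^{1,2}(X)$ one has $\limsup_{t\downarrow 0}\int\lip(Q_t f)^2\,\d\mm\le \int\weakgrad f^2\,\d\mm$. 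This is obtained by testing $Q_t$ against the defining test-plan inequality \eqref{eq:defsob}: evaluating $f\mapsto Q_t f$ along optimal transport geodesics and using the Kuwada-type duality between action of test plans and Sobolev energy, one upgrades the pointwise Hamilton--Jacobi inequality to an $L^2$-inequality that controls $\lip(Q_t f)$ by $\weakgrad f$.

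Once the equality $\Ch(f)=\tfrac12\int\weakgrad f^2\,\d\mm$ is available, the additional compact-support requirement is handled by the standard Leibniz-plus-cutoff trick: given $f\in W^{1,2}(X)$, truncate values via $(f\wedge R)\vee(-R)$ (which converges in $W^{1,2}$ by the chain rule) and multiply by the $1$-Lipschitz cutoff $\eta_R(x):=(R-\sfd(x,x_0))^+\wedge 1$, so that \eqref{eq:leibbase} gives $\weakgrad{(\eta_R f)}\le |f|\weakgrad{\eta_R}+\eta_R \weakgrad f$ and the error $\|f\weakgrad{\eta_R}\|_{L^2}\to 0$ by dominated convergence, because $\weakgrad{\eta_R}$ is supported on a shell $B_{R+1}(x_0)\setminus B_R(x_0)$. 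Applying the Hopf--Lax approximation to these compactly supported truncations and cutting off once more produces compactly supported Lipschitz approximants. The strong $L^2$-convergence $\lip(f_n)\to \weakgrad f$ follows from a Radon--Riesz argument: the inequalities above force weak $L^2$-limits of $\lip(f_n)$ to coincide with $\weakgrad f$, while the norms converge, so convergence is strong.

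The main obstacle is clearly the duality step in the middle paragraph: bounding $\int \lip(Q_t f)^2\,\d\mm$ in terms of $\int\weakgrad f^2\,\d\mm$. Without infinitesimal Hilbertianity one cannot invoke the heat flow, so the only route is to propagate Sobolev information through the Hopf--Lax semigroup using test plans; making this quantitative, uniformly in $t$, is the delicate analytic content on which the whole relaxation argument rests.
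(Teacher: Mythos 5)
The high-level framing is right — the theorem is equivalent to identifying the Cheeger relaxation functional $\Ch$ with $\frac12\int\weakgrad{\cdot}^2\,\d\mm$, the easy inequality $\frac12\int\weakgrad f^2\,\d\mm\le\Ch(f)$ follows from \eqref{eq:lipweak} and \eqref{eq:lscwug} exactly as you say, the cutoff/truncation argument for compact support is fine, and Radon--Riesz closes the loop once the energy identity is established. This matches the framework of \cite{AmbrosioGigliSavare11}, which is what the paper invokes and does not reprove.

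The problem is the ``decisive duality step.'' The claim $\limsup_{t\downarrow 0}\int\lip(Q_t f)^2\,\d\mm\le\int\weakgrad f^2\,\d\mm$ is not merely hard to quantify — it is false in the generality of the theorem, and the reason is structural: $Q_t$ is a purely metric operation that never sees the measure $\mm$, while $\weakgrad f$ does. Take $X=C$ a fat Cantor set in $[0,1]$ with $\mm=\mathcal L^1\restr C$ and $f(x)=x$. Since $C$ is totally disconnected, every absolutely continuous curve with values in $C$ is constant, so every test plan is concentrated on constant curves, the defining inequality \eqref{eq:defsob} is vacuous, and $\weakgrad g\equiv 0$ for every Borel $g$. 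On the other hand $Q_t f$ is the infimal convolution of the identity with $\sfd^2/(2t)$ over $C$; on the set $\{x\in C:\,x-t\in C\}$, whose $\mm$-measure tends to $\mm(C)$ as $t\downarrow 0$, one has $Q_t f(x)=x-t/2$ and hence $\lip(Q_t f)(x)=1$ (using that a.e.\ $x$ is a density point of $C$). So $\int\lip(Q_t f)^2\,\d\mm\to\mm(C)>0=\int\weakgrad f^2\,\d\mm$, and $Q_t f$ is not a recovery sequence. More generally, in such degenerate spaces the theorem is still true — one approximates $f$ by functions that are locally constant at small scales, which have $\lip=0$ $\mm$-a.e.\ — but those approximants cannot come from the Hopf--Lax semigroup applied directly to $f$.

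What the Ambrosio--Gigli--Savar\'e argument actually does is bring $\mm$ into play through the $L^2$-gradient flow of the Cheeger energy (the heat flow). Roughly: the heat flow of a bounded probability density $\rho$ is a $W_2$-absolutely continuous curve; by Lisini-type lifting this produces a genuine \emph{test plan} with bounded compression, and pairing that test plan with the defining inequality for weak upper gradients, together with the entropy-dissipation identity for the Cheeger-energy gradient flow, yields a Fisher-information comparison that forces $|\nabla f|_*=\weakgrad f$ a.e. The Hopf--Lax semigroup does appear, but as a tool to establish the Kuwada/Kantorovich estimates that feed into this heat-flow argument — not as the recovery sequence itself. So the gap in your plan is not a missing quantitative estimate but a wrong mechanism: the Hopf--Lax regularization must be replaced (or supplemented) by a regularization tied to $\mm$, otherwise the claim in the middle paragraph fails.
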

\subsection{Infinitesimally Hilbertian spaces and the object $\la\nabla f,\nabla g\ra$}\label{se:infhil}
From this section on  we will focus on those metric measure spaces which, from the Sobolev calculus' point of view, resemble a Riemannian structure rather than a general Finsler one.  The definition as well as the foregoing discussion comes from \cite{Gigli12}, which in turn is based and extends the analysis done in \cite{AmbrosioGigliSavare11-2}.
\begin{definition}[Infinitesimally Hilbertian spaces]\label{def:infhil}
Let $(X,\sfd,\mm)$ be a proper metric measure space. We say that it is infinitesimally Hilbertian provided $W^{1,2}(X,\sfd,\mm)$ is an Hilbert space.
\end{definition}
We already noticed that $W^{1,2}(X)$ is always a Banach space, so to ask that it is Hilbert is equivalent to ask that the $W^{1,2}$-norm satisfies the parallelogram rule. From the definition \eqref{eq:w12norm} and the fact that the $L^2(X,\mm)$-norm certainly satisfies the parallelogram rule, we see that $(X,\sfd,\mm)$ is infinitesimally Hilbertian if and only if
\begin{equation}
\label{eq:infhil}
\|\weakgrad{(f+g)}\|_{L^2}^2+\|\weakgrad{(f-g)}\|_{L^2}^2=2\Big(\|\weakgrad{f}\|_{L^2}^2+\|\weakgrad{g}\|_{L^2}^2\Big),\qquad\forall f,g\in\s^2(X).
\end{equation}
Notice that thanks to the uniform convexity of $W^{1,2}(X)$, on infinitesimally Hilbertian spaces Theorem \ref{thm:energylip} immediately yields the following statement:
\begin{theorem}[Density in $W^{1,2}$-norm of Lipschitz functions]\label{thm:stronglip}
Let $(X,\sfd,\mm)$ be an infinitesimally Hilbertian space.

Then Lipschitz functions are dense in  $W^{1,2}(X)$, i.e. for any $f\in W^{1,2}(X)$ there exists a sequence $(f_n)\subset W^{1,2}(X)$ of Lipschitz functions such that $f_n\to f$, $\weakgrad {(f_n-f)}\to0$  as $n\to\infty$ in $L^2(X)$.

Furthermore, these $f_n$'s can be chosen with compact support for every $n\in\N$ and to satisfy $\lip(f_n)\to\weakgrad f$ in $L^2(X)$ as $n\to\infty$.
\end{theorem}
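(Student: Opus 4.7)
The plan is to upgrade the energy-density statement of Theorem \ref{thm:energylip} to norm-density using the Hilbert structure of $W^{1,2}(X)$. First, I would apply Theorem \ref{thm:energylip} directly to obtain a sequence $(f_n)$ of compactly supported Lipschitz functions with $f_n \to f$ in $L^2(X)$, $\weakgrad{f_n} \to \weakgrad f$ in $L^2(X)$, and $\lip(f_n) \to \weakgrad f$ in $L^2(X)$. These immediately give the auxiliary claims (compact support and $\lip(f_n) \to \weakgrad f$ in $L^2$) for free; only the strong $W^{1,2}$-convergence $\weakgrad{(f_n-f)} \to 0$ in $L^2(X)$ remains to be established.

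The key observation is that, by the definition \eqref{eq:w12norm}, the two $L^2$-convergences above imply $\|f_n\|_{W^{1,2}} \to \|f\|_{W^{1,2}}$ and, in particular, that the sequence is bounded in the Hilbert space $W^{1,2}(X)$. Hence, along a subsequence $(n_k)$, there exists a weak limit $g \in W^{1,2}(X)$ with $f_{n_k} \weakto g$ in $W^{1,2}(X)$. Since weak convergence in $W^{1,2}$ implies, in particular, weak convergence in $L^2$, and since $f_n \to f$ already strongly in $L^2$, we must have $g = f$. Thus $f_{n_k} \weakto f$ in $W^{1,2}(X)$ together with $\|f_{n_k}\|_{W^{1,2}} \to \|f\|_{W^{1,2}}$, and by the Radon--Riesz property of Hilbert spaces (weak convergence plus norm convergence gives strong convergence) this forces $f_{n_k} \to f$ strongly in $W^{1,2}(X)$, i.e.\ $\weakgrad{(f_{n_k}-f)} \to 0$ in $L^2(X)$.

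Finally, to promote this subsequential statement to convergence of the whole sequence, I would apply the standard subsubsequence argument: every subsequence of $(f_n)$ still satisfies the hypotheses above, hence admits a further subsequence converging strongly to $f$ in $W^{1,2}(X)$, which forces the whole sequence $(f_n)$ to converge strongly to $f$. No real obstacle is expected in this proof: all the nontrivial work, namely the construction of an energy-approximating sequence of compactly supported Lipschitz functions with $\lip(f_n) \to \weakgrad f$ in $L^2$, has already been carried out in Theorem \ref{thm:energylip}, and the upgrade from energy-approximation to norm-approximation is a soft consequence of the uniform convexity granted by infinitesimal Hilbertianity.
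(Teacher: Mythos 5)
Your argument is correct and is exactly what the paper has in mind when it says Theorem \ref{thm:energylip} ``immediately yields'' the statement ``thanks to the uniform convexity of $W^{1,2}(X)$''; you have simply written out the standard Radon--Riesz/subsubsequence details that the paper leaves implicit. No gap.
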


On infinitesimally Hilbertian spaces and for given Sobolev functions $f,g$  one can define a bilinear object $\la\nabla f, \nabla g\ra$ which plays the role of  the scalar product of their gradients. This can be done without really defining what the gradient of a Sobolev function actually is, as in metric measure spaces this notion requires more care (see e.g. \cite{Weaver01} and \cite{Gigli12}). Thus, the spirit of the definition is similar to the one that leads to the definition of the carr\'e du champ $\Gamma(f,g)$ in the context of Dirichlet forms. Actually, on infinitesimally Hilbertian spaces the map 
\[
W^{1,2}(X,\sfd,\mm)\ni f\qquad\mapsto \qquad\int_X\weakgrad f^2\,\d\mm, 
\]
is a regular and strongly local Dirichlet form on $L^2(X,\mm)$, so that the object $\la\nabla f,\nabla g\ra$ that we are going to define could actually be introduced just as the carr\'e du champ $\Gamma(f,g)$ associated to this Dirichlet form. Yet, we are going to use a  different definition and a different notation since our structure is richer than the one available when working with  abstract Dirichlet forms, because we have a metric measure space $(X,\sfd,\mm)$ satisfying the assumption \eqref{eq:infhil} and not only a topological space $(X,\tau)$ endowed with a measure $\mm$ and a Dirichlet form $\mathcal E$. One of the effects of this additional structure is that in our context it is already given the $\mm$-a.e. value of `the modulus $\weakgrad f$ of the gradient of $f$', while in the context of Dirichlet forms this has to be built. Also, it is worth to notice that the definition \ref{def:nablafnablag} given below makes sense even on spaces which are not infinitesimally Hilbertian and in this higher generality provides a reasonable definition of what is `the differential of $f$ applied to the gradient of $g$' (see \cite{Gigli12}). In this sense, the approach we propose is more general than the one available in the `linear' framework  of Dirichlet form and formula \eqref{eq:defnablanabla} can be seen as a sort of nonlinear variant of the polarization identity. 

\begin{definition}[The object $\la\nabla f,\nabla g\ra$]\label{def:nablafnablag}
Let $(X,\sfd,\mm)$ be an infinitesimally Hilbertian space, $\Omega\subseteq X$ an open set and $f,g\in \s^2_{\rm loc}(\Omega)$. The map $\la\nabla f,\nabla g\ra:\Omega\to\R$ is $\mm$-a.e. defined as
\begin{equation}
\label{eq:defnablanabla}
\la\nabla f,\nabla g\ra:=\inf_{\eps>0}\frac{\weakgrad{(g+\eps f)}^2-\weakgrad g^2}{2\eps},
\end{equation}
the infimum being intended as $\mm$-essential infimum.
\end{definition}
Notice that as a direct consequence of the locality stated in \eqref{eq:localgrad}, also the object $\la\nabla f,\nabla g\ra$ is local, i.e.:
\begin{equation}
\label{eq:local}
\la\nabla f,\nabla g\ra=\la\nabla \tilde f,\nabla \tilde g\ra,\qquad\mm\text{\rm-a.e.  on}  \ \{f=\tilde f\}\cap\{g=\tilde g\}\cap\Omega.
\end{equation}
In the following theorem we collect the main properties of $\la\nabla f,\nabla g\ra$, showing that the expected algebraic calculus rules hold.
\begin{theorem}\label{thm:calculus}
Let $(X,\sfd,\mm)$ be infinitesimally Hilbertian and $\Omega\subseteq X$ an open set. 

Then $W^{1,2}(\Omega)$ is an Hilbert space and the following hold.
\begin{itemize}
\item\underline{`Cauchy-Schwartz'.} For any $f,g\in\s^2_{\rm loc}(\Omega)$ it holds
\begin{align}
\label{eq:1}
\la\nabla f,\nabla f\ra&=\weakgrad f^2,\\
\label{eq:boundfg}
\big|\la\nabla f,\nabla g\ra\big|&\leq \weakgrad f\weakgrad g,
\end{align}
$\mm$-a.e. on $\Omega$.
\item\underline{Linearity in $f$.} For any  $f_1,f_2,g\in \s^2_{\rm loc}(\Omega)$ and $\alpha,\beta\in\R$ it holds
\begin{equation}
\label{eq:linf}
\la\nabla(\alpha f_1+\beta f_2),\nabla g\ra=\alpha\la\nabla f_1,\nabla g\ra+\beta \la\nabla f_2,\nabla g\ra,\qquad\mm\text{\rm -a.e. on }\Omega.
\end{equation}
\item\underline{Chain rule in $f$.}
Let $f\in\s^2_{\rm loc}(\Omega)$ and $\varphi:\R\to\R$  Lipschitz. Then for any $g\in\s^2_{\rm loc}(\Omega)$ it holds
\begin{equation}
\label{eq:chainf}
\la\nabla(\varphi\circ f),\nabla g\ra=\varphi'\circ f\la\nabla f,\nabla g\ra,\qquad\mm\text{\rm-a.e. on }\Omega,
\end{equation}
where the value of $\varphi'\circ f$ is taken arbitrary on those $x\in\Omega$ such that $\varphi$ is not differentiable at $f(x)$. 
\item\underline{Leibniz rule in $f$.}
For $f_1,f_2\in\s^2_{\rm loc}(\Omega)\cap L^{\infty}_{\rm loc}(\Omega)$ and $g\in\s^2_{\rm loc}(\Omega)$ the Leibniz rule
\begin{equation}
\label{eq:leibf}
\la\nabla (f_1f_2),\nabla g\ra=f_1\la\nabla f_2,\nabla g\ra+f_2\la\nabla f_1,\nabla g\ra,\qquad\mm\text{\rm-a.e. on }\Omega,
\end{equation}
holds. 
\item\underline{Symmetry.} For any $f,g\in\s^2_{\rm loc}(\Omega)$ it holds
\begin{equation}
\label{eq:simm}
\la\nabla f,\nabla g\ra=\la\nabla g,\nabla f\ra,\qquad\mm\text{\rm-a.e. on }\Omega.
\end{equation}
In particular, the object $\la\nabla f,\nabla g\ra$ is also linear in $g$ and satisfies chain and Leibniz rules in $g$ analogous to those valid for $f$.
\end{itemize}
\end{theorem}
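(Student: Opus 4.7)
The plan is to follow the order of the claims, reducing everything to the core structural fact behind the definition \eqref{eq:defnablanabla}: namely, that the map $\eps\mapsto\weakgrad{(g+\eps f)}^2$ is $\mm$-a.e.\ convex in $\eps$, as an immediate consequence of the subadditivity and positive $1$-homogeneity \eqref{eq:vectorstru} of $\weakgrad{\cdot}$. Consequently the essential infimum in \eqref{eq:defnablanabla} is a decreasing limit as $\eps\downarrow 0$ and equals the right-derivative of this map at $\eps=0$; in particular $\la\nabla f,\nabla g\ra\in L^1_{\rm loc}(\Omega)$. The fact that $W^{1,2}(\Omega)$ is Hilbert is inherited from $W^{1,2}(X)$ via the usual cutoff argument, using Definition \ref{def:parigi} and the locality \eqref{eq:localgrad} to transfer the parallelogram rule \eqref{eq:infhil}.

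Next I would read off the two Cauchy--Schwarz-type inequalities. The identity \eqref{eq:1} is a direct computation: $\weakgrad{((1+\eps)f)}^2=(1+\eps)^2\weakgrad f^2$ gives difference quotient $(1+\tfrac\eps2)\weakgrad f^2\to\weakgrad f^2$. For \eqref{eq:boundfg}, applying \eqref{eq:vectorstru} yields $\weakgrad{(g+\eps f)}^2\leq\weakgrad g^2+2\eps\weakgrad g\weakgrad f+\eps^2\weakgrad f^2$, and dividing by $2\eps$ gives the bound from above; the lower bound follows by applying the same estimate to $g=(g+\eps f)-\eps f$.

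The main obstacle, and the place where infinitesimal Hilbertianity is genuinely used, is upgrading the one-sided derivative to a two-sided one, which is what yields linearity in $f$ and ultimately symmetry. Applying the parallelogram identity \eqref{eq:infhil} with $a=g$, $b=\eps f$ (first multiplying by a bounded Lipschitz cutoff to reduce to $\s^2(X)$, then localizing via \eqref{eq:localgrad}) gives
\[
\int\frac{\weakgrad{(g+\eps f)}^2-\weakgrad g^2}{2\eps}\,\d\mm-\int\frac{\weakgrad{(g-\eps f)}^2-\weakgrad g^2}{-2\eps}\,\d\mm=\eps\int\weakgrad f^2\,\d\mm.
\]
By convexity the first integrand decreases monotonically to $D^+:=\la\nabla f,\nabla g\ra$ as $\eps\downarrow 0$, and the second increases to the left-derivative analogue $D^-$, with $D^+\geq D^-$ pointwise. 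Passing to the limit by monotone convergence forces $\int(D^+-D^-)\,\d\mm=0$, hence $D^+=D^-$ $\mm$-a.e. So $\eps\mapsto\weakgrad{(g+\eps f)}^2$ is $\mm$-a.e.\ differentiable at $\eps=0$ with two-sided derivative $2\la\nabla f,\nabla g\ra$, and linearity \eqref{eq:linf} follows from the standard fact that the directional derivative of a quadratic-type functional is additive in the direction.

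With a two-sided derivative in hand, the remaining calculus rules reduce to routine manipulations. The chain rule \eqref{eq:chainf} follows by expanding $\weakgrad{(g+\eps(\varphi\circ f))}^2$ and using the chain rule for $\weakgrad{\cdot}$ together with the locality \eqref{eq:nullgrad}. The Leibniz rule \eqref{eq:leibf} is then obtained from linearity combined with the chain rule applied to $\varphi(t)=t^2/4$ via the polarization $f_1f_2=\tfrac14((f_1+f_2)^2-(f_1-f_2)^2)$. Finally, symmetry \eqref{eq:simm} at the integrated level is immediate from the polarization identity
\[
\int\la\nabla f,\nabla g\ra\,\d\mm=\tfrac14\int\weakgrad{(f+g)}^2-\weakgrad{(f-g)}^2\,\d\mm,
\]
whose right-hand side is manifestly symmetric in $f$ and $g$; $\mm$-a.e.\ symmetry is then recovered by testing against an arbitrary $\varphi$ in a suitable dense family of bounded Lipschitz functions and using the Leibniz rule \eqref{eq:leibf} to exchange $\varphi$ with $f$ or $g$ on both sides, concluding by the arbitrariness of $\varphi$.
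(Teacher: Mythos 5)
Your sketch is sound and follows the paper's route up to and including linearity in $f$: the convexity of $\eps\mapsto\weakgrad{(g+\eps f)}^2$, the Cauchy--Schwarz estimates, and the parallelogram trick to upgrade the one-sided to a two-sided derivative are all exactly the paper's ingredients. The Leibniz rule via the polarization $f_1f_2=\tfrac14\big((f_1+f_2)^2-(f_1-f_2)^2\big)$ and the chain rule for (a Lipschitz truncation of) $t\mapsto t^2$ is a legitimate and arguably cleaner alternative to the paper's logarithm trick, and the integrated symmetry $\int\la\nabla f,\nabla g\ra\,\d\mm=\int\la\nabla g,\nabla f\ra\,\d\mm$ via polarization is correct.

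However, there is a genuine gap in the passage from integrated to $\mm$-a.e.\ symmetry, and this is the hard part of the whole theorem. Your sentence ``$\mm$-a.e.\ symmetry is then recovered by testing against an arbitrary $\varphi$ \dots and using the Leibniz rule to exchange $\varphi$ with $f$ or $g$ on both sides'' cannot work as stated: the Leibniz rule \eqref{eq:leibf} is only available in the \emph{first} slot of $\la\nabla\cdot,\nabla\cdot\ra$ at this stage, so you can move $\varphi$ across $f$ but not across $g$. Any attempt to expand $\int\la\nabla(\varphi f),\nabla g\ra$ or $\int\la\nabla g,\nabla(\varphi f)\ra$ in terms of $\int\varphi\la\nabla f,\nabla g\ra$ eventually requires a Leibniz (or chain) rule in the second argument, which is exactly what symmetry would provide --- so the argument is circular. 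Note also that infinitesimal Hilbertianity \eqref{eq:infhil} is only an \emph{integrated} parallelogram law: the pointwise identity $\weakgrad{(f+g)}^2+\weakgrad{(f-g)}^2=2\weakgrad f^2+2\weakgrad g^2$ is not available a priori, so you cannot simply polarize $\weakgrad{\cdot}^2$ pointwise either. The paper closes this gap by first proving continuity of $g\mapsto\int\la\nabla f,\nabla g\ra\,\d\mm$ under energy convergence of $g_n\to g$ (\eqref{eq:contg}), using it to establish the \emph{integrated chain rule in the second slot} $\int\varphi'\circ g\,\la\nabla f,\nabla g\ra\,\d\mm=\int\la\nabla f,\nabla(\varphi\circ g)\ra\,\d\mm$ (\eqref{eq:chaingint}), and then combining Leibniz in $f$, the integrated chain rule in $g$, and integrated symmetry to show that $f\mapsto\int h\weakgrad f^2\,\d\mm$ is a quadratic form for every bounded Lipschitz $h$; pointwise symmetry follows from this. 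You will need some version of that continuity-in-$g$ lemma and the integrated chain rule in the second argument to complete the proof. A smaller remark: your description of the chain rule as ``expanding $\weakgrad{(g+\eps(\varphi\circ f))}^2$ and using the chain rule for $\weakgrad{\cdot}$'' is not quite right, since $g+\eps(\varphi\circ f)$ is not a composition; the actual mechanism is locality for piecewise-affine $\varphi$ (using \eqref{eq:local} and \eqref{eq:nullgrad}) followed by the Lipschitz estimate $\big|\la\nabla f,\nabla g\ra-\la\nabla\tilde f,\nabla g\ra\big|\leq\weakgrad{(f-\tilde f)}\weakgrad g$ to pass to general Lipschitz $\varphi$.
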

\begin{proof} The fact that $W^{1,2}(\Omega)$ is Hilbert is a direct consequence of the stated algebraic properties. Such properties are expressed as $\mm$-a.e. equalities on $\Omega$, hence, by the very definition of $\s^2_{\rm loc}(\Omega)$ and the locality property \eqref{eq:local}, to conclude it is sufficient to deal with the case of $\Omega=X$ and functions in $\s^2(X,\sfd,\mm)$.

The identity \eqref{eq:1} is a direct consequence of the definition. Taking into account that $\weakgrad{(g+\eps f)}\leq \weakgrad g+\eps\weakgrad f$ for any $\eps>0$, we get
\begin{equation}
\label{eq:lato1}
\la\nabla f,\nabla g\ra\leq \weakgrad f\weakgrad g,\qquad\mm\textrm{-a.e..}
\end{equation}
From the  inequality  \eqref{eq:vectorstru} we get that the map $\s^2(X,\sfd,\mm) \ni f\mapsto\weakgrad{f}$ is $\mm$-a.e. convex, in the sense that
\[
\weakgrad{((1-\lambda)f+\lambda g)}\leq(1-\lambda)\weakgrad f+\lambda\weakgrad g,\qquad\mm\text{\rm-a.e.}\quad \forall f,g\in \s^2(X,\sfd,\mm),\ \lambda\in[0,1].
\]
It follows that $\R\ni\eps\mapsto\weakgrad{(g+\eps f)}$ is also $\mm$-a.e. convex and, being non-negative, also $\R\ni\eps\mapsto\weakgrad{(g+\eps f)}^2/2$ is $\mm$-a.e. convex in the sense of the above inequality. In particular, the $\inf_{\eps>0}$ in definition \eqref{eq:defnablanabla} can be substituted with $\lim_{\eps\downarrow0}$ in $L^1(X,\mm)$, and thus  we  easily get that for given $g\in \s^2(X,\sfd,\mm)$
\begin{equation}
\label{eq:1conv}
\textrm{the map }\s^2(X,\sfd,\mm)\ni f\quad\mapsto\quad\la\nabla f,\nabla g\ra\textrm{ is }\mm\textrm{-a.e. positively 1-homogeneous and convex},
\end{equation}
and that
\[
\frac{\weakgrad{(g+\eps f)}^2-\weakgrad g^2}{2\eps}\leq \frac{\weakgrad{(g+\eps' f)}^2-\weakgrad g^2}{2\eps'},\qquad\mm\text{\rm-a.e.}\quad \forall \eps,\eps'\in\R\setminus\{0\},\ \eps\leq \eps',
\]
so that we obtain $\mm$-a.e.:
\begin{equation}
\label{eq:nonuguali}
\la\nabla f,\nabla g\ra=\inf_{\eps>0}\frac{\weakgrad{(g+\eps f)}^2-\weakgrad g^2}{2\eps}\geq\sup_{\eps<0}\frac{\weakgrad{(g+\eps f)}^2-\weakgrad g^2}{2\eps}=-\la\nabla(- f),\nabla g\ra.
\end{equation}
Now plug  $\eps f$ in place of $f$ in \eqref{eq:infhil} to get 
\[
\int \frac{\weakgrad{(g+\eps f)}^2-\weakgrad g^2}{2\eps}\,\d\mm=-\int \frac{\weakgrad{(g-\eps f)}^2-\weakgrad g^2}{2\eps}\,\d\mm+\eps\int\weakgrad f^2\,\d\mm.
\]
Letting $\eps\downarrow0$ we obtain $\int \la\nabla f,\nabla g\ra\d\mm=-\int\la\nabla (-f),\nabla g\ra\d\mm $, which by \eqref{eq:nonuguali} forces 
\begin{equation}
\label{eq:uguali}
\la\nabla f,\nabla g\ra=-\la\nabla(-f),\nabla g\ra,
\end{equation}
and in particular, by \eqref{eq:lato1}, we deduce \eqref{eq:boundfg}. 

For given $g\in\s^2(X,\sfd,\mm)$, \eqref{eq:1conv} yields that $f\mapsto -\la\nabla (-f), \nabla g\ra $ is $\mm$-a.e. positively 1-homogeneous and concave, hence from \eqref{eq:uguali} we deduce the linearity in $f$ of $\la\nabla f,\nabla g\ra$, i.e. \eqref{eq:linf} is proved.

We now turn to the chain rule in \eqref{eq:chainf}.  Notice that the linearity in $f$ and the inequality \eqref{eq:boundfg} immediately yield
\begin{equation}
\label{eq:1lip}
\big|\la\nabla f,\nabla g\ra-\la\nabla \tilde f,\nabla g\ra\big|\leq \weakgrad{(f-\tilde f)}\weakgrad{g}.
\end{equation}
Moreover, thanks to \eqref{eq:linf}, \eqref{eq:chainf} is obvious if  $\varphi$ is linear, and since \eqref{eq:chainf} is unchanged if we add a constant to $\varphi$, it is also true if $\varphi$ is affine. Then, using the locality property \eqref{eq:local}  we also get \eqref{eq:chainf} for $\varphi$ piecewise affine (notice that the property \eqref{eq:nullgrad} ensures that letting $\mathcal N\subset\R$ be the negligible points of non-differentiability of $\varphi$, both $|\nabla(\varphi\circ f)|$ and $|\nabla f|$ are 0 $\mm$-a.e. on $f^{-1}(\mathcal N)$). To conclude in the general case, let $\varphi$ be an arbitrary Lipschitz function and find a sequence $(\varphi_n)$ of piecewise affine functions such that $\varphi_n'(z)\to\varphi'(z)$ as $n\to\infty$ for $\mathcal L^1$-a.e. $z\in\R$. Let $\mathcal N\subset\R$ be the union of the set of points of non-differentiability of $\varphi$ and the $\varphi_n$'s with the set of $z$ such that $\varphi_n'(z)\not\to\varphi'(z)$. Then $\mathcal N$ is a Borel negligible set, and thus \eqref{eq:nullgrad} gives
\[
\varphi'_n\circ f\la\nabla f,\nabla g\ra\quad\to\quad\varphi'\circ f\la\nabla f,\nabla g\ra,\qquad\mm\textrm{-a.e.},
\]
and similarly
\[
\big|\la\nabla (\varphi\circ f),\nabla g\ra-\la\nabla (\varphi_n\circ f),\nabla g\ra\big|\leq\weakgrad{((\varphi-\varphi_n)\circ f )}\weakgrad g= |\varphi'-\varphi_n'|\circ f\weakgrad f\weakgrad g\to 0,
\]
$\mm$-a.e.. The chain rule \eqref{eq:chainf} follows.

The Leibniz rule \eqref{eq:leibf} is a consequence of the chain rule  \eqref{eq:chainf} and the linearity \eqref{eq:linf}: indeed, up to adding a constant to both $f_1$ and $f_2$, we can assume that $\mm$-a.e. it holds $f_1,f_2\geq c$ for some $c>0$, then notice that from \eqref{eq:chainf} and \eqref{eq:linf} we get
\[
\begin{split}
\la\nabla (f_1f_2),\nabla g\ra&=f_1f_2\la\nabla(\log(f_1f_2)),\nabla g\ra=f_1f_2\la\nabla(\log f_1+\log f_2),\nabla g\ra\\
&=f_1f_2\big(\la\nabla(\log f_1),\nabla g\ra+\la\nabla(\log f_2),\nabla g\ra\big)\\
&=f_1f_2\left(\frac1{f_1}\la\nabla f_1,\nabla g\ra+\frac1{f_2}\la\nabla f_2,\nabla g\ra\right)=f_2\la\nabla f_1,\nabla g\ra+f_1\la\nabla f_2,\nabla g\ra.
\end{split}
\]
To conclude it is now sufficient to show the symmetry relation \eqref{eq:simm}. For this we shall need some auxiliary intermediate results. The first one concerns continuity in $g$ of the map $\s^2(X,\sfd,\mm)\ni g\mapsto\int \la\nabla f,\nabla g\ra\d\mm$. More precisely, we claim that
\begin{equation}
\label{eq:contg}
\begin{split}
&\text{\rm given a sequence $(g_n)\subset \s^2(X,\sfd,\mm)$ and $g\in\s^2(X,\sfd,\mm)$ such that}\\
&\text{\rm $\lim_{n\to\infty}\int\weakgrad{(g_n-g)}^2\,\d\mm=0$, for any $f\in\s^2(X,\sfd,\mm)$ it holds}\\
&\lim_{n\to\infty}\int\la\nabla f,\nabla g_n\ra\d\mm=\int\la\nabla f,\nabla g\ra\d\mm.
\end{split}\end{equation}
To see this, notice that for any $\eps\neq 0$ and under the same assumptions it holds
\[
\lim_{n\to\infty}\int \frac{\weakgrad{(g_n+\eps f)}^2-\weakgrad{g_n}^2}{\eps}\,\d\mm=\int \frac{\weakgrad{(g+\eps f)}^2-\weakgrad{g}^2}{\eps}\,\d\mm.
\]
Now recall that $\R^+\ni\eps\mapsto\frac{\weakgrad{(g_n+\eps f)}^2-\weakgrad{g_n}^2}{\eps}$ is $\mm$-a.e. increasing and converges to $\la\nabla f,\nabla g_n\ra$ $\mm$-a.e. as $\eps\downarrow 0$ to get 
\[
\lims_{n\to\infty}\int\la\nabla f,\nabla g_n\ra\,\d\mm\leq\lim_{n\to\infty}\int \frac{\weakgrad{(g_n+\eps f)}^2-\weakgrad{g_n}^2}{\eps}\,\d\mm=\int \frac{\weakgrad{(g+\eps f)}^2-\weakgrad{g}^2}{\eps}\,\d\mm,
\]
and eventually passing to the limit as $\eps\downarrow 0$ we deduce
\[
\lims_{n\to\infty}\int\la\nabla f,\nabla g_n\ra\d\mm\leq\int\la\nabla f,\nabla g\ra\d\mm.
\]
The $\limi$ inequality then follows replacing $f$ with $-f$ and using  linearity in $f$ expressed in \eqref{eq:linf}.

We shall use \eqref{eq:contg} to obtain an integrated chain rule for $g$, i.e.:
\begin{equation}
\label{eq:chaingint}
\int \varphi'\circ g\la\nabla f,\nabla g\ra\d\mm=\int\la\nabla f,\nabla(\varphi\circ g)\ra\d\mm.
\end{equation}
To get this, start observing that letting $\eps\downarrow 0$ in the trivial identity
\[
\frac{\weakgrad{(\alpha g+\eps f)}^2-\weakgrad{(\alpha g)}^2}{2\eps}=\alpha\frac{\weakgrad{(g+\frac{\eps}\alpha f)}^2-\weakgrad{ g}^2}{2\frac\eps\alpha},\qquad\alpha\neq 0,
\]
and recalling the linearity in $f$ \eqref{eq:linf}, we obtain 1-homogeneity in $g$, i.e.
\[
\la\nabla f,\nabla (\alpha g)\ra=\alpha\la\nabla f,\nabla g\ra,\qquad\forall\alpha\in\R.
\]
From the locality property \eqref{eq:local} we then get that for $\varphi:\R\to\R$ piecewise affine it holds
\begin{equation}
\label{eq:quasichain}
\la\nabla f,\nabla(\varphi\circ g)\ra=\varphi'\circ g\la\nabla f,\nabla g\ra,\qquad\mm\text{\rm-a.e.},
\end{equation}
where, as before, the value of $\varphi\circ g$ can be chosen arbitrary at those $x$ such that $\varphi$ is not differentiable in $g(x)$. To conclude we argue as in the proof of \eqref{eq:chainf} using \eqref{eq:contg} in place of \eqref{eq:1lip}. More precisely, given $\varphi:\R\to\R$ Lipschitz we find a sequence $(\varphi_n)$ of uniformly Lipschitz piecewise affine functions such that $\varphi'_n(z)\to\varphi'(z)$ for $\mathcal L^1$-a.e.  $z$.

From $\weakgrad{(\varphi\circ g-\varphi_n\circ g)}=|\varphi'-\varphi'_n|\circ g\weakgrad g\to 0$ $\mm$-a.e. and the fact that $\varphi,\varphi_n$, $n\in\N$, are uniformly Lipschitz we get $\lim_{n\to\infty}\int\weakgrad{(\varphi\circ g-\varphi_n\circ g)}^2\,\d\mm\to 0$. Thus from \eqref{eq:contg} and \eqref{eq:quasichain} we conclude
\[
\begin{split}
\int\la\nabla f,\nabla (\varphi\circ g)\ra&=\lim_{n\to\infty}\int\la\nabla f,\nabla(\varphi_n\circ g)\ra\d\mm\\
&=\lim_{n\to\infty}\int\varphi_n'\circ g\la\nabla f,\nabla g\ra\d\mm=\int\varphi'\circ g\la\nabla f,\nabla g\ra\d\mm,
\end{split}
\]
having used the dominated convergence theorem in the last step.

The last ingredient we need to prove the symmetry property \eqref{eq:simm} is its integrated version
\begin{equation}
\label{eq:simmint}
\int\la\nabla f,\nabla g\ra\d\mm=\int\la\nabla g,\nabla f\ra\d\mm.
\end{equation}
This easily follows by noticing that the assumption of infinitesimal Hilbertianity yields
\begin{equation}
\label{eq:camel}
\int\frac{\weakgrad{(g+\eps f)}^2-\weakgrad{g}^2}{\eps}-\eps\weakgrad f^2\,\d\mm=\int\frac{\weakgrad{(f+\eps g)}^2-\weakgrad{f}^2}{\eps}-\eps\weakgrad g^2\,\d\mm,
\end{equation}
and then letting $\eps\downarrow 0$.

Now notice that \eqref{eq:simm} is equivalent to the fact that for any $h\in L^\infty(X,\mm)$ it holds 
\begin{equation}
\label{eq:trenino}
\int h\la\nabla f,\nabla g\ra\d\mm=\int h\la\nabla g,\nabla f\ra\d\mm,\qquad\forall f,g\in\s^2(X,\sfd,\mm).
\end{equation}
Taking into account the weak$^*$-density of Lipschitz and bounded functions in $L^\infty(X,\mm)$, we easily see that it is sufficient to check \eqref{eq:trenino} for any $h$ Lipschitz and bounded. Also, with the same arguments that led from \eqref{eq:camel} to \eqref{eq:simmint} and a simple truncation argument,  \eqref{eq:trenino} will follow if we show that 
\begin{equation}
\label{eq:simm2}
\s^2(X,\sfd,\mm)\cap L^\infty(X,\mm)\ni f\qquad\mapsto\qquad \int h\weakgrad f^2\,\d\mm\in\R\qquad\textrm{is a quadratic form}.
\end{equation}
To this aim, notice that from \eqref{eq:leibf}, \eqref{eq:chaingint} and \eqref{eq:simmint} we get
\begin{equation}
\label{eq:leibsimm}
\begin{split}
\int h\weakgrad f^2\,\d\mm&=\int \la\nabla (fh),\nabla f\ra-f\la\nabla h,\nabla f\ra\d\mm\\
&=\int \la\nabla (fh),\nabla f\ra-\la\nabla h,\nabla \Big(\frac{f^2}2\Big)\ra\d\mm=\int \la\nabla (fh),\nabla f\ra-\la\nabla \Big(\frac{f^2}2\Big), \nabla h\ra\d\mm.
\end{split}
\end{equation}
By \eqref{eq:linf} and \eqref{eq:simmint} we know that both $f\mapsto \int \la\nabla (fh),\nabla\tilde f\ra\d\mm$ and $f\mapsto\int\la\nabla (\tilde fh),\nabla f\ra\d\mm$ are linear maps, hence $f\mapsto\int\la \nabla (fh),\nabla f\ra\d\mm$ is a quadratic form. Again by \eqref{eq:linf} we also get that $f\mapsto \int \la\nabla(\frac {f^2}2), \nabla h\ra\d\mm$ is a quadratic form. Hence \eqref{eq:leibsimm} yields \eqref{eq:simm2} and the conclusion.
\end{proof}
We remark that during the proof we showed  that $\la\nabla f, \nabla g\ra$ can be realized as limit rather than as infimum, i.e. it holds
\begin{equation}
\label{eq:limite}
\la\nabla f,\nabla g\ra=\lim_{\eps\to 0}\frac{\weakgrad{(g+\eps f)}^2-\weakgrad g^2}{2\eps},\qquad\forall f,g\in\s^2(\Omega),
\end{equation}
the limit being intended both in $L^1(\Omega)$ and in the essential-$\mm$-a.e. sense.

\subsection{Horizontal and vertical derivatives, i.e. first order differentiation formula}
The definition of $\la\nabla f,\nabla g\ra$ that we just provided has all the basic expected algebraic properties one wishes. Yet, it does not really answer the question `what is the derivative of $f$ along the direction $\nabla g$?' The way we defined it, this object is obtained by a `vertical' derivative, i.e. by a perturbation in the dependent variable, while the essence of derivation is to take `horizontal' derivatives, i.e. perturbations in the independent variable. Notice indeed that in a smooth Riemannian world, the value of $\la\nabla f,\nabla g\ra(x)$ (more precisely: of the differential of $f$ applied to the gradient of $g$) is  defined as $\lim_{t\downarrow0}\frac{f(\gamma_t)-f(\gamma_0)}t$, where $\gamma$ is any smooth curve with $\gamma_0=x$ and $\gamma_0'=\nabla g(x)$. It is therefore natural to ask whether a similar approach exists in the non-smooth setting and if it provides the same calculus as given by Theorem \ref{thm:calculus}. It turns out that the answer is yes, see Theorem \ref{thm:horver} below: this result, appeared first in \cite{AmbrosioGigliSavare11-2} and then generalized in \cite{Gigli12}, should be considered as the single most important contribution to differential calculus on metric measure spaces among those presented in such papers.

\bigskip

Obviously on a non-smooth structure it makes no sense to say that a curve $\gamma$ satisfies $\gamma'_0=\nabla g(x)$. Yet, we can implicitly give a meaning to this expression mimicking De Giorgi's definition of gradient flow in metric spaces (see \cite{AmbrosioGigliSavare08}) arguing as follows.  Let  $g\in\s^2_{\rm loc}(X)$ and $\ppi$ a test plan such that $\supp((\e_t)_\sharp\ppi)\subset\Omega$ for some  bounded open set $\Omega$ and all $t$'s sufficiently small. Using the fact that $g\in\s^2(\Omega)$, for sufficiently small $t$'s we can integrate  inequality \eqref{eq:localplan} and use Young's inequality to get
\begin{equation}
\label{eq:perrepr}
\begin{split}
\int g(\gamma_t)-g(\gamma_0)\,\d\ppi(\gamma)&\leq \frac12\iint_0^t\weakgrad g^2(\gamma_s)\,\d s\,\d\ppi(\gamma)+\frac12\iint_0^t|\dot\gamma_s|^2\,\d s\,\d\ppi(\gamma)\\
&=\frac12\int \weakgrad g^2\,\d\left(\int_0^t(\e_s)_\sharp\ppi\,\d s\right)+\frac12\iint_0^t|\dot\gamma_s|^2\,\d s\,\d\ppi(\gamma).
\end{split}
\end{equation}
From the fact that $\iint_0^t|\dot\gamma_s|^2\,\d s\,\d\ppi(\gamma)<\infty$ it is immediate to verify that $(\e_t)_\sharp\ppi\to (\e_0)_\sharp\ppi$ weakly in duality with $C_b(X)$. Taking also into account that $(\e_t)_\sharp\ppi\leq C\mm$ for every $t\in[0,1]$ and some $C>0$, dividing \eqref{eq:perrepr} by $t$ and letting $t\downarrow0$ we deduce
\begin{equation}
\label{eq:perrepr2}
\lims_{t\downarrow0}\int \frac{g(\gamma_t)-g(\gamma_0)}t\,\d\ppi(\gamma)\leq \frac12\int\weakgrad g^2\,\d(\e_0)_\sharp\ppi(\gamma)+\frac12\lims_{t\downarrow0}\frac1t\iint_0^t|\dot\gamma_s|^2\,\d s\,\d\ppi(\gamma).
\end{equation}
In a smooth Riemannian world, this inequality reads as 
\begin{equation}
\label{eq:liscio}
\lim_{t\downarrow0}\frac{g(\gamma_t)-g(\gamma_0)}{t}\leq \frac12|\nabla g|^2(\gamma_0)+\frac12|\gamma'_0|^2,
\end{equation}
for any smooth function $g$ and smooth curve $\gamma$ and we know that it holds $\gamma'_0=\nabla g(\gamma_0)$ if and only if the equality in \eqref{eq:liscio} holds. We are therefore lead to the following definition:
\begin{definition}[Plan representing gradients]\label{def:planrepr} Let $(X,\sfd,\mm)$ be an infinitesimally Hilbertian space,  $g\in\s^2_{\rm loc}(X)$ and $\ppi\in\prob{C([0,1],X)}$. We say that $\ppi$ represents the gradient of $g$ provided:
\begin{itemize}
\item[i)] there is $T>0$ such that $(\e_t)_\sharp\ppi\leq C\mm$ and $\supp((\e_t)_\sharp\ppi)\subset \Omega$ for some constant $C>0$ and bounded open set $\Omega$,
\item[ii)] $\iint_0^T|\dot\gamma_t|^2\,\d t\,\d\ppi(\gamma)<\infty$,
\item[iii)] the inequality
\begin{equation}
\label{eq:defppigrad}
\limi_{t\downarrow0}\int \frac{g(\gamma_t)-g(\gamma_0)}t\,\d\ppi(\gamma)\geq \frac12\int\weakgrad g^2\,\d(\e_0)_\sharp\ppi(\gamma)+\frac12\lims_{t\downarrow0}\frac1t\iint_0^t|\dot\gamma_s|^2\,\d s\,\d\ppi(\gamma),
\end{equation}
holds.
\end{itemize}
\end{definition}
Notice that plans representing gradients exist in high generality (see \cite{Gigli12}). The following simple and crucial result shows the link between differentiation of a Sobolev function $f$ along a plan representing $\nabla g$ and the object $\la\nabla f,\nabla g\ra$. 
\begin{theorem}[Horizontal and vertical derivatives]\label{thm:horver}
Let $(X,\sfd,\mm)$ be an infinitesimally Hilbertian metric measure space, $f,g\in\s^2_{\rm loc}(X)$ and $\ppi\in\prob{C([0,1],X)}$ be representing the gradient of $g$. Then
\[
\lim_{t\downarrow0}\int \frac{f(\gamma_t)-f(\gamma_0)}{t}\,\d\ppi(\gamma)=\int\la\nabla f,\nabla g\ra\d(\e_0)_\sharp\ppi.
\]
\end{theorem}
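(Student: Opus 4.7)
The plan is to exploit the perturbation $g \rightsquigarrow g+\eps f$ and compare the generic upper bound \eqref{eq:perrepr2} (which holds for any Sobolev function and any test plan with the stated bounded-support / $L^\infty$ density properties) with the two-sided information \eqref{eq:defppigrad} that $\ppi$ represents $\nabla g$. Write for brevity
\[
F_t:=\int\frac{f(\gamma_t)-f(\gamma_0)}{t}\,\d\ppi(\gamma),\qquad G_t:=\int\frac{g(\gamma_t)-g(\gamma_0)}{t}\,\d\ppi(\gamma),
\]
\[
A_h:=\tfrac12\int\weakgrad h^2\,\d(\e_0)_\sharp\ppi,\qquad B:=\tfrac12\lims_{t\downarrow 0}\frac1t\iint_0^t|\dot\gamma_s|^2\,\d s\,\d\ppi(\gamma).
\]
Since $\ppi$ represents $\nabla g$, the definition forces $\limi_t G_t\geq A_g+B$, while the general inequality \eqref{eq:perrepr2} applied to $g$ itself gives $\lims_t G_t\leq A_g+B$; hence $G_t$ actually converges to $A_g+B$ as $t\downarrow 0$.

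Now fix any $\eps\in\R\setminus\{0\}$. Since $g+\eps f\in\s^2_{\rm loc}(X)$ by the vector space structure \eqref{eq:vectorstru} and $\ppi$ still has the bounded support / $L^\infty$-density properties required, we may apply \eqref{eq:perrepr2} with $g$ replaced by $g+\eps f$ to obtain
\[
\lims_{t\downarrow 0}(G_t+\eps F_t)\leq A_{g+\eps f}+B.
\]
Because $G_t$ has a limit, for $\eps>0$ one has $\lims_t(G_t+\eps F_t)=\lim_t G_t+\eps\lims_t F_t$, while for $\eps<0$ one has $\lims_t(G_t+\eps F_t)=\lim_t G_t+\eps\limi_t F_t$. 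Subtracting $A_g+B=\lim_t G_t$ from the displayed inequality and dividing by $\eps$ (being careful with its sign) yields, for every $\eps>0$ and $\eps<0$ respectively,
\[
\lims_{t\downarrow 0}F_t\leq\int\frac{\weakgrad{(g+\eps f)}^2-\weakgrad g^2}{2\eps}\,\d(\e_0)_\sharp\ppi,\qquad
\limi_{t\downarrow 0}F_t\geq\int\frac{\weakgrad{(g+\eps f)}^2-\weakgrad g^2}{2\eps}\,\d(\e_0)_\sharp\ppi.
\]

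It remains to pass to the limit $\eps\to 0$ in both integrals. By the $\mm$-a.e. monotonicity of $\eps\mapsto(\weakgrad{(g+\eps f)}^2-\weakgrad g^2)/(2\eps)$ established in the proof of Theorem~\ref{thm:calculus} (and recorded as \eqref{eq:limite}), these integrands converge to $\la\nabla f,\nabla g\ra$ monotonically from above (as $\eps\downarrow 0$) and from below (as $\eps\uparrow 0$). The integrability of the dominating functions follows from $(\e_0)_\sharp\ppi\leq C\mm$ together with the bounded support of $(\e_0)_\sharp\ppi$ and the $L^2_{\rm loc}$ Cauchy-Schwartz bound \eqref{eq:boundfg}, so monotone (or dominated) convergence applies. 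Hence both the $\limi$ and the $\lims$ of $F_t$ coincide with $\int\la\nabla f,\nabla g\ra\,\d(\e_0)_\sharp\ppi$, proving the theorem.

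I expect the only place where care is needed is the legitimacy of this $\eps\to 0$ passage: one must observe that the difference quotients have a definite sign of monotonicity so that the convergence is in fact controlled, since the problem only provides global hypotheses like $f,g\in\s^2_{\rm loc}$ rather than integrability of $\weakgrad f\,\weakgrad g$ against $\mm$; this is why localising via the bounded support of $(\e_0)_\sharp\ppi$ is essential. Everything else is a direct bookkeeping of $\lims$/$\limi$ under linear combinations.
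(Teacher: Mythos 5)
Your proof is correct and follows essentially the same route as the paper: perturb $g$ to $g+\eps f$, compare the generic bound \eqref{eq:perrepr2} for $g+\eps f$ with the representation inequality \eqref{eq:defppigrad} for $g$, divide by $\eps$ with attention to its sign, and then let $\eps\to 0$ using \eqref{eq:limite}. Your write-up is somewhat more explicit in two spots where the paper is terse: you first observe that $G_t$ actually \emph{converges} to $A_g+B$ (which makes the algebra with $\lims$ and $\limi$ transparent, whereas the paper implicitly relies on the less obvious inequality $\lims(a_t+b_t)-\limi a_t\geq\lims b_t$), and you spell out why monotone/dominated convergence justifies the $\eps\to 0$ passage via the $\mm$-a.e.\ monotonicity of the difference quotients together with $(\e_0)_\sharp\ppi\leq C\mm$ on a bounded support. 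These are worthwhile clarifications but do not change the argument.
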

\begin{proof}
Write inequality \eqref{eq:perrepr2} for the function $g+\eps  f$ and subtract inequality \eqref{eq:defppigrad} to get
\[
\lims_{t\downarrow0}\eps\int\frac{f(\gamma_t)-f(\gamma_0)}{t}\,\d\ppi(\gamma)\leq \int \weakgrad{(g+\eps f)}^2-\weakgrad g^2\,\d(\e_0)_\sharp\ppi.
\] 
Divide by $\eps >0$ (resp. $\eps<0$), let $\eps\downarrow0$ (resp. $\eps\uparrow0$) and recall \eqref{eq:limite} to conclude.
\end{proof}

\subsection{Measure valued Laplacian}
Having understood the definition of $\la\nabla f,\nabla g\ra$, we can now integrate by parts and give the definition of measure valued Laplacian.

For $\Omega\subseteq X$ open, we will denote by  $\test\Omega$ the set of all Lipschitz functions compactly supported in $\Omega$.
\begin{definition}[Measure valued Laplacian]\label{def:distrlap} Let $(X,\sfd,\mm)$ be an infinitesimally Hilbertian space  and $\Omega\subseteq X$ open.
Let $g:\Omega\to\R$ be a locally Lipschitz function. We say that $g$ has a distributional Laplacian in $\Omega$, and write $g\in D(\bd,\Omega)$, provided there exists a Radon  measure $\mu$ on $\Omega$ such that
\begin{equation}
\label{eq:deflap}
-\int\la\nabla f,\nabla g\ra\d\mm=\int f\,\d\mu,\qquad\forall f\in\test\Omega.
\end{equation}
In this case we will say that $\mu$ (which is clearly unique) is the distributional Laplacian of $g$ and  indicate it by $\bd g\restr\Omega$. In the case $\Omega=X$ we write $D(\bd)$ and $\bd g$ in place of $D(\bd,\Omega)$ and $\bd g\restr\Omega$.
\end{definition} 
Notice that the integrand in the left hand side of \eqref{eq:deflap} is in $L^1(\Omega)$, because $g$, being locally Lipschitz, is Lipschitz on $\supp(f)$ and thus  inequalities \eqref{eq:boundfg} and \eqref{eq:lipweak} grant that the integrand is bounded. In this direction, the restriction to locally Lipschitz $g$'s is quite unnatural and indeed unnecessary (see \cite{Gigli12}), yet it is sufficient for our purposes so that we will be satisfied with it. 

The calculus rules for $\bd g$ are easily derived from those of $\la\nabla f,\nabla g\ra$ from basic algebraic manipulation. Start observing that  since the left hand side of \eqref{eq:deflap} is linear in $g$, the set $D(\bd,\Omega)$ is a vector space and the map
\[
D(\bd,\Omega)\ni g\qquad\mapsto\qquad\bd g\restr\Omega\,,
\]
is linear. We also have natural chain and Leibniz rules:
\begin{proposition}[Chain rule]\label{prop:chainlap}
Let $(X,\sfd,\mm)$ be an infinitesimally Hilbertian space,  $\Omega\subseteq X$  an open set and $g\in D(\bd,\Omega)$. Then for every  function $\varphi\in C^{2}(g(\Omega))$, the function $\varphi\circ g$ is in $D(\bd,\Omega)$ and it holds  
\begin{equation}
\label{eq:chainlap}
\bd(\varphi\circ g)\restr\Omega=\varphi'\circ g\bd g\restr\Omega+\varphi''\circ g\weakgrad g^2\mm\restr\Omega.
\end{equation}
\end{proposition}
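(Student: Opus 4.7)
The plan is to apply the defining identity of $\bd g\restr\Omega$ to a carefully chosen test function, and to expand the resulting bilinear form $\la\nabla\cdot,\nabla g\ra$ by the Leibniz rule \eqref{eq:leibf} and the chain rule \eqref{eq:chainf}, using symmetry \eqref{eq:simm} to move derivatives between the two slots.

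Concretely, fix $f\in\test\Omega$. Since $\supp(f)\subset\Omega$ is compact and $g$ is continuous, the image $g(\supp(f))$ is a compact subset of $g(\Omega)$, and on this set $\varphi,\varphi',\varphi''$ are bounded with $\varphi'$ Lipschitz. Combined with the local Lipschitzianity of $g$, this shows that $\varphi'\circ g$ is Lipschitz on $\supp(f)$; hence the function $h:=f\,(\varphi'\circ g)$ still belongs to $\test\Omega$. Using $h$ as a test function in \eqref{eq:deflap} for $g$ and then applying Leibniz \eqref{eq:leibf}, chain rule \eqref{eq:chainf} and \eqref{eq:1}, we obtain
\begin{align*}
-\int f\,(\varphi'\circ g)\,\d(\bd g\restr\Omega)
&=\int\la\nabla(f(\varphi'\circ g)),\nabla g\ra\,\d\mm\\
&=\int (\varphi'\circ g)\la\nabla f,\nabla g\ra\,\d\mm+\int f\,(\varphi''\circ g)\,\weakgrad g^2\,\d\mm,
\end{align*}
where we used $\la\nabla(\varphi'\circ g),\nabla g\ra=(\varphi''\circ g)\,\weakgrad g^2$. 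Rearranging gives
\begin{equation*}
-\int(\varphi'\circ g)\la\nabla f,\nabla g\ra\,\d\mm=\int f\,\d\bigl((\varphi'\circ g)\bd g\restr\Omega+(\varphi''\circ g)\weakgrad g^2\mm\restr\Omega\bigr).
\end{equation*}

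On the other hand, since $\varphi\circ g$ is locally Lipschitz on $\Omega$ (composition of a locally Lipschitz with a $C^1$ map), the chain rule \eqref{eq:chainf} combined with symmetry \eqref{eq:simm} yields $\la\nabla f,\nabla(\varphi\circ g)\ra=(\varphi'\circ g)\la\nabla f,\nabla g\ra$ $\mm$-a.e.. Substituting in the identity above gives
\begin{equation*}
-\int\la\nabla f,\nabla(\varphi\circ g)\ra\,\d\mm=\int f\,\d\bigl((\varphi'\circ g)\bd g\restr\Omega+(\varphi''\circ g)\weakgrad g^2\mm\restr\Omega\bigr),\qquad\forall f\in\test\Omega.
\end{equation*}
The candidate measure on the right is Radon on $\Omega$ because $\varphi'\circ g$ is locally bounded, $\bd g\restr\Omega$ is Radon, and $(\varphi''\circ g)\weakgrad g^2\in L^1_{\mathrm{loc}}(\Omega)$ (again since $\varphi''\circ g$ is locally bounded and $g$ is locally Lipschitz so $\weakgrad g\in L^\infty_{\mathrm{loc}}(\Omega)$ by \eqref{eq:lipweak}). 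By Definition \ref{def:distrlap} this gives $\varphi\circ g\in D(\bd,\Omega)$ and \eqref{eq:chainlap}.

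I do not expect a real obstacle here: the only delicate point is verifying that $f(\varphi'\circ g)\in\test\Omega$ so that it is an admissible test function, which follows from the compactness of $\supp(f)$ and the $C^2$ regularity of $\varphi$. Everything else is a direct bookkeeping of the algebraic rules established in Theorem \ref{thm:calculus}.
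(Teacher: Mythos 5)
Your proof is correct and follows essentially the same route as the paper: both arguments hinge on testing $\bd g\restr\Omega$ against $f\,(\varphi'\circ g)\in\test\Omega$, then unwinding via the Leibniz rule \eqref{eq:leibf}, the chain rule \eqref{eq:chainf}, and symmetry \eqref{eq:simm}. You merely run the chain of identities in the opposite direction (starting from the defining identity for $g$ rather than from $-\int\la\nabla f,\nabla(\varphi\circ g)\ra\d\mm$), and you spell out the admissibility of $f\,(\varphi'\circ g)$ and the Radon property of the candidate measure slightly more explicitly; the substance is identical.
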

\begin{proof} The right hand side of \eqref{eq:chainlap} defines a locally finite measure, so the statement makes sense. Now let $f\in\test\Omega$ and notice that being $\varphi'\circ g$ locally Lipschitz, we also have  $f\varphi'\circ g\in\test\Omega$. The conclusion comes from the calculus rules expressed in Theorem \ref{thm:calculus} noticing that:
\[
\begin{split}
-\int \la\nabla f,\nabla(\varphi\circ g)\ra\d\mm&=-\int \varphi'\circ g\la\nabla f,\nabla g\ra\d\mm\\
&=-\int\la\nabla(f\varphi'\circ g),\nabla g\ra-f\la\nabla(\varphi'\circ g),\nabla g\ra\d\mm\\
&=\int f\varphi'\circ g\,\d\bd g\restr\Omega+\int f\varphi''\circ g\weakgrad g^2\,\d\mm,
\end{split}
\]
which is the thesis.
\end{proof}
\begin{proposition}[Leibniz rule]
Let $(X,\sfd,\mm)$ be an infinitesimally Hilbertian space, $\Omega\subseteq X$  an open set  and $g_1,g_2\in D(\bd,\Omega)$. Then $g_1g_2\in D(\bd,\Omega)$ and
\begin{equation}
\label{eq:leiblap}
\bd(g_1g_2)\restr\Omega=g_1\bd g_2\restr\Omega+g_2\bd g_1\restr\Omega+2\la\nabla g_1,\nabla g_2\ra\mm\restr\Omega.
\end{equation}
\end{proposition}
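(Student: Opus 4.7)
The plan is to mimic the proof of the chain rule (Proposition \ref{prop:chainlap}): starting from a test function $f\in\test\Omega$, I rewrite $\la\nabla f,\nabla(g_1g_2)\ra$ using the calculus rules of Theorem \ref{thm:calculus} until it is expressed as a combination of integrands that can be identified with $\bd g_1\restr\Omega$, $\bd g_2\restr\Omega$ and an absolutely continuous part against $\mm$.

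First I check that the proposed right-hand side is a well-defined Radon measure on $\Omega$. Since $g_1,g_2$ are locally Lipschitz, they are locally bounded, so $g_1\bd g_2\restr\Omega$ and $g_2\bd g_1\restr\Omega$ are Radon. Moreover, by \eqref{eq:boundfg} and \eqref{eq:lipweak}, $|\la\nabla g_1,\nabla g_2\ra|\leq \weakgrad{g_1}\weakgrad{g_2}\leq \lip(g_1)\lip(g_2)$ is locally bounded on $\Omega$, so $\la\nabla g_1,\nabla g_2\ra\mm\restr\Omega$ is Radon as well. Note also that $g_1g_2$ is locally Lipschitz on $\Omega$, so the notion of $\bd(g_1g_2)\restr\Omega$ makes sense in the framework of Definition \ref{def:distrlap}.

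Next, fix $f\in\test\Omega$. By the symmetry \eqref{eq:simm} and the Leibniz rule \eqref{eq:leibf} applied in the second slot, we have
\[
\la\nabla f,\nabla(g_1g_2)\ra = g_1\la\nabla f,\nabla g_2\ra + g_2\la\nabla f,\nabla g_1\ra,\qquad\mm\text{-a.e. on }\Omega.
\]
For each term I now push the multiplication by $g_i$ into the first slot using \eqref{eq:leibf} in the opposite direction:
\[
g_i\la\nabla f,\nabla g_j\ra = \la\nabla(fg_i),\nabla g_j\ra - f\la\nabla g_i,\nabla g_j\ra,\qquad \{i,j\}=\{1,2\}.
\]
Since $g_i$ is locally Lipschitz and $f\in\test\Omega$, the products $fg_i$ again belong to $\test\Omega$, so they are admissible test functions in the definition of $\bd g_j\restr\Omega$.

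Combining and integrating against $\mm$, I obtain
\[
-\int\la\nabla f,\nabla(g_1g_2)\ra\,\d\mm = -\int\la\nabla(fg_1),\nabla g_2\ra\,\d\mm - \int\la\nabla(fg_2),\nabla g_1\ra\,\d\mm + 2\int f\la\nabla g_1,\nabla g_2\ra\,\d\mm,
\]
which, by the defining identity \eqref{eq:deflap} for $g_1,g_2\in D(\bd,\Omega)$ and the symmetry \eqref{eq:simm}, equals
\[
\int fg_1\,\d\bd g_2\restr\Omega + \int fg_2\,\d\bd g_1\restr\Omega + 2\int f\la\nabla g_1,\nabla g_2\ra\,\d\mm.
\]
This is precisely the pairing of $f$ with the measure on the right-hand side of \eqref{eq:leiblap}, proving both $g_1g_2\in D(\bd,\Omega)$ and the claimed formula. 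There is no real obstacle here: everything is a direct bookkeeping consequence of the bilinear calculus already established for $\la\nabla\cdot,\nabla\cdot\ra$; the only points requiring a moment of care are the local finiteness of the three summands forming the candidate measure and the verification that $fg_i\in\test\Omega$, both of which follow from the local Lipschitz regularity assumed in Definition \ref{def:distrlap}.
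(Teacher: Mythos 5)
Your proof is correct and follows essentially the same route as the paper: expand $\la\nabla f,\nabla(g_1 g_2)\ra$ via the Leibniz rule in the second slot, push $g_i$ into the first slot (picking up the $-f\la\nabla g_1,\nabla g_2\ra$ terms), and pair the resulting $\la\nabla(fg_i),\nabla g_j\ra$ integrands with the defining identity for $\bd g_j$. Your extra remarks on the local finiteness of the candidate measure and the fact that $fg_i\in\test\Omega$ are the same bookkeeping points the paper relies on, just spelled out in more detail.
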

\begin{proof}
The right hand side of \eqref{eq:leiblap} defines a locally finite measure, so the statement makes sense. For $f\in\test\Omega$ we have $fg_1,fg_2\in\test\Omega$, hence using the Leibniz rule \eqref{eq:leibf} and the symmetry \eqref{eq:simm}  we get
\[
\begin{split}
-\int\la\nabla f,\nabla(g_1g_2)\ra\d\mm&=-\int g_1\la\nabla f,\nabla g_2\ra+g_2\la\nabla f,\nabla g_1\ra\d\mm\\
&=-\int \la\nabla(fg_1),\nabla g_2\ra+\la \nabla(fg_2),\nabla g_1\ra-2f\la\nabla g_1,\nabla g_2\ra\d\mm\\
&=\int fg_1\,\d\bd g_2\restr\Omega+\int fg_2\,\d\bd g_1\restr\Omega+\int2f\la\nabla g_1,\nabla g_2\ra\d\mm,
\end{split}
\]
which is the thesis.
\end{proof}
We conclude with the following useful comparison property:
\begin{proposition}[Comparison]\label{prop:compar}
Let $(X,\sfd,\mm)$ be an infinitesimally Hilbertian space,  $\Omega\subseteq X$  an open set, $g:\Omega\to\R$ locally Lipschitz and assume that there exists a Radon measure $\mu$ on $\Omega$ such that
\[
-\int\la\nabla f,\nabla g\ra\d\mm\leq \int f\,\d\mu,\qquad\forall f\in\test\Omega,\ f\geq 0.
\]
Then $g\in D(\bd,\Omega)$ and $\bd g\restr\Omega\leq \mu$.
\end{proposition}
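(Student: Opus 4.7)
The plan is to show that the functional
\[
L(f) := \int f\,\d\mu + \int \la\nabla f,\nabla g\ra\,\d\mm,\qquad f\in\test\Omega,
\]
is representable by a non-negative Radon measure $\nu$ on $\Omega$; then $\mu-\nu$ is the distributional Laplacian of $g$ and the conclusion $\bd g\restr\Omega\leq\mu$ follows immediately from $\nu\geq 0$.

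First I would observe that $L$ is linear on $\test\Omega$: linearity of the first summand is obvious, while linearity of the second follows from the linearity of $\la\nabla\cdot,\nabla g\ra$ in its first argument established in \eqref{eq:linf}. By the hypothesis, $L(f)\geq 0$ whenever $f\geq 0$, so $L$ is a positive linear functional on the test class.

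Next I would upgrade positivity to local boundedness via cut-offs. For any compact $K\subset\Omega$ pick $\chi_K\in\test\Omega$ with $0\leq\chi_K\leq 1$ and $\chi_K\equiv 1$ on $K$; the finiteness of $L(\chi_K)$ is guaranteed because $\mu$ is Radon (hence finite on $\supp\chi_K$) and because on the compact set $\supp\chi_K$ the locally Lipschitz function $g$ has bounded weak upper gradient, so the Cauchy–Schwarz bound \eqref{eq:boundfg} combined with \eqref{eq:lipweak} makes $\int\la\nabla\chi_K,\nabla g\ra\,\d\mm$ finite. For any $f\in\test\Omega$ with $\supp f\subset K$, both $\|f\|_\infty\chi_K\pm f$ lie in $\test\Omega$ and are non-negative, whence $|L(f)|\leq L(\chi_K)\,\|f\|_\infty$. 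This gives the standard $C_c$-continuity estimate on each compact.

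The last step is to invoke the Riesz–Markov representation theorem. Since $\test\Omega$ is dense in $C_c(\Omega)$ in the inductive-limit topology (one can mollify by truncation and Lipschitz approximation, or simply use that any $\varphi\in C_c(\Omega)$ can be uniformly approximated on its support by Lipschitz functions supported in a slightly larger compact while preserving the cutoff estimate), $L$ extends uniquely to a positive linear functional on $C_c(\Omega)$, represented by a non-negative Radon measure $\nu$. Rewriting the definition of $L$ yields
\[
-\int\la\nabla f,\nabla g\ra\,\d\mm=\int f\,\d(\mu-\nu),\qquad\forall f\in\test\Omega,
\]
so $g\in D(\bd,\Omega)$ with $\bd g\restr\Omega=\mu-\nu\leq\mu$. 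The only genuinely non-routine step is the density/extension passage from $\test\Omega$ to $C_c(\Omega)$; everything else is bookkeeping built on the calculus rules of Theorem \ref{thm:calculus}.
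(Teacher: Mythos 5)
Your proof is correct and follows essentially the same route as the paper's: define the positive linear functional $L$, use the cut-off $\chi_K$ to deduce the sup-norm bound $|L(f)|\leq L(\chi_K)\|f\|_\infty$ for $f$ supported in $K$, extend by uniform density of Lipschitz functions to $C_c(\Omega)$, and invoke the Riesz representation theorem to obtain the non-negative measure $\nu$ with $\bd g\restr\Omega=\mu-\nu$. The only cosmetic difference is your parenthetical mention of mollification (which is not available in a bare metric measure space), but your fallback argument via Lipschitz approximation on compacts is exactly what the paper uses.
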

\begin{proof}
The map
\[
\test\Omega\ni f\qquad\mapsto\qquad L(f):= \int f\,\d\mu+\int\la\nabla f,\nabla g\ra\d\mm,
\]
is linear and satisfies $L(f)\geq 0$ for $f\geq 0$. To conclude we need to show that there exists a non-negative Radon measure $\tilde\mu$ on $\Omega$ such that $L(f)=\int f\,\d\tilde\mu$ for any $f\in \test\Omega$. 

To this aim, fix a compact set $K\subset\Omega$ and a  function $\nchi_K\in \test\Omega$ such that $0\leq\nchi_K\leq 1$ everywhere and $\nchi_K= 1$ on $K$. Let $V_K\subset \test\Omega$ be the set of Lipschitz  functions with support contained in $K$ and observe that for any  $f\in V_K$, the fact that $(\max |f|)\nchi_K-f$ is in $\test\Omega$ and non-negative yields
\[
\begin{split}
0\leq L\big((\max|f|)\nchi -f\big)=(\max|f|)L(\nchi)-L(f).
\end{split}
\]
Replacing $f$ with $-f$ we deduce
\[
|L(f)|\leq (\max|f|)\, |L(\nchi)|,
\]
i.e. $L:V_K\to\R$ is continuous w.r.t. the $\sup$ norm. Hence it can be extended to a (unique, by the density of Lipschitz functions in the uniform norm) linear bounded functional on the set $C_K\subset C(X)$ of continuous functions with support contained in $K$. Since $K$ was arbitrary, by the Riesz theorem we get that there exists a Radon  measure $\tilde\mu$ such that $L(f)=\int f\,\d\tilde\mu$ for any $f\in \test\Omega$.
It is obvious that  $\tilde\mu$ is non-negative, thus the thesis is achieved.
\end{proof}
\section{Analytic effects of the geometric assumptions}
\subsection{\underline{Things to know:}\ optimal transport and $\RCD(0,N)$ condition}\label{se:cd}
Let $(X,\sfd)$ be a proper geodesic space. By $\probt X$ we denote the space of Borel probability measures on $X$ with finite second moment and by $W_2$ the quadratic transportation distance defined on it. In this setting $W_2(\mu,\nu)$ can be defined as
\begin{equation}
\label{eq:defw2}
W_2^2(\mu,\nu):=\inf\iint_0^1|\dot\gamma_t|^2\,\d t\,\d\ppi(\gamma),
\end{equation}
the $\inf$ being taken among all plans $\ppi\in\prob{C([0,1],X)}$ such that $(\e_0)_\sharp\ppi=\mu$, $(\e_1)_\sharp\ppi=\nu$. It turns out that a minimum always exists and is concentrated on the set $\geo(X)\subset C([0,1],X)$ of constant speed minimizing geodesics on $X$, i.e. curves $\gamma$ such that $\sfd(\gamma_t,\gamma_s)=|t-s|\sfd(\gamma_0,\gamma_1)$ for every $t,s\in[0,1]$.  In the following, when speaking about geodesics  we will always refer to constant speed minimizing geodesics.

The set of minimizers for \eqref{eq:defw2} is denoted by $\gopt(\mu,\nu)$. For every $\ppi\in\gopt(\mu,\nu)$ the map $t\mapsto(\e_t)_\sharp\ppi$ is a $W_2$-geodesic connecting $\mu$ to $\nu$ and viceversa for any $(\mu_t)\subset \probt X$ geodesic with $\mu_0=\mu$ and $\mu_1=\nu$ there is $\ppi\in\gopt(\mu,\nu)$ (not necessarily unique) such that $\mu_t=(\e_t)_\sharp\ppi$ for every $t\in[0,1]$. Any such $\ppi$ is said to be a lifting of $(\mu_t)$, or to induce $(\mu_t)$.

A function $\varphi:X\to\R\cup\{-\infty\}$ not identically $-\infty$ is said $c$-concave provided there is $\psi:X\to\R\cup\{-\infty\}$ such that
\[
\varphi(x)=\inf_{y\in X}\frac{\sfd^2(x,y)}{2}-\psi(y).
\]
Given a $c$-concave function $\varphi$, its $c$-transform $\varphi^c:X\to\R\cup\{-\infty\}$ is defined by
\[
\varphi^c(y):=\inf_{x\in X}\frac{\sfd^2(x,y)}{2}-\varphi(x).
\] 
It turns out that $\varphi$ is $c$-concave if and only if  $\varphi^{cc}=\varphi$. The $c$-superdifferential $\partial^c\varphi$ of a $c$-concave function $\varphi$ is the subset of $X^2$ of those couples $(x,y)$ such that $\varphi(x)+\varphi^c(y)=\frac{\sfd^2(x,y)}2$, or equivalently the set of $(x,y)$'s such that
\[
\varphi(z)-\varphi(x)\leq \frac{\sfd^2(z,y)}{2}-\frac{\sfd^2(x,y)}{2},\qquad\forall z\in X.
\]
For $x\in X$, the set $\partial^c\varphi(x)\subset X$ is the set of those $y$'s such that $(x,y)\in\partial^c\varphi(y)$.

It can be proved that a  $\ppi\in\geo(X)$ belongs to $\gopt((\e_0)_\sharp\ppi,(\e_1)_\sharp\ppi)$ if and only if there is a $c$-concave function $\varphi$ such that $\supp((\e_0,\e_1)_\sharp\ppi)\subset\partial^c\varphi$. Any such $\varphi$ is called Kantorovich potential from $(\e_0)_\sharp\ppi$ to $(\e_1)_\sharp\ppi$ and is said to induce $\ppi$. It is then easy to check that for any Kantorovich potential $\varphi$ from $\mu$ to $\nu$, every  $\ppi\in\gopt(\mu,\nu)$ and every $t\in[0,1]$,  the function $t\varphi$ is a Kantorovich potential from $\mu$ to $(\e_t)_\sharp\ppi$, i.e. $t\varphi$ is $c$-concave and it holds
\begin{equation}
\label{eq:tkant}
\gamma\in\geo(X),\quad\gamma_1\in\partial^c\varphi(\gamma_0)\qquad\Rightarrow\qquad\gamma_t\in\partial^c(t\varphi)(\gamma_0),\qquad\forall t\in[0,1].
\end{equation}

Notice that Kantorovich potentials can be chosen to satisfy the following property, slightly stronger than $c$-concavity:
\[
\varphi(x)=\inf_{y\in\supp(\nu)}\frac{\sfd^2(x,y)}{2}-\varphi^c(y),
\]
which shows in particular that if $\supp(\nu)$ is bounded, then $\varphi$ can be chosen to be locally Lipschitz.

\bigskip

Let $\mm$ be a non-negative Radon measure on our proper geodesic metric space $(X,\sfd)$. For $N\in[1,\infty)$ we define the functional $\u_N:\probt X\to[-\infty,0]$ as
\[
\u_N(\mu):=-\int\rho^{1-\frac1N}\,\d\mm,\qquad\mu=\rho\mm+\mu^s,\ \mu^s\perp\mm,
\]
if $N>1$ and
\[
\u_1(\mu):=\mm(\{\rho>0\}),\qquad\mu=\rho\mm+\mu^s,\ \mu^s\perp\mm.
\]
Notice that if $\mu$ is concentrated on a bounded set, then $\u_N(\mu)>-\infty$ and for every $B\subset X $ Borel and bounded the restriction of $\u_N$ to the measures concentrated on $B$ is lower semicontinuous w.r.t. weak convergence.

In the limiting case $N=\infty$ we consider the relative entropy functional $\u_\infty$ defined on the space of measures with bounded support given by
\[
\u_\infty(\mu):=\left\{\begin{array}{ll}
\displaystyle{\int\rho\log\rho\,\d\mm},&\qquad\text{ if }\mu=\rho\mm,\\
+\infty,&\qquad\text{ if $\mu$ is not absolutely continuous w.r.t. $\mm$}.
\end{array}
\right.
\]

\begin{definition}[$\CD(0,N)$ and $\RCD(0,N)$ conditions]\label{def:cd}
Let $N\in[1,\infty]$. A proper geodesic metric measure space $(X,\sfd,\mm)$ is said a $\CD(0,N)$ space provided for any couple of measures $\mu_0,\mu_1\in\probt X$ with bounded support there exists a geodesic $(\mu_t)\subset\probt X$ connecting them such that
\begin{equation}
\label{eq:cd}
\u_{N'}(\mu_t)\leq(1-t)\u_{N'}(\mu_0)+t\u_{N'}(\mu_1),\qquad\forall t\in[0,1],
\end{equation}
for every $N'\in [N,\infty]$.

A $\CD(0,N)$ space which is also infinitesimally Hilbertian will be called $\RCD(0,N)$ space.
\end{definition}
Notice that the definition given in this way (i.e. with the measures $\mu_0,\mu_1$ with bounded support instead of bounded and contained in $\supp(\mm)$ as in \cite{Sturm06II}) forces the support of $\mm$ to be the whole $X$. This is a bit dangerous only when discussing stability issues in the infinite dimensional case, but in fact irrelevant for our discussion.

The restriction to proper geodesic spaces when dealing with the $\CD(0,\infty)$ condition  is not natural (see e.g. \cite{Sturm06I}, \cite{Villani09}, \cite{AmbrosioGigliSavare11}) but for our purposes it does not really matter, given that our space is $\CD(0,N)$. In this direction, notice that choosing $\mu_0=\delta_{x_0}$ and $\mu_1=\mm(B_R(x_0))^{-1}\mm\restr{B_R(x_0)}$, a direct application of inequality \eqref{eq:cd} and of Jensen's inequality yields the sharp Bishop-Gromov volume comparison estimate (\cite{Lott-Villani09}, \cite{Sturm06II}), valid on general $\CD(0,N)$ spaces:
\begin{equation}
\label{eq:BG}
\frac{\mm(B_r(x_0))}{\mm(B_R(x_0))}\geq\frac{r^N}{R^N},\qquad\forall x_0\in X,\ 0\leq r\leq R,
\end{equation}
which in particular yields that $\mm$ is doubling and henceforth gives an estimate on the total boundedness of bounded sets (see e.g. the last part of the proof of Theorem \ref{thm:dimm1}), so that we have a precise quantification of `how compact' bounded sets are.

\bigskip

An important and non-trivial fact about $\RCD(0,\infty)$ spaces is the following generalization of the celebrated Brenier-McCann theorem proved in \cite{RajalaSturm12} and \cite{Gigli12a}:
\begin{theorem}[Optimal maps in $\RCD(0,\infty)$ spaces]\label{thm:maprcd}
Let $(X,\sfd,\mm)$ be a $\RCD(0,\infty)$ space and $\mu,\nu\in\probt X$ two measures absolutely continuous w.r.t. $\mm$. Then there exists a unique $\ppi\in\gopt(\mu,\nu)$.  Moreover, such plan is induced by a map and concentrated on a set of non-branching geodesics, i.e. for every $t\in[0,1]$ there exists a Borel map ${\rm inv_\sppi\text{-}e}_t:X\to\geo(X)$ such that
\begin{equation}
\label{eq:invet}
\ppi=({\rm inv_\sppi\text{-}e}_t)_\sharp(\e_t)_\sharp\ppi.
\end{equation}
\end{theorem}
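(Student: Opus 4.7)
My plan is to combine three ingredients: (i) preservation of absolute continuity along $W_2$-geodesics, which comes from the $\CD(0,\infty)$ condition; (ii) an essentially non-branching property, which is where the full $\RCD$ (as opposed to $\CD$) structure is crucial; (iii) a measurable selection argument to invert the evaluation maps.

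\emph{Step 1 (intermediate absolute continuity).} By $\CD(0,\infty)$ there exists a $W_2$-geodesic $(\mu_t)$ from $\mu$ to $\nu$, lifted by some $\ppi\in\gopt(\mu,\nu)$, along which $\u_\infty$ is convex. Since $\mu,\nu\ll\mm$, after a standard truncation of the densities we may assume $\u_\infty(\mu),\u_\infty(\nu)<\infty$, so convexity yields $\u_\infty(\mu_t)<\infty$ and hence $\mu_t\ll\mm$ for every $t\in[0,1]$.

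\emph{Step 2 (essential non-branching).} I would argue that $\ppi$ is concentrated on a Borel set $\Gamma\subset\geo(X)$ such that any two $\gamma,\gamma'\in\Gamma$ which coincide on a nontrivial subinterval of $[0,1]$ are equal. If this failed, one could use a measurable selection to extract two sub-probabilities $\ppi_1,\ppi_2\leq c\,\ppi$ whose supports share a common restriction on $[0,1/2]$ but differ on $[1/2,1]$. Gluing the first half of $\ppi_1$ with the second half of $\ppi_2$ produces a new optimal plan whose associated geodesic $(\tilde\mu_t)$ has $\tilde\mu_{1/2}=\mu_{1/2}$ but develops two distinct outgoing branches from a common midpoint configuration; the key point is that the infinitesimal Hilbertianity of $W^{1,2}(X)$, which is what distinguishes $\RCD$ from $\CD$, rules out such Finsler-type branching via the $K$-convexity of the entropy along $W_2$-geodesics (this is the Rajala--Sturm / Gigli argument).

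\emph{Step 3 (map property and inverse of the evaluation).} Once essential non-branching is established, fix $t\in(0,1)$ and a Kantorovich potential $\varphi$ inducing $\ppi$. For any $\gamma,\gamma'\in\Gamma$ with $\gamma_t=\gamma'_t$, the concatenation $\gamma|_{[0,t]}*\gamma'|_{[t,1]}$ is another optimal geodesic, so essential non-branching forces $\gamma=\gamma'$. Hence $\e_t\restriction\Gamma$ is injective $\ppi$-a.e., and since $(\e_t)_\sharp\ppi\ll\mm$ is concentrated on a $\sigma$-compact set, a Borel measurable selection gives an inverse ${\rm inv}_{\sppi}\text{-}\e_t:X\to\geo(X)$ satisfying \eqref{eq:invet}. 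The same argument at $t=0$ produces a map $T:X\to\geo(X)$ with $\ppi=T_\sharp\mu$.

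\emph{Step 4 (uniqueness).} If $\ppi_1,\ppi_2\in\gopt(\mu,\nu)$, then $\ppi=\tfrac12(\ppi_1+\ppi_2)$ also belongs to $\gopt(\mu,\nu)$ (since $\gopt$ is convex) and is between absolutely continuous marginals, so by Steps 1--3 it is induced by a map. This forces $\ppi_1=\ppi_2$, since a plan induced by a map cannot be written nontrivially as a convex combination of two distinct plans with the same marginals.

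\emph{Main obstacle.} The decisive and delicate step is Step 2: essential non-branching genuinely requires the $\RCD$ (not just $\CD$) assumption, and in the non-smooth setting its proof relies on a somewhat subtle gluing/measurable-selection argument combined with the convexity of $\u_\infty$ to produce a contradiction. Steps 1, 3 and 4 are comparatively soft once non-branching is in hand.
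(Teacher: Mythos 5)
The paper does not prove this statement: it is presented as a ``Things to know'' fact and attributed to \cite{RajalaSturm12} and \cite{Gigli12a}, so there is no internal argument to compare against. Your outline, however, does match the route taken in those references, namely: (1) $\CD(0,\infty)$ propagates absolute continuity along Wasserstein geodesics; (2) the $\RCD$ assumption upgrades this to essential non-branching; (3) non-branching plus a selection argument gives the map property and the inverse evaluations; (4) convexity of $\gopt$ forces uniqueness. Steps 1 and 4 are fine, and Step 3's conclusion via measurable selection is standard.

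The substantive issues are in Steps 2 and 3. In Step 2 you say infinitesimal Hilbertianity ``rules out Finsler-type branching via the $K$-convexity of the entropy along $W_2$-geodesics,'' but $K$-convexity of the entropy along \emph{some} geodesic is already available in $\CD(K,\infty)$ and that is not enough. The actual mechanism is that $\RCD$ gives the $\mathrm{EVI}_K$ gradient flow of the entropy, and existence of an $\mathrm{EVI}_K$ flow forces $K$-convexity of the entropy along \emph{every} $W_2$-geodesic (the ``strong'' $\CD(K,\infty)$ condition); it is this stronger convexity that makes the Rajala--Sturm mixing/restriction argument close. Your sketch of the contradiction (gluing two sub-plans and comparing entropies) is the right mechanism, but it only works because one may test convexity along the glued, non-canonical geodesic. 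In Step 3 you declare that two geodesics $\gamma,\gamma'$ in $\Gamma$ agreeing at a single time $t$ can be treated by non-branching via the concatenation $\gamma|_{[0,t]}*\gamma'|_{[t,1]}$. Two caveats: first, the concatenation is a geodesic only because both pairs lie in the $c$-superdifferential of a common Kantorovich potential, which needs to be said; second, your notion of non-branching in Step 2 applies only to elements of $\Gamma$, and the concatenation is a priori not in $\Gamma$, so you cannot apply it directly. The cited proofs get around this by showing directly that $\e_t\restr\Gamma$ is essentially injective (i.e. what the theorem calls ``concentrated on a set of non-branching geodesics''), rather than deducing it from interval-non-branching via gluing. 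These are not fatal objections to the strategy, but they are exactly the places where the real work in \cite{RajalaSturm12} and \cite{Gigli12a} happens, so a correct proof would have to fill them in carefully.
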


\subsection{Improved geodesic regularity in the case $N<\infty$}
\begin{quote}
{From now on the space $(X,\sfd,\mm)$ will always be assumed to be a $\RCD(0,N)$ space.}
\end{quote}
Here we show how finite dimensionality can improve the result of Theorem \ref{thm:maprcd} by weakening the assumptions in `just one of $\mu,\nu$ is absolutely continuous', rather then asking for both of them to be so. The discussion is taken from \cite{GigliRajalaSturm13}.
\begin{proposition}\label{prop:varphi}
Let   $\mu_i=\rho_i\mm\in\probt X$, $i=0,1$,  two given measures,  $\ppi\in\gopt(\mu_0,\mu_1)$ the unique optimal geodesic plan from $\mu_0$ to $\mu_1$ given by Theorem \ref{thm:maprcd}  and put $\mu_t:=(\e_t)_\sharp\ppi$. Then $\mu_t\ll\mm$ for every $t\in[0,1]$ and writing $\mu_t=\rho_t\mm$  we have
\begin{equation}
\label{eq:boundpoint}
\rho_t(\gamma_t)^{-\frac1N}\geq (1-t)\rho_0(\gamma_0)^{-\frac1N}+t\rho_1(\gamma_1)^{-\frac1N},\qquad\ppi\ae \ \gamma.
\end{equation}
\end{proposition}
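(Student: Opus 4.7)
The strategy is to combine the uniqueness and map-structure of the optimal plan given by Theorem \ref{thm:maprcd} with a restriction-and-localization argument. The core observation is the \emph{restriction principle}: for any Borel $F\subset\geo(X)$ with $\ppi(F)>0$, the normalized restriction $\ppi_F:=\ppi(F)^{-1}\ppi\restr{F}$ itself belongs to $\gopt(\mu_0^F,\mu_1^F)$, where $\mu_s^F:=(\e_s)_\sharp\ppi_F$, because any strictly better plan between $\mu_0^F,\mu_1^F$ could be glued back to $\ppi\restr{F^c}$ to contradict optimality of $\ppi$. Since $\mu_0^F\ll\mu_0$ and $\mu_1^F\ll\mu_1$ are absolutely continuous, Theorem \ref{thm:maprcd} says $\ppi_F$ is the \emph{unique} optimal plan between them, so $s\mapsto\mu_s^F$ is the only $W_2$-geodesic joining $\mu_0^F$ to $\mu_1^F$. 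A preliminary restriction also lets us reduce to the case where $\mu_0,\mu_1$ have bounded support, so that $\CD(0,N)$ applies directly.

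To prove $\mu_t\ll\mm$, suppose by contradiction that $\mu_t$ has a nontrivial singular part carried by some $\mm$-null Borel set $A$, and set $F:=\e_t^{-1}(A)$, so $\ppi(F)>0$. Then $\mu_t^F$ is concentrated on $A$, hence purely singular, and therefore $\u_N(\mu_t^F)=0$. But applying the $\CD(0,N)$ inequality to the unique geodesic $s\mapsto\mu_s^F$ gives
\[
\u_N(\mu_t^F)\;\leq\;(1-t)\,\u_N(\mu_0^F)+t\,\u_N(\mu_1^F),
\]
and the right-hand side is \emph{strictly} negative, since $\mu_0^F$ and $\mu_1^F$ are absolutely continuous probability measures and hence $-\int(\rho_i^F)^{1-1/N}\,\d\mm<0$ for $i=0,1$. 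This contradiction forces $\mu_t\ll\mm$.

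Now that $\mu_s=\rho_s\mm$ for every $s\in[0,1]$, use $\mu_s=(\e_s)_\sharp\ppi$ to rewrite $\u_N(\mu_s)=-\int\rho_s(\gamma_s)^{-1/N}\,\d\ppi(\gamma)$; the $\CD(0,N)$ inequality applied to $\ppi$ thus reads
\[
\int\rho_t(\gamma_t)^{-1/N}\,\d\ppi\;\geq\;(1-t)\int\rho_0(\gamma_0)^{-1/N}\,\d\ppi+t\int\rho_1(\gamma_1)^{-1/N}\,\d\ppi.
\]
Replaying the same computation with $\ppi_F$ in place of $\ppi$ and exploiting the map structure $\ppi=({\rm inv_\sppi\text{-}e}_s)_\sharp\mu_s$ from Theorem \ref{thm:maprcd}, one checks that $\rho_s^F(\gamma_s)=\ppi(F)^{-1}\rho_s(\gamma_s)$ for $\ppi$-a.e.\ $\gamma\in F$; the common factor $\ppi(F)^{1/N}$ cancels across the two sides of the inequality, and after multiplying back by $\ppi(F)$ one obtains the \emph{localized} bound
\[
\int_F\rho_t(\gamma_t)^{-1/N}\,\d\ppi\;\geq\;(1-t)\int_F\rho_0(\gamma_0)^{-1/N}\,\d\ppi+t\int_F\rho_1(\gamma_1)^{-1/N}\,\d\ppi,
\]
valid for \emph{every} Borel $F\subset\geo(X)$ with $\ppi(F)>0$. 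Choosing $F:=\{\gamma:\rho_t(\gamma_t)^{-1/N}<(1-t)\rho_0(\gamma_0)^{-1/N}+t\rho_1(\gamma_1)^{-1/N}-\eps\}$ forces this set to have $\ppi$-measure zero, and letting $\eps\downarrow 0$ yields \eqref{eq:boundpoint}. The main obstacle is precisely the passage from the integrated inequality to its pointwise form, and this is where Theorem \ref{thm:maprcd}'s uniqueness is indispensable: without it the restrictions $\ppi_F$ would not automatically be the unique optimal plans between their marginals, and the localization step would not close.
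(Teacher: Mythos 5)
Your proof is correct and follows essentially the same route as the paper: both exploit the restriction principle (optimality is stable under restriction, and uniqueness from Theorem \ref{thm:maprcd} then identifies the restricted plan as \emph{the} optimal geodesic plan between its marginals, hence the one where $\CD(0,N)$ applies) together with the map structure \eqref{eq:invet} to convert the integrated R\'enyi inequality into the pointwise bound \eqref{eq:boundpoint}; the paper phrases the pointwise step as ``equivalent to the inequality for every bounded Borel $G\subset\geo(X)$'' while you phrase it as choosing $F$ to be the exceptional set and showing $\ppi(F)=0$, which is the same thing. The one place where you genuinely diverge is the proof of $\mu_t\ll\mm$: you restrict to $F=\e_t^{-1}(A)$ for an $\mm$-null $A$ carrying a hypothetical singular part and derive the contradiction $0=\u_N(\mu_t^F)\leq(1-t)\u_N(\mu_0^F)+t\u_N(\mu_1^F)<0$, whereas the paper restricts to $G_M$ (geodesics with endpoint densities and positions bounded by $M$) so that $\u_\infty(\mu_0^M),\u_\infty(\mu_1^M)<\infty$, uses the $\CD(0,\infty)$ part of the condition to conclude $\u_\infty((\e_t)_\sharp\ppi_M)<\infty$, hence absolute continuity, and then lets $M\to\infty$. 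Both routes work and rely on the same restriction-plus-uniqueness mechanism; note only that your strict inequality $\u_N(\mu_i^F)<0$ needs $N>1$ (for $N=1$ the functional $\u_1$ is non-negative), a cosmetic issue since you could run the same argument with $\u_{N'}$ for any $N'>1$ or with $\u_\infty$ as the paper does, both of which are available since $\CD(0,N)$ requires \eqref{eq:cd} for all $N'\in[N,\infty]$. Also be a little more explicit that the $F$ you pick in the pointwise step should be intersected with bounded sets such as $G_M$ before invoking $\CD(0,N)$, since the definition requires bounded supports; this is what your ``preliminary restriction'' should supply.
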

\begin{proof} We start proving that $\mu_t\ll\mm$ for every $t\in[0,1]$.  Fix $\bar x\in X$ and for $M>0$ let $G_M\subset \geo(X)$ be defined by 
\[
G_M:=\Big\{\gamma\in \geo(X)\ :\ \rho_0(\gamma_0),\rho_1(\gamma_1),\sfd(\gamma_0,\bar x),\sfd(\gamma_1,\bar x)\leq M\Big\}.
\]
For $M$ large enough we have $\ppi(G_M)>0$, thus the plan $\ppi_M:=c_M\ppi\restr{G_M}$ is well defined,  $c_M:=\ppi(G_M)^{-1}$ being the normalization constant. Put $\mu_0^M:=(\e_0)_\sharp\ppi_M$, $\mu_1^M:=(\e_1)_\sharp\ppi_M$ and notice that $\mu_0^M,\mu_1^M\ll\mm$ and that by construction and since  optimality is stable by restriction we get  $\ppi_M\in\gopt( \mu_0^M, \mu_1^M)$. Hence the uniqueness part of Theorem \ref{thm:maprcd}  yields that  $\ppi_M$ is the only optimal plan from $\mu_0^M$ to $\mu_1^M$. Being $(X,\sfd,\mm)$ a $\CD(0,N)$ space it is also a  $\CD(0,\infty)$ space and thus fact that $\u_\infty(\mu_0^M),\u_\infty(\mu_1^M)<\infty$ (because both have bounded densities and support) gives $\u_\infty((\e_t)_\sharp\ppi_M)<\infty$ for every $t\in[0,1]$. In particular, $(\e_t)_\sharp\ppi_M\ll\mm$ for every $t\in[0,1]$. Since $c_M^{-1}(\e_t)_\sharp\ppi_M\uparrow (\e_t)_\sharp\ppi=\mu_t$ as $M\to\infty$, we deduce $\mu_t\ll\mm$ for every $t\in[0,1]$.

We turn to \eqref{eq:boundpoint}. Notice that to prove it  is equivalent to prove that for any bounded Borel set $G\subset\geo(X)$ it holds
\begin{equation}
\label{eq:intcd}
\begin{split}
-\int_G\rho_{t}^{-\frac1N}(\gamma_{t})\,\d\ppi(\gamma)\leq& -\int_G(1-t)\rho_0(\gamma_0)^{-\frac1N}+t\rho_1(\gamma_1)^{-\frac1N}\,\d\ppi(\gamma).
\end{split}
\end{equation}
Fix such $G\subset\geo(X)$, assume without loss of generality that $\ppi(G)>0$ and define $\ppi_G:=\ppi(G)^{-1}\ppi\restr G$.  Notice that since $G$ is bounded, $(\e_t)_\sharp\ppi_G$ has bounded support for every $t\in[0,1]$. Let ${\rm inv_\sppi\text{-}e}_t:X\to\geo(X)$ be the maps given by Theorem \ref{thm:maprcd} and notice that the identity \eqref{eq:invet} ensures  that $(\e_t)_\sharp\ppi_G=\ppi(G)^{-1}\nchi_G\circ {\rm inv_\sppi\text{-}e}_t\,(\e_t)_\sharp\ppi$.  In other words,  letting $\rho_{G,t}\mm=(\e_t)_\sharp\ppi_G$ we have $\rho_{G,t}(\gamma_t)=\ppi(G)^{-1}\rho_t(\gamma_t)$ for $\ppi$-a.e. $\gamma\in G$  and therefore
\begin{equation}
\label{eq:nizza1}
-\int_G\rho_{t}^{-\frac1N}(\gamma_{t})\,\d\ppi(\gamma)=\ppi(G)^{-\frac 1N}\,\u_N((\e_t)_\sharp\ppi_G),\qquad\forall t\in[0,1].
\end{equation}
By construction, $\ppi_G$ is optimal from $\rho_{G,0}\mm$ to $\rho_{G,1}\mm$ and by the uniqueness part of Theorem \ref{thm:maprcd} we know that it is the only optimal plan, hence \eqref{eq:intcd} follows from  the $\CD(0,N)$ condition and \eqref{eq:nizza1}.
\end{proof}

\begin{lemma}\label{le:ac}
Let $\mu,\nu\in\probt X$ such that  $\mu\leq C\mm$ for some $C>0$. Then there exists a geodesic $(\mu_t)$ from $\mu$ to $\nu$ such that $\mu_t\leq\frac{C}{(1-t)^N}\mm$ for every $t\in[0,1)$.
\end{lemma}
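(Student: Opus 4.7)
The plan is to reduce to the absolutely continuous case handled by Proposition~\ref{prop:varphi} through an approximation argument, and then pass to the limit.

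\emph{Step 1: approximate $\nu$ by absolutely continuous measures.} I would construct a sequence $(\nu_n) \subset \probt{X}$ with $\nu_n \ll \mm$ and $W_2(\nu_n,\nu) \to 0$. A concrete construction: fix a Borel partition $\{B_{n,i}\}_i$ of $X$ into sets of diameter at most $1/n$, pick $x_{n,i} \in B_{n,i}$, and set
\[
\nu_n := \sum_i \nu(B_{n,i})\, \frac{\mm\restr{B_{1/n}(x_{n,i})}}{\mm(B_{1/n}(x_{n,i}))}.
\]
The Bishop-Gromov inequality \eqref{eq:BG} guarantees $\mm(B_{1/n}(x_{n,i}))>0$, and the obvious coupling between $\nu\restr{B_{n,i}}$ and its $\nu_n$-counterpart produces distances at most $2/n$, hence $W_2(\nu_n,\nu) \to 0$.

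\emph{Step 2: apply Proposition~\ref{prop:varphi}.} Since $\mu$ and each $\nu_n$ are absolutely continuous, Theorem~\ref{thm:maprcd} supplies the unique optimal plan $\ppi_n \in \gopt(\mu,\nu_n)$, and Proposition~\ref{prop:varphi} gives $(\e_t)_\sharp\ppi_n = \rho_t^n\mm$ with the pointwise bound \eqref{eq:boundpoint}. Writing $\rho_0:=\d\mu/\d\mm \leq C$ and discarding the non-negative $t$-term,
\[
\rho_t^n(\gamma_t)^{-1/N} \geq (1-t)\rho_0(\gamma_0)^{-1/N} \geq (1-t)C^{-1/N}, \qquad \ppi_n\text{-a.e.\ }\gamma,
\]
so that $(\e_t)_\sharp\ppi_n \leq \frac{C}{(1-t)^N}\mm$ for every $t \in [0,1)$.

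\emph{Step 3: pass to the narrow limit.} The family $(\ppi_n)$ is tight on $C([0,1],X)$: the marginals $\mu$ and $\nu_n$ are uniformly tight (the latter since $W_2(\nu_n,\nu)\to0$), and the uniform bound $\iint_0^1|\dot\gamma_t|^2\,\d t\,\d\ppi_n(\gamma) = W_2^2(\mu,\nu_n)$ forces all curves into a precompact equicontinuous set by properness. Extract a narrow cluster point $\ppi$; it is concentrated on the closed set $\geo(X)$, has marginals $\mu,\nu$, and by lower semicontinuity of the action combined with $W_2(\mu,\nu_n)\to W_2(\mu,\nu)$ it lies in $\gopt(\mu,\nu)$. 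Hence $\mu_t := (\e_t)_\sharp\ppi$ is a $W_2$-geodesic from $\mu$ to $\nu$. The density bound passes to the limit: for any non-negative $f \in C_c(X)$,
\[
\int f\,\d\mu_t = \lim_n \int f\,\d(\e_t)_\sharp\ppi_n \leq \frac{C}{(1-t)^N}\int f\,\d\mm,
\]
which yields $\mu_t \leq \frac{C}{(1-t)^N}\mm$.

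The only delicate point I anticipate is the bookkeeping in Step 3 --- ensuring that tightness gives a cluster point which simultaneously retains the geodesic structure (via lower semicontinuity of the kinetic action) and the density bound (via testing against $C_c(X)$ functions). Each ingredient is standard, but both need to be combined carefully.
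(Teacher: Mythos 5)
Your proposal follows the paper's proof essentially verbatim: approximate $\nu$ by absolutely continuous measures, apply Proposition~\ref{prop:varphi} together with $\mu\leq C\mm$ to get the uniform density bound $(\e_t)_\sharp\ppi_n\leq C(1-t)^{-N}\mm$, and pass to a narrow cluster point by tightness. The paper compresses the approximation and limit steps into one sentence ("a simple compactness argument based on the fact that $(X,\sfd)$ is proper"); your Steps 1 and 3 just spell out those details correctly.
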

\begin{proof}
Let $(\nu_n)\subset\probt X$ be a sequence of absolutely continuous measures weakly converging to $\nu$ and with uniformly bounded supports and $\ppi_n\in\gopt(\mu,\nu_n)$ the unique optimal plan given by Theorem \ref{thm:maprcd}. By Proposition \ref{prop:varphi} we know that $(\e_t)_\sharp\ppi_n\ll\mm$ and that denoting by $\rho_{n,t}$ its density we have
\[
\rho_{n,t}(\gamma_t)\leq \rho_{n,0}(\gamma_0)(1-t)^{-N},\qquad\ppi_n\ae\ \gamma,
\]
having dropped the term involving $\rho_{n,1}$ in the bound \eqref{eq:boundpoint}. By the assumption $\mu\leq C\mm$ we thus deduce
\[
(\e_t)_\sharp\ppi_n\leq \frac{C}{(1-t)^N}\mm,\qquad\forall t\in[0,1),\ n\in\N.
\]
This bound is independent on $n\in\N$, hence with a simple compactness argument based on the fact that $(X,\sfd)$ is proper we get the conclusion by letting $n\to\infty$.
\end{proof}

\begin{theorem}[Exponentiation and optimal maps]\label{thm:exp} Let $\varphi:X\to\R$ a locally Lipschitz $c$-concave function. Then for $\mm$-a.e. $x\in X$ there exists a unique geodesic $T(x)\in\geo(X)$ with $T(x)_0=x$ and $T(x)_1\in\partial^c\varphi(x)$. For any $t\in[0,1)$ the map $T_t:X\to X$ sending $x$ to $T(x)_t$ is Borel and satisfies
\begin{equation}
\label{eq:acexp}
(T_t)_\sharp\mm\ll\mm.
\end{equation}
In particular, for every $\mu,\nu\in\probt X$ with $\mu\ll\mm$, there exists a unique geodesic $(\mu_t)$ connecting them, a unique lifting  $\ppi\in\gopt(\mu,\nu)$ of it and this plan is induced by a map and concentrated on a set of non-branching geodesics.
\end{theorem}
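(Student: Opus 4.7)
\emph{Plan.} The approach combines Borel measurable selection, the approximation scheme of Lemma~\ref{le:ac} with the pointwise density bound of Proposition~\ref{prop:varphi}, and the uniqueness-plus-non-branching content of Theorem~\ref{thm:maprcd}.

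Since the conclusion is $\mm$-a.e.\ and $\varphi$ is locally Lipschitz, I would first localize to a bounded Borel set $B$ on which $\varphi$ is Lipschitz. A measurable selection theorem applied to the closed relation $\{(x,y):y\in\partial^c\varphi(x)\}$, combined with a Borel selector of geodesics in $\geo(X)$ between two prescribed endpoints, produces a Borel candidate map $\tilde T:A\to\geo(X)$ with $\tilde T(x)_0=x$ and $\tilde T(x)_1\in\partial^c\varphi(x)$, where $A:=\{x\in B:\partial^c\varphi(x)\neq\emptyset\}$. Setting $\mu:=\mm(A)^{-1}\mm\restr A$ and $\nu:=(\e_1)_\sharp\tilde T_\sharp\mu$, I would approximate $\nu$ by absolutely continuous measures $\nu_n$ with uniformly bounded supports. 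Theorem~\ref{thm:maprcd} provides unique optimal plans $\ppi_n\in\gopt(\mu,\nu_n)$, and Proposition~\ref{prop:varphi} yields the pointwise estimate $(\e_t)_\sharp\ppi_n\leq\mm(A)^{-1}(1-t)^{-N}\mm$ for every $t\in[0,1)$. A weak compactness argument in the spirit of Lemma~\ref{le:ac} produces a limit $\ppi^*\in\gopt(\mu,\nu)$ inheriting the estimate. Since $\ppi^*$ is concentrated on $\partial^c\varphi$ by Kantorovich duality, \eqref{eq:tkant} tells us that $t\varphi$ is a Kantorovich potential between the absolutely continuous marginals $\mu$ and $(\e_t)_\sharp\ppi^*$; hence Theorem~\ref{thm:maprcd} forces $\ppi^*\restr{[0,t]}$ to be the unique element of $\gopt(\mu,(\e_t)_\sharp\ppi^*)$ and to be induced by a Borel map. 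Letting $t\uparrow 1$ and using continuity of geodesics, $\ppi^*=T_\sharp\mu$ for a Borel $T:A\to\geo(X)$ with $T(x)_1\in\partial^c\varphi(x)$; set $T_t:=\e_t\circ T$.

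For the uniqueness of $T(x)$ for $\mm$-a.e.\ $x$, suppose another Borel selection $\tilde T'$ of geodesics into $\partial^c\varphi$ disagreed with $T$ on a Borel set $C\subset A$ with $\mm(C)>0$. Setting $\mu_C:=\mm(C)^{-1}\mm\restr C$ and $\hat\ppi:=\tfrac12(T_\sharp\mu_C+\tilde T'_\sharp\mu_C)$, one applies the same analysis to $(\mu_C,\hat\nu)$ with $\hat\nu:=(\e_1)_\sharp\hat\ppi$ and obtains a good companion plan $\hat\ppi^*\in\gopt(\mu_C,\hat\nu)$ with absolutely continuous intermediate marginals and induced by a Borel map. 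Comparing $\hat\ppi^*\restr{[0,t]}$ (unique optimal by Theorem~\ref{thm:maprcd}) with $\hat\ppi\restr{[0,t]}$ (also optimal for the same Kantorovich potential $t\varphi$, by \eqref{eq:tkant}), a convex combination argument identifies $\hat\ppi=\hat\ppi^*$, so $\hat\ppi\restr{[0,t]}$ is induced by a map. But the conditional distribution of $\hat\ppi$ at $\gamma_0=x\in C$ equals $\tfrac12\delta_{T(x)_{|[0,t]}}+\tfrac12\delta_{\tilde T'(x)_{|[0,t]}}$, which is a Dirac only if $T(x)=\tilde T'(x)$; the non-branching content of Theorem~\ref{thm:maprcd} then yields a contradiction. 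Uniqueness of $T$ follows, hence $(T_t)_\sharp\mm\ll\mm$ by exhausting $X$ with such sets $B$, and the ``In particular'' statement is obtained by choosing, for any $\ppi\in\gopt(\mu,\nu)$ with $\mu\ll\mm$, a locally Lipschitz $c$-concave Kantorovich potential $\varphi$ (via the inf-convolution formulation) and applying the first part.

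The technical heart, and main obstacle, is the identification $\hat\ppi=\hat\ppi^*$ in the last step (and the analogous implicit identification between the plan built from an arbitrary selection and the specific good plan produced by approximation): this is precisely where the intermediate-time Kantorovich structure given by \eqref{eq:tkant} combines with the uniqueness-plus-non-branching package of Theorem~\ref{thm:maprcd}.
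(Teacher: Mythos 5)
Your existence argument is sound (Borel selection plus the Lemma~\ref{le:ac}-style approximation and compactness is essentially what the paper does for \eqref{eq:acexp}), and you correctly identify the technical heart: the identification $\hat\ppi = \hat\ppi^*$. But that step, as stated, does not go through and the argument is in fact circular. The plan $\hat\ppi$ is optimal from $\mu_C$ to $\hat\nu$, concentrated on $\partial^c\varphi$, and so is $\hat\ppi^*$; but the endpoint $\hat\nu$ need not be absolutely continuous, so Theorem~\ref{thm:maprcd} does not force $\hat\ppi=\hat\ppi^*$. If you try instead to restrict to $[0,t]$ and invoke uniqueness there, note that $\hat\ppi\restr{[0,t]}$ and $\hat\ppi^*\restr{[0,t]}$ have \emph{a priori different} time-$t$ marginals: uniqueness from Theorem~\ref{thm:maprcd} applies to a fixed pair of absolutely continuous endpoints, not to a fixed Kantorovich potential. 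To make your comparison work you would need $(\e_t)_\sharp\hat\ppi\ll\mm$, i.e.\ that $(\e_t)_\sharp T_\sharp\mu_C$ and $(\e_t)_\sharp\tilde T'_\sharp\mu_C$ are each absolutely continuous --- but this is exactly the $\,(T_t)_\sharp\mm\ll\mm\,$ statement you are trying to prove. You cannot assume it.

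The paper avoids this trap by building the contradiction from two plans whose intermediate marginals are absolutely continuous \emph{by construction}. Concretely: a Fubini argument produces a time $t_0$ and a set $E_3$ of positive measure on which $G_{t_0}(x)$ has diameter $\geq a/2$; from this one extracts two Borel selectors $T,S$ at time $t_0$ whose images land in disjoint balls, giving $\nu_1:=T_\sharp\mu$, $\nu_2:=S_\sharp\mu$ mutually singular. Lemma~\ref{le:ac} is then applied \emph{separately} to $(\mu,\nu_1)$ and $(\mu,\nu_2)$, producing geodesics $(\mu^i_t)$ with $\mu^i_t\ll\mm$ for $t<1$, and one fixes $t<1$ with $\mu^1_t\neq\mu^2_t$. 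The optimal plans $\ppi^i\in\gopt(\mu,\mu^i_t)$ are distinct, both supported on $\partial^c(tt_0\varphi)$, and $\ppi:=\tfrac12(\ppi^1+\ppi^2)$ then has both marginals absolutely continuous, is optimal, but is not induced by a map --- directly contradicting Theorem~\ref{thm:maprcd}. In your version neither the Fubini/disjoint-support step nor the separate-application-of-Lemma-\ref{le:ac} step is present, and that is precisely what is needed to manufacture a genuinely absolutely continuous, genuinely non-map-induced optimal plan.

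Minor additional point: in your ``in particular'' step you should also restrict $\ppi$ so that $\nu$ has bounded support before claiming a locally Lipschitz Kantorovich potential exists (the paper does this); without boundedness the inf-convolution need not be locally Lipschitz.
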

\begin{proof} We start with existence. Let $x\in X$ and $(y_n)\subset X$ a sequence such that $\varphi(x)=\lim_{n\to\infty}\frac{\sfd^2(x,y_n)}2-\varphi^c(y_n)$. Assume by contradiction that $\lims_{n\to\infty}\sfd(x,y_n)=\infty$, let $\gamma^n:[0,\sfd(x,y_n)]\to X$ be a unit speed geodesic connecting $x$ to $y_n$ and put $z_n:=\gamma^n_1$. Then the sequence $(z_n)\subset X$ is bounded and the inequality
\[
\varphi(z_n)-\varphi(x)\leq \frac{\sfd^2(z_n,y_n)}{2}-\frac{\sfd^2(x,y_n)}2=-2\sfd(x,y_n)+1,
\]
shows that $\limi_{n\to\infty}\varphi(z_n)=-\infty$, contradicting the fact that $\varphi$ is locally Lipschitz. Hence $(y_n)\subset X$ must be bounded and a simple compactness-continuity argument shows that any limit point $y$ belongs to $\partial^c\varphi(x)$. Since $(X,\sfd)$ is geodesic and $x\in X$ was arbitrary, this is sufficient to get existence of geodesics as in the statement.

For uniqueness we argue by contradiction as well. For $x\in\Omega$ let  $G(x)\subset\geo(X)$ be the set of $\gamma$'s such that $\gamma_0=x$ and $\gamma_1\in\partial^c\varphi(x)$ and assume that there is a compact set $E_1\subset \Omega$ such that $\mm(E_1)>0$ and $\#G(x)\geq 2$ for every $x\in E_1$. 

For some $a>0$ there is a compact set $E_2\subset E_1$  with $\mm(E_2)>0$ such that ${\rm diam}G(x)\geq a$ for every $x\in E_2$. Pick such $a$ and $E_2$. For $t\in[0,1]$ put  $G_t(x):=\{\gamma_t:\gamma\in G(x)\}\subset X$ and consider the set $\mathcal B\subset E_2\times [0,1]$ of $(x,t)$'s such that   such that ${\rm diam}G_t(x)\geq\frac a2$. It is easy to check that $\mathcal B$ is compact and the continuity of geodesics grants that for any $x\in E_2$ the set of $t$'s such that $(x,t)\in \mathcal B$  has positive $\mathcal L^1$-measure. By Fubini's theorem, there is $t_0\in[0,1]$ such that the compact set $E_3\subset E_2$ of $x$'s such that ${\rm diam}G_{t_0}(x)\geq\frac a2$ has positive $\mm$-measure. Notice that necessarily $t_0>0$. With a Borel selection argument we can find a Borel map $T:E_3\to X$ such that $T(x)\in G_{t_0}(x)$ for every $x\in E_3$. Let $x_0\in X$ be such that  $T_\sharp(\mm\restr {E_3})(B_{\frac a6}(x_0))>0$ and put  $E_4:=T^{-1}(B_{\frac a6}(x_0))$, so that $\mm(E_4)>0$. By construction, the map $E_4\ni x\mapsto G_{t_0}(x)\setminus B_{\frac{a}3}(x_0)$ is Borel and has non-empty values, thus again with a Borel selection argument we can find Borel map $S:E_4\to X$ such that $S(x)\in G_{t_0}(x)\setminus B_{\frac{a}3}(x_0) $ for every $x\in A$.

Let $\mu:=\mm(E_4)^{-1}\mm\restr {E_4}$, $\nu_1:=T_\sharp\mu$ and $\nu_2:=S_\sharp\mu$. By construction $\nu_1$ and $\nu_2$ have disjoint support, and in particular $\nu_1\neq \nu_2$. Furthermore, recalling property \eqref{eq:tkant}, the function $t_0\varphi$ is a Kantorovich potential both from $\mu$ to $\nu_1$ and from $\mu$ to $\nu_2$. Apply Lemma \ref{le:ac} to both $(\mu,\nu_1)$ and $(\mu,\nu_2)$ to find geodesics $(\mu^i_t)$, $i=1,2$, from $\mu$ to $\nu_1,\nu_2$ respectively such that $\mu^i_t\ll\mm$ for every $t\in[0,1)$, $i=1,2$. By construction, for $t$ sufficiently close to 1 we have $\mu^1_t\neq \mu^2_t$. Fix such $t$, let $\ppi^i\in\gopt(\mu,\mu^i_t)$, $i=1,2$ and notice that $\ppi^1\neq \ppi^2$ and that $\supp((\e_0,\e_1)_\sharp\ppi^i)\subset\partial^c(tt_0\varphi)$, $i=1,2$. 

Thus for the plan $\ppi:=\frac12(\ppi^1+\ppi^2)$ it also holds $\supp((\e_0,\e_1)_\sharp\ppi)\subset \partial^c(tt_0\varphi)$, so that $\ppi$ is optimal as well. Moreover it satisfies $(\e_0)_\sharp\ppi,(\e_1)_\sharp\ppi\ll\mm$ and, by construction, is not induced by a map. This contradicts Theorem \ref{thm:maprcd}, concluding the proof of the first part of the statement.

For the last part, notice that if the optimal geodesic plan is not unique or not induced by a map, there must be $\ppi\in\gopt(\mu,\nu)$ which is not induced by a map. With a restriction argument we can then assume that $\mu:=(\e_0)_\sharp\ppi$, and $\nu:=(\e_1)_\sharp\ppi$ have bounded support, with $\mu\ll\mm$. But in this case there is a locally Lipschitz Kantorovich potential from $\mu$ to $\nu$ and the first part of the statement gives the conclusion. This argument shows not only uniqueness of  $\ppi$, but also that of the geodesic $(\mu_t)$.

Finally, the property \eqref{eq:acexp} is now a simple consequence of the uniqueness we just proved and Lemma \ref{le:ac}
\end{proof}
We conclude with the following result which puts in relation optimal plans and Sobolev calculus. Notice that it is in fact a restatement of the metric Brenier theorem proved in \cite{AmbrosioGigliSavare11}.
\begin{corollary}\label{cor:metrbre}
Let $\mu,\nu\in\probt X$ with bounded support, assume that $\mu\leq C\mm$ for some $C>0$, let $\ppi\in\gopt(\mu,\nu)$ be the optimal geodesic plan given by Theorem \ref{thm:exp} and let $\varphi$ be a locally Lipschitz Kantorovich potential from $\mu$ to $\nu$. 

Then   $\ppi$ represents the gradient of $-\varphi$  in the sense of Definition \ref{def:planrepr}. 
\end{corollary}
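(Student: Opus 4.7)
My plan is to verify directly the three conditions (i), (ii), (iii) of Definition \ref{def:planrepr} for the plan $\ppi$ and the function $g=-\varphi$; the key point is that the $c$-superdifferential inequality simultaneously provides the first-order lower bound on $-\varphi$ along the flow and a pointwise control on the slope of $\varphi$. Condition (i) should be routine: boundedness of $\supp((\e_t)_\sharp\ppi)$ is immediate since $\gamma_t$ stays in a fixed bounded set for every $t\in[0,1]$, and the estimate $(\e_t)_\sharp\ppi\leq \frac{C}{(1-t)^N}\mm$ coming from Lemma \ref{le:ac} gives the required uniform domination of $(\e_t)_\sharp\ppi$ by $\mm$ on, say, $t\in[0,1/2]$. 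Condition (ii) is equally direct: $\ppi$ is concentrated on constant speed geodesics, so $|\dot\gamma_t|\equiv\sfd(\gamma_0,\gamma_1)$ for a.e. $t$, and hence $\iint_0^1|\dot\gamma_t|^2\,dt\,d\ppi(\gamma)=\int\sfd^2(\gamma_0,\gamma_1)\,d\ppi=W_2^2(\mu,\nu)<\infty$.

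For condition (iii) the first step is to lower-bound the left-hand side. Since $\varphi$ induces $\ppi$, we have $\supp((\e_0,\e_1)_\sharp\ppi)\subset\partial^c\varphi$; substituting $z=\gamma_t$ in the defining inequality of the $c$-superdifferential and using the geodesic identity $\sfd(\gamma_t,\gamma_1)=(1-t)\sfd(\gamma_0,\gamma_1)$ yields, for $\ppi\ae\ \gamma$,
\[
\varphi(\gamma_0)-\varphi(\gamma_t)\;\geq\;\frac{\sfd^2(\gamma_0,\gamma_1)-\sfd^2(\gamma_t,\gamma_1)}{2}\;=\;\left(t-\tfrac{t^2}{2}\right)\sfd^2(\gamma_0,\gamma_1).
\]
Dividing by $t$, integrating against $\ppi$, and taking $\liminf$ as $t\downarrow 0$ therefore gives $\liminf_{t\downarrow 0}\int\frac{-\varphi(\gamma_t)+\varphi(\gamma_0)}{t}\,d\ppi\geq W_2^2(\mu,\nu)$. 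Constant speed also yields the exact identity $\tfrac{1}{t}\iint_0^t|\dot\gamma_s|^2\,ds\,d\ppi=W_2^2(\mu,\nu)$ for every $t>0$. Consequently (iii) reduces to the single scalar inequality $\int\weakgrad{\varphi}^2\,d\mu\leq W_2^2(\mu,\nu)$.

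To close I use the $c$-superdifferential a second time, now at the pointwise level to control the local slope. For $\mu$-a.e.\ $x$, the map furnished by Theorem \ref{thm:exp} selects a unique $y=T(x)\in\partial^c\varphi(x)$, and for any $z\in X$
\[
\varphi(z)-\varphi(x)\;\leq\;\tfrac{1}{2}\bigl(\sfd^2(z,y)-\sfd^2(x,y)\bigr)\;\leq\;\tfrac{1}{2}\sfd(x,z)\bigl(\sfd(x,y)+\sfd(z,y)\bigr),
\]
from which $\lip^+(\varphi)(x)\leq\sfd(x,T(x))$ is read off by letting $z\to x$. Combining with $\weakgrad{\varphi}\leq\lip^+(\varphi)$ from \eqref{eq:lipweak} and integrating against $\mu$ gives $\int\weakgrad{\varphi}^2\,d\mu\leq\int\sfd^2(x,T(x))\,d\mu=\int\sfd^2(\gamma_0,\gamma_1)\,d\ppi=W_2^2(\mu,\nu)$, completing the proof. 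The only non-obvious step is this last pointwise estimate, which is essentially a lightweight instance of the metric Brenier theorem of \cite{AmbrosioGigliSavare11}; everything else is bookkeeping with $c$-concavity and constant speed geodesics.
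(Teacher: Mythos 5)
Your proof is correct and follows essentially the same route as the paper's: condition (i) via Lemma \ref{le:ac}, condition (ii) via constant speed, and condition (iii) by using the $c$-superdifferential inequality twice — once with $z=\gamma_t$ to get the $\liminf$ lower bound $\geq W_2^2(\mu,\nu)$, and once with arbitrary $z\to\gamma_0$ to derive the pointwise bound $\lip^+(\varphi)(\gamma_0)\leq\sfd(\gamma_0,\gamma_1)$, then combining with $\weakgrad{\varphi}\leq\lip^+(\varphi)$.
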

\begin{proof}
It is trivial that  $\cup_{t\in[0,1]}\supp(\mu_t)$ is bounded, so the existence of $\Omega$ as in $(i)$ of Definition \ref{def:planrepr}  follows. Lemma \ref{le:ac} and the uniqueness granted by Theorem \ref{thm:exp} ensure that $(\e_t)_\sharp\ppi\leq \frac{C}{(1-t)^N}\mm$ for every $t\in[0,1)$ and so property $(i)$ in Definition \ref{def:planrepr} holds. Given that $\iint_0^1|\dot\gamma_t|^2\,\d t\,\d\ppi(\gamma)=W_2^2(\mu,\nu)<\infty$, property $(ii)$ holds as well, so we need only to check $(iii)$. By construction, we have $\gamma_1\in\partial^c\varphi(\gamma_0)$ for $\ppi$-a.e. $\gamma$, therefore for $\ppi$-a.e. $\gamma$ and every $z\in X$ we have
\[
\varphi(z)-\varphi(\gamma_0)\leq\frac{\sfd^2(z,\gamma_1)}{2}-\frac{\sfd^2(\gamma_0,\gamma_1)}{2}\leq \frac{\sfd(\gamma_0,z)}{2}(\sfd(z,\gamma_1)+\sfd(\gamma_0,\gamma_1)).
\]
Dividing by $\sfd(\gamma_0,z)$ and letting $z\to\gamma_0$ we deduce $\lip^+(\varphi)(\gamma_0)\leq \sfd(\gamma_0,\gamma_1)$, while choosing $z=\gamma_t$ after little manipulation we get
\[
\limi_{t\downarrow0}\int\!\!\frac{\varphi(\gamma_0)-\varphi(\gamma_t)}{t}\,\d\ppi(\gamma)\!\geq\!\!\int\!\!\sfd^2(\gamma_0,\gamma_1)\,\d\ppi(\gamma)\geq \frac12\!\int\!\!\big(\lip^+(\varphi)\big)^2\,\d(\e_0)_\sharp\ppi+\frac12\!\int\!\!\sfd^2(\gamma_0,\gamma_1)\,\d\ppi(\gamma).
\]
Since $\ppi$ is concentrated on $\geo(X)$ we have $\int\sfd^2(\gamma_0,\gamma_1)\,\d\ppi(\gamma)=\iint_0^1|\dot\gamma_t|^2\,\d t\,\d\ppi(\gamma)$, hence  recalling the bound \eqref{eq:lipweak} we conclude.
\end{proof}

\subsection{Laplacian comparison estimates}
In this section we prove the sharp Laplacian comparison estimate for the distance on $\RCD(0,N)$ spaces.

The idea of the proof, which relies only on the curvature-dimension condition and not, as in the smooth case, on Jacobi fields calculus or on the Bochner inequality, is the following. Fix a $c$-concave function $\varphi$, a measure $\mu=\rho\mm$ and consider the geodesic $t\mapsto\mu_t:=(T_t)_\sharp\mm$, $T_t$ being given by Theorem \ref{thm:exp}. Then combine the inequality 
\[
\frac{\u_N(\mu_t)-\u_N(\mu_0)}{t}\leq\u_N(\mu_1)-\u_N(\mu_0),
\]
which follows directly from \eqref{eq:cd}, with the bound
\[
\limi_{t\downarrow0}\frac{\u_N(\mu_t)-\u_N(\mu_0)}{t}\geq -\frac1N\int\la\nabla \rho^{1-\frac1N},\nabla\varphi\ra\d\mm,
\]
which follows from the first order differentiation formula, to obtain
\[
-\u_N(\mu_0)\geq -\frac1N\int\la\nabla \rho^{1-\frac1N},\nabla\varphi\ra\d\mm,
\]
having recalled that $\u_N(\mu_1)\leq 0$. Given that  $\u_N(\mu_0)=-\int\rho^{1-\frac1N}\,\d\mm$ and using the fact that $\rho$ was chosen independently on $\varphi$, we get the conclusion from Proposition \ref{prop:compar}.

We turn to the details.
\begin{proposition}[Lower bound on the derivative of $\u_N$]\label{prop:lowerbound}
Let $\Omega\subset X$ be a bounded open set and  $\ppi\in\prob{\geo(X)}$  an optimal geodesic plan such that:
\begin{itemize}
\item for every $t\in[0,1]$ the measure $\mu_t:=(\e_t)_\sharp\ppi$ is concentrated on $\Omega$,
\item the measure $\mu_0$ is absolutely continuous w.r.t. $\mm$ and for its density $\rho$ we have that $\rho\restr\Omega:\Omega\to\R$ is Lipschitz and bounded from below by a positive constant.
\end{itemize} 
Then we have
\begin{equation}
\label{eq:upperbound}
\limi_{t\downarrow0}\frac{\u_N(\mu_t)-\u_N(\mu_0)}t\geq-\frac1N\int_{\Omega} \la\nabla( \rho^{1-\frac1N}), \nabla \varphi\ra\d\mm,
\end{equation}
where $\varphi:X\to\R$ is any locally Lipschitz Kantorovich potential inducing $\ppi$. 
\end{proposition}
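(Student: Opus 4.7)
\emph{Plan of proof.} The idea is to compare $\rho_t^{1-1/N}$ with its first-order linearisation at $\rho$ via the concavity of $u(x):=x^{1-1/N}$, and then to invoke the horizontal--vertical principle (Theorem~\ref{thm:horver}) to identify the resulting time-derivative. Since $\rho\geq c>0$ on $\Omega$, the concavity of $u$ yields, for $\mm$-a.e.\ $z\in\Omega$,
\[
\rho_t(z)^{1-\frac1N}\leq \rho(z)^{1-\frac1N}+\Big(1-\tfrac1N\Big)\rho(z)^{-\frac1N}\big(\rho_t(z)-\rho(z)\big),
\]
while both sides vanish $\mm$-a.e.\ off $\Omega$ because $\mu_t$ is concentrated on $\Omega$. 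Integrating against $\mm$ and using the change of variable $\int\psi\,\d\mu_s=\int\psi(\gamma_s)\,\d\ppi(\gamma)$ for $s=0,t$ rearranges to
\[
\frac{\u_N(\mu_t)-\u_N(\mu_0)}{t}\geq -\Big(1-\tfrac1N\Big)\int\frac{\rho^{-\frac1N}(\gamma_t)-\rho^{-\frac1N}(\gamma_0)}{t}\,\d\ppi(\gamma).
\]

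I would then take $\liminf_{t\downarrow 0}$ and evaluate the limit on the right by Theorem~\ref{thm:horver}. By Corollary~\ref{cor:metrbre} the plan $\ppi$ represents the gradient of $-\varphi$ in the sense of Definition~\ref{def:planrepr}. Since $\rho^{-1/N}$ is only a priori Lipschitz on $\Omega$, I would first pick a cutoff $\chi\in\test X$ equal to $1$ on a neighbourhood of $\bigcup_{t\in[0,1]}\supp((\e_t)_\sharp\ppi)$ and replace $\rho^{-1/N}$ by $f:=\chi\,\rho^{-1/N}$, which is Lipschitz with compact support (hence lies in $\s^2(X)$) and coincides with $\rho^{-1/N}$ on every $\ppi$-a.e.\ geodesic and on $\supp(\mu_0)$; the locality~\eqref{eq:local} then ensures that neither the pointwise integrand above nor $\la\nabla f,\nabla\varphi\ra$ is affected by the cutoff. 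Theorem~\ref{thm:horver} applied to $f$ gives
\[
\lim_{t\downarrow 0}\int\frac{\rho^{-\frac1N}(\gamma_t)-\rho^{-\frac1N}(\gamma_0)}{t}\,\d\ppi=-\int\la\nabla\rho^{-\frac1N},\nabla\varphi\ra\,\d\mu_0=-\int\rho\,\la\nabla\rho^{-\frac1N},\nabla\varphi\ra\,\d\mm,
\]
using the linearity of $\la\nabla\cdot,\nabla\cdot\ra$ in the second argument to absorb the sign, and $(\e_0)_\sharp\ppi=\rho\mm$. Combining, one arrives at
\[
\liminf_{t\downarrow 0}\frac{\u_N(\mu_t)-\u_N(\mu_0)}{t}\geq\Big(1-\tfrac1N\Big)\int_\Omega\rho\,\la\nabla\rho^{-\frac1N},\nabla\varphi\ra\,\d\mm.
\]

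The last step is purely algebraic. Two applications of the chain rule~\eqref{eq:chainf} give, $\mm$-a.e.\ on $\Omega$,
\[
\la\nabla\rho^{-\frac1N},\nabla\varphi\ra=-\tfrac1N\,\rho^{-\frac1N-1}\la\nabla\rho,\nabla\varphi\ra,\qquad \la\nabla\rho^{1-\frac1N},\nabla\varphi\ra=\Big(1-\tfrac1N\Big)\rho^{-\frac1N}\la\nabla\rho,\nabla\varphi\ra,
\]
from which the identity $(1-\tfrac1N)\,\rho\,\la\nabla\rho^{-1/N},\nabla\varphi\ra=-\tfrac1N\la\nabla\rho^{1-1/N},\nabla\varphi\ra$ is immediate, and integrating produces the claim. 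The most delicate point I anticipate is the legitimacy of the passage to the limit: what makes it work is that Corollary~\ref{cor:metrbre} supplies an explicit plan representing the gradient of $-\varphi$ with $(\e_t)_\sharp\ppi\leq C(1-t)^{-N}\mm$ (Lemma~\ref{le:ac}) and uniformly bounded supports, so that Theorem~\ref{thm:horver} applies directly to the Lipschitz-with-compact-support function $f$, bypassing any ad hoc dominated convergence argument on the difference quotient. The case $N=1$ uses a different definition of $\u_1$ and would need a separate treatment, but it is irrelevant for the Laplacian comparison outlined above.
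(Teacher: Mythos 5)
Your proof is correct and follows essentially the same route as the paper: linearise $\u_N$ using the concavity of $z\mapsto z^{1-1/N}$ (which is just the convexity of $u_N$ used in the paper), invoke Corollary~\ref{cor:metrbre} to see that $\ppi$ represents $\nabla(-\varphi)$, apply the horizontal--vertical derivative formula of Theorem~\ref{thm:horver}, and finish with the chain rule. Your explicit cutoff to place $\rho^{-1/N}$ in $\s^2_{\rm loc}(X)$ before invoking Theorem~\ref{thm:horver} makes precise a step the paper carries out by simply noting $u_N'\circ\rho\in\s^2(\Omega)$; just be aware that this cutoff requires $\bigcup_t\supp(\mu_t)$ to be compactly contained in $\Omega$, a point the paper equally leaves implicit.
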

\begin{proof}
Notice that since $\rho,\rho^{-1}$ are  Lipschitz and bounded on $\Omega$, the function $ \rho^{1-\frac1N}$ is Lipschitz and bounded on $\Omega$ as well, in particular the right hand side of \eqref{eq:upperbound} is well defined and the statement makes sense. For every $\nu\in\probt{X}$ concentrated on $\Omega$ and absolutely continuous w.r.t. $\mm$, the convexity of $u_N(z)=-z^{1-\frac1N}$ gives $\u_N(\nu)-\u_N(\mu_0)\geq \int_{\Omega} u_N'(\rho)(\frac{\d\nu}{\d\mm}-\rho)\,\d\mm$. Then a simple approximation argument based on the continuity of $\rho$ gives
\[
\u_N(\nu)-\u_N(\mu_0)\geq \int_{\Omega} u_N'(\rho)\,\d\nu-\int_{\Omega} u_N'(\rho)\,\d\mu,\qquad\forall\nu\in\probt X\ \text{concentrated on }\Omega.
\]
Plugging $\nu:=\mu_t$, dividing by $t$ and letting $t\downarrow0$ we get
\begin{equation}
\label{eq:ober}
\limi_{t\downarrow 0}\frac{\u_N(\mu_t)-\u_N(\mu_0)}t\geq\limi_{t\downarrow0}\int\frac{u_N'(\rho)\circ\e_t-u_N'(\rho)\circ\e_0}t\,\d\ppi.
\end{equation}
Now recall that by Corollary \ref{cor:metrbre} the plan $\ppi$ represents $\nabla(-\varphi)$ and that by the assumptions on $\rho$ we have $u_N'\circ\rho\in\s^2(\Omega)$. Thus by the first order differentiation formula given in Theorem \ref{thm:horver} we can compute the right hand side of \eqref{eq:ober} and get
\[
\limi_{t\downarrow 0}\frac{\u_N((\e_t)_\sharp\ppi)-\u_N((\e_0)_\sharp\ppi)}t\geq-\int_\Omega\la\nabla( u_N'\circ\rho),\nabla \varphi\ra\rho\,\d\mm.
\]
To conclude, notice that $u_N'(z)=(-1+\frac1N)z^{-\frac1N}$ and apply twice the chain rule \eqref{eq:chainf}:
\[
\begin{split}
\Big(\frac1N-1\Big) \int_{\Omega} \la\nabla ( \rho^{-\frac1N}), \nabla \varphi\ra \rho\,\d\mm&=\Big(\frac1N-\frac1{N^2}\Big)\int_{\Omega} \rho^{-\frac1N} \la\nabla \rho, \nabla \varphi\ra\d\mm\\
&=\frac1N\int_{\Omega} \la\nabla ( \rho^{1-\frac1N}), \nabla \varphi \ra\d\mm.
\end{split}
\]
\end{proof}

\begin{lemma}\label{le:tecn}
Let $\varphi$ be a locally Lipschitz Kantorovich potential and $\Omega\subset X$ an open bounded set. Then there exists another open bounded set $\tilde\Omega$ and another locally Lipschitz Kantorovich potential $\tilde\varphi$ such that the following holds:
\begin{itemize}
\item[i)] $\tilde\varphi=\varphi$ on $\Omega$,
\item[ii)] for every $x\in\Omega$ and $y\in\partial^c\varphi(x)$ it holds $y\in\partial^c\tilde\varphi(x)$,
\item[iii)] for every $x\in\tilde\Omega$ the set $\partial^c\tilde\varphi(x)$ is non-empty and for every geodesic $\gamma$ such that $\gamma_0=x$ and $\gamma_1\in\partial^c\tilde\varphi(x)$ it holds $\gamma_t\in\tilde\Omega$ for every $t\in[0,1]$. 
\end{itemize}
\end{lemma}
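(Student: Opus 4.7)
The plan is to obtain $\tilde\varphi$ by truncating the $c$-transform representation of $\varphi$, using the fact that local Lipschitzness forces $\partial^c\varphi$ to be uniformly bounded on compact sets. First I would note that, since $\varphi$ is locally Lipschitz and $\bar\Omega$ is compact, $\varphi$ is $L$-Lipschitz on some bounded neighborhood of $\bar\Omega$; combined with the standard estimate $\sfd(x,y)\leq\lip(\varphi)(x)$ valid for every $y\in\partial^c\varphi(x)$ (obtained by plugging a point on the geodesic from $x$ to $y$ into the $c$-concavity inequality and letting the parameter go to $0$), this ensures $\bigcup_{x\in\bar\Omega}\partial^c\varphi(x)$ is bounded. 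Fix $x_0\in\Omega$ and pick $R>0$ with $\bar\Omega\cup\bigcup_{x\in\bar\Omega}\partial^c\varphi(x)\subset B_R(x_0)$, and define
\[
\tilde\varphi(x):=\inf_{y\in\bar B_R(x_0)}\Big[\frac{\sfd^2(x,y)}{2}-\varphi^c(y)\Big].
\]
This is $c$-concave (being a $c$-transform) and locally Lipschitz since the infimum runs over a compact set on which $\varphi^c$ is bounded.

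Properties (i) and (ii) come essentially for free. For (i), the global infimum defining $\varphi(x)=\inf_{y\in X}[\frac{\sfd^2(x,y)}{2}-\varphi^c(y)]$ is already attained at any $y\in\partial^c\varphi(x)\subset\bar B_R(x_0)$ for $x\in\Omega$, so restricting the infimum does not change its value. For (ii), the global inequality $\tilde\varphi\geq\varphi$ gives $\tilde\varphi^c\leq\varphi^c$, while for $y\in\bar B_R(x_0)$ the elementary bound $\tilde\varphi(z)\leq\frac{\sfd^2(z,y)}{2}-\varphi^c(y)$ valid for every $z\in X$ yields $\varphi^c(y)\leq\tilde\varphi^c(y)$ upon taking the infimum in $z$; hence $\tilde\varphi^c=\varphi^c$ on $\bar B_R(x_0)$, so that $\tilde\varphi(x)+\tilde\varphi^c(y)=\varphi(x)+\varphi^c(y)=\frac{\sfd^2(x,y)}{2}$ for $x\in\Omega$, $y\in\partial^c\varphi(x)$.

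The real work is (iii). I would first prove the key inclusion $\partial^c\tilde\varphi(x)\subset\bar B_R(x_0)$ for every $x\in X$: if $y\notin\bar B_R(x_0)$, then using the lower bound $\tilde\varphi(z)\geq\frac{\sfd(z,\bar B_R(x_0))^2}{2}-C$ with $C:=\sup_{\bar B_R(x_0)}|\varphi^c|$, and choosing test points $z_n$ along a geodesic ray starting in $\bar B_R(x_0)$, passing through $y$ and extending beyond (possible using properness together with non-compactness of $X$; in the trivial case that $X$ is compact one just sets $\tilde\Omega=X$), one gets $\frac{\sfd^2(z_n,y)}{2}-\tilde\varphi(z_n)\to-\infty$, whence $\tilde\varphi^c(y)=-\infty$ and $y$ cannot lie in $\partial^c\tilde\varphi(x)$. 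Granted this, I would choose $\tilde\Omega$ as a bounded open set containing $\bar\Omega\cup\bar B_R(x_0)$ together with all geodesic segments emanating from its own points toward $\bar B_R(x_0)$; the semi-perimeter bound $\sfd(\gamma_t,x_0)\leq\tfrac12(\sfd(\gamma_0,x_0)+\sfd(\gamma_1,x_0)+\sfd(\gamma_0,\gamma_1))$, together with the uniform inclusion just proved, keeps such a `geodesic hull' bounded.

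The main obstacle I expect is precisely this last construction of $\tilde\Omega$: in a general metric measure space open balls are not geodesically convex, so $\tilde\Omega$ cannot simply be a ball around $x_0$, and one must build it as a carefully chosen union of $\bar\Omega$ with geodesic segments reaching into $\bar B_R(x_0)$. Ensuring the resulting set is bounded requires exploiting both the uniform control $\partial^c\tilde\varphi\subset\bar B_R(x_0)$ and the geodesic structure available in the $\RCD(0,N)$ setting, rather than attempting to close a naive iteration on balls, which would grow indefinitely.
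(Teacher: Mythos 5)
Your construction of $\tilde\varphi$ is essentially the paper's (the paper restricts the infimum to the set $B:=\bigcup_{x\in\Omega}\partial^c\varphi(x)$ rather than to a ball containing it, but this is cosmetic), and your arguments for (i) and (ii) are correct. The problem is (iii), where your proposed construction of $\tilde\Omega$ contains a genuine gap that you half-acknowledge but do not resolve.

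The ``geodesic hull'' you describe cannot be bounded by the semi-perimeter estimate you invoke. That estimate shows that if $\gamma_0\in B_{R'}(x_0)$ and $\gamma_1\in\bar B_R(x_0)$ with $R\le R'$, then $\sfd(\gamma_t,x_0)\le R'+R$, which is strictly larger than $R'$. So adjoining geodesic segments from points of a candidate $\tilde\Omega$ toward $\bar B_R(x_0)$ strictly enlarges the outer radius at every stage; iterating $R'\mapsto R'+R$ diverges, and there is no reason for the process to stabilize at a bounded set. In a general $\RCD(0,N)$ space metric balls are not geodesically convex, so no ball can serve as a fixed point either. The auxiliary inclusion $\partial^c\tilde\varphi(x)\subset\bar B_R(x_0)$ you set out to prove, besides resting on the existence of geodesic rays through arbitrary points (which properness alone does not guarantee), does not save the construction: even with that inclusion in hand, you are still trying to make a set $\tilde\Omega$ absorb all geodesics from its own points into $\bar B_R(x_0)$, and the argument above shows this is incompatible with boundedness.

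The fix, which is the paper's actual route, is to replace the set-theoretic ``hull'' by a functional sublevel set. Put $s:=\sup_\Omega\varphi$ and $\tilde\Omega:=\{\tilde\varphi<s+1\}$. Since $\tilde\varphi(x)\ge\frac12\sfd(x,B)^2-\sup_B\varphi^c$ grows quadratically as $x\to\infty$, this $\tilde\Omega$ is automatically bounded, and it contains $\Omega$ because $\tilde\varphi=\varphi$ there. The geodesic absorption is then immediate from $c$-concavity: if $x\in\tilde\Omega$ and $y\in\partial^c\tilde\varphi(x)$, the defining inequality $\tilde\varphi(z)-\tilde\varphi(x)\le\frac12\sfd^2(z,y)-\frac12\sfd^2(x,y)$ shows $\tilde\varphi(z)\le\tilde\varphi(x)<s+1$ as soon as $\sfd(z,y)\le\sfd(x,y)$, which holds for every $z=\gamma_t$ on a geodesic from $x$ to $y$. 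This monotonicity of $\tilde\varphi$ toward its own $c$-superdifferential is what makes the sublevel set do the job that no ball or hull can.
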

\begin{proof}
Arguing as in the beginning of the proof of Theorem \ref{thm:exp} we see that  the set 
\[
B:=\{y\in X\ :\ y\in\partial^c\varphi(x)\ \text{ for some $x\in\Omega$}\},
\]
is bounded. Define $\tilde\varphi:X\to \R$ by
\[
\tilde\varphi(x):=\inf_{y\in B}\frac{\sfd^2(x,y)}{2}-\varphi^c(y),
\]
and notice that by construction $\tilde\varphi$ is a Kantorovich potential satisfying $(i)$ and $(ii)$ of the statement. Let $s:=\sup_\Omega\varphi$ and define 
\[
\tilde\Omega:=\{\tilde\varphi<s+1\}.
\]
Obviously $\tilde\Omega$ is open, bounded and contains $\Omega$. Now let $x\in\tilde\Omega$ and $y\in\partial^c\tilde\varphi(x)$. The inequality
\[
\tilde\varphi(z)-\tilde\varphi(x)\leq \frac{\sfd^2(z,y)}{2}-\frac{\sfd^2(x,y)}2,
\]
shows that if $\sfd(z,y)\leq \sfd(x,y)$, then $\tilde\varphi(z)\leq\tilde\varphi(x)$ and thus  $z\in\tilde\Omega$. This applies in particular to the choice $z=\gamma_t$, where $\gamma\in\geo(X)$ is a geodesic from $x$ to $y$, hence $(iii)$ is fulfilled as well.
\end{proof}
\begin{proposition}[Key inequality]\label{thm:enin}
Let $\mu,\nu\in\probt X$ be two measures with bounded support, $\varphi$ a locally Lipschitz Kantorovich potential from $\mu$ to $\nu$ and assume that $\mu\ll\mm$ with density $\rho$ such that $\rho^{1-\frac1N}$ is Lipschitz. 

Then
\[
\u_N(\nu)-\u_N(\mu)\geq -\frac1N\int \la\nabla(\rho^{1-\frac1N}),\nabla\varphi\ra\,\d\mm.
\]
\end{proposition}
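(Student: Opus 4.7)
The approach, already sketched by the author just before the statement, is to combine the $\CD(0,N)$ convexity of $\u_N$ along the optimal geodesic from $\mu$ to $\nu$ with the lower bound on the initial slope of $\u_N$ supplied by Proposition \ref{prop:lowerbound}, preceded by an approximation step needed because $\rho$ may vanish (we only know $\rho^{1-\frac1N}$ is Lipschitz).

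First I apply Lemma \ref{le:tecn} to $\varphi$ and a bounded open set containing $\supp(\mu)\cup\supp(\nu)$, obtaining an enlarged bounded open set $\tilde\Omega$ and a locally Lipschitz Kantorovich potential $\tilde\varphi$ that coincides with $\varphi$ on $\supp(\mu)$ and has the property that the exponential geodesics $T_t$ of Theorem \ref{thm:exp} associated to $\tilde\varphi$, once started in $\tilde\Omega$, never leave $\tilde\Omega$. Since $\rho^{1-\frac1N}$ is supported in $\overline{\supp(\mu)}\subset\tilde\Omega$, locality \eqref{eq:local} gives $\int\la\nabla\rho^{1-\frac1N},\nabla\varphi\ra\,\d\mm=\int\la\nabla\rho^{1-\frac1N},\nabla\tilde\varphi\ra\,\d\mm$, so it suffices to prove the inequality with $\tilde\varphi$ in place of $\varphi$.

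Next I regularize by setting
\[
\rho_\eps\;:=\;c_\eps\,\bigl(\rho^{1-\frac1N}+\eps\bigr)^{N/(N-1)}\mathbf{1}_{\tilde\Omega},\qquad\mu_\eps:=\rho_\eps\mm,
\]
with $c_\eps$ the normalization constant. On $\tilde\Omega$ the density $\rho_\eps$ is Lipschitz and bounded below by $c_\eps\eps^{N/(N-1)}>0$, so by Theorem \ref{thm:exp} the plan $\ppi_\eps:=(T)_\sharp\mu_\eps$ is the unique optimal geodesic plan from $\mu_\eps$ to $\nu_\eps:=(T_1)_\sharp\mu_\eps$, and by Lemma \ref{le:tecn}(iii) its marginals $\mu_{\eps,t}:=(\e_t)_\sharp\ppi_\eps$ remain in $\tilde\Omega$. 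Proposition \ref{prop:lowerbound} then yields
\[
\liminf_{t\downarrow 0}\frac{\u_N(\mu_{\eps,t})-\u_N(\mu_\eps)}{t}\;\geq\;-\frac1N\int\la\nabla\rho_\eps^{1-\frac1N},\nabla\tilde\varphi\ra\,\d\mm,
\]
while $\CD(0,N)$ convexity combined with $\u_N(\nu_\eps)\leq 0$ (valid for $N>1$ since $u_N\leq 0$) gives $\u_N(\mu_{\eps,t})\leq(1-t)\u_N(\mu_\eps)$, hence $\frac{\u_N(\mu_{\eps,t})-\u_N(\mu_\eps)}{t}\leq-\u_N(\mu_\eps)$. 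Comparing the two bounds produces
\[
-\u_N(\mu_\eps)\;\geq\;-\frac1N\int\la\nabla\rho_\eps^{1-\frac1N},\nabla\tilde\varphi\ra\,\d\mm.
\]

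Finally I send $\eps\downarrow 0$. On $\tilde\Omega$ one has $\rho_\eps^{1-\frac1N}=c_\eps^{1-1/N}(\rho^{1-\frac1N}+\eps)$, and dominated convergence gives $c_\eps\to 1$, whence both $\rho_\eps^{1-\frac1N}\to\rho^{1-\frac1N}$ and $\nabla\rho_\eps^{1-\frac1N}\to\nabla\rho^{1-\frac1N}$ uniformly on $\tilde\Omega$. This entails $\u_N(\mu_\eps)\to\u_N(\mu)$ and the right-hand side converges to $-\frac1N\int\la\nabla\rho^{1-\frac1N},\nabla\tilde\varphi\ra\,\d\mm=-\frac1N\int\la\nabla\rho^{1-\frac1N},\nabla\varphi\ra\,\d\mm$, so in the limit $-\u_N(\mu)\geq -\frac1N\int\la\nabla\rho^{1-\frac1N},\nabla\varphi\ra\,\d\mm$; since $\u_N(\nu)\leq 0$ the conclusion follows, $\u_N(\nu)-\u_N(\mu)\geq-\u_N(\mu)\geq-\frac1N\int\la\nabla\rho^{1-\frac1N},\nabla\varphi\ra\,\d\mm$. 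The main obstacle is engineering the approximation, specifically arranging that $\rho_\eps$ is Lipschitz and bounded below on the whole region $\tilde\Omega$ containing the transport; the trick is to let $\rho_\eps$ be discontinuous across $\partial\tilde\Omega$, which is harmless because Proposition \ref{prop:lowerbound} only requires local regularity on $\tilde\Omega$ (and the integrand on the right is supported there anyway).
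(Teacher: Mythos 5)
Your overall strategy matches the paper's: reduce to $\tilde\varphi$ and $\tilde\Omega$ via Lemma \ref{le:tecn}, regularize $\rho$ to $\rho_\eps$ so Proposition \ref{prop:lowerbound} applies, compare $\CD(0,N)$ convexity of $\u_N$ along the (unique) geodesic $t\mapsto\mu_{\eps,t}$ with the lower bound on its initial slope, and let $\eps\downarrow 0$. The approximation is set up correctly.

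There is, however, a genuine error in the final step. You weaken the $\CD(0,N)$ inequality to $\u_N(\mu_{\eps,t})\leq(1-t)\u_N(\mu_\eps)$ by discarding $t\u_N(\nu_\eps)\leq 0$, so you only derive
\[
-\u_N(\mu_\eps)\geq-\frac1N\int\la\nabla\rho_\eps^{1-\frac1N},\nabla\tilde\varphi\ra\,\d\mm,
\]
and in the limit $-\u_N(\mu)\geq-\frac1N\int\la\nabla\rho^{1-\frac1N},\nabla\varphi\ra\,\d\mm$. You then try to upgrade this to the statement by writing $\u_N(\nu)-\u_N(\mu)\geq-\u_N(\mu)$, justified by ``$\u_N(\nu)\leq 0$''. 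But $\u_N(\nu)\leq 0$ gives exactly the \emph{opposite} inequality, $\u_N(\nu)-\u_N(\mu)\leq-\u_N(\mu)$. What you proved is strictly weaker than the claim whenever $\u_N(\nu)<0$, and that lost strength is exactly what later applications rely on: e.g.\ in Theorem \ref{thm:gfpresmeas} the proposition is applied with $\varphi=t\b$ and $\bd\b=0$ makes the right-hand side vanish, so the conclusion $\u_N(\nu)\geq\u_N(\mu)$ needs the $\u_N(\nu)$ term to be present.

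The repair is to keep $t\u_N(\nu_\eps)$: divide the full convexity inequality by $t$ to get $\frac{\u_N(\mu_{\eps,t})-\u_N(\mu_\eps)}{t}\leq\u_N(\nu_\eps)-\u_N(\mu_\eps)$, and compare with Proposition \ref{prop:lowerbound} to obtain $\u_N(\nu_\eps)-\u_N(\mu_\eps)\geq-\frac1N\int\la\nabla\rho_\eps^{1-\frac1N},\nabla\tilde\varphi\ra\,\d\mm$. This then requires one further ingredient you do not currently address: $\u_N(\nu_\eps)\to\u_N(\nu)$ as $\eps\downarrow0$. As in the paper, this follows because $\mu_\eps\to\mu$ in total variation, hence (since the optimal map $T_1$ induced by $\tilde\varphi$ is common to all these geodesics by uniqueness in Theorem \ref{thm:exp}) $\nu_\eps=(T_1)_\sharp\mu_\eps\to\nu$ in total variation as well, and the sublinearity of $u_N(z)=-z^{1-\frac1N}$ together with the uniform bound on the supports gives convergence of $\u_N$ along this sequence.
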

\begin{proof} Let $\Omega\subset X$ be an open bounded set containing $\supp(\mu)$ and use Lemma \ref{le:tecn} above to find $\tilde\varphi$ and $\tilde \Omega$ fulfilling $(i),(ii),(iii)$ of the statement. For $\eps>0$ define $\rho_\eps:X\to\R^+$ as $0$ on $X\setminus\tilde\Omega$ and as $c_\eps(\eps+\rho^{1-\frac1N})^{\frac N{N-1}}$ on $\tilde\Omega$, $c_\eps\uparrow1$ being chosen so that $\rho_\eps$ is a probability density. Let $T_t:X\to X$, $t\in[0,1]$, be the optimal maps induced by $\tilde\varphi$ as in Theorem \ref{thm:exp} and put $\mu_\eps:=\rho_\eps\mm$ and $\mu_{t,\eps}:=(T_t)_\sharp\mu_\eps$. Notice that by $(iii)$ of Lemma \ref{le:tecn} we know that  $\mu_{t,\eps}$ is concentrated on $\tilde\Omega$ for every $\eps>0$, $t\in[0,1]$ and that by $(i),(ii)$ of Lemma \ref{le:tecn} and the uniqueness given by Theorem \ref{thm:exp} we have $(T_1)_\sharp\mu=\nu$.

By construction we know that $\mu_\eps\to\mu$ as $\eps\downarrow0$ in the total variation distance which in particular implies that $\mu_{1,\eps}\to\nu$ as $\eps\downarrow0$ in the total variation distance as well. Using the sublinearity of $u_N(z)=-z^{1-\frac1N}$ and the fact that all the considered measures are concentrated on the bounded set $\tilde\Omega$, it is then immediate to see that
\begin{equation}
\label{eq:apprun}
\u_N(\mu_{1,\eps})-\u_N(\mu_\eps)\quad\to\quad\u_N(\nu)-\u_N(\mu),\qquad as\ \eps\downarrow0.
\end{equation}
For given $\eps>0$, the assumptions of Proposition \ref{prop:lowerbound} are fulfilled with $\mu_{t,\eps}:=(T_t)_\sharp\mu_\eps$ in place of $\mu_t$ and $\tilde\Omega$ in place of $\Omega$. Thus recalling the definition of $\rho_\eps$ we have
\begin{equation}
\label{eq:treno1}
\limi_{t\downarrow0}\frac{\u_N(\mu_{t,\eps})-\u_N(\mu_\eps)}t\geq-\frac{c_\eps}N\int_{\tilde\Omega} \la\nabla( \rho^{1-\frac1N}), \nabla \tilde\varphi\ra\d\mm=-\frac{c_\eps}N\int_{\Omega} \la\nabla( \rho^{1-\frac1N}), \nabla \varphi\ra\d\mm,
\end{equation}
where in the equality we used the fact that $\rho$ is concentrated on $\Omega$, the locality of the object $\la\nabla f,\nabla g\ra$ and the fact that $\tilde\varphi=\varphi$ on $\Omega$.

Now observe that the curve $t\mapsto\mu_{t,\eps}$ is a geodesic from $\mu_\eps$ to $\nu_\eps$ and that by Theorem \ref{thm:exp} it is the only one. Hence the $\CD(0,N)$ condition \eqref{eq:cd} yields
\[
\u_N(\mu_{t,\eps})\leq(1-t)\u_N(\mu_\eps)+t\u_N(\nu_\eps),\qquad\forall t\in[0,1],
\]
and thus
\[
\frac{\u_N(\mu_{t,\eps})-\u_N(\mu_\eps)}t\leq \u_N(\nu_\eps)-\u_N(\mu_\eps),\qquad\forall t\in(0,1].
\]
This bound, \eqref{eq:treno1} and \eqref{eq:apprun} yield the thesis.
\end{proof}

\begin{theorem}[Laplacian comparison]\label{cor:laplcomp}
Let $\varphi:X\to\R$ be a locally Lipschitz $c$-concave function.

Then $\varphi\in D(\bd,X)$ and
\begin{equation}
\label{eq:lapsq}
\bd\varphi\leq N\mm.
\end{equation}
\end{theorem}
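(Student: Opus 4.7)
My plan is to reduce the conclusion to an integral inequality through the comparison criterion Proposition~\ref{prop:compar}: it is enough to show
$$-\int\la\nabla f,\nabla\varphi\ra\,\d\mm\ \leq\ N\int f\,\d\mm \qquad \forall\, f\in\test X,\ f\geq 0.$$
Fix such an $f$, which we may assume is not identically zero. I set $Z:=\int f^{N/(N-1)}\,\d\mm\in(0,\infty)$ (finite because $f$ is bounded with bounded support), $\rho:=f^{N/(N-1)}/Z$, and $\mu:=\rho\mm$. Then $\mu$ is a probability measure supported on the compact set $\supp f$, and crucially $\rho^{1-\frac1N}=Z^{-(1-\frac1N)}f$ is Lipschitz, which is exactly the density hypothesis needed to invoke the Key inequality Proposition~\ref{thm:enin}.

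Next I would transport $\mu$ along $\varphi$. By Theorem~\ref{thm:exp} the map $T_1\colon X\to X$ sending $x$ to the endpoint of the unique geodesic starting at $x$ with endpoint in $\partial^c\varphi(x)$ is defined $\mm$-a.e., hence $\mu$-a.e.; setting $\nu:=(T_1)_\sharp\mu$ produces an optimal plan from $\mu$ to $\nu$ induced by $\varphi$. The boundedness of $\supp\nu$ follows by repeating the compactness argument at the beginning of the proof of Theorem~\ref{thm:exp}: local Lipschitzianity of $\varphi$ forces $\partial^c\varphi(x)$ to remain in a fixed bounded region as $x$ ranges over the bounded set $\supp f$. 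With these choices all hypotheses of Proposition~\ref{thm:enin} are satisfied.

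Applying the key inequality gives
$$\u_N(\nu)-\u_N(\mu)\ \geq\ -\frac{1}{N}\int\la\nabla(\rho^{1-\frac1N}),\nabla\varphi\ra\,\d\mm\ =\ -\frac{1}{NZ^{1-\frac1N}}\int\la\nabla f,\nabla\varphi\ra\,\d\mm.$$
Since $\u_N\leq 0$ on $\probt X$ (by nonnegativity of the density of $\nu$) we have $\u_N(\nu)\leq 0$, while
$$-\u_N(\mu)=\int\rho^{1-\frac1N}\,\d\mm=Z^{-(1-\frac1N)}\int f\,\d\mm.$$
Combining these two facts and clearing the factor $Z^{1-\frac1N}$ produces exactly the target inequality $-\int\la\nabla f,\nabla\varphi\ra\,\d\mm\leq N\int f\,\d\mm$. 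Proposition~\ref{prop:compar} then delivers $\varphi\in D(\bd)$ together with $\bd\varphi\leq N\mm$.

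The chief obstacle in this scheme is guaranteeing the existence of $T_1$ starting from a measure $\mu$ whose density is not bounded below and whose image $\nu$ is not assumed absolutely continuous: this is the content of the dimensional strengthening Theorem~\ref{thm:exp}, and is what makes the argument genuinely finite-dimensional (unavailable in the mere $\RCD(K,\infty)$ setting). A minor but real point concerns the boundary case $N=1$, where the exponent $N/(N-1)$ is undefined; one can handle it by observing that any $\RCD(0,1)$ space is a fortiori $\RCD(0,N)$ for every $N>1$, applying the above to deduce $\bd\varphi\leq N\mm$ for each such $N$, and letting $N\downarrow 1$.
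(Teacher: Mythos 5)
Your proof is correct and follows essentially the same route as the paper: reduce to the integral inequality via Proposition~\ref{prop:compar}, take $\rho\propto f^{N/(N-1)}$ so that $\rho^{1-1/N}$ is a multiple of $f$, push forward by the optimal map from Theorem~\ref{thm:exp}, and apply the key inequality Proposition~\ref{thm:enin} together with $\u_N\leq 0$. The only addition is your observation about the degenerate exponent when $N=1$, which the paper glosses over and which your limiting argument handles cleanly.
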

\begin{proof}
By Proposition \ref{prop:compar} it is sufficient to show that 
\begin{equation}
\label{eq:percomp}
-\int_X\la\nabla f,\nabla \varphi\ra\d\mm\leq N\int_X f\,\d\mm,\qquad\forall f\in\test X,\ f\geq 0.
\end{equation}
Thus, fix a non-negative $f\in\test X$, $f$ not identically 0 and let $\Omega$ be an open bounded set containing $\supp(f)$. Define $\rho:=cf^{\frac{N}{N-1}}$, $c:=(\int f^{\frac N{N-1}})^{-1}$ being the normalization constant, let $T=T_1$ be the optimal map induced by $\varphi$ given by Theorem \ref{thm:exp} and put $\nu:=T_\sharp(\rho\mm)$. Then by Proposition \ref{thm:enin} we get
\[
\u_N(\nu)-\u_N(\rho\mm)\geq -\frac1N\int \la\nabla(\rho^{1-\frac1N}),\nabla\varphi\ra\,\d\mm.
\]
Now notice that $\u_N(\nu)\leq 0$ and recall the definition of $\rho$ to get \eqref{eq:percomp} and the conclusion.
\end{proof}

\subsection{\underline{Things to know:}\ strong maximum principle}
In order to prove that the Busemann function is harmonic, we need some form of the strong maximum principle. The following statement has been proved in \cite{Bjorn-Bjorn11}, notice that it does not require any notion of distributional Laplacian, being based on the variational formulation of super-harmonicity. The simple link between such formulation and the measure valued Laplacian has been established in \cite{Gigli12}, \cite{Gigli-Mondino12}, see the proof of Theorem \ref{thm:bharm}.
\begin{theorem}\label{thm:strongmax}
Let $(\tilde X,\tilde \sfd,\tilde \mm)$ be a metric measure space supporting a 1-2 weak local Poincar\'e inequality with $\tilde \mm$ doubling and let $g\in C(\tilde X)\cap\s^2_{\rm loc}(\tilde X)$  be with   the following property: for any non-positive $f\in\test {\tilde X}$ it holds 
\[
\int_\Omega |\nabla g|^2\,\d\tilde \mm\leq \int_\Omega|\nabla (g+f)|^2\,\d\tilde \mm,
\]
where $\Omega\subset\tilde X$ is any bounded open set containing $\supp(f)$. Assume that $g$ has a maximum. Then $g$ is constant.
\end{theorem}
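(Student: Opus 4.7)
The plan is to reduce to the standard weak Harnack inequality for non-negative superminimizers in the doubling $+$ (1,2)-Poincar\'e setting (which is the technical heart of the Bj\"orn-Bj\"orn result). The hypothesis on $g$ is exactly the variational definition of being a subminimizer, so $u:=M-g$ (with $M:=\max_{\tilde X}g$) ought to be a non-negative superminimizer vanishing at a maximum point; then the classical strong maximum principle argument closes the proof.

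First I would verify that $u\in C(\tilde X)\cap\s^2_{\rm loc}(\tilde X)$ is a superminimizer, meaning
\[
\int_\Omega|\nabla u|^2\,\d\tilde\mm\leq\int_\Omega|\nabla(u+\psi)|^2\,\d\tilde\mm
\]
for every non-negative $\psi\in\test{\tilde X}$ and every bounded open $\Omega\supset\supp(\psi)$. The chain rule from Section~\ref{se:sob} gives $|\nabla u|=|\nabla g|$ $\tilde\mm$-a.e., and setting $f:=-\psi\in\test{\tilde X}$ one similarly has $|\nabla(u+\psi)|=|\nabla(g+f)|$; the desired inequality is then immediate from the hypothesis on $g$ applied to this $f\leq 0$.

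Next I would invoke the weak Harnack inequality for non-negative superminimizers: there exist constants $p,C>0$ such that for every ball with $B_{2R}(x_0)\subset\tilde X$,
\[
\left(\frac{1}{\tilde\mm(B_R(x_0))}\int_{B_R(x_0)}u^p\,\d\tilde\mm\right)^{1/p}\leq C\operatorname*{ess\,inf}_{B_R(x_0)}u.
\]
Choosing $x_0$ with $g(x_0)=M$, so $u(x_0)=0$, continuity of $u$ forces the essential infimum on the right to be $0$; hence $u\equiv 0$ $\tilde\mm$-a.e.\ on $B_R(x_0)$, and by continuity identically so. Therefore $\{g=M\}$ is open.

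Since $\{g=M\}$ is closed by continuity, non-empty by assumption, and $\tilde X$ is connected (doubling $+$ Poincar\'e implies quasi-convexity, hence connectedness), one concludes $\{g=M\}=\tilde X$, i.e.\ $g$ is constant. The main obstacle is not in this reduction but in the weak Harnack inequality itself: its proof requires a genuine Moser iteration in the non-smooth setting, combining Caccioppoli-type energy estimates with Sobolev-Poincar\'e embedding, and this is precisely the content of the Bj\"orn-Bj\"orn paper quoted here.
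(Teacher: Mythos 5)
The paper does not prove Theorem~\ref{thm:strongmax}: it is stated as a ``things to know'' result and attributed entirely to Bj\"orn--Bj\"orn \cite{Bjorn-Bjorn11}, with only the remark that it rests on the variational (superminimizer) formulation and De Giorgi--Moser--Nash techniques. Your reduction is faithful to that reference and correct: the hypothesis on $g$ is exactly the subminimizer property, so $u:=\max g - g$ is a non-negative superminimizer by the locality and invariance of $|\nabla\cdot|$ under negation and addition of constants; the weak Harnack inequality then forces $u\equiv 0$ near any maximum point, and connectedness of $\tilde X$ (from doubling $+$ Poincar\'e) propagates this globally. You correctly flag that the real content is the weak Harnack estimate itself, which is the Moser-iteration core of \cite{Bjorn-Bjorn11} and not something you are asked to reprove here.
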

We shall not discuss the meaning of 1-2 weak local Poincar\'e inequality (see for instance \cite{Bjorn-Bjorn11} and the discussion therein). For our purposes it is sufficient to know that our $\RCD(0,N)$ space $(X,\sfd,\mm)$ fulfills the assumptions of the above theorem (see \cite{Lott-Villani07}, \cite{Rajala12} and \cite{Rajala12-2}).

\subsection{The Busemann function is harmonic and $c$-concave}\label{se:basebus}
\begin{quote}
From now on the space $(X,\sfd,\mm)$ will always be assumed to be a $\RCD(0,N)$ space
and it will be assumed that there is a line $\bar\gamma:\R\to X$, i.e. a curve satisfying $$\sfd(\bar\gamma_t,\bar\gamma_s)=|t-s|,\qquad\forall t,s\in\R.$$
\end{quote}
This completes our set of assumptions on $X$ to get the splitting theorem. It is a classical and easy to prove fact that in presence of the line $\bar\gamma$ the two functions $\b^\pm:X\to\R$, called Busemann functions, are well defined by:
\[
\b^+(x):=\lim_{t\to+\infty}t-\sfd(x,\bar\gamma_t),\qquad\qquad\qquad\b^-(x):=\lim_{t\to+\infty}t-\sfd(x,\bar\gamma_{-t}).
\]
Indeed, the triangle inequality gives that the limits exist and are real valued for any $x\in X$.

In this section we first prove, following the original arguments of Cheeger-Gromoll \cite{Cheeger-Gromoll-splitting}, that it holds $\b^++\b^-\equiv 0$ and that these functions are harmonic, i.e. $\bd\b^\pm\equiv 0$. Then we show the technically useful fact that  for any $t\in\R$  the functions $t\b^\pm$ are $c$-concave. In particular, this property is what links the geometric condition of existence of a line with the theory of optimal transport on which the definition of the curvature-dimension condition is based.

We start with the following statement, which is a simple consequence of the Laplacian comparison estimates for the distance.
\begin{proposition}[Subharmonicity of the Busemann function] With the same notation as above, we have $\b^\pm\in D(\bd)$ and $\bd\b^\pm\geq 0$.
\end{proposition}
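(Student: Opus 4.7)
The strategy is to write $\b^+$ as a monotone pointwise limit of the functions $g_t(\cdot):=t-\sfd(\cdot,\bar\gamma_t)$ and use the Laplacian comparison of Theorem~\ref{cor:laplcomp} to show that each $g_t$ is subharmonic up to an error that vanishes as $t\to+\infty$. Monotonicity $g_s\leq g_t$ for $s\leq t$ is a direct consequence of the triangle inequality, and combined with the fact that each $g_t$ is $1$-Lipschitz it yields locally uniform convergence $g_t\to\b^+$ by Dini's theorem.

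\emph{Step 1 (Laplacian bound for the distance).} For fixed $y\in X$, the function $\varphi:=\tfrac12\sfd^2(\cdot,y)$ is locally Lipschitz and $c$-concave (realize it as the $c$-transform of the $\psi$ equal to $0$ at $y$ and $-\infty$ elsewhere). Theorem~\ref{cor:laplcomp} then gives $\varphi\in D(\bd)$ with $\bd\varphi\leq N\mm$. On the open set $\Omega:=X\setminus\{y\}$ one has $\varphi>0$, so Proposition~\ref{prop:chainlap} applies with $\phi(s):=\sqrt{2s}\in C^{2}((0,\infty))$ and $\phi\circ\varphi=\sfd(\cdot,y)$, giving
\[
\bd\sfd(\cdot,y)\restr{\Omega}\;=\;\frac{1}{\sfd(\cdot,y)}\bd\varphi\restr{\Omega}\;-\;\frac{|\nabla\varphi|^{2}}{\sfd(\cdot,y)^{3}}\mm\restr{\Omega}\;\leq\;\frac{N}{\sfd(\cdot,y)}\mm\restr{\Omega},
\]
after dropping the non-positive second summand. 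Consequently $\bd g_t\geq -\frac{N}{\sfd(\cdot,\bar\gamma_t)}\mm$ on $X\setminus\{\bar\gamma_t\}$.

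\emph{Step 2 (Limit and conclusion).} By Proposition~\ref{prop:compar} applied to $-\b^+$ on any bounded open $\tilde\Omega\subset X$, it suffices to prove
\[
\int\la\nabla f,\nabla\b^+\ra\d\mm\;\leq\;0\qquad\text{for every }f\in\test{\tilde\Omega},\ f\geq 0.
\]
Fix such $f$, set $K:=\supp(f)$, and choose $t$ large enough that $\sfd(\cdot,\bar\gamma_t)\geq t/2$ on $K$. Integration by parts and Step~1 give
\[
\int\la\nabla f,\nabla g_t\ra\d\mm\;=\;-\int f\,\d\bd g_t\;\leq\;\frac{2N}{t}\int f\,\d\mm\;\xrightarrow[t\to\infty]{}\;0.
\]
The $1$-Lipschitz bound $|\nabla g_t|\leq 1$ together with uniform convergence $g_t\to\b^+$ on $\tilde\Omega$ ensure that $(g_t)$ is bounded in the Hilbert space $W^{1,2}(\tilde\Omega)$; any subsequential weak limit must coincide with the $L^2$-limit $\b^+$, so $g_t\weakto\b^+$ weakly in $W^{1,2}(\tilde\Omega)$. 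The Hilbert structure then yields
\[
\int\la\nabla f,\nabla g_t\ra\d\mm\;\to\;\int\la\nabla f,\nabla\b^+\ra\d\mm,
\]
giving the desired inequality. Proposition~\ref{prop:compar} now delivers $\b^+\in D(\bd,\tilde\Omega)$ with $\bd\b^+\restr{\tilde\Omega}\geq 0$ for every bounded $\tilde\Omega\subset X$, hence $\b^+\in D(\bd)$ and $\bd\b^+\geq 0$. The argument for $\b^-$ is identical.

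\emph{Main obstacle.} The delicate point is not the algebraic manipulation in Step~1 but the weak-$W^{1,2}$ passage to the limit in Step~2: a priori the gradients $\nabla g_t$ could oscillate as $t\to\infty$, and one genuinely needs both the Hilbert structure of $W^{1,2}$ (which makes $f\mapsto\int\la\nabla f,\nabla\cdot\ra\d\mm$ continuous under weak convergence) and the combination of the Lipschitz bound with locally uniform convergence to identify the weak limit. Settling for lower semicontinuity of $|\nabla\cdot|$ alone would give only one-sided control of $\int\la\nabla f,\nabla g_t\ra\d\mm$ in the limit, which is insufficient.
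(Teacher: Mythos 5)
Your proof is correct and takes essentially the same approach as the paper's: both approximate $\b^+$ by $g_t=t-\sfd(\cdot,\bar\gamma_t)$, obtain $\bd\sfd(\cdot,\bar\gamma_t)\leq \frac{N}{\sfd(\cdot,\bar\gamma_t)}\mm$ via the chain rule applied to the Kantorovich potential $\tfrac12\sfd^2(\cdot,\bar\gamma_t)$, and then pass to the limit using the weak compactness of the bounded family $\{g_t\}$ in the Hilbert space $W^{1,2}(\Omega)$ together with continuity of the linear map $g\mapsto\int\la\nabla f,\nabla g\ra\,\d\mm$. Your extra remarks on monotonicity, Dini's theorem, and the necessity of Hilbertianity in the weak-limit step are accurate and simply make explicit what the paper leaves implicit.
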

\begin{proof}
We shall prove the result for $\b^+$ only, the proof for $\b^-$ being similar. According to Proposition \ref{prop:compar} it is sufficient to show that
\[
-\int\la\nabla f,\nabla\b^+\ra\,\d\mm\geq 0,\qquad\forall f\in \test X,\ f\geq 0.
\]
Fix such $f$, let $\Omega\subset X$ be a bounded open set such that $\supp(f)\subset\Omega$ and notice that the functions $\b_t(x):=t-\sfd(x,\bar\gamma_t)$ are 1-Lipschitz and uniformly converge to $\b^+$ on $\Omega$ as $t\to\infty$. For $t$ big enough we have $\sfd(\bar\gamma_t,\Omega)>0$ and therefore applying the chain rule \eqref{eq:chainlap} to the Kantorovich potential  $g:=\frac12\sfd^2(\cdot,\bar\gamma_t)$ and the function $\psi(z):=\sqrt{2z}$ and taking into account the comparison estimate \eqref{eq:lapsq} we deduce that for $t$ big enough it holds
\[
\sfd(\cdot,\bar\gamma_t)\in D(\bd,\Omega),\qquad\bd\sfd(\cdot,\bar\gamma_t)\leq\frac{N}{\sfd(\cdot,\bar\gamma_t)}\mm,
\]
having also used the trivial bound  $|\nabla \sfd(\cdot,\bar\gamma_t)|\geq 0$ $\mm$-a.e.. It directly follows that for $t\gg1$ we have $\b_t\in D(\bd,\Omega)$ with $\bd\b^+\restr\Omega\geq\frac{N}{\sfd(\cdot,\bar\gamma_t)}\mm\restr\Omega\geq \frac{N}{\sfd(\bar\gamma_t,\Omega)}\mm\restr\Omega$ and therefore
\[
-\int_X\la\nabla f,\nabla\b_t\ra\,\d\mm=\int_\Omega f\,\d\bd\b_t\restr\Omega\geq \frac{N}{\sfd(\bar\gamma_t,\Omega)}\int_\Omega f\,\d\mm\to 0,\qquad\textrm{ as }t\to+\infty.
\]
To conclude it is  therefore sufficient to show that
\[
\lim_{t\to+\infty}\int_X\la\nabla f,\nabla\b_t\ra\,\d\mm=\int_X\la\nabla f,\nabla\b^+\ra\,\d\mm.
\]
To see this, notice that $\{\b_t\}_{t\geq 0}$ is a bounded family in $W^{1,2}(\Omega)$ and therefore, since $W^{1,2}(\Omega)$ is Hilbert by Theorem \ref{thm:calculus}, weakly relatively compact  in $W^{1,2}(\Omega)$. The uniform convergence of $(\b_t)$ to $\b^+$ when $t\to +\infty$ grants in particular the convergence in $L^2(\Omega)$ and therefore $(\b_t)$ weakly converges to $\b^+$ as $t\to+\infty$ in $W^{1,2}(\Omega)$. Conclude observing that the inequality
\[
\int_X\la\nabla f,\nabla g\ra\,\d\mm\leq \int_\Omega\weakgrad f\weakgrad g\,\d\mm\leq\sqrt{\int_\Omega\weakgrad f^2\,\d\mm}\sqrt{\int_\Omega\weakgrad g^2\,\d\mm}\leq \|f\|_{W^{1,2}(\Omega)}\|g\|_{W^{1,2}(\Omega)},
\]
shows that the linear map $W^{1,2}(\Omega)\ni g\mapsto \int_X\la\nabla f,\nabla g\ra\,\d\mm$ is continuous.
\end{proof}
We now use the strong maximum principle to deduce that $\b^++\b^-\equiv 0$ and that  $\bd\b^\pm=0$.
\begin{theorem}[Harmonicity of the Busemann function]\label{thm:bharm}
We have  $\b^++\b^-\equiv 0$ and $\bd\b^+=\bd\b^-=0$.
\end{theorem}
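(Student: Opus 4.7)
The plan is to combine three ingredients: (a) a pointwise inequality $\b^+ + \b^- \leq 0$ coming from the triangle inequality on the line, (b) the subharmonicity $\bd(\b^+ + \b^-) \geq 0$ already established in the previous proposition, and (c) the strong maximum principle of Theorem \ref{thm:strongmax}. The last ingredient forces $u := \b^+ + \b^-$ to be constant, and checking that this constant is $0$ is immediate by evaluating at $\bar\gamma_0$.

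First I would observe that $\b^\pm$ are $1$-Lipschitz (being pointwise limits of $1$-Lipschitz functions $x\mapsto t - \sfd(x,\bar\gamma_{\pm t})$), so $u \in \s^2_{\rm loc}(X) \cap C(X)$ by \eqref{eq:lipweak}. Using the triangle inequality $\sfd(x,\bar\gamma_t) + \sfd(x,\bar\gamma_{-t}) \geq \sfd(\bar\gamma_t,\bar\gamma_{-t}) = 2t$ and passing to the limit gives $u(x) \leq 0$ for every $x$. On the other hand $\b^+(\bar\gamma_0) = \b^-(\bar\gamma_0) = 0$, so $u$ attains its maximum $0$ at $\bar\gamma_0$.

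The key technical step is translating the measure-level information $\bd u = \bd\b^+ + \bd\b^- \geq 0$ into the variational form required by Theorem \ref{thm:strongmax}. For non-positive $f \in \test X$ with $\supp(f) \subset \Omega$, expand
\[
\int_\Omega \weakgrad{(u+f)}^2 \, \d\mm - \int_\Omega \weakgrad{u}^2 \, \d\mm = \int_\Omega 2\la\nabla u, \nabla f\ra + \weakgrad{f}^2 \, \d\mm,
\]
and use the symmetry \eqref{eq:simm} together with the definition of distributional Laplacian to get
\[
\int_\Omega \la\nabla u, \nabla f\ra \, \d\mm = \int_\Omega \la\nabla f, \nabla u\ra \, \d\mm = -\int_\Omega f \, \d\bd u \geq 0,
\]
the last inequality because $f \leq 0$ and $\bd u \geq 0$. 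Since also $\weakgrad{f}^2 \geq 0$, the variational subharmonicity hypothesis of Theorem \ref{thm:strongmax} is satisfied. Because the space is a $\RCD(0,N)$ space, it is doubling and satisfies the $1$-$2$ weak local Poincar\'e inequality, so the hypotheses of that theorem are met; as $u$ attains its maximum, it is constant, and thus $u \equiv u(\bar\gamma_0) = 0$, giving $\b^- = -\b^+$.

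Finally, the harmonicity of $\b^\pm$ is a one-line consequence: by linearity of the Laplacian, $\bd\b^- = \bd(-\b^+) = -\bd\b^+$, and since both $\bd\b^+$ and $\bd\b^-$ are non-negative measures, both must vanish. The only step I expect to demand real care is the variational translation, since one has to check that the symmetry and linearity of $\la\nabla\cdot,\nabla\cdot\ra$ from Theorem \ref{thm:calculus} really do reduce the measure-valued Laplacian condition to the minimizing inequality of Theorem \ref{thm:strongmax}; the rest is routine.
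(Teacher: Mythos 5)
Your proposal is correct and follows essentially the same route as the paper: triangle inequality to get $\b^++\b^-\le 0$ with maximum $0$ on the line, translation of $\bd(\b^++\b^-)\ge 0$ into the variational superminimizer inequality required by Theorem~\ref{thm:strongmax} (the paper uses the convexity of $\eps\mapsto\int_\Omega|\nabla(g+\eps f)|^2\,\d\mm$ to obtain a one-sided bound, whereas you expand the quadratic form exactly via Theorem~\ref{thm:calculus} — both are equivalent here), and then linearity of $\bd$ together with $\bd\b^\pm\ge 0$ to conclude both vanish.
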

\begin{proof} Put $g:=\b^++\b^-$ and notice that by the linearity of the Laplacian we have $g\in D(\bd)$ with $\bd g\geq 0$. It is obvious that $g$ is Lipschitz, that $g\leq 0$ (by the triangle inequality) and that $g(\bar\gamma_t)=0$ for any $t\in\R$. Thus according to the strong maximum principle (Theorem \ref{thm:strongmax}) to conclude it is sufficient to show that for any non-positive $f\in \test X$ it holds
\[
\int_\Omega|\nabla g|^2\,\d\mm\leq \int_\Omega|\nabla(g+f)|^2\,\d\mm,
\]
where $\Omega\subset X$ is any bounded open set containing $\supp(f)$. This is an obvious consequence of the convexity of $\eps\mapsto\int_\Omega|\nabla(g+\eps f)|^2\,\d\mm$ and the inequality $\bd g\geq 0$:
\[
\begin{split}
\int_\Omega|\nabla(g+f)|^2\,\d\mm-\int_\Omega|\nabla g|^2\,\d\mm&\geq\lim_{\eps\downarrow0}\int_\Omega\frac{|\nabla(g+\eps f)|^2-|\nabla g|^2}{\eps}\,\d\mm\\
&=2\int_\Omega\la\nabla g,\nabla f\ra\,\d\mm=-2\int f\,\d\bd g\geq 0,
\end{split}
\]
and the proof is completed.
\end{proof}
From now on, to simplify the notation we shall consider the Busemann function $\b:X\to\R$ defined as
\begin{equation}
\label{eq:defb}
\b:=\b^+=-\b^-.
\end{equation}

\begin{theorem}[Multiples of $\b$ are Kantorovich potentials]\label{thm:basemetric}
For every $a\in\R$ the function $a\b$ is $c$-concave and fulfills
\begin{equation}
\label{eq:abcconc}
\begin{split}
(a\b)^c&=-a\b-\frac{a^2}2,\\
(-a\b)^c&=a\b-\frac{a^2}2.
\end{split}
\end{equation}
In particular, $(x,y)\in\partial^c(a\b)$ if and only if $(y,x)\in \partial^c(-a\b)$.

\end{theorem}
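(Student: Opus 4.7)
The plan is to compute the $c$-transforms in \eqref{eq:abcconc} directly from the definition, deduce $c$-concavity as an immediate consequence, and then observe that the last equivalence follows by comparing the two explicit formulas.

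First I would record that $\b$ is $1$-Lipschitz: it is a pointwise (in fact locally uniform) limit of the $1$-Lipschitz functions $x\mapsto t-\sfd(x,\bar\gamma_t)$. Hence for every $x,y\in X$ one has $|\b(x)-\b(y)|\leq \sfd(x,y)$. Fix $a\in\R$ and $y\in X$; rewriting
\[
\frac{\sfd^2(x,y)}{2}-a\b(x)=-a\b(y)+\frac{\sfd^2(x,y)}{2}-a\big(\b(x)-\b(y)\big),
\]
and setting $s:=\sfd(x,y)$, the Lipschitz bound gives
\[
\frac{\sfd^2(x,y)}{2}-a\big(\b(x)-\b(y)\big)\geq \frac{s^2}{2}-|a|s\geq -\frac{a^2}{2},
\]
which yields the lower bound $(a\b)^c(y)\geq -a\b(y)-\tfrac{a^2}{2}$.

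For the matching upper bound, and this is the step that carries the geometric content, I would use the line $\bar\gamma$ to produce a minimizing sequence. Assume $a>0$ (the case $a<0$ is symmetric, using $\bar\gamma_{-t}$). For each large $t$, pick a unit-speed geodesic $\eta^t$ from $y$ to $\bar\gamma_t$ and set $z_t:=\eta^t_{a}$, so that $\sfd(z_t,y)=a$. The triangle inequality gives $\b(z_t)\leq \b(y)+a$, while the classical monotonicity of $u\mapsto u-\sfd(z,\bar\gamma_u)$ (which follows from the triangle inequality using that $\bar\gamma$ is a line) yields
\[
\b(z_t)\geq t-\sfd(z_t,\bar\gamma_t)=t-\bigl(\sfd(y,\bar\gamma_t)-a\bigr)\ \xrightarrow[t\to+\infty]{}\ \b(y)+a.
\]
Hence $\b(z_t)-\b(y)\to a$, and
\[
\frac{\sfd^2(z_t,y)}{2}-a\big(\b(z_t)-\b(y)\big)=\frac{a^2}{2}-a\big(\b(z_t)-\b(y)\big)\to -\frac{a^2}{2},
\]
proving $(a\b)^c(y)=-a\b(y)-\tfrac{a^2}{2}$. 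The same argument with $a$ replaced by $-a$ gives $(-a\b)^c(y)=a\b(y)-\tfrac{a^2}{2}$.

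To conclude, $c$-concavity of $a\b$ is just $(a\b)^{cc}=a\b$: combining the two formulas just established,
\[
((a\b)^c)^c(x)=\inf_{y}\frac{\sfd^2(x,y)}{2}-(a\b)^c(y)=\inf_y\frac{\sfd^2(x,y)}{2}+a\b(y)+\frac{a^2}{2}=(-a\b)^c(x)+\frac{a^2}{2}=a\b(x).
\]
Finally, unwinding the definition of the $c$-superdifferential with the formulas for $(a\b)^c$ and $(-a\b)^c$, one sees that both conditions $(x,y)\in\partial^c(a\b)$ and $(y,x)\in\partial^c(-a\b)$ reduce to the single identity
\[
a\b(x)-a\b(y)-\frac{a^2}{2}=\frac{\sfd^2(x,y)}{2},
\]
which gives the stated equivalence. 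The only non-routine step is the construction of the minimizing sequence $(z_t)$, where the monotonicity of $u\mapsto u-\sfd(z_t,\bar\gamma_u)$ is the crucial ingredient.
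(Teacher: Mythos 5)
Your proof is correct and follows essentially the same route as the paper: the same $1$-Lipschitz lower bound, the same competitor points $z_t = \eta^t_a$ on geodesics to $\bar\gamma_t$, and the same algebraic derivation of $c$-concavity and of the last equivalence. The only difference is cosmetic: the paper uses properness to extract a subsequential limit $y_a$ of the $z_t$ and plugs in that single point as competitor, while you work directly with the sequence $(z_t)$ via the monotonicity of $u\mapsto u-\sfd(\cdot,\bar\gamma_u)$, which quietly removes the reliance on compactness of balls at this step.
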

\begin{proof} Fix $a\in\R$ and notice that since $a\b$ is $|a|$-Lipschitz we have
\[
a\b(x)-a\b(y)\leq|a|\sfd(x,y)\leq \frac{\sfd^2(x,y)}2+\frac{a^2}2,\qquad\forall x,y\in X,
\]
which yields $\frac{\sfd^2(x,y)}{2}-a\b(x)\geq -a\b(y)-\frac{a^2}2$ for any $x,y\in X$, and thus
\[
(a\b)^c(y)\geq -a\b(y)-\frac{a^2}2,\qquad\forall y\in X.
\]
To prove the opposite inequality, fix $y\in X$ and assume for the moment $a\geq 0$. Let  $\gamma^{t,y}:[0,\sfd(y,\bar\gamma_t)]\to X$ be a unit speed geodesic connecting $y$ to $\bar\gamma_t$ and notice that since $(X,\sfd)$ is proper, for some sequence $t_n\uparrow +\infty$ the sequence $n\mapsto \gamma^{t_n,y}_a$ converges to some point $ y_a\in X$ which clearly has distance $a$ from $y$. 

Letting $n\to\infty$  in
\[
\begin{split}
t_n-\sfd(y_a,\bar\gamma_{t_n})\geq t_n-\sfd(\gamma^{t_n,y}_a,\bar\gamma_{t_n})-\sfd(y_a,\gamma^{t_n,y}_a)= t_n-\sfd(y,\bar\gamma_{t_n})+a-\sfd(y_a,\gamma^{t_n,y}_a),
\end{split}
\]
and recalling that $\b=\b^+=\lim_{n\to\infty}t_n-\sfd(\cdot,\bar\gamma_{t_n})$ we deduce 
\begin{equation}
\label{eq:perdopo}
\b(y_a)\geq \b(y)+a.
\end{equation}
Choosing $ y_a$ as competitor in the definition of $(a\b)^c(y)$ we obtain
\[
(a\b)^c(y)=\inf_x\frac{\sfd^2(x,y)}{2}-a\b(x)\leq\frac{\sfd^2( y_a,y)}{2}-a\b( y_a)\stackrel{\eqref{eq:perdopo}}\leq -a\b(y)-\frac{a^2}2,
\]
as desired. The case $a\leq0$ is handled analogously by letting $y_a$ be any limit of $\gamma^{-t,y}_{|a|}$ as $t\to+\infty$ and using the fact that $\b=-\b^-=\lim_{t\to+\infty}\sfd(\cdot,\bar\gamma_{-t})-t$.

This proves the first identity in \eqref{eq:abcconc}. The second follows from the first choosing $-a$ in place of $a$. Finally, the $c$-concavity of $a\b$ is obtained by direct algebraic manipulation:
\[
(a\b)^{cc}=\left(-a\b-\frac{a^2}2\right)^c=(-a\b)^c+\frac{a^2}2=a\b.
\]
The last assertion follows from the fact that $(x,y)\in\partial^c(a\b)$ if and only if $(y,x)\in\partial^c(a\b)^c$ and identities \eqref{eq:abcconc}.
\end{proof}

\subsection{The gradient flow of $\b$ preserves the measure}

\begin{proposition}\label{prop:gfb}
There exists a Borel map $\R\times X\ni (t,x)\mapsto \X_t(x)\in X$ such that for $\mm$-a.e. $x\in X$ the curve $t\mapsto \X_t(x)$ is continuous and fulfills  $\X_t(x)\in\partial^c(t\b)(x)$. Such curve is unique up to $\mm$-a.e. equality. Furthermore we have
\begin{align}
\label{eq:almostmeaspres}
\mm&\ll(\X_t)_\sharp\mm\ll\mm,&&\hspace{-1cm}\forall t\in\R,\\
\label{eq:group}
\X_{t+s}(x)&=\X_t(\X_s(x)),&&\hspace{-1cm}\mm\ae\ x\in\X,\ t,s\in\R,
\end{align}
and for $\mm$-a.e. $x\in X$ the curve $\R\ni t\mapsto\X_t(x)$ is a unit speed geodesic, i.e. a line.
\end{proposition}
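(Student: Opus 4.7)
The plan is to build $\X_t$ by applying Theorem \ref{thm:exp} to the family $\{n\b\}_{n\in\N}\cup\{-n\b\}_{n\in\N}$ of locally Lipschitz $c$-concave functions supplied by Theorem \ref{thm:basemetric}, and then to glue the forward and backward halves into a single line. Applying Theorem \ref{thm:exp} to $n\b$ yields, on a set of full $\mm$-measure, a Borel map $T^n:X\to\geo(X)$ with $T^n(x)_1\in\partial^c(n\b)(x)$. From the identity $(n\b)^c=-n\b-n^2/2$ in \eqref{eq:abcconc} and the $1$-Lipschitz character of $\b$, a one-line algebraic manipulation forces every $y\in\partial^c(n\b)(x)$ to satisfy $\sfd(x,y)=n$ and $\b(y)=\b(x)-n$, so $s\mapsto T^n(x)_{s/n}$ is a unit-speed geodesic on $[0,n]$ along which $\b$ decreases at rate one. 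The same construction applied to $-n\b$ produces maps $S^n$ giving unit-speed geodesics on which $\b$ increases at rate one.

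The first key step is a consistency check across the scales $n$. For $m\leq n$ and $\mm$-a.e.\ $x$, the restriction of $T^n(x)$ to $[0,m/n]$, reparametrized on $[0,1]$, is a geodesic from $x$ to an endpoint which by \eqref{eq:tkant} applied to $n\b$ lies in $\partial^c(m\b)(x)$; the uniqueness part of Theorem \ref{thm:exp} applied to $m\b$ therefore identifies it with $T^m(x)$, yielding $T^n(x)_{s/n}=T^m(x)_{s/m}$ for every $s\in[0,m]$. Intersecting countably many such full-measure sets I unambiguously define $\X_s^+(x):=T^n(x)_{s/n}$ for any $n\geq s$, and similarly $\X_s^-(x)$ from $S^n$; setting $\X_t(x):=\X_t^+(x)$ for $t\geq 0$ and $\X_t(x):=\X_{-t}^-(x)$ for $t\leq 0$ gives a Borel family with $\X_t(x)\in\partial^c(t\b)(x)$ and $t\mapsto\X_t(x)$ continuous. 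The half $(\X_t)_\sharp\mm\ll\mm$ of \eqref{eq:almostmeaspres} is immediate from \eqref{eq:acexp} applied to $T^n$ and $S^n$, and uniqueness modulo $\mm$-a.e.\ equality follows from the uniqueness clause of Theorem \ref{thm:exp} at each scale.

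I expect the main geometric obstacle to be showing that the concatenated curve is genuinely a \emph{line} on $\R$, not merely a geodesic on each half-line. The argument I plan to use is brief but crucially exploits the Busemann function: for $s,t>0$ and $\mm$-a.e.\ $x$, one has $\b(\X_{-s}(x))-\b(\X_t(x))=s+t$, so $1$-Lipschitzness of $\b$ yields $\sfd(\X_{-s}(x),\X_t(x))\geq s+t$, while the triangle inequality gives the reverse bound; hence the concatenated curve on $[-s,t]$ has length equal to the distance between its endpoints and is a unit-speed minimizing geodesic. Varying $s,t$, the curve $t\mapsto\X_t(x)$ is a line, and a short direct computation then shows that for any $s<t$ the point $\X_t(x)$ lies in $\partial^c((t-s)\b)(\X_s(x))$: writing $n=t-s$, $y=\X_t(x)$, $z=\X_s(x)$, the inequality $n\b(w)-n\b(z)\leq\sfd^2(w,y)/2-n^2/2$ reduces to the tautology $0\leq(\sfd(w,y)-n)^2$ after use of $\b(y)=\b(z)-n$ and $1$-Lipschitzness.

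To close, I would establish the group property \eqref{eq:group} together with the remaining direction $\mm\ll(\X_t)_\sharp\mm$ of \eqref{eq:almostmeaspres}. Fix $s\in\R$; since $(\X_s)_\sharp\mm\ll\mm$, for $\mm$-a.e.\ $x$ the point $y:=\X_s(x)$ lies in the full-measure set on which Theorem \ref{thm:exp} is applicable to every $n\b$. The line $r\mapsto\X_{s+r}(x)$ passes through $y$ at $r=0$, and by the remark just made its restriction to $[0,n]$ is a geodesic from $y$ to a point of $\partial^c(n\b)(y)$; the uniqueness of such geodesics from $y$ identifies it with $r\mapsto\X_r(y)$, proving \eqref{eq:group}. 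Specializing to $r=-s$ gives $\X_{-s}\circ\X_s=\mathrm{id}$ $\mm$-a.e.; combined with $(\X_{-s})_\sharp\mm\ll\mm$, this closes the proof: if $(\X_s)_\sharp\mm(A)=0$ then $(\X_{-s})_\sharp(\mm\restr A)$ is a measure concentrated on the $\mm$-null set $\X_s^{-1}(A)$ with total mass $\mm(A)$, forcing $\mm(A)=0$.
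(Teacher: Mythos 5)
Your proof is correct, and it follows a genuinely different route from the paper at the organizational level. The paper constructs $\X_t$ by gluing geodesics at rational scales $t_0\in\Q$, then proves the group property \emph{first} by showing directly that $\X_t(\X_s(x))\in\partial^c\big((t+s)\b\big)(x)$ (using \eqref{eq:abcconc} and the $1$-Lipschitzness of $\b$ to control $\sfd$ and $\b$ along the concatenation), and finally derives the full line property as a \emph{consequence} of the group law. You instead glue at integer scales $\pm n$, prove the line property \emph{directly} from the behavior of the Busemann function -- the clean estimate $\sfd(\X_{-s}(x),\X_t(x))\geq\b(\X_{-s}(x))-\b(\X_t(x))=s+t$ together with the triangle inequality -- and only afterwards obtain the group property, by checking $\X_t(x)\in\partial^c\big((t-s)\b\big)(\X_s(x))$ along an orbit and then invoking uniqueness of exponentiation from the base point $y=\X_s(x)$. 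Both routes rest on exactly the same two ingredients (the algebra of Theorem \ref{thm:basemetric} and the uniqueness clause of Theorem \ref{thm:exp}), but your ordering is arguably more geometric: the line property becomes a direct Busemann-function statement rather than a bookkeeping consequence of the group law.

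One small point to tighten: in your group-property paragraph you only treat $r\geq 0$ explicitly (``its restriction to $[0,n]$ is a geodesic from $y$ to a point of $\partial^c(n\b)(y)$''), yet you immediately specialize to $r=-s$, which for $s>0$ requires the $r<0$ half. The argument is symmetric -- use $S^n$ in place of $T^n$ together with the duality $(x,y)\in\partial^c(a\b)\iff(y,x)\in\partial^c(-a\b)$ from Theorem \ref{thm:basemetric} -- but it should be stated, since it is precisely the half that feeds the last step.
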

\begin{proof} By Theorem \ref{thm:exp} and the fact that $t\b$ is a Kantorovich potential for every $t\in\R$ we deduce that there is a Borel negligible set $\mathcal N\subset X$ such that for $x\in X\setminus\mathcal N$ and $t_0\in\Q$ the set $\partial^c(t\b)(x)$ is a singleton and there is a unique geodesic $[0,1]\ni t\mapsto T_t(t_0,x)\in X$ such that $T_0(t_0,x)=x$ and  $T_1(t_0,x)\in\partial^c(t_0\b)(x)$. By the property \eqref{eq:tkant} we have that
\[
T_t(t_0,x)=T_1(tt_0,x),\qquad\forall t,t_0\in\Q,\ x\in X\setminus \mathcal N.
\]
It follows that for any $t\in\R$ and $x\in X\setminus\mathcal N$ the definition
\[
\X_t(x):=T_{\frac t{t_0}}(t_0,x),\qquad \forall t_0\in\Q\text{ such that }\tfrac t{t_0}\in[0,1],
\] 
is well posed and defines a curve which is a geodesic when restricted to $[0,+\infty)$ and $(-\infty,0]$. The uniqueness of such $\X$ follows by the construction and a simple continuity argument gives $\X_t(x)\in\partial^c(t\b)(x)$ for every $t\in\R$ and $x\in X\setminus\mathcal N$. Notice also that by the property \eqref{eq:acexp} we deduce that $(\X_t)_\sharp\mm\ll\mm$ for every $t\in\R$.

For the group property \eqref{eq:group}, start assuming that $t,s\geq 0 $ and pick $x\in X\setminus(\mathcal N\cup\X_s^{-1}(\mathcal N)\cup\X_{t+s}^{-1}(\mathcal N))$ (notice that $\mathcal N\cup\X_s^{-1}(\mathcal N)\cup\X_{t+s}^{-1}(\mathcal N)$ is Borel and $\mm$-negligible) and observe that from  $\X_s(x)\in\partial^c(s\b)(x)$ and  the relations \eqref{eq:abcconc} we get
\[
s\b(x)-s\b(\X_t(x))=\frac{\sfd^2(x,\X_s(x))}{2}+\frac{s^2}2,
\]
which, due to the fact that $\b$ is 1-Lipschitz, forces 
\begin{equation}
\label{eq:treno2}
\sfd(x,\X_s(x))=s,\qquad\text{ and }\qquad \b(x)-\b(\X_s(x))=s.
\end{equation}
Similarly, from $x\notin \X_s^{-1}(\mathcal N)$ we have $\X_t(\X_s(x))\in\partial^c(t\b)(\X_s(x))$ which forces 
\begin{equation}
\label{eq:treno3}
\sfd\big(\X_t(\X_s(x)),\X_s(x)\big)=t,\qquad\text{ and }\qquad\b(\X_s(x))-\b\big(\X_t(\X_s(x))\big)=t.
\end{equation}
From \eqref{eq:treno2} and \eqref{eq:treno3} we get $\b(x)-\b(\X_t(\X_s(x)))=t+s$ and $\sfd(x,\X_t(\X_s(x)))\leq t+s$ and thus recalling the relations \eqref{eq:abcconc} again, we get
\[
\frac{\sfd^2(x,\X_t(\X_s(x)))}2\leq (t+s)\b(x)-\big((t+s)\b\big)^c\big(\X_t(\X_s(x))\big),
\]
which means  $\X_t(\X_s(x))\in\partial^c((t+s)\b)(x)$. Given that  $x\notin \X_{t+s}^{-1}(\mathcal N)$, this forces $\X_t(\X_s(x))=\X_{t+s}(x)$, as desired.

To get the full group property it is now sufficient to show that for $t\in \Q$ and $x\in X\setminus(\mathcal N\cup\X_t^{-1}(\mathcal N)\cup\X_{-t}^{-1}(\mathcal N)) $ it holds 
\begin{equation}
\label{eq:treno4}
\X_{-t}(\X_t(x))=x,\qquad\text{ and }\qquad\X_t(\X_{-t}(x))=x.
\end{equation}
To check the first notice that we have $\X_t(x)\in\partial^c(t\b)(x)$ and thus by the last assertion in Theorem \ref{thm:basemetric} that $x\in\partial^c(-t\b)(\X_t(x))$. Since $x\notin \X_t^{-1}(\mathcal N)$ we know that $\partial^c(-t\b)(\X_t(x))$ contains only the point $\X_{-t}(\X_t(x))$, we deduce that the first equality in \eqref{eq:treno4} indeed holds. The second is proved analogously.

To prove that $\R\ni t\mapsto\X_t(x)$ is a geodesic for $\mm$-a.e. $x\in X$ it sufficient to prove that $[-T,T]\ni t\mapsto \X_t(x)$ is a geodesic for $\mm$-a.e. $x\in X$ and every $T>0$. This follows from the group property, which grants that $\X_t(x)=\X_{t+T}(\X_{-T}(x))$ for $\mm$-a.e. $x\in X$, and the fact that $[0,2T]\ni t\mapsto \X_t(x)$ is a geodesic, as pointed out in the first part  of the proof.

Finally, the first in \eqref{eq:almostmeaspres} follows from the second one and the group property.
\end{proof}
We shall refer to the map $(t,x)\mapsto\X_t(x)$ as the gradient flow of $\b$ although in fact we characterized it by the property $\X_t(x)\in\partial^c(t\b)(x)$. It is indeed easy to see that in the smooth setting this is really the gradient flow of $\b$ in the sense that it satisfies $\partial_t\X_t=-\nabla\b(\X_t)$. In our context, this property is expressed by the derivation rule   \eqref{eq:derfiga} given below and the group law \eqref{eq:group}.
\begin{theorem}[The gradient flow of $\b$ preserves the measure]\label{thm:gfpresmeas} The map $\R\times X\ni (t,x)\mapsto \X_t(x)\in X$ given by Proposition \ref{prop:gfb} satisfies
\begin{equation}
\label{eq:gfb1}
\begin{split}
(\X_t)_\sharp\mm&=\mm,\qquad\forall t\in\R.\\
\end{split}
\end{equation}
\end{theorem}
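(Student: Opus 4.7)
The strategy is to deduce measure preservation from preservation of the $N$-R\'enyi entropy $\u_N$ along the flow $\X_t$, using Proposition~\ref{thm:enin} applied in both time directions together with the harmonicity of $\b$ (Theorem~\ref{thm:bharm}) to kill the right-hand side.

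Step 1 (entropy preservation). For a density $\rho$ compactly supported, bounded above and below on its support and with $\rho^{1-\frac1N}$ Lipschitz, set $\mu := \rho\mm$. By Theorem~\ref{thm:basemetric}, $t\b$ is a locally Lipschitz $c$-concave function, hence a Kantorovich potential for every $t\in\R$; by the uniqueness in Theorem~\ref{thm:exp} the induced optimal map is $\X_t$, so $\nu_t := (\X_t)_\sharp\mu$ is the endpoint of the $W_2$-geodesic $s \mapsto (\X_{st})_\sharp\mu$. Applying Proposition~\ref{thm:enin} yields
\[
\u_N(\nu_t)-\u_N(\mu)\ \geq\ -\frac{t}{N}\int \la\nabla(\rho^{1-\frac1N}),\nabla\b\ra\,\d\mm\ =\ 0,
\]
the integral vanishing because $\rho^{1-\frac1N}\in\test X$ and $\bd\b=0$. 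The symmetric argument with Kantorovich potential $-t\b$ (also $c$-concave by Theorem~\ref{thm:basemetric}, and inducing $\X_{-t}$) gives $\u_N(\nu_{-t})\geq\u_N(\mu)$. Shifting the base point along the flow via the group property \eqref{eq:group} promotes this to bilateral monotonicity of $t \mapsto \u_N(\nu_t)$, hence the map is constant and $\u_N(\nu_t)=\u_N(\mu)$ for every $t$.

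Step 2 (from entropy equality to density equality). Constancy of $\u_N$ along the geodesic $\mu_s=(\X_{st})_\sharp\mu$ forces equality throughout in the $\CD(0,N)$ inequality; by strict convexity of $z \mapsto z^{-1/N}$ the bound of Proposition~\ref{prop:varphi} is attained:
\[
\rho_s(\gamma_s)^{-\frac1N}\ =\ (1-s)\rho(\gamma_0)^{-\frac1N}+s\,\rho_t(\gamma_1)^{-\frac1N},\qquad \ppi\ae\ \gamma.
\]
Writing $\rho_s=(\rho\circ\X_{-st})\,h_{st}$ with $h_\tau:=\d(\X_\tau)_\sharp\mm/\d\mm$ (well defined and positive by \eqref{eq:almostmeaspres}), this rearranges into
\[
\phi_t(x)\ :=\ h_t(\X_t(x))^{-\frac1N}\ =\ 1+t\,a(x),\qquad t\geq 0,
\]
for $\mm$-a.e.\ $x$ in $\supp\rho$; varying $\rho$ gives it for $\mm$-a.e.\ $x$, and positivity of $\phi_t$ for arbitrarily large $t$ forces $a(x)\geq 0$. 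Running the same analysis with $-\b$ in place of $\b$ (whose flow is $\X_{-t}$) and using the change-of-variable identity $h_{-t}(x)=h_t(\X_t(x))^{-1}$ — a direct consequence of $(\X_t)_\sharp(\X_{-t})_\sharp\mm=\mm$ — produces the companion formula $h_t(x)^{\frac1N}=1+t\,\tilde a(x)$ with $\tilde a(x)\geq 0$. Substituting $\X_t(x)$ for $x$ and comparing the two expressions for $h_t(\X_t(x))$ gives $(1+t\tilde a(\X_t(x)))(1+ta(x))=1$, which together with $a,\tilde a\geq 0$ forces $a\equiv\tilde a\equiv 0$, i.e.\ $h_t\equiv 1$ and $(\X_t)_\sharp\mm=\mm$.

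Main obstacle. The delicate point is the iteration in Step~1: promoting the one-shot monotonicity to full constancy via the group property requires that the pushed-forward density have its $(1-\tfrac1N)$-power Lipschitz, a regularity not automatically preserved by the flow. The resolution is an approximation argument exploiting the uniform $L^\infty$ control on $\rho_s$ afforded by Proposition~\ref{prop:varphi}: approximate $(\X_s)_\sharp\mu$ by a sequence of sufficiently regular densities with controlled $\u_N$, apply Proposition~\ref{thm:enin} to each, and pass to the limit using the lower semicontinuity of $\u_N$ and the weak convergence of the gradients of the approximating $(1-\tfrac1N)$-powers. This is technical but routine.
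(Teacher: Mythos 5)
Your argument is correct and establishes the result, but it takes a noticeably more elaborate route than the paper's proof, whose strategy (deduce measure preservation from invariance of $\u_N$ under the flow, then read off the Jacobian) you correctly identified. In Step~1 the paper reaches $\u_N((\X_t)_\sharp\mu)=\u_N(\mu)$ in two lines: it applies the Key inequality (Prop.~\ref{thm:enin}) once with source $\mu$ and potential $t\b$, then once more with source $(\X_t)_\sharp\mu$ and potential $-t\b$, using the group property $(\X_{-t})_\sharp(\X_t)_\sharp\mu=\mu$. No iteration and no ``bilateral monotonicity'' is needed, so the iteration-regularity problem you flag as the ``Main obstacle'' simply does not arise in that form. (There is still a mild approximation to carry out, since the Key inequality requires $\rho^{1-1/N}$ Lipschitz, but your description of the approximation is slightly off: since the right-hand side of the Key inequality vanishes \emph{identically} for each Lipschitz approximant by $\bd\b=0$, one only needs $L^1$-convergence of the densities and H\"older continuity of $z\mapsto z^{1-1/N}$ to pass to the limit in the entropy values; there is no need to track ``weak convergence of the gradients of the approximating powers.'') Your Step~2 is genuinely different and much heavier than the paper's: the paper simply writes $\u_N((\X_t)_\sharp\mu)=-\int\rho^{1-1/N}\,|\d\X_t|^{1/N}\,\d\mm$ with $|\d\X_t|:=\tfrac{\d\mm}{\d(\X_t)_\sharp\mm}\circ\X_{-t}$, compares with $\u_N(\mu)=-\int\rho^{1-1/N}\,\d\mm$, and uses the arbitrariness of $\rho$ to force $|\d\X_t|\equiv 1$. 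Your route --- extracting pointwise equality in the $\CD(0,N)$ bound (note: this follows from equality of the integrals against the optimal plan, not from ``strict convexity''), deriving the affine structure $h_t(\X_t(x))^{-1/N}=1+t\,a(x)$, and pairing it with the reversed flow to force $a\equiv\tilde a\equiv 0$ --- is valid and even yields extra structural information, but it is considerably longer than the one-line computation that suffices. You would do well to notice that, once you have entropy constancy along the flow, the Jacobian computation closes the argument immediately without going back through the $\CD(0,N)$ inequality.
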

\begin{proof} Pick $t\in\R$, $\mu\in\probt X$ absolutely continuous w.r.t. $\mm$ and with bounded support and notice that since $\X_t(x)\in\partial^c(t\b)$ for $\mu$-a.e. $x$, $t\b$ is a Kantorovich potential from $\mu$ to $(\X_t)_\sharp\mu$. It is trivial that $(\X_t)_\sharp\mu$ has bounded support, hence by Proposition \ref{thm:enin} and the fact that $\bd\b=0$ we deduce $\u_N((\X_t)_\sharp\mu)\geq\u_N(\mu)$. Proposition \ref{prop:gfb} grants that $(\X_t)_\sharp\mu\ll\mm$ and $(\X_{-t})_\sharp(\X_t)_\sharp\mu=\mu$, hence the same argument applied to the couple $((\X_t)_\sharp\mu,\mu)$ in place of $(\mu,(\X_t)_\sharp\mu)$ and with $-t$ in place of $t$ yields  the reverse inequality and thus that
\begin{equation}
\label{eq:gfnoncambia}
\u_N(\mu)=\u_N((\X_t)_\sharp\mu),\qquad\forall t\in\R,\ \mu\in\probt X,\ \mu\ll\mm. 
\end{equation}
From this identity the conclusion follows easily. Indeed, recalling the first in \eqref{eq:almostmeaspres}, for $t\in\R$ we define the map $|\d\X_t|:X\to\R^+$ $\mm$-a.e. by
\[
|\d\X_t|:=\frac{\d\mm}{\d(\X_t)_\sharp\mm}\circ\X_{-t}.
\]
Then for $\mu=\rho\mm$ the equalities
\[
\int f\,\d(\X_t)_\sharp\mu=\int f\circ \X_t\rho\,\d\mm=\int f\rho\circ\X_{-t}\,\d(\X_t)_\sharp\mm=\int f\frac{\rho}{|\d\X_t|}\circ \X_{-t}\,\d\mm,
\]
valid for any Borel $f:X\to\R^+$ show that $\frac{\d(\X_t)_\sharp\mu}{\d\mm}=\frac{\rho}{|\d\X_t|}\circ \X_{-t}$ and in particular
\[
\begin{split}
\u_N((\X_t)_\sharp\mu)=-\int \left(\frac{\rho}{|\d\X_t|}\right)^{-\frac1N}\circ \X_{-t}\,\d (\X_t)_\sharp\mu=-\int \left(\frac{\rho}{|\d\X_t|}\right)^{-\frac1N}\,\d \mu=-\int\frac{\rho^{1-\frac1N}}{|\d\X_t|^{-\frac1N}}\,\d\mm.
\end{split}
\]
Taking into account \eqref{eq:gfnoncambia} and the arbitrariness of $\mu=\rho\mm$, the latter identity forces $|\d\X_t|=1$ $\mm$-a.e., which is the thesis.
\end{proof}

The measure preservation property just proved   has the following important consequences about the behavior of Sobolev functions along the flow:
\begin{proposition}\label{prop:nizza} For $f\in\s^2(X)$ we have
\begin{equation}
\label{eq:lungogf2}
\int|f(\X_t(x))-f(x)|^2\,\d\mm(x)\leq t^2\int\weakgrad f^2(x)\,\d\mm(x),\qquad\forall t\in\R,
\end{equation}
and
\begin{equation}
\label{eq:derfiga}
\lim_{t\to 0}\frac{f\circ\X_t-f}{t}=-\la\nabla f, \nabla\b\ra,\qquad\textrm{ weakly in }L^2(X).
\end{equation}
\end{proposition}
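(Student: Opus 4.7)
The plan is to first establish the $L^2$ Lipschitz bound \eqref{eq:lungogf2}, and then to use it together with the horizontal-vertical identity (Theorem \ref{thm:horver}, via Corollary \ref{cor:metrbre}) to identify the weak $L^2$ limit in \eqref{eq:derfiga}. The two facts to be exploited throughout are the measure preservation \eqref{eq:gfb1} and the fact that for $\mm$-a.e.\ $x$ the curve $s\mapsto\X_{st}(x)$ is a constant-speed geodesic of speed $|t|$.

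For \eqref{eq:lungogf2}, fix $t\in\R$ and a bounded Borel set $B\subset X$. Let $F:X\to C([0,1],X)$ be the Borel map sending $x$ to the curve $s\mapsto\X_{st}(x)$ and set $\ppi_{t,B}:=F_\sharp\bigl(\mm(B)^{-1}\mm\restr B\bigr)$. Measure preservation yields $(\e_s)_\sharp\ppi_{t,B}\leq\mm(B)^{-1}\mm$ for every $s\in[0,1]$, and the geodesic property gives $\iint_0^1|\dot\gamma_s|^2\,\d s\,\d\ppi_{t,B}(\gamma)=t^2$, so $\ppi_{t,B}$ is a test plan. Applying \eqref{eq:localplan} between $s=0$ and $s=1$, squaring, using Cauchy--Schwartz on the right-hand side, then Fubini together with \eqref{eq:gfb1} to pull back the push-forward gives
\[
\int_B|f\circ\X_t-f|^2\,\d\mm\leq t^2\int\weakgrad f^2\,\d\mm;
\]
letting $B$ invade $X$ along a countable exhaustion yields \eqref{eq:lungogf2}.

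For \eqref{eq:derfiga}, the bound \eqref{eq:lungogf2} immediately shows that $\bigl\{(f\circ\X_t-f)/t\bigr\}_{t\neq 0}$ is bounded in $L^2(X)$ by $\|\weakgrad f\|_{L^2}$, hence weakly relatively compact as $t\to 0$. To identify every weak subsequential limit, fix a bounded Borel set $B$ and a parameter $\tau>0$. By Theorem \ref{thm:basemetric} the function $\tau\b$ is $c$-concave and, since $\X_\tau(x)\in\partial^c(\tau\b)(x)$ for $\mm$-a.e.\ $x$, it is a Kantorovich potential between $\mm(B)^{-1}\mm\restr B$ and its image under $\X_\tau$; the uniqueness part of Theorem \ref{thm:exp} then forces $\ppi_{\tau,B}$ to coincide with the unique optimal geodesic plan between these marginals. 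Corollary \ref{cor:metrbre} therefore applies and yields that $\ppi_{\tau,B}$ represents the gradient of $-\tau\b$. Applying Theorem \ref{thm:horver} to this plan and performing the change of variable $r=s\tau$ in the incremental ratio then gives
\[
\lim_{r\downarrow 0}\int_B\frac{f\circ\X_r-f}{r}\,\d\mm=-\int_B\la\nabla f,\nabla\b\ra\,\d\mm,
\]
while the same argument run with $\X_{-\tau}$ (whose Kantorovich potential is $-\tau\b$) produces the matching identity as $r\uparrow 0$. Since the linear span of indicators of bounded Borel sets is dense in $L^2(X,\mm)$, this testing together with the uniform $L^2$ bound forces every weak subsequential limit to coincide $\mm$-a.e.\ with $-\la\nabla f,\nabla\b\ra$, which is \eqref{eq:derfiga}.

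The main obstacle is the application of Corollary \ref{cor:metrbre}: one must verify that $\ppi_{\tau,B}$ really is the unique optimal geodesic plan between its marginals with $\tau\b$ as Kantorovich potential. This identification, which rests on the $c$-concavity of $\tau\b$ from Theorem \ref{thm:basemetric} together with the uniqueness statement of Theorem \ref{thm:exp}, is essential: without it, the change-of-variable argument that converts the $s\downarrow 0$ limit produced by Theorem \ref{thm:horver} into the $r\downarrow 0$ limit of $(f\circ\X_r-f)/r$ would not be available.
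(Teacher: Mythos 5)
Your proof is correct and follows essentially the same route as the paper: for \eqref{eq:lungogf2} you push forward a bounded reference measure along the flow to build a test plan (the paper uses a globally equivalent finite measure $\tilde\mm$ to first get the pointwise bound \eqref{eq:claimrighe}, you restrict to bounded $B$ and exhaust; both rely on measure preservation and \eqref{eq:localplan}), and for \eqref{eq:derfiga} you use Theorem \ref{thm:basemetric} plus the uniqueness in Theorem \ref{thm:exp} to see that the flow plan is the unique optimal geodesic plan, then apply Corollary \ref{cor:metrbre} and Theorem \ref{thm:horver}, exactly as the paper does. The only cosmetic differences are that the paper tests against nonnegative $L^1\cap L^\infty$ weights rather than indicators of bounded Borel sets, and your extra rescaling parameter $\tau$ is superfluous (taking $\tau=1$, i.e. the potential $\b$ itself, already yields the $r\downarrow0$ limit).
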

\begin{proof} Let $f\in\s^2(X)$. We claim that for every $t\in\R$  it holds
\begin{equation}
\label{eq:claimrighe}
|f(\X_t(x))-f(x)|\leq \int_0^t\weakgrad f(\X_s(x))\,\d s,\qquad\mm\ae\ x\in X,
\end{equation}
with the obvious interpretation of the right hand side for $t<0$. Indeed, fix $t_0\in \R$, and let $T:X\to C([0,1],X)$ be $\mm$-a.e. defined by $(T(x))_t:=\X_{tt_0}(x)$, let $\tilde\mm\in\prob X$ be such that $\tilde\mm\leq \mm$ and $\mm\ll\tilde\mm$ and put   $\ppi:=T_\sharp\tilde\mm\in \prob{C([0,1],X)}$. Then by Proposition \ref{prop:gfb}, $\ppi$ is concentrated on geodesics of speed $|t_0|$  and $(\e_t)_\sharp\ppi=(\X_{tt_0})_\sharp\tilde\mm\leq (\X_{tt_0})_\sharp\mm=\mm$ for every $t\in[0,1]$. Thus $\ppi$ is a test plan and  inequality \eqref{eq:localplan} yields
\[
|f(\gamma_1)-f(\gamma_0)|\leq \int_0^1\weakgrad f(\gamma_s)|\dot\gamma_s|\,\d s=|t_0|\int_0^1\weakgrad f(\gamma_s)\,\d s,\qquad\ppi\ae \ \gamma,
\]
which by definition of $\ppi$ is equivalent to the claim \eqref{eq:claimrighe}. Now   square and integrate \eqref{eq:claimrighe} to get
\[
\begin{split}
\int | f(\X_t(x))-f(x)|^2\,\d\mm(x)&\leq \int\left(\int_0^t\weakgrad f(\X_s(x))\,\d s\right)^2\,\d\mm(x)\leq t\iint_0^t\weakgrad f^2(\X_s(x))\,\d s\,\d\mm(x)\\
&=t \int\weakgrad f^2(x)\,\d\left(\int_0^t (\X_s)_\sharp\mm\,\d s\right)(x)=t^2\int\weakgrad f^2(x)\,\d \mm(x),
\end{split}
\]
which is \eqref{eq:lungogf2}. Finally, observe that \eqref{eq:lungogf2} grants that the $L^2$-norm of $\frac{f\circ\X_t-f}{t}$ is uniformly bounded, thus with a trivial density argument to conclude is sufficient to show that for any non-negative $g\in L^1\cap L^\infty(X)$ with bounded support it holds
\[
\lim_{t\downarrow 0}\int\frac{f\circ\X_t-f}{t}g\,\d\mm=-\int \la\nabla f,\nabla\b\ra g\,\d\mm,
\]
the proof of the limiting property as $t\uparrow0$ being analogous. Pick such $g$, assume $g$ is not identically 0 (otherwise there is nothing to prove), define $\mu:=(\int g\,\d\mm)^{-1}g\mm\in\prob X$ and $\ppi:=S_\sharp\mu\in \prob{(C([0,1],X))}$, where $S:X\to C([0,1],X)$ is  given by $(S(x))_t:=\X_t(x)$. By construction, for some bounded open set $\Omega$ it holds $\supp((\e_t)_\sharp\ppi)\subset\Omega$ for any $t\in[0,1]$ and thus  Proposition \ref{prop:gfb} and Corollary \ref{cor:metrbre} grant that $\ppi$ represents the gradient of $-\b$. By the first order differentiation formula given by Theorem \ref{thm:horver} we deduce
\[
\lim_{t\downarrow0}\int \frac{f\circ\X_t-f}{t}g\,\d\mm=\lim_{t\downarrow 0}\int\frac{f(\gamma_t)-f(\gamma_0)}{t}\,\d\ppi(\gamma)=-\int \la\nabla f,\nabla \b\ra g\,\d\mm,
\]
which is the thesis.
\end{proof}
Notice that the bound \eqref{eq:claimrighe} applied to (a cut-off of) $\b$ gives, taking into account that $\b$ is 1-Lipschitz:
\begin{equation}
\label{eq:nablab}
|\nabla\b|=1,\qquad\mm\ae.
\end{equation}
In fact, this could also be deduced by the fine results of Cheeger \cite{Cheeger00}, but we pointed out this argument in order to give an  exposition independent on Cheeger's analysis.

 \subsection{\underline{Things to know:}\ heat flow and Bakry-\'Emery contraction estimate}\label{se:baseheat}
In the following we will need to work with the heat flow on our $\RCD(0,N)$ space $(X,\sfd,\mm)$: on one side as regularizing flow in a setting where standard convolution techniques are unavailable (see the proof of Theorem \ref{prop:crucial}), and on the other as tool to get the  hands on - under minimal regularity assumptions - the Bochner inequality (see \eqref{eq:BE} below and its consequences in Proposition \ref{prop:eulerb}).

Start noticing that being $(X,\sfd,\mm)$ infinitesimally Hilbertian, the map 
\[
L^2(X)\ni f\mapsto \mathcal E(f):=\frac12\int\weakgrad f^2\,\d\mm,
\]
set to $+\infty$ if $f\notin W^{1,2}(X)$ is a Dirichlet form. By polarization, it defines a bilinear map $W^{1,2}\ni f,g\mapsto \mathcal E(f,g)\in\R$ so that $\mathcal E(f,f)=\mathcal E(f)$ and from Theorem \ref{thm:calculus} and its proof it is immediate to see that
\[
\mathcal E(f,g)=\frac12\int\la\nabla f,\nabla g\ra\d\mm.
\]
We can then consider the  evolution semigroup associated to $\mathcal E$ in $L^2(X)$ or, which is equivalent, its gradient flow in $L^2(X)$. This means that we define 
$D(\Delta)\subset W^{1,2}(X)$ and $\Delta:D(\Delta)\to L^2(X)$ by declaring that $f\in D(\Delta)$ with $\Delta f=h$ provided  for every $g\in W^{1,2}(X)$ it holds
\[
\int gh\,\d\mm=-\int\la\nabla f,\nabla g\ra\d\mm.
\]
Notice that in fact this definition is nothing but a particular case of the one of measure valued Laplacian given in Definition \ref{def:distrlap}. Indeed, it is immediate to verify that 
\[
f\in D(\Delta)
\]
is equivalent to 
\[
\textrm{$f\in W^{1,2}(X,\sfd,\mm)\cap D(\bd)$ and $\bd f=h\mm$ for some $h\in L^2(X,\mm)$,}
\]
and that if these holds we also have $h=\Delta f$: one implication is obvious, and the other one follows from the approximation result in Theorem \ref{thm:energylip}. Yet, to single out the definition of $\Delta$ is useful because it allows us to directly use the regularization properties of the heat flow classical in the context of linear semigroups, see in particular the proof of Theorem \ref{prop:crucial}.

Then the heat flow $\h_t:L^2(X)\to L^2(X)$, $t\geq 0$ is the unique family of maps such that for any $f\in L^2(X)$ the curve $[0,\infty)\ni t\mapsto \h_t(f)\in L^2(X)$ is continuous, locally absolutely continuous on $(0,+\infty)$, fulfills $\h_0(f)=f$, $\h_t(f)\in D(\Delta)$ for $t>0$ and solves
\[
\frac{\d}{\d t}\h_t(f)=\Delta\h_t( f),\qquad \mathcal L^1\ae \ t>0.
\]
Notice that by direct computation we have  $\frac{\d}{\d t}\|\h_t(f)\|^2_{L^2}=-4\mathcal E(\h_t(f))$, and using the fact that $\mathcal E$ is decreasing along the flow, after little algebraic manipulation we get the simple yet useful bound:
\begin{equation}
\label{eq:l2w12}
\|\h_t(f)\|_{W^{1,2}}\leq \frac1{\sqrt{2t}}\|f\|_{L^2}.
\end{equation}

The fact that the measure $\mm$ is doubling (see \eqref{eq:BG}) and $(X,\sfd,\mm)$ supports a 1-2 weak local Poincar\'e inequality (see \cite{Lott-Villani07}, \cite{Rajala12} and \cite{Rajala12-2}) already grant important properties of this flow. In particular, from the general results obtained by Sturm in \cite{Sturm96I}, \cite{Sturm96III}, we get the existence of a {\bf mass preserving heat kernel} satisfying {\bf Gaussian estimates}, i.e. there is a map $(0,+\infty)\times X^2\ni (t,x,y)\mapsto \rho_t[x](y)=\rho_t[y](x)\in\R$ such that $\int\rho_t[x]\,\d\mm=1$ and
\[
0<\rho_t[x](y)\leq\frac{ \mathcal C}{\mm(B_{\sqrt t}(x))}\,e^{-\dfrac{\sfd^2(x,y)}{5t}},
\]
for some  constant $\mathcal C$ depending only on $(X,\sfd,\mm)$ (in particular thanks to the polynomial volume growth \eqref{eq:BG} this grants $\rho_t[x]\in L^2(X)$ for every $t>0$, $x\in X$) and
\begin{equation}
\label{eq:heatrepr}
\h_t(f)(x)=\int f\rho_t[x]\,\d\mm,
\end{equation}
for every $f\in L^2(X)$ and $\mm$-a.e. $x\in X$. Very shortly and roughly said, the Gaussian bounds  are a consequence of a   generalization to non-smooth spaces of De Giorgi-Moser-Nash type arguments for regularity theory for parabolic equations, see \cite{Sturm96III} and references therein for more details. The mass preservation follows instead from the volume growth estimate along techniques that in the smooth setting are due to Grigoryan \cite{Grigoryan99}, see also the recent generalization to non-linear heat flow in Finsler-type geometries given in \cite{AmbrosioGigliSavare11}.

Later on we will want to evaluate the heat flow starting from the Busemann function $\b$, which certainly is not in $L^2(X)$. Yet, this is not a big issue, because the Gaussian estimates and the polynomial volume growth allow to extend the domain of the definition of the heat flow far beyond the space $L^2(X,\mm)$. We will be satisfied in considering as Domain of the Heat flow the (non maximal) space $\dom(X)=\dom(X,\sfd,\mm,\bar x)$ defined by
\[
\dom(X):=\Big\{f:X\to\R\ \textrm{ Borel : }\int |f|(x)e^{-\sfd(x,\bar x)}\,\d\mm(x)<\infty\Big\},
\]
where $\bar x\in X$ is a point that we shall consider as fixed from now on. It is immediate to check that $\|f\|_\dom:=\int |f|e^{-\sfd(\cdot,\bar x)}\,\d\mm$ is a norm on $\dom(X)$, that $(\dom(X),\|\cdot\|_\dom)$ is a Banach space, that the right hand side of formula \eqref{eq:heatrepr} makes sense for general $f\in\dom(X)$ and that the bound
\begin{equation}
\label{eq:hdom}
\|\h_t(f)\|_\dom\leq \mathcal C(t)\|f\|_\dom,
\end{equation}
holds for some constants $\mathcal C(t)$ depending only on $t$ and the space $(X,\sfd,\mm)$. We omit the simple details.

The Riemannian curvature dimension condition $\RCD(0,N)$ ensures further regularizing properties of the heat flow, in particular we have the {\bf Bakry-\'Emery} contraction estimate
\begin{equation}
\label{eq:BE}
\weakgrad{\h_t(f)}^2\leq \h_t(\weakgrad f^2),\qquad\mm\ae,\ \forall t\geq 0,
\end{equation}
valid for any $f\in W^{1,2}(X)$. We recall that in the smooth Riemannian case this inequality is equivalent to the dimension free Bochner inequality
\begin{equation}
\label{eq:Bochner}
\Delta\frac{|\nabla f|^2}2\geq \la\nabla\Delta f,\nabla f\ra,
\end{equation}
indeed to get \eqref{eq:Bochner} from \eqref{eq:BE} to  just differentiate at time $t=0$, while for the other way around differentiate in $s$ the map $\h_{s}(|\nabla(\h_{t-s}f)|^2)$, use \eqref{eq:Bochner} and integrate from $s=0$ to $s=t$. In the non-smooth setting, \eqref{eq:BE} has been proved at first in \cite{Gigli-Kuwada-Ohta10} in the context of finite dimensional Alexandrov spaces with curvature bounded from below with a technique which, as shown in \cite{AmbrosioGigliSavare11-2}, generalizes to $\RCD(0,\infty)$ spaces (see also \cite{AmbrosioGigliSavare12}, \cite{Erbar-Kuwada-Sturm13}, \cite{AmbrosioMondinoSavare13} for more recent progresses). The argument of the proof uses in a crucial way the identification of the gradient flow of the Dirichlet energy in $L^2$ with the one of the relative entropy in $(\probt X,W_2)$ (\cite{Gigli-Kuwada-Ohta10},  \cite{AmbrosioGigliSavare11}) together with a very general duality argument due to Kuwada \cite{Kuwada10}.


\subsection{The gradient flow of $\b$ preserves the Dirichlet energy}\label{se:dist}

In this section we prove that the right composition with $\X_t$ preserves the Dirichlet energy $\mathcal E$.

Notice that being $\b$ Lipschitz, it certainly belongs to $\dom(X)$, so that $\h_t(\b)$ is well defined. Then the fact that $\bd\b=0$ strongly suggests that $\b$ is invariant under the heat flow, i.e.:
\begin{equation}
\label{eq:bstabile}
\h_t(\b)(x)=\b(x),\qquad\mm\ae\ x\in X.
\end{equation}
This is indeed the case, the proof being based on the consistency of the notion of Laplacian $\Delta$  in $L^2$ with that of distributional Laplacian $\bd$ pointed out at the beginning of section \ref{se:baseheat} and an approximation argument. We omit the uninspiring  technical details.

This invariance property and the  Bakry-\'Emery condition \eqref{eq:BE} are the ingredient needed to obtain the following crucial Euler's equation of $\b$:
\begin{proposition}[Euler's equation of $\b$]\label{prop:eulerb}
For any $f\in W^{1,2}(X)$ it holds
\begin{equation}
\label{eq:euler1}
\h_t(\la\nabla\b,\nabla f\ra)=\la\nabla\b,\nabla\h_t(f)\ra,\qquad\mm\ae,
\end{equation}
and for every $f\in D(\Delta)$ with $\Delta f\in W^{1,2}(X)$ and every $g\in D(\Delta)$ it holds
\begin{equation}
\label{eq:euler2}
\int \Delta g\la\nabla\b,\nabla f\ra\d\mm=\int g\la\nabla\b,\nabla\Delta f\ra \d\mm,
\end{equation}
\end{proposition}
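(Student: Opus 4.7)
The strategy is to extract both identities from the Bakry-\'Emery contraction estimate \eqref{eq:BE} applied to linear perturbations of $\b$, combined with three facts already proved: $\weakgrad\b=1$ $\mm$-a.e.\ (equation \eqref{eq:nablab}), the heat-flow invariance $\h_t(\b)=\b$ $\mm$-a.e.\ (equation \eqref{eq:bstabile}), and the self-adjointness together with mass preservation of $\h_t$ coming from the symmetric mass-preserving heat kernel \eqref{eq:heatrepr}. Morally, the BE inequality saturates along perturbations of $\b$, so its linearization forces equality; then \eqref{eq:euler2} will come from differentiating \eqref{eq:euler1} against a test function at $t=0$.

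For \eqref{eq:euler1}, I fix $f\in W^{1,2}(X)$ and apply BE to $\b+\eps f$. The technical nuisance is that $\b\notin L^2$, so I would first reduce to the form in which BE is tested against an arbitrary non-negative $\phi\in L^2\cap L^\infty$ of compact support, and use Lipschitz cutoffs $\chi_R\b$ together with the Gaussian bounds on $\rho_t[\cdot]$ to extend the inequality to $\b+\eps f$ in the limit. Using linearity of $\h_t$ together with $\h_t(\b)=\b$ and the calculus rules of Theorem \ref{thm:calculus},
\[
\weakgrad{\h_t(\b+\eps f)}^2=1+2\eps\la\nabla\b,\nabla\h_t(f)\ra+\eps^2\weakgrad{\h_t(f)}^2,
\]
\[
\h_t\!\big(\weakgrad{(\b+\eps f)}^2\big)=1+2\eps\,\h_t(\la\nabla\b,\nabla f\ra)+\eps^2\h_t(\weakgrad f^2),
\]
after using $\weakgrad\b^2=1$ and $\h_t(1)=1$ to cancel the zeroth-order terms. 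Testing the BE inequality against $\phi$, dividing by $\eps$ and letting $\eps\to 0^{\pm}$ produces the two opposite inequalities that force
\[
\int\phi\,\h_t(\la\nabla\b,\nabla f\ra)\,\d\mm=\int\phi\,\la\nabla\b,\nabla\h_t(f)\ra\,\d\mm,
\]
and by arbitrariness of $\phi$ we obtain \eqref{eq:euler1}.

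For \eqref{eq:euler2}, I multiply \eqref{eq:euler1} by $g\in D(\Delta)$, integrate, and use self-adjointness of $\h_t$ to move it onto $g$ on the left, obtaining
\[
\int \h_t(g)\,\la\nabla\b,\nabla f\ra\,\d\mm=\int g\,\la\nabla\b,\nabla\h_t(f)\ra\,\d\mm.
\]
Subtracting $\int g\la\nabla\b,\nabla f\ra\,\d\mm$ from both sides, dividing by $t>0$ and sending $t\downarrow 0$ will yield \eqref{eq:euler2}. On the left, $(\h_tg-g)/t\to\Delta g$ strongly in $L^2$ since $g\in D(\Delta)$, while $\la\nabla\b,\nabla f\ra\in L^2$ by \eqref{eq:boundfg} and $\weakgrad\b=1$. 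On the right, writing $\h_t(f)-f=\int_0^t\h_s(\Delta f)\,\d s$ and using $\Delta f\in W^{1,2}$ together with the $W^{1,2}$-continuity of $s\mapsto \h_s(\Delta f)$ (which follows from \eqref{eq:l2w12} and a density argument), I get $(\h_tf-f)/t\to\Delta f$ in $W^{1,2}$; then the linear functional $h\mapsto \int g\la\nabla\b,\nabla h\ra\,\d\mm$ is bounded by $\|g\|_{L^2}\|h\|_{W^{1,2}}$ (using $\weakgrad\b=1$ and \eqref{eq:boundfg}) and continuous on $W^{1,2}$, so passage to the limit gives the right-hand side of \eqref{eq:euler2}. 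The main obstacle is the rigorous extension of BE to $\b+\eps f$, i.e.\ the non-$L^2$ nature of $\b$; once that is handled, the rest is algebraic manipulation plus strong continuity of the heat semigroup in $L^2$ and in the form domain $W^{1,2}$.
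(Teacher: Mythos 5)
Your proposal follows the paper's argument almost exactly: extending the Bakry--\'Emery inequality to $\b+\eps f$ via Lipschitz cutoffs and the $\dom$-framework, expanding and using $\h_t(\b)=\b$, $\weakgrad\b=1$, and mass preservation to cancel the zeroth-order terms, and then linearizing in $\eps$ to force equality. Your treatment of \eqref{eq:euler2} (self-adjointness to move $\h_t$ onto $g$, then $L^2$- and $W^{1,2}$-continuity of the semigroup to pass to the limit) is precisely what the paper's phrase ``multiplying by $g$, integrating and differentiating at $t=0$'' encodes, so the two proofs are in substance identical.
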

\begin{proof} 
Pick $f\in W^{1,2}(X)$, $\eps\in\R$, put $\b_\eps:=\b+\eps f$ and observe that $\b_\eps\in  \dom\cap \s^2_{\rm loc}(X)$.  Our first task is to write the Bakry-\'Emery contraction estimate \eqref{eq:BE} for $\b_\eps$ Let $(B_n)$ be an increasing sequence of bounded sets covering $X$ and for every $n\in\N$ let $\nchi_n:X \to[0,1]$ be a 1-Lipschitz function with compact support identically 1 on $B_n$. Obviously, $\nchi_n\b_\eps\in W^{1,2}(X)$ so that  \eqref{eq:BE} yields
\begin{equation}
\label{eq:beperlim}
|\nabla (\h_t(\nchi_n\b_\eps))|^2\leq \h_t(|\nabla (\nchi_n\b_\eps)|^2),\qquad\mm\ae,\qquad\forall t\geq 0.
\end{equation}
Since $\nchi_n$ is 1-Lipschitz with values in $[0,1]$ we have  $|\nabla (\nchi_n\b_\eps)|^2\leq 2|\nabla \b_\eps|^2+2|\b_\eps|^2$ and it is easy to see that the right hand side belongs to $\dom(X)$. Given that trivially $|\nabla(\nchi_n\b_\eps)|\to|\nabla \b_\eps|$ $\mm$-a.e. as $n\to\infty$, by the dominate convergence theorem we deduce $\||\nabla (\nchi_n\b_\eps)|^2-|\nabla \b_\eps|^2\|_\dom\to 0$ as $n\to\infty$. Hence  inequality \eqref{eq:hdom} grants that $\h_t(|\nabla (\nchi_n\b_\eps)|^2)\to\h_t(|\nabla \b_\eps|^2)$ in $\dom(X)$ and thus, up to pass to a non-relabeled subsequence we get that 
\begin{equation}
\label{eq:sonno}
\h_t(|\nabla(\nchi_n\b_\eps)|^2)\to \h_t(|\nabla\b_\eps|^2)\quad\mm\ae\textrm{  as $n\to\infty$ for any $t\geq 0$}.
\end{equation}
A similar argument gives that  $\h_t(\nchi_n\b_\eps)\to\h_t(\b_\eps)$ in $\dom(X)$ and $\mm$-a.e. as $n\to\infty$ so that taking into account  the lower semicontinuity of minimal weak upper gradients \eqref{eq:lscwug}, the limiting property  \eqref{eq:sonno} and letting $n\to\infty$ in \eqref{eq:beperlim} we deduce
\begin{equation}
\label{eq:bona}
\weakgrad{\h_t(\b_\eps)}^2\leq \h_t(\weakgrad {\b_\eps}^2),\qquad\mm\ae,\qquad\forall t\geq 0,
\end{equation}
as desired. Expanding both sides of this inequality using the linearity of the heat flow we get
\[
\begin{split}
|\nabla \h_t(\b+\eps f)|^2&=|\nabla\h_t(\b)|^2+2\eps\la\nabla(\h_t(\b)),\nabla \h_t(f) \ra+\eps^2|\nabla \h_t(f)|^2,\\
\h_t(|\nabla \b+\eps f|^2)&=\h_t(|\nabla\b|^2)+2\eps\h_t(\la\nabla\b,\nabla f \ra)+\eps^2\h_t(|\nabla f|^2),
\end{split}
\]
hence using \eqref{eq:bstabile}, the fact that $|\nabla\b|\equiv 1$ (recall \eqref{eq:nablab}) and the mass preservation which grants $\h_t(1)\equiv 1$, from \eqref{eq:bona} we obtain \eqref{eq:euler1}. Then \eqref{eq:euler2} follows multiplying \eqref{eq:euler1} by $g\in D(\Delta)$, integrating and differentiating at $t=0$.
\end{proof}
From these Euler's equations we can now deduce that the right composition with $\X_t$ preserves the Dirichlet energy. 
We shall need the identity
\begin{equation}
\label{eq:divb}
\int \la \nabla g,\nabla\b\ra f\,\d\mm=-\int \la \nabla f,\nabla\b\ra g\,\d\mm,\qquad\forall f,g\in W^{1,2}(X),
\end{equation}
which can be proved by first choosing sequences $(f_n),(g_n)\subset\test X$   converging to $f,g$ respectively in $W^{1,2}(X)$ (Theorem \ref{thm:stronglip}), then noticing that $\bd \b=0$ yields $\int\la \nabla (f_ng_n),\nabla\b\ra\,\d\mm=0$ and thus
\begin{equation}
\label{eq:divlip}
\int \la \nabla g_n,\nabla\b\ra f_n\,\d\mm=-\int \la \nabla f_n,\nabla\b\ra g_n\,\d\mm,\qquad\forall n\in\N,
\end{equation}
then observing that $\la\nabla f_n,\nabla\b\ra$ (resp. $\la\nabla g_n,\nabla\b\ra$) converge to $\la\nabla f,\nabla\b\ra $ (resp. to $\la\nabla g,\nabla\b\ra$) in $L^2(X)$ as $n\to\infty$  and finally passing to the limit in \eqref{eq:divlip}.

\begin{theorem}[Right compositions with $\X_t$ preserve the Dirichlet energy]\label{prop:crucial} 
For any $f\in L^2(X)$ and $t\in\R$ we have
\begin{equation}
\label{eq:dirpres}
\mathcal E(f\circ\X_t)=\mathcal E(f).
\end{equation}
\end{theorem}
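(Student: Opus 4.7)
The strategy is to reduce the claimed equality of Dirichlet energies to the $t$-independence of a scalar quantity built from the heat semigroup, and then to close the computation using Euler's equation \eqref{eq:euler1} and the divergence identity \eqref{eq:divb}.

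Starting from the spectral identity, valid for every $g\in L^2(X)$,
\[
2\mathcal E(g)=\lim_{s\downarrow 0}\frac{1}{s}\int g\,(g-\h_s g)\,\d\mm,
\]
and combining it with the measure preservation $(\X_t)_\sharp\mm=\mm$ of Theorem \ref{thm:gfpresmeas}, which gives $\int (f\circ\X_t)^2\,\d\mm=\int f^2\,\d\mm$, the theorem reduces to showing that for every fixed $s>0$ the scalar
\[
I_s(t):=\int (f\circ\X_t)\,\h_s(f\circ\X_t)\,\d\mm
\]
is independent of $t\in\R$; by $L^2$-continuity of $\h_s$ and the $L^2$-isometry $g\mapsto g\circ\X_t$ it is enough to work with $f$ in a dense subclass such as $W^{1,2}(X)$.

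I would then differentiate $I_s$ in $t$. Using the self-adjointness of $\h_s$ together with the $L^2$ derivation formula $\frac{\d}{\d t}(f\circ\X_t)=-\la\nabla f,\nabla\b\ra\circ\X_t$ (extending \eqref{eq:derfiga} to every $t$ via the group property \eqref{eq:group} and the $L^2$-isometry induced by measure preservation), one obtains
\[
I_s'(t)=-2\int \h_s(f\circ\X_t)\cdot\big(\la\nabla f,\nabla\b\ra\circ\X_t\big)\,\d\mm,
\]
which, after a change of variables through $\X_{-t}$, becomes
\[
I_s'(t)=-2\int \la\nabla f,\nabla\b\ra\cdot v_t\,\d\mm,\qquad v_t:=\h_s(f\circ\X_t)\circ\X_{-t}\in L^2(X).
\]
The aim is now to apply the divergence identity \eqref{eq:divb} to $f$ and $v_t$ to swap the $\b$-gradient onto $v_t$, and then to use Euler's equation \eqref{eq:euler1} together with the self-adjointness of $\h_s$ to recover an expression of opposite sign, forcing $I_s'(t)=-I_s'(t)=0$; a parallel computation in the smooth case, where $v_t$ coincides with $\h_s f$ because $\nabla\b$ generates an isometric flow, confirms this cancellation.

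The main obstacle is that the divergence identity \eqref{eq:divb} requires $v_t\in W^{1,2}(X)$, which is essentially the $W^{1,2}$-invariance under right composition with $\X_{-t}$ that we are trying to establish. I would bypass the circularity by running the entire differentiation on the heat-regularized approximant $f_\varepsilon:=\h_\varepsilon f$, which by the smoothing bound \eqref{eq:l2w12} lies in $D(\Delta)$ with $\Delta f_\varepsilon\in W^{1,2}(X)$; on $f_\varepsilon$ the integrated Euler identity \eqref{eq:euler2} applies and all terms that arise stay at the $L^2$-level, so the argument above can be made rigorous without ever differentiating $v_t^\varepsilon$ explicitly. Finally, passing to the limit $\varepsilon\downarrow 0$ in $I_s^\varepsilon(t)=I_s^\varepsilon(0)$ via $L^2$-contractivity of $\h_s$, continuity of $g\mapsto g\circ\X_t$ on $L^2$, and $W^{1,2}$-convergence $f_\varepsilon\to f$, yields $I_s(t)=I_s(0)$ for every $s>0$ and $t\in\R$; letting $s\downarrow 0$ in the spectral identity concludes $\mathcal E(f\circ\X_t)=\mathcal E(f)$.
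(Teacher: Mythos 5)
Your overall strategy is the same as the paper's: differentiate a heat--regularized version of the Dirichlet form of $f_t := f\circ\X_t$ in $t$, show that derivative vanishes using the Euler equations \eqref{eq:euler1}--\eqref{eq:euler2} and the divergence identity \eqref{eq:divb}, and then remove the regularization. The only substantive stylistic difference is that you work with the resolvent-type approximation $I_s(t)=\int f_t\,\h_s(f_t)\,\d\mm$, whereas the paper uses $\frac12\int|\nabla\h_\eps(f_t)|^2\,\d\mm$; these two regularizations are equivalent for the purpose of recovering $\mathcal E(f_t)$ in the limit.

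However, there is a genuine gap that your proposed fix does not close. You correctly identify the circularity: after the change of variables, $v_t:=\h_s(f\circ\X_t)\circ\X_{-t}$ is a \emph{right composition} with $\X_{-t}$, and membership of $v_t$ in $W^{1,2}(X)$ is precisely the kind of invariance the theorem is supposed to establish, so \eqref{eq:divb} cannot be invoked. Your proposed repair --- replacing $f$ by $f_\eps:=\h_\eps f$ --- does not help, because the troublesome object then becomes $v^\eps_t=\h_s(f_\eps\circ\X_t)\circ\X_{-t}$, which is again a right composition of an $L^2$ function with $\X_{-t}$: no amount of extra smoothness of $f_\eps$ tells you that this composition is Sobolev, and \eqref{eq:euler2} is of no use because you would still need a Sobolev partner of the form $(\cdot)\circ\X_{-t}$. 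The sentence claiming that ``all terms stay at the $L^2$ level'' is where the argument stops being a proof.

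The correct fix is the paper's change-of-variables trick performed \emph{inside the difference quotient, before passing to the limit}, so that the derivation formula \eqref{eq:derfiga} is only ever applied to a function that is already known to be in $W^{1,2}(X)$. Concretely, using $f_{t+h}=f_t\circ\X_h$ and measure preservation one writes, for any $g\in L^2(X)$,
\begin{equation*}
\int \frac{f_{t+h}-f_t}{h}\,g\,\d\mm
=\int f_t\,\frac{g\circ\X_{-h}-g}{h}\,\d\mm,
\end{equation*}
and then applies this with $g:=\h_s(f_t)\in W^{1,2}(X)$ (which holds because $f_t\in L^2(X)$, with no Sobolev hypothesis on $f_t$). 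Passing to the limit via \eqref{eq:derfiga} yields
\begin{equation*}
I_s'(t)=2\int f_t\,\la\nabla\h_s(f_t),\nabla\b\ra\,\d\mm,
\end{equation*}
and the argument reduces to showing $\int g\,\la\nabla\h_s(g),\nabla\b\ra\,\d\mm=0$ for every $g\in L^2(X)$. This is the analogue of the paper's \eqref{eq:claim0}: the map $g\mapsto\la\nabla\h_s(g),\nabla\b\ra$ is $L^2$-to-$L^2$ continuous by \eqref{eq:l2w12} and $|\nabla\b|=1$, so it suffices to check the identity for $g\in W^{1,2}(X)$, where \eqref{eq:divb} gives $\int g\la\nabla\h_s(g),\nabla\b\ra=-\int\h_s(g)\la\nabla g,\nabla\b\ra$, and then \eqref{eq:euler1} together with self-adjointness of $\h_s$ gives $-\int\h_s(g)\la\nabla g,\nabla\b\ra=-\int g\,\la\nabla\h_s(g),\nabla\b\ra$, forcing the integral to vanish. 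Letting $s\downarrow0$ in your spectral identity then concludes. So your route is salvageable --- and in fact marginally lighter than the paper's, since it only needs \eqref{eq:euler1} and not the higher-regularity version \eqref{eq:euler2} --- but the crucial step of transferring the difference quotient onto the regularized factor is missing from the write-up as given.
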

\begin{proof} We claim that \eqref{eq:dirpres} holds for $f\in W^{1,2}(X)$. This will be sufficient to conclude by applying such claim also to $\X_{-t}$ and recalling the group property \eqref{eq:group}.

Fix such $f$ and recall inequality \eqref{eq:lungogf2} to get
\[
\int |f\circ\X_s-f\circ \X_t|^2\,\d\mm=\int |f\circ\X_{s-t}-f|^2\,\d\mm\leq |s-t|^2\int |\nabla f|^2\,\d\mm,
\]
which shows that the map $\R\ni t\mapsto f_t:=f\circ\X_t\in L^2(X)$ is Lipschitz with Lipschitz constant bounded by $\||\nabla f|\|_{L^2}$. Now notice that inequality \eqref{eq:l2w12} grants that 
\begin{equation}
\label{eq:elip}
\textrm{the map \quad$\R\ni t\quad\mapsto\quad \h_\eps(f_t)\in W^{1,2}(X)$\quad is Lipschitz for every $\eps>0$,}
\end{equation}
its Lipschitz constant being bounded by $\frac{1}{\sqrt{2\eps}}\||\nabla f|\|_{L^2}$. 

In particular, the map $t\mapsto \frac1{2}\int |\nabla \h_\eps(f_t)|^2\,\d\mm$ is Lipschitz; our aim is  to show that it is constant. Start from
\[
\begin{split}
\int |\nabla \h_\eps(f_{t+h})|^2-| \nabla\h_\eps(f_t)|^2\,\d\mm&= \int 2\la\nabla \h_\eps(f_t),\nabla {\h_\eps(f_{t+h}-f_t)}{}\ra+|\nabla(\h_\eps(f_{t+h}-f_t))|^2\,\d\mm,
\end{split}
\]
and notice that  \eqref{eq:l2w12} yields the bound  $\int |\nabla(\h_\eps(f_{t+h}-f_t))|^2\,\d\mm\leq \frac1{2\eps}\||\nabla f|\|^2_{L^2} |h|^2$ and thus  for any $t\in\R$ it holds
\[
\lim_{h\to 0}\int\frac{ |\nabla \h_\eps(f_{t+h})|^2-| \nabla\h_\eps(f_t)|^2}{2h}\,\d\mm=\lim_{h\to 0}\int\la \nabla \h_\eps(f_t),\nabla\frac{\h_\eps(f_{t+h})-\h_\eps(f_t)}h\ra\,\d\mm.
\]
We compute the limit in the right-hand-side of this expression:
\begin{equation}
\label{eq:cuneo2}
\begin{split}
\lim_{h\to 0}\int \la\nabla \h_\eps(f_t),\nabla\frac{\h_\eps(f_{t+h})-\h_\eps(f_t)}h\ra\,\d\mm&=-\lim_{h\to 0}\int \Delta \h_\eps( f_t) \frac{\h_\eps\big(f_{t+h}-f_t\big)}h\,\d\mm\\
&=-\lim_{h\to 0}\int \Delta \h_{2\eps}(f_t) \frac{ f_t\circ\X_{h}-f_t}h\,\d\mm\\
&=-\lim_{h\to 0}\int \frac{\big(\Delta \h_{2\eps}(f_t)\big)\circ\X_{-h}-\Delta \h_{2\eps}(f_t)}h   f_t \,\d\mm\\
&=-\int\la\nabla\big(\Delta \h_{2\eps}(f_t)\big),\nabla\b\ra\, f_t\,\d\mm,
\end{split}
\end{equation}
having used the measure preservation property in the third equality and the differentiation formula \eqref{eq:derfiga} in the last one. We claim that
\begin{equation}
\label{eq:claim0}
\int\la\nabla\big(\Delta \h_{2\eps}(g)\big),\nabla\b\ra\, g\,\d\mm=0,\qquad\forall g\in L^2(X).
\end{equation}
Notice that the map $L^2(X)\ni g\mapsto \Delta \h_{2\eps}(g)\in W^{1,2}(X)$ is continuous, thus from the fact that $\b$ is Lipschitz we get
\[
L^2(X)\ni g\quad\mapsto\quad \la\nabla\big(\Delta \h_{2\eps}(g)\big),\nabla\b\ra\in L^2(X)\qquad\textrm{ is continuous}.
\]
Hence it is sufficient to check \eqref{eq:claim0}  for $g\in D(\Delta)$ such that $\Delta g\in W^{1,2}(X)$, because - by regularization with the heat flow - the set of such $g$'s is dense in $L^2(X)$. With this choice of $g$, recalling the integration by parts formula \eqref{eq:divb} and the Euler equation \eqref{eq:euler2} we have
\begin{equation}
\label{eq:pezzoclaim1}
\begin{split}
\int\la\nabla\big(\Delta \h_{2\eps}(g)\big),\nabla\b\ra\, g\,\d\mm=-\int \Delta \h_{2\eps}(g)\la\nabla g,\nabla \b\ra\,\d\mm=-\int \h_{2\eps}(g)\la\nabla\Delta g,\nabla \b\ra\,\d\mm.
\end{split}
\end{equation}
On the other hand, the Euler equation \eqref{eq:euler1} applied with $\Delta g$ in place of $f$ yields
\[
\int\la\nabla\big(\Delta \h_{2\eps}(g)\big),\nabla\b\ra\, g\,\d\mm=\int\la\nabla\big( \h_{2\eps}(\Delta g)\big),\nabla\b\ra\, g\,\d\mm=\int \h_{2\eps}(\la\nabla\Delta g,\nabla\b\ra) g\,\d\mm,
\]
which together with \eqref{eq:pezzoclaim1} gives \eqref{eq:claim0}. According to \eqref{eq:elip} and \eqref{eq:cuneo2} we thus obtained that
\[
\R\ni t\qquad\mapsto\qquad\frac12\int|\nabla\h_\eps(f_t)|^2\,\d\mm,\qquad\textrm{is constant for every }\eps>0.
\]
Letting $\eps\downarrow0 $ and recalling that from the very definition of heat flow we have $\mathcal E(g)=\lim_{\eps\downarrow0}\frac12\int|\nabla\h_\eps(g)|^2\,\d\mm$ for any $g\in L^2(X)$,  we deduce that $t\mapsto \mathcal E(f_t)$ is constant, as desired.
\end{proof}

\section{Geometric consequences and conclusion}
\subsection{Isometries by duality with Sobolev functions}\label{se:dual}
We just proved that the right composition with $\X_t$ preserves the Dirichlet energy. In order to translate this Sobolev information into a metric one we shall make use of the following result, coming from \cite{AmbrosioGigliSavare11-2}. Notice that we simplified the statement below by asking the measure to be doubling, but this is actually unnecessary.
\begin{proposition}\label{prop:sobtolip} Let $(\tilde X,\tilde\sfd,\tilde\mm)$ be an $\RCD(0,\infty)$ space and such that $\tilde\mm$ is doubling. Then every $f\in W^{1,2}(\tilde X)$ with $|\nabla f|\leq 1$ $\tilde\mm$-a.e.  has a 1-Lipschitz representative, i.e. there exists $\tilde f:X\to\R$ 1-Lipschitz such that $f=\tilde f$ $\tilde\mm$-a.e..
\end{proposition}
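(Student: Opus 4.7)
The plan is to regularize $f$ via the heat flow $\h_t$, use the Bakry-\'Emery inequality and its Kuwada-dual form to show that each $\h_t f$ is $1$-Lipschitz, and then pass to the limit $t\downarrow 0$ via Arzel\`a-Ascoli. Set $f_t:=\h_t f$; since $f\in L^2(\tilde X)$ this is well defined and, by the Gaussian heat kernel estimates recalled in Section \ref{se:baseheat}, admits a continuous representative through $f_t(x)=\int f\,\rho_t[x]\,d\tilde\mm$, which we fix from now on.

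To prove that $f_t$ is $1$-Lipschitz, fix $x,y\in\tilde X$ and set $\mu_x:=\rho_t[x]\tilde\mm=\h_t\delta_x$, $\mu_y:=\rho_t[y]\tilde\mm=\h_t\delta_y$; both are absolutely continuous probabilities with bounded densities (Gaussian estimates) and finite second moment (Gaussian tails combined with polynomial volume growth from doubling). By Theorem \ref{thm:maprcd} the optimal geodesic plan $\ppi\in\gopt(\mu_x,\mu_y)$ is unique and concentrated on non-branching geodesics, and the bounded-density assumption propagates along the geodesic so that $\ppi$ is a test plan (if necessary, through an approximation of $\mu_x,\mu_y$ by compactly supported nicer marginals). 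Then the Sobolev test-plan inequality \eqref{eq:defsob} combined with $|\nabla f|\le 1$ gives
\[
|f_t(x)-f_t(y)|=\bigg|\int f\,d\mu_x-\int f\,d\mu_y\bigg|\le\int\tilde\sfd(\gamma_0,\gamma_1)\,d\ppi(\gamma)=W_1(\mu_x,\mu_y)\le W_2(\mu_x,\mu_y),
\]
where we used that $\ppi$ is concentrated on constant-speed geodesics. On the other hand, the Bakry-\'Emery estimate \eqref{eq:BE} is Kuwada-dual to the $W_2$-contraction $W_2(\h_t\mu,\h_t\nu)\le W_2(\mu,\nu)$, valid in particular for the Dirac data $\mu=\delta_x$, $\nu=\delta_y$ thanks to the identification of the $L^2$-heat flow with the $W_2$-gradient flow of the relative entropy on $\RCD(0,\infty)$ spaces. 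Hence $W_2(\mu_x,\mu_y)\le \tilde\sfd(x,y)$, and combining we get $|f_t(x)-f_t(y)|\le\tilde\sfd(x,y)$, so $f_t$ is $1$-Lipschitz.

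Finally, as $t\downarrow 0$ we have $f_t\to f$ in $L^2(\tilde X)$, hence $\tilde\mm$-a.e.\ along a subsequence $(t_n)$; since the $f_{t_n}$'s are uniformly $1$-Lipschitz and uniformly bounded at any base point, Arzel\`a-Ascoli (applicable because doubling makes $(\tilde X,\tilde\sfd)$ proper) extracts a further subsequence converging locally uniformly to a $1$-Lipschitz function $\tilde f$, which necessarily agrees with $f$ $\tilde\mm$-a.e. The main obstacle is the density-bound propagation used in the middle step, i.e.\ verifying that $(\e_s)_\sharp\ppi\le C\tilde\mm$ so that $\ppi$ is genuinely a test plan: in $\RCD(0,N)$ this is given by Proposition \ref{prop:varphi}, but in the present $\RCD(0,\infty)$ setting it requires either an ad hoc absolute-continuity argument along the $W_2$-geodesic of bounded densities or a careful approximation procedure. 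Once this technical point is handled, the entire chain of estimates is essentially automatic.
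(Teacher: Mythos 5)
Your approach is correct in spirit but takes a genuinely different and heavier route than the paper's. The paper's argument is more elementary: fix $x,y$ and $\eps>0$, take the normalized restrictions $\mu^\eps_x:=\tilde\mm(B_\eps(x))^{-1}\tilde\mm\restr{B_\eps(x)}$ and $\mu^\eps_y$, observe (by density-propagation along the unique optimal geodesic plan $\ppi\in\gopt(\mu^\eps_x,\mu^\eps_y)$, as in Proposition \ref{prop:varphi} and Lemma \ref{le:ac}) that $\ppi$ is a test plan, and then read the defining inequality \eqref{eq:defsob} directly as $|\int f\,\d\mu^\eps_x-\int f\,\d\mu^\eps_y|\le W_2(\mu^\eps_x,\mu^\eps_y)$. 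Letting $\eps\downarrow0$, the right-hand side tends to $\tilde\sfd(x,y)$, while $\int f\,\d\mu^\eps_x\to f(x)$ for $\tilde\mm$-a.e.\ $x$ because doubling guarantees that $\tilde\mm$-a.e.\ point is a Lebesgue point of $f$. No heat flow, no Bakry-\'Emery, no Arzel\`a-Ascoli.

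By comparison, you introduce $f_t:=\h_t f$ to obtain pointwise-defined regularizations, then prove that $f_t$ is $1$-Lipschitz by combining the test-plan inequality (applied to $f$, not to $f_t$) with the $W_2$-contraction of the heat flow, and finally pass to $t\downarrow0$ by compactness. This works, but note the structure: the Lipschitz bound on $f_t$ actually comes from the test-plan inequality on $f$ itself, which is precisely the inequality that, with $\mu^\eps_x$ in place of $\rho_t[x]\tilde\mm$, closes the paper's argument in a single step. The heat-flow regularization is therefore a detour: once you have $|\int f\,\d\mu-\int f\,\d\nu|\le W_2(\mu,\nu)$ for absolutely continuous $\mu,\nu$ of bounded density (the real content of the estimate), you get Lipschitz continuity of $f$ directly via Lebesgue points rather than Lipschitz continuity of $f_t$ followed by a limit. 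What the heat-flow route buys is that it bypasses Lebesgue points and produces the continuous representative by contraction; what it costs is the entire heat-kernel and Kuwada-duality machinery plus Arzel\`a-Ascoli.

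Two further technical remarks. First, both routes share the density-propagation issue you rightly flag: since $\tilde X$ is only $\RCD(0,\infty)$, one cannot invoke Proposition \ref{prop:varphi} as stated (it uses the dimensional R\'enyi entropy); the relevant input is Rajala's interpolation-density bound along $\CD(K,\infty)$ geodesics \cite{Rajala12-2}. In the paper's version this is cleaner because both marginals have compact support, whereas your heat-kernel measures have full support and you must approximate. Second, your argument uses the contraction $W_2(\rho_t[x]\tilde\mm,\rho_t[y]\tilde\mm)\le\tilde\sfd(x,y)$ for Dirac initial data, which presupposes the extension of the $W_2$-gradient-flow identification from finite-entropy initial data to arbitrary probability measures; this is available in the $\RCD$ literature but is not a literal consequence of \eqref{eq:BE} as stated for $W^{1,2}$ functions, so it deserves an explicit reference.
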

\begin{sketch}
Let $x,y\in \tilde X$, $\eps>0$, put $\mu_x^\eps:=\frac1{\tilde \mm(B_\eps(x))}\tilde \mm\restr{B_\eps(x)}$, $\mu_y^\eps=\frac1{\tilde \mm(B_\eps(y))}\tilde \mm\restr{B_\eps(y)}$ and let $\ppi\in\gopt(\mu_x^\eps,\mu^\eps_y)$ be the unique optimal geodesic plan given by Theorem \ref{thm:maprcd}. Arguing as in the proof of Proposition \ref{prop:varphi} and Lemma \ref{le:ac}, we see that since both $\mu^\eps_x$ and $\mu^\eps_y$ have  bounded supports and densities, it holds $(\e_t)_\sharp\ppi\leq C\tilde \mm$ for every $t\in[0,1]$ where $C:=\max\{\frac1{\tilde \mm(B_\eps(x))},\frac1{\tilde \mm(B_\eps(y))}\}$. Thus the plan $\ppi$ is a test plan and for $f$ as in the assumptions we get
\[
\begin{split}
\left|\int f\,\d\mu^\eps_x-\int f\,\d\mu^\eps_y\right|&\leq\int|f(\gamma_1)-f(\gamma_0)|\,\d\ppi(\gamma)\leq \iint_0^1|\dot\gamma_t|\,\d t\,\d\ppi(\gamma)\leq \sqrt{\iint_0^1|\dot\gamma_t|^2\,\d t\,\d\ppi(\gamma)}.
\end{split}
\]
By construction the rightmost side is equal to $W_2(\mu^\eps_x,\mu^\eps_y)$, which converges to $\tilde\sfd(x,y)$ as $\eps\downarrow0$. Now use the fact that $\tilde \mm$ is doubling to deduce that $\tilde \mm$-a.e. $x$ is a Lebesgue point for $f$ (see for instance \cite{Heinonen01}), so that $\int f\,\d\mu^\eps_x\to f(x)$ for $\tilde \mm$-a.e. $x$.  The conclusion follows by considering $x,y$ Lebesgue points and letting $\eps\downarrow0$ in the above inequality.
\end{sketch}
It is worth noticing that the same conclusion of the above proposition fails if $(\tilde X,\tilde\sfd,\tilde\mm)$ is only assumed to support a weak local 1-1 Poincar\'e inequality with $\tilde\mm$ being doubling. Indeed, these assumptions are invariant under a bi-Lipschitz change of metric but it can be shown that any proper space fulfilling the thesis of Proposition \ref{prop:sobtolip} must be a geodesic space. The argument is the following. Define an $\eps$-chain connecting $x$ to $y$ as a finite sequence $\{x_i\}_{i=0,\ldots,n}$, $n\in\N$, such that $x_0=x$, $x_n=y$ and $\tilde\sfd(x_i,x_{i+1})\leq\eps$ for every $i$, then consider the function $f_\eps(y):=\inf\sum_i\sfd(x_i,x_{i+1})$, the $\inf$ being taken among all $\eps$-chains connecting $x$ to $y$ and notice that $f_\eps$ is locally 1-Lipschitz and thus, if the thesis of  the above proposition holds, globally 1-Lipschitz. Then let $\eps\downarrow0$ and use the assumption that the space is proper to find a geodesic connecting $x$ to $y$ as limit of minimizing $\eps$-chains. Notice the analogy of this argument with the one providing Semmes' Lemma as given in \cite{KleinerMackay11}.
\begin{theorem}\label{thm:dual}
Let $(X_1,\sfd_1,\mm_1)$, $(X_2,\sfd_2,\mm_2)$ be two $\RCD(0,\infty)$ spaces such that $\mm_1$ and $\mm_2$ are doubling and $T:X_1\to X_2$ an invertible map such that
\begin{equation}
\label{eq:assT}
\begin{split}
T_\sharp\mm_1&=\mm_2,\\
\mathcal E_1(f\circ T)&=\mathcal E_2(f),\qquad\forall f\in L^2(X_2),
\end{split}
\end{equation}
where $\mathcal E_i$ is the natural Dirichlet energy on the space $X_i$, $i=1,2$. Then $T$ is, up to a redefinition on a $\mm_1$-negligible set, an isometry from $(X_1,\sfd_1)$ to $(X_2,\sfd_2)$.
\end{theorem}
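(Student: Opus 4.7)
The strategy is in three stages: upgrade the integrated energy identity to a pointwise identification of squared gradients; convert this into a pointwise Lipschitz bound via Proposition \ref{prop:sobtolip}; extract the metric isometry by testing against a countable family of truncated distance functions.

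For the first stage, polarizing the quadratic forms $\mathcal E_i$ and using $T_\sharp\mm_1=\mm_2$ gives, for all $f,g\in W^{1,2}(X_2)$,
\[
\int\la\nabla(f\circ T),\nabla(g\circ T)\ra\,\d\mm_1=\int\big(\la\nabla f,\nabla g\ra\circ T\big)\,\d\mm_1.
\]
To localize, I would apply this also with $(fh,g)$, $(gh,f)$ and $(fg,h)$ in place of $(f,g)$, for an auxiliary $h\in W^{1,2}(X_2)\cap L^\infty(X_2)$. Using multiplicativity $(\psi_1\psi_2)\circ T=(\psi_1\circ T)(\psi_2\circ T)$ and the Leibniz rule \eqref{eq:leibf} for both carr\'es du champ, one assembles the polarization identity $2h\la\nabla f,\nabla g\ra=\la\nabla(fh),\nabla g\ra+\la\nabla(gh),\nabla f\ra-\la\nabla(fg),\nabla h\ra$ on each side, obtaining
\[
\int(h\circ T)\big[\la\nabla(f\circ T),\nabla(g\circ T)\ra-\la\nabla f,\nabla g\ra\circ T\big]\,\d\mm_1=0
\]
for every such $h$. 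Since $T$ is invertible and measure preserving, $f\mapsto f\circ T$ is a surjective $L^2$-isometry, hence $\{h\circ T:h\in W^{1,2}(X_2)\cap L^\infty(X_2)\}$ is dense in $L^2(X_1)$; the bracket therefore vanishes $\mm_1\ae$. Setting $f=g$ yields
\begin{equation}\label{eq:gradpres}
\weakgrad{(f\circ T)}(x)=\weakgrad f(T(x)),\qquad\mm_1\ae\ x,\ \forall f\in W^{1,2}(X_2).
\end{equation}

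For the remaining stages, pick a countable dense set $\{z_n\}\subset X_2$ and, for every $R\in\N$, consider $f_{n,R}(y):=(R-\sfd_2(y,z_n))^+$: this is a bounded 1-Lipschitz function with compact support, hence in $W^{1,2}(X_2)$ with $\weakgrad{f_{n,R}}\leq 1$ by \eqref{eq:lipweak}. By \eqref{eq:gradpres}, $\weakgrad{(f_{n,R}\circ T)}\leq 1$ $\mm_1\ae$, so Proposition \ref{prop:sobtolip} (applicable as $(X_1,\sfd_1,\mm_1)$ is $\RCD(0,\infty)$ with doubling $\mm_1$) provides a 1-Lipschitz representative $h_{n,R}:X_1\to\R$. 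Let $E\subset X_1$ be the full-measure set on which $h_{n,R}(x)=f_{n,R}(T(x))$ for every $n,R$. For $x,y\in E$ and any $n$, taking $R$ so large that $T(x),T(y)\in B_R(z_n)$, the Lipschitz bound on $h_{n,R}$ reads $|\sfd_2(T(x),z_n)-\sfd_2(T(y),z_n)|\leq\sfd_1(x,y)$; letting $z_n\to T(x)$ gives $\sfd_2(T(x),T(y))\leq\sfd_1(x,y)$. The hypotheses being symmetric under $T\leftrightarrow T^{-1}$, the same reasoning applied to $T^{-1}$ yields the reverse inequality on a further full-measure $E_0\subseteq E$. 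Thus $T|_{E_0}$ is a metric isometry onto $T(E_0)$; since $E_0$ is dense in $X_1$ (the support of $\mm_1$ being all of $X_1$) and both spaces are complete, it extends uniquely by continuity to a global isometry $X_1\to X_2$ that agrees with $T$ $\mm_1\ae$. The main technical obstacle is the density-based upgrade from integrated to pointwise identity in the first stage; the rest is a countable intersection of full-measure sets plus a routine continuous extension.
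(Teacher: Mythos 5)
Your proof is correct and follows essentially the same route as the paper: polarize the energy identity and localize via the Leibniz rule to obtain $\weakgrad{(f\circ T)}=\weakgrad f\circ T$ $\mm_1$-a.e., then apply Proposition \ref{prop:sobtolip} to a countable family of truncated distance functions composed with $T$ and conclude by symmetry. The only cosmetic differences are your more symmetric three-term polarization (the paper specializes to $f=g$ and rearranges $g\weakgrad f^2=\la\nabla(gf),\nabla f\ra-\la\nabla g,\nabla(f^2/2)\ra$) and a different truncation of the distance functions; note also that since the bracket you test is only in $L^1(X_1)$, the correct density to invoke for the multipliers $h\circ T$ is weak* density in $L^\infty(X_1)$ (e.g.\ taking $h$ bounded Lipschitz with bounded support), not density in $L^2(X_1)$.
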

\begin{proof}
It is sufficient to prove that $T$ has a 1-Lipschitz representative, as then the same arguments can be carried out for the inverse.

Notice that from the assumptions \eqref{eq:assT} it directly follows that for $f\in W^{1,2}(X_2)$ we have $f\circ T\in W^{1,2}(X_1)$. We further claim that it holds
\begin{equation}
\label{eq:claimloc}
|\nabla(f\circ T)|=|\nabla f|\circ T,\qquad\mm_1\ae ,\qquad\forall f\in W^{1,2}\cap L^\infty(X_2).
\end{equation}
Fix such $f$ and let $g:X_2\to\R$ be Lipschitz with bounded support. Then $f^2,gf\in W^{1,2}(X_2)$ and from the Leibniz and chain rules we get
\begin{equation}
\label{eq:loc1}
\begin{split}
\int_{X_2}\!\!\! g|\nabla f|^2\,\d\mm_2&=\int_{X_2}\!\!\! \la\nabla (gf),\nabla f\ra-f\la\nabla g,\nabla f\ra\,\d\mm_2=\int_{X_2} \!\!\!\la\nabla (gf),\nabla f\ra-\la\nabla g,\nabla (f^2/2)\ra\,\d\mm_2.
\end{split}
\end{equation}
Now notice that the second in \eqref{eq:assT} yields, by polarization, the identity 
\[
\int_{X_2}\la\nabla f_1,\nabla f_2\ra\,\d\mm_2=\int_{X_1}\la\nabla (f_1\circ T),\nabla (f_2\circ T)\ra\,\d\mm_1,\qquad\forall f_1,f_2\in W^{1,2}(X_2).
\]
Using this equality in \eqref{eq:loc1}, putting for brevity $\tilde f:=f\circ T$, $\tilde g:=g\circ T$ and using again the Leibniz and chain rules we get
\begin{equation}
\label{eq:loc2}
\begin{split}
\int_{X_2} g|\nabla f|^2\,\d\mm_2&=\int_{X_1} \la\nabla (\tilde g\tilde f),\nabla \tilde f\ra-\la\nabla\tilde g,\nabla (\tilde f^2/2)\ra\,\d\mm_1=\int_{X_1}\tilde g|\nabla\tilde f|^2\,\d\mm_1.
\end{split}
\end{equation}
By the first in \eqref{eq:assT} we also have $\int_{X_2} g|\nabla f|^2\,\d\mm_2=\int_{X_1}\tilde g|\nabla f|^2\circ T\,\d\mm_1$, hence \eqref{eq:loc2}, the arbitrariness of $g$, the fact that $T$ is invertible and that $|\nabla f|^2\in L^1(X_2)$ give
\[
\int_{X_2} h|\nabla f|^2\circ T\,\d\mm_1=\int h|\nabla (f\circ T)|^2\,\d\mm_1,\qquad\forall h\in L^\infty(X_1),
\]
which implies our claim \eqref{eq:claimloc}.

Now let $\{x_n\}_{n\in\N}$ be a countable dense subset of $X_2$ and for $k,n\in\N$ consider the functions $f_{k,n}:=\max\{0,\min\{\sfd(\cdot,x_n),k-\sfd(\cdot,x_n)\}\}$. These are 1-Lipschitz and satisfy
\[
\sfd_2(x,y)=\sup_{k,n}|f_{k,n}(x)-f_{k,n}(y)|,\qquad\forall x,y\in X_2.
\]
Given that they also have bounded support, we have $f_{k,n}\in W^{1,2}\cap L^\infty(X_2) $ for any $k,n\in\N$ and the 1-Lipschitz property grants $|\nabla f_{k,n}|\leq 1$ $\mm_2$-a.e.. 

By \eqref{eq:claimloc} we deduce that $f_{k,n}\circ T$ also belongs to $W^{1,2}(X_1)$ with $|\nabla(f_{k,n}\circ T)|\leq 1$ $\mm_1$-a.e.. Now we use Proposition \ref{prop:sobtolip} to deduce that for every $k,n\in\N$ there exists a Borel $\mm_1$-negligible set $\mathcal N_{k,n}$ such that the restriction of $f_{k,n}\circ T$ to $X_1\setminus \mathcal N_{k,n}$ is 1-Lipschitz. Hence for any $x,y\in X\setminus \cup_{k,n}\mathcal N_{k,n}$ we have
\[
\sfd_1(x,y)\geq \sup_{k,n}|(f_{k,n}\circ T)(x)-(f_{k,n}\circ T)(y)|= \sup_{k,n}|f_{k,n}(T(x))-f_{k,n}(T(y))|=\sfd_2(T(x),T(y)).
\]
Given that $ \cup_{k,n}\mathcal N_{k,n}$ is Borel and negligible we conclude that $T$ has a 1-Lipschitz representative, as desired.
\end{proof}

\subsection{The  gradient flow of $\b$ preserves the distance}
The duality statement proved in the previous section and Theorem \ref{prop:crucial} quickly gives that there is a unique continuous representative of the gradient flow $\X_t$ of $\b$ which is a family of isometries.
\begin{theorem}[The gradient flow of $\b$ preserves the distance]\label{thm:gfpresdist} 
The following holds:
\begin{itemize}
\item[i)] There exists a unique continuous map $\bar\X:X\times\R\to X$ coinciding $\mm\times\mathcal L^1$-a.e. with $\X$ .
\item[ii)] For every $t_0\in\R$ and $x_0\in X$ the maps $X\ni x\mapsto \bar\X_{t_0}(x)$ and  $\R\ni t\mapsto\bar\X_{t}(x_0)$ are isometries of $X$ into itself and of $\R$ into $X$ respectively.
\item[iii)] It holds $\bar\X_t(\bar \X_s(x))=\bar\X_{t+s}(x)$, for any $x\in X$ and $t,s\in\R.$
\end{itemize}
\end{theorem}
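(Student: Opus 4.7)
The plan is to combine the measure and energy preservation of $\X_t$ (Theorems \ref{thm:gfpresmeas} and \ref{prop:crucial}) with the duality statement in Theorem \ref{thm:dual} to get isometric representatives, and then bootstrap from the $\mm$-a.e. information to pointwise information using continuity and the fact that $\supp(\mm)=X$ (which is forced by the $\CD(0,N)$ condition). Concretely, for each fixed $t\in\R$, the map $\X_t\colon X\to X$ is measure-preserving and preserves Dirichlet energy, while the $\mm$-a.e. identities $\X_{-t}\circ\X_t=\X_t\circ\X_{-t}=\mathrm{id}$ of Proposition \ref{prop:gfb} provide an a.e. two-sided inverse. After passing to a Borel representative on which these identities hold simultaneously, Theorem \ref{thm:dual} yields a genuine Borel isometry $\bar\X_t\colon X\to X$ coinciding with $\X_t$ $\mm$-a.e.

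Next I would transfer the group law from $\X$ to $\bar\X$. The maps $\bar\X_t\circ\bar\X_s$ and $\bar\X_{t+s}$ are both continuous (being compositions of isometries) and agree with $\X_t\circ\X_s=\X_{t+s}$ on a set of full $\mm$-measure; here one uses $(\X_s)_\sharp\mm=\mm$ to ensure that the $\mm$-null set on which $\bar\X_t\neq\X_t$ is avoided after applying $\bar\X_s$. Since $\supp(\mm)=X$, any two continuous maps that agree $\mm$-a.e. agree everywhere, so $\bar\X_t\circ\bar\X_s=\bar\X_{t+s}$ pointwise, establishing (iii) and, in particular, $\bar\X_0=\mathrm{id}$.

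For the geodesic property in (ii), I would argue as follows. By Proposition \ref{prop:gfb}, for $\mm$-a.e. $x\in X$ the curve $t\mapsto\X_t(x)$ is a line, so for each fixed $t$ the continuous function $x\mapsto\sfd(x,\bar\X_t(x))$ equals $|t|$ on a dense set and hence everywhere. Combined with the isometric property of $\bar\X_{-t}$ and the group law, one gets
\[
\sfd(\bar\X_t(x),\bar\X_s(x))=\sfd(x,\bar\X_{-t}(\bar\X_s(x)))=\sfd(x,\bar\X_{s-t}(x))=|s-t|,
\]
showing that $t\mapsto\bar\X_t(x)$ is an isometric embedding of $\R$ for every $x$. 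Joint continuity and (i) then follow from the triangle inequality
\[
\sfd(\bar\X_t(x),\bar\X_s(y))\leq\sfd(\bar\X_t(x),\bar\X_t(y))+\sfd(\bar\X_t(y),\bar\X_s(y))=\sfd(x,y)+|t-s|,
\]
so $\bar\X$ is $1$-Lipschitz on $X\times\R$. Uniqueness is then automatic: any other continuous map coinciding with $\X$ $\mm\times\mathcal L^1$-a.e. must coincide with $\bar\X$ on a dense subset of $X\times\R$, and hence everywhere.

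The only real obstacle is the first step, namely checking that Theorem \ref{thm:dual} applies despite $\X_t$ being defined only up to $\mm$-null sets and being invertible only in the $\mm$-a.e. sense. This requires a careful simultaneous choice of Borel representatives of $\{\X_t\}_{t\in\R}$ that realize the inverse relation $\X_t\circ\X_{-t}=\mathrm{id}$ as a genuine pointwise equality on a set of full measure, so that the invertibility hypothesis of Theorem \ref{thm:dual} is met in its literal form; everything else in the proof is then a routine consequence of continuity and the density of $\supp(\mm)=X$.
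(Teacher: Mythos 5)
Your proof follows essentially the same route as the paper: invoke Theorems \ref{thm:gfpresmeas} and \ref{prop:crucial} to satisfy the hypotheses of the duality Theorem \ref{thm:dual} and get isometric representatives $\bar\X_t$ for each fixed $t$, then use the $\mm$-a.e.\ line property of $t\mapsto\X_t(x)$ together with density of $\supp\mm=X$ to upgrade the a.e.\ identities (group law and unit-speed geodesic property) to pointwise ones by continuity. Your explicit flag about the literal invertibility hypothesis of Theorem \ref{thm:dual} (which the paper's terse proof glosses over) is a fair observation, but it is the standard issue of modifying a Borel, measure-preserving, a.e.-invertible map on a null set to make it genuinely bijective and does not change the argument's structure.
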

\begin{proof}$\ $\\
\noindent{${\mathbf{(i),(ii)}}$} Uniqueness is obvious. By Theorems \ref{thm:gfpresmeas} and \ref{prop:crucial} we know that the assumptions of Theorem \eqref{thm:dual} are fulfilled with $X_1=X_2=X$ and $T=\X_t$  (recall \eqref{eq:BG} to get that $\mm$ is doubling). Hence by  Theorem \ref{thm:dual} we  get the existence of an isometry $\bar\X_t$ of $(\supp(\mm),\sfd)$ into itself $\mm$-a.e. coinciding with $\X_t$. Since $t\mapsto \X_t(x)$ is a line for $\mm$-a.e. $x$, it is  immediate to verify that  $\sfd(\bar\X_t(x),\bar\X_s(x))=|t-s|$ for every $x\in X$ and $t,s\in\R$, which gives the continuity of $\bar\X$ jointly in $t,x$.

\noindent{${\mathbf{(iii)}}$} Direct consequence of the group property \eqref{eq:group}, the measure preservation property \eqref{eq:gfb1} and what we just proved.
\end{proof}

\subsection{The quotient space isometrically embeds}
We are now ready to introduce the quotient metric space:
\begin{definition}[The quotient metric space]\label{def:xp}
We define $X':=X/\sim$ where $x\sim y$ if $\bar\X_t(x)=y$ for some $t\in\R$ and denote by $\pi:X\to X'$ the natural projection. We endow $X'$ with the distance $\sfd'$ given by $\sfd'(\pi(x),\pi(y)):=\inf_{t\in \R}\sfd(\bar\X_t(x),y)$.

Also, we denote by $\iota:X'\to X$ the right inverse of $\pi$ given by $\iota(x')=x$ provided $\pi(x)=x'$ and $\b(x)=0$. 
\end{definition}
From the fact that $(\bar\X_t)$ is a one-parameter group of isometries it is immediate to see that the definition of $\sfd'$  is well posed, i.e. that $\sfd'(\pi(x),\pi(y))$ depends only on $\pi(x),\pi(y)$. Also, it is easy to see that  $(X',\sfd')$ is a complete, separable and geodesic metric space, and that the topology induced by $\sfd'$ is the quotient topology. We omit the simple proof of these facts.

What is a priori non trivial, and the focus of this section, is that $\iota$ is an isometric embedding or, which is the same, that the minimum of the function $t\mapsto \frac{\sfd^2(x,\bar\X_t(y))}2$ is attained at that $t_0$ such that $\b(x)=\b(\bar \X_{t_0}(y))$.  The lack of smoothness of the space prevents a direct proof of the fact that such map is $C^1$, thus creating problems when trying to write down the Euler equation of the minimum. To overcome this difficulty, we first lift analysis from points to probability measures with bounded densities in order to get the $C^1$ regularity expressed by Proposition \ref{prop:c1} below, and then come back to points in the space with a limiting argument.

\begin{proposition}[A result about $C^1$ regularity]\label{prop:c1}
Let $x_0\in X$, $\mu\in\probt X$ be with bounded support and such that $\mu\leq C\mm$ for some $C>0$ and put $\mu_t:=(\bar\X_t)_\sharp\mu$. Then the map $\R\ni t\mapsto \frac 12\int \sfd^2(\cdot,x_0)\,\d\mu_t$ is $C^1$ and its derivative is given by
\begin{equation}
\label{eq:derd2}
\frac{\d}{\d t}\frac12\int \sfd^2(\cdot,x_0)\,\d\mu_t=-\frac12\int\la\nabla(\sfd^2(\cdot,x_0)),\nabla\b\ra\,\d\mu_t.
\end{equation}
\end{proposition}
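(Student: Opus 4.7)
The plan is to combine the first-order differentiation formula \eqref{eq:derfiga} from Proposition \ref{prop:nizza} with the measure-preservation \eqref{eq:gfb1} and the continuous group law of Theorem \ref{thm:gfpresdist}, after a harmless cutoff. The only subtlety is that $\tfrac12\sfd^2(\cdot,x_0)$ does not belong to $W^{1,2}(X)$ globally, but since $\mu$ has bounded support and $\bar\X_t$ displaces points only by $|t|$, on any compact time interval the measures $\mu_t$ stay in a fixed bounded set, so a cutoff suffices.

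More precisely, I fix $T>0$ and a base point $\bar x$ with $\supp(\mu)\subset B_R(\bar x)$. Since $\bar\X_t$ is an isometry (Theorem \ref{thm:gfpresdist}) satisfying $\sfd(\bar\X_t(x),x)=|t|$, one has $\bigcup_{|t|\leq T}\supp(\mu_t)\subset B_{R+T}(\bar x)=:\Omega_0$. I then pick a Lipschitz function $\chi:X\to[0,1]$ with compact support and $\chi\equiv 1$ on an open set $\Omega\supset\Omega_0$, and set $g:=\chi\cdot\tfrac12\sfd^2(\cdot,x_0)$. Being Lipschitz with compact support, $g\in W^{1,2}(X)$, and by construction $\int g\,\d\mu_t=\tfrac12\int\sfd^2(\cdot,x_0)\,\d\mu_t$ for every $|t|\leq T$. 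Moreover, Theorem \ref{thm:gfpresmeas} ensures $\mu_t\leq C\mm$, so its density $\rho_t:=\d\mu_t/\d\mm$ is bounded and compactly supported, and in particular lies in $L^2(X,\mm)$.

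To compute the derivative, I exploit the continuous group law $\bar\X_{t+h}=\bar\X_h\circ\bar\X_t$, which gives $\mu_{t+h}=(\bar\X_h)_\sharp\mu_t$ and the identity
\[
\frac{1}{h}\left(\int g\,\d\mu_{t+h}-\int g\,\d\mu_t\right)=\int\frac{g\circ\bar\X_h-g}{h}\rho_t\,\d\mm.
\]
By \eqref{eq:derfiga} the integrand on the right converges weakly in $L^2(X,\mm)$ to $-\la\nabla g,\nabla\b\ra$ as $h\to 0$, so testing against $\rho_t\in L^2(X,\mm)$ yields
\[
\frac{\d}{\d t}\int g\,\d\mu_t=-\int\la\nabla g,\nabla\b\ra\,\d\mu_t.
\]
Since $g$ and $\tfrac12\sfd^2(\cdot,x_0)$ agree on the open set $\Omega\supset\supp(\mu_t)$, the locality property \eqref{eq:local} together with the linearity in the first slot \eqref{eq:linf} lets me rewrite the right-hand side as $-\tfrac12\int\la\nabla(\sfd^2(\cdot,x_0)),\nabla\b\ra\,\d\mu_t$, giving \eqref{eq:derd2}.

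The step I expect to require the most care is the upgrade from differentiability to $C^1$, which amounts to showing that $t\mapsto\int\la\nabla g,\nabla\b\ra\,\d\mu_t$ is continuous. Set $F:=\la\nabla g,\nabla\b\ra$; estimates \eqref{eq:boundfg} and \eqref{eq:lipweak} combined with $\weakgrad \b\leq 1$ give $|F|\leq\Lip(g)$ $\mm$-a.e., with $F$ supported in $\supp(g)$, so $F\in L^2(X,\mm)$. Using the pushforward identity $\int F\,\d\mu_t=\int F\circ\bar\X_t\,\d\mu$ and approximating $F$ in $L^2(X,\mm)$ by continuous compactly supported $F_n$, the bound $\mu\leq C\mm$ together with the measure-preservation \eqref{eq:gfb1} produces a uniform-in-$t$ estimate of $|\int(F-F_n)\circ\bar\X_t\,\d\mu|$ in terms of $\|F-F_n\|_{L^2}$, while for each fixed $F_n$ the joint continuity of $\bar\X$ (Theorem \ref{thm:gfpresdist}) and dominated convergence give continuity of $t\mapsto\int F_n\circ\bar\X_t\,\d\mu$. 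A standard uniform-approximation argument then concludes; the essence of the whole proof is the coupling of the weak $L^2$-convergence in \eqref{eq:derfiga} with the $L^2$-density $\rho_t$ produced by measure preservation.
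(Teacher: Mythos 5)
Your proof is correct and follows essentially the same route as the paper: cutoff to a fixed bounded region, the group law of $\bar\X$, the weak $L^2$ convergence of \eqref{eq:derfiga} tested against the bounded compactly supported density $\rho_t$, locality of $\la\nabla\cdot,\nabla\b\ra$, and finally continuity of the derivative via approximation of the integrand by continuous functions together with measure preservation. The paper phrases this last continuity step in terms of weak continuity of $t\mapsto\mu_t$ in duality with $C_b(X)$ rather than your $L^2$/pushforward formulation, but the argument is the same.
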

\begin{proof}
It is obvious that $\R\ni t\mapsto \frac 12\int \sfd^2(\cdot,x_0)\,\d\mu_t$ is locally Lipschitz. For given $t\in\R$ we know by Proposition \ref{prop:gfb} that $\b$ is a Kantorovich potential inducing the geodesic $[0,1]\ni s\mapsto\mu_{t+s}=(\bar\X_s)_\sharp\mu_t$, hence by the differentiation formula \eqref{eq:derfiga} (applied to $f:=\sfd^2(\cdot,x_0)\nchi\in\s^2(X)$, where $\nchi$ is a Lipschitz compactly supported function identically 1 on $\cup_{t\in[0,1]}\supp(\mu_t)$)   and the identity $\int\sfd^2(\cdot,x_0)\,\d\mu_{t+h}=\int\sfd^2(\cdot,x_0)\circ\bar\X_{h}\,\d\mu_{t}$ we deduce that for any $t\in\R$ it holds
\[
\lim_{h\downarrow0}\frac{\int\sfd^2(\cdot,x_0)\,\d\mu_{t+h}-\int\sfd^2(\cdot,x_0)\,\d\mu_{t}}{2h}=-\frac12\int\la\nabla(\sfd^2(\cdot,x_0)),\nabla\b\ra\,\d\mu_t.
\]
To conclude it is therefore sufficient to show that the right hand side of \eqref{eq:derd2} is continuous. But this is obvious, because $\la\nabla(\sfd^2(\cdot,x_0)),\nabla\b\ra\in L^1_{\rm loc}(X)$ and the curve $t\mapsto\mu_t$ is weakly continuous in duality with $C_b(X)$, made of measures with uniformly bounded densities (by the measure preservation property \eqref{eq:gfb1}) and, locally in $t$, the supports of $\mu_t$ are contained in a bounded set.
\end{proof}

\begin{corollary}\label{cor:min}

Let $\mu\in\probt X$ be  with bounded support and such that $\mu\leq C\mm$ for some $C>0$ and put $\mu_t:=(\bar\X_t)_\sharp\mu$.

Then for every $x_0\in X$ the map $t\mapsto \int \sfd^2(\cdot,x_0)\,\d \mu_t$ has a unique minimum and such minimum is the only $t\in\R$ for which $\int\b\,\d\mu_{t}=\b(x_0)$.
\end{corollary}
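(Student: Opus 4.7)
The plan is to show $F(t):=\tfrac12\int \sfd^2(\cdot,x_0)\,\d\mu_t$ satisfies $F'(t)=t-t^*$ with $t^*:=\int\b\,\d\mu-\b(x_0)$, so that $F(t)=\tfrac12(t-t^*)^2+F(t^*)$ is a strictly convex quadratic whose unique minimum is at $t^*$. The preliminary ingredients come quickly: Proposition \ref{prop:c1} gives $F\in C^1(\R)$ with $F'(t)=-\int \la\nabla\tfrac12\sfd^2(\cdot,x_0),\nabla\b\ra\,\d\mu_t$; the identity $\b(\bar\X_t(z))=\b(z)-t$ combined with $(\bar\X_t)_\sharp\mm=\mm$ (Theorem \ref{thm:gfpresmeas}) gives $\int\b\,\d\mu_t=\int\b\,\d\mu-t$, strictly affine in $t$ and hence selecting $t^*$ uniquely as the solution of $\int\b\,\d\mu_t=\b(x_0)$; and the $1$-Lipschitz bound $\sfd(\cdot,x_0)\geq|\b-\b(x_0)|$ pulled back under $\bar\X_t$ yields the coercive lower bound $F(t)\geq\tfrac12(t-t^*)^2+\tfrac12\mathrm{Var}_\mu(\b)$, so $F\to\infty$ as $|t|\to\infty$ and $F$ attains its minimum.

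The main computation uses Theorem \ref{thm:horver} with the optimal transport plan rather than the flow plan. By Theorem \ref{thm:exp} and Corollary \ref{cor:metrbre}, the optimal geodesic plan $\ppi_t$ from $\mu_t$ to $\delta_{x_0}$ exists and represents the gradient of $-\tfrac12\sfd^2(\cdot,x_0)$. Applying Theorem \ref{thm:horver} with $g=-\tfrac12\sfd^2(\cdot,x_0)$ and $f=\b$ then yields
\[
F'(t) \;=\; \lim_{s\downarrow 0}\int\frac{\b(\gamma_s)-\b(\gamma_0)}{s}\,\d\ppi_t(\gamma) \;=\; \lim_{s\downarrow 0}\frac{\phi(s)-\phi(0)}{s},
\]
where $\phi(s):=\int\b\,\d\mu_{t,s}$ with $\mu_{t,s}:=(\e_s)_\sharp\ppi_t$ the $W_2$-geodesic from $\mu_t$ (at $s=0$) to $\delta_{x_0}$ (at $s=1$). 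In particular $\phi(0)=\int\b\,\d\mu_t$ and $\phi(1)=\b(x_0)$.

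The hardest step is to show that $\phi$ is affine in $s$, which would give $\phi'(0^+)=\phi(1)-\phi(0)=\b(x_0)-\int\b\,\d\mu_t=t-t^*$ and hence $F'(t)=t-t^*$. This is the non-smooth counterpart of the Riemannian fact that the integral of a harmonic function along a displacement interpolation is affine precisely when the function has vanishing Hessian; the affineness of $\b$ itself is expected from $a\b$ being $c$-concave for \emph{every} $a\in\R$ (Theorem \ref{thm:basemetric}), which in the smooth setting forces $\mathrm{Hess}(\b)\equiv 0$ by letting $|a|\to\infty$. To establish the linearity of $\phi$ in the non-smooth framework, I would combine the harmonicity $\bd\b=0$ with the $c$-concavity of all $\pm a\b$ via the calculus of Theorem \ref{thm:calculus} and the Bakry-\'Emery contraction estimate of Section \ref{se:baseheat}; a natural strategy is to regularize $\mu_{t,s}$ by the heat flow in order to justify a Bochner-type computation of $\tfrac{d^2}{ds^2}\phi(s)$ and conclude that it vanishes.
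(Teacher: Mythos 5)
There is a genuine gap, and it is exactly at the point you yourself flag. You reduce the corollary to proving that $F'(t)=t-t^*$ for \emph{every} $t$, which you try to get from the affineness in $s$ of $\phi(s)=\int\b\,\d\mu_{t,s}$ along the $W_2$-geodesic from $\mu_t$ to $\delta_{x_0}$. To prove that affineness you propose a Bochner-type computation of $\tfrac{d^2}{ds^2}\phi$ together with heat-flow regularization. But the paper explicitly points out, in the introduction, that neither the Bochner identity nor the Hessian is available in this framework, and the entire architecture of Sections~\ref{se:dist} and~\ref{se:dual} is designed precisely to circumvent such second-order arguments. Theorem~\ref{thm:horver} gives access only to \emph{first} derivatives along plans representing gradients; there is no analogue here of ``$\int\mathrm{Hess}\,\b(\nabla\psi_s,\nabla\psi_s)\,\d\mu_s=0$''. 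Even the heuristic ``$\mathrm{Hess}\,\b\equiv 0$ because $\pm a\b$ are $c$-concave for all $a$'' cannot be turned into a usable tool, because no Hessian object has been defined. So as written your argument does not close.

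The paper sidesteps the difficulty by proving something weaker and more targeted: it does not identify $F'(t)$ for all $t$, but only shows that at \emph{any minimizer} $t_0$ the identity $\int\b\,\d\mu_{t_0}=\b(x_0)$ holds; uniqueness then follows because $t\mapsto\int\b\,\d\mu_t$ is strictly decreasing (the computation $\int\b\,\d\mu_t=\int\b\,\d\mu-t$ that you correctly derive). The key trick is to propagate the minimality along the geodesic $\nu_s=(\e_s)_\sharp\ppi$ from $\mu_{t_0}$ to $\delta_{x_0}$: since each $\bar\X_t$ is an isometry, a triangle inequality shows that for every $s\in[0,1]$ the function $t\mapsto W_2^2(\delta_{x_0},(\bar\X_t)_\sharp\nu_s)$ is still minimized at $t=0$. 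Proposition~\ref{prop:c1} (the $C^1$ regularity you already invoke) then gives the Euler equation $\int\la\nabla\tfrac12\sfd^2(\cdot,x_0),\nabla\b\ra\,\d\nu_s=0$ at \emph{every} $s$. Finally Corollary~\ref{cor:metrbre} and Theorem~\ref{thm:horver} identify that same quantity (up to a factor $1/s$) as the one-sided derivative of $s\mapsto\int\b\,\d\nu_s$, which therefore is constant. Your coercivity bound $F(t)\geq\tfrac12(t-t^*)^2+\tfrac12\mathrm{Var}_\mu(\b)$ and the affine formula for $\int\b\,\d\mu_t$ are correct and useful preliminaries; what is missing is to replace ``$\phi$ is affine in $s$'' by ``$\phi$ is constant along a geodesic emanating from a minimizer'', which only needs a first-order argument and is precisely what the isometry trick delivers.
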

\begin{proof} It is clear that the map $t\mapsto \int \sfd^2(\cdot,x_0)\,\d \mu_t=W_2^2(\mu_t,\delta_{x_0})$ has at least a minimum $t_0$. Fix it, let $\ppi\in\gopt(\mu_{t_0},\delta_{x_0})$ be the unique optimal geodesic plan (Theorem \ref{thm:exp}) and put $\nu_s:=(\e_s)_\sharp\ppi$. We claim that for each $s\in[0,1]$ the map $t\mapsto W_2^2(\delta_{x_0},(\bar\X_t)_\sharp\nu_s)$ has a minimum for $t=0$. Indeed, if by absurdum for some $t\in\R$ it holds $W_2(\delta_{x_0},(\bar\X_t)_\sharp\nu_s)<W_2(\delta_{x_0},\nu_s)$, the fact that $\bar\X_t:X\to X$ is an isometry would give
\[
\begin{split}
W_2(\delta_{x_0},(\bar\X_t)_\sharp\mu_{t_0})&\leq W_2(\delta_{x_0},(\bar\X_t)_\sharp\nu_s)+W_2((\bar\X_t)_\sharp\nu_s,(\bar\X_t)_\sharp\mu_{t_0})\\
&<W_2(\delta_{x_0},\nu_s)+W_2(\nu_s,\mu_{t_0})=W_2(\delta_{x_0},\mu_{t_0}),
\end{split}
\]
thus contradicting the minimality of $t_0$.

Put $\varphi:=\frac{\sfd^2(\cdot,x_0)}{2}$ and notice that $\frac12W_2^2(\nu,\delta_{x_0})=\int\varphi\,\d\nu$ for every $\nu\in\probt X$. Hence Proposition \ref{prop:c1} and the minimality of $\nu_s$ gives
\begin{equation}
\label{eq:usandoeulero}
0=\frac{\d}{\d t}\frac12W_2^2(\nu,(\X_t)_\sharp\nu_s)\restr{t=0}=-\int \la\nabla\varphi,\nabla\b \ra\,\d\nu_s,\qquad\forall s\in[0,1].
\end{equation}
Now notice that $s\mapsto \int\b\,\d\nu_s$  is Lipschitz and compute its left derivative. Given that, trivially, $\varphi$ is a Kantorovich potential for the geodesic $[0,1]\ni r\mapsto \nu_{s(1-r)}$, Corollary \ref{cor:metrbre} and the first order differentiation formula in Theorem \ref{thm:horver} ensure that for any $s\in(0,1]$ it holds:
\[
\lim_{h\downarrow0}\frac{\int\b\,\d\nu_{s-h}-\int\b\,\d\nu_s}{h}=\lim_{h\downarrow0}\frac{\int\b\,\d\nu_{s(1-h)}-\int\b\,\d\nu_s}{sh}=-\frac1s\int\la\nabla\varphi,\nabla\b\ra\,\d\nu_s\stackrel{\eqref{eq:usandoeulero}}=0.
\] 
Hence  $s\mapsto \int\b\,\d\nu_s$ is constant, i.e. for any minimum $t_0$ of $t\mapsto \int \sfd^2(\cdot,x_0)\,\d \mu_t$ it holds $\int \b\,\d\mu_{t_0}=\b(x_0)$. It is now obvious that such $t_0$ must be unique, hence the proof is completed.
\end{proof}

Corollary \ref{cor:min} allows us to prove the main result of this section:
\begin{theorem}[The quotient space isometrically embeds into the original one]\label{thm:embed}
$\iota$ is an isometric embedding of $(X',\sfd')$ into $(X,\sfd)$.
\end{theorem}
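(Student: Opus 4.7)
The inequality $\sfd'(x',y')\le\sfd(\iota(x'),\iota(y'))$ is immediate from the definition of $\sfd'$ (take $t=0$), so the whole point is to show the reverse inequality, i.e.\ that $t=0$ realises the infimum defining $\sfd'(x',y')$. Equivalently, writing $x_0:=\iota(y')$ and varying $t$, one has to check that
\[
\sfd^2(\iota(x'),x_0)\le \sfd^2(\bar\X_t(\iota(x')),x_0),\qquad\forall t\in\R.
\]
The natural tool is Corollary \ref{cor:min}: for any $\mu\in\probt X$ with bounded support and bounded density, the map $t\mapsto\int\sfd^2(\cdot,x_0)\,\d(\bar\X_t)_\sharp\mu$ is minimised at the unique $t_*$ with $\int\b\,\d(\bar\X_{t_*})_\sharp\mu=\b(x_0)$. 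Since $\b\circ\bar\X_s=\b-s$ (a direct consequence of \eqref{eq:treno2} and the continuity of $\bar\X$) we have $\int\b\,\d(\bar\X_t)_\sharp\mu=\int\b\,\d\mu-t$, so $t_*=\int\b\,\d\mu-\b(x_0)$; in particular, if $\mu$ is chosen with $\int\b\,\d\mu=0$ and $x_0=\iota(y')$ (so that $\b(x_0)=0$), then $t_*=0$.

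Hence the plan is to approximate $\delta_{\iota(x')}$ by such measures. For every $n\in\N$ let $\tilde\mu_n:=\mm(B_{1/n}(\iota(x')))^{-1}\mm\restr{B_{1/n}(\iota(x'))}$ and set
\[
s_n:=\int\b\,\d\tilde\mu_n,\qquad \mu_n:=(\bar\X_{s_n})_\sharp\tilde\mu_n.
\]
The continuity of $\b$ and $\b(\iota(x'))=0$ give $s_n\to 0$; the measure-preservation and isometry properties of $\bar\X_{s_n}$ (Theorem \ref{thm:gfpresdist}) ensure that $\mu_n$ is supported in $B_{1/n}(\bar\X_{s_n}(\iota(x')))$ and has density bounded by $\mm(B_{1/n}(\iota(x')))^{-1}$, so the hypotheses of Corollary \ref{cor:min} are met, and by construction
\[
\int\b\,\d\mu_n=\int(\b-s_n)\,\d\tilde\mu_n=0.
\]
Applying Corollary \ref{cor:min} with $x_0=\iota(y')$, the minimum of $t\mapsto\int\sfd^2(\cdot,\iota(y'))\,\d(\bar\X_t)_\sharp\mu_n$ is attained at $t=0$, giving
\begin{equation}\label{eq:plan}
\int\sfd^2(\cdot,\iota(y'))\,\d\mu_n\le\int\sfd^2(\bar\X_t(\cdot),\iota(y'))\,\d\mu_n,\qquad\forall t\in\R,\ n\in\N.
\end{equation}

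Finally I pass to the limit in \eqref{eq:plan}. Since $s_n\to 0$ and $\bar\X$ is jointly continuous, $\bar\X_{s_n}(\iota(x'))\to\iota(x')$, so $\mu_n$ is eventually supported in an arbitrarily small neighbourhood of $\iota(x')$ and therefore $\mu_n\to\delta_{\iota(x')}$ weakly in $\prob X$. The functions $\sfd^2(\cdot,\iota(y'))$ and $\sfd^2(\bar\X_t(\cdot),\iota(y'))$ (for fixed $t$) are continuous and uniformly bounded on $\bigcup_n\supp(\mu_n)$ (a bounded set), so passing to the limit in \eqref{eq:plan} yields
\[
\sfd^2(\iota(x'),\iota(y'))\le\sfd^2(\bar\X_t(\iota(x')),\iota(y')),\qquad\forall t\in\R,
\]
which by the definition of $\sfd'$ is precisely $\sfd(\iota(x'),\iota(y'))\le\sfd'(x',y')$, concluding the proof. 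The only delicate point is the construction of the approximating measures $\mu_n$: one needs simultaneously bounded density, bounded support, $\mu_n\rightharpoonup\delta_{\iota(x')}$, and $\int\b\,\d\mu_n=0$; the trick of flowing $\tilde\mu_n$ by $\bar\X_{s_n}$ for the right $s_n$ (possible thanks to the measure-preservation in Theorem \ref{thm:gfpresmeas}) is what makes all four conditions compatible.
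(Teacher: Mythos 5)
Your proof is correct and uses the same basic strategy as the paper: reduce to showing that the minimum of $t\mapsto\sfd(\bar\X_t(\iota(x')),\iota(y'))$ is attained at $t=0$, approximate a Dirac mass by absolutely continuous probability measures with uniformly bounded density, apply Corollary~\ref{cor:min}, and then pass to the limit. Where the two arguments differ is in the bookkeeping. The paper approximates $\delta_{\iota(y')}$ by the normalized restrictions $\mu_\eps$ of $\mm$ to $B_\eps(\iota(y'))$; since these only satisfy $|\int\b\,\d\mu_\eps|\leq\eps$, the minimiser $t_\eps$ produced by Corollary~\ref{cor:min} lies in $[-\eps,\eps]$ rather than at $0$, and the paper closes the argument with a $3\eps$ perturbation estimate using that $f_\eps$ is $1$-Lipschitz and $|f_\eps-f|\leq\eps$. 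You instead approximate $\delta_{\iota(x')}$ and then flow the approximating measure $\tilde\mu_n$ by $\bar\X_{s_n}$ with $s_n:=\int\b\,\d\tilde\mu_n$ so that $\int\b\,\d\mu_n=0$ exactly; combined with $\b(\iota(y'))=0$ and the identity $\b\circ\bar\X_t=\b-t$, this pins the minimiser of Corollary~\ref{cor:min} at exactly $t=0$, yielding an exact inequality \eqref{eq:plan} valid for every $t$ and $n$, which then passes directly to the limit by weak convergence of $\mu_n$ to $\delta_{\iota(x')}$ together with continuity and local boundedness of the integrands. Your recentering trick (made possible precisely by the measure-preservation of $\bar\X_t$ from Theorem~\ref{thm:gfpresmeas}) is a genuine simplification: it eliminates the $\eps$-Lipschitz bookkeeping at the cost of one extra line verifying the density bound for $\mu_n$. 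The argument is sound; I see no gap.
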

\begin{proof}
Let $x',y'\in X'$ and $x:=\iota(x')$, $y:=\iota(y')$. By definition of $\sfd'$ and $\iota$ it certainly holds $\sfd'(x',y')\leq \sfd(x,y)$. To prove the converse inequality amounts to prove that the minimum of the function $f(t):=\sfd(x,\X_t(y))$ is attained at $t=0$. For $\eps>0$ let $\mu_\eps\in\probt X$ be given by $\mu_\eps:=\mm(B_\eps(y))^{-1}\mm\restr{B_\eps(y)}$ and define $f_\eps(t):=W_2(\delta_x,(\bar\X_t)_\sharp\mu_\eps)$. Notice that $f_\eps$ is 1-Lipschitz and that it holds
\[
\begin{split}
|f_\eps(t)-f(t)|&=|W_2(\delta_x,(\bar\X_t)_\sharp\mu_\eps)-W_2(\delta_x,(\bar\X_t)_\sharp\delta_{y})|\leq W_2((\bar\X_t)_\sharp\mu_\eps,(\bar\X_t)_\sharp\delta_{y})=W_2(\mu_{\eps},\delta_y)\leq\eps.
\end{split}
\]

By definition, we have  $|\int \b\,\d\mu_\eps|\leq \eps$, thus letting $t_\eps$ be the minimizer of $f_\eps$, Corollary \ref{cor:min} and the trivial identity  $\int\b\,\d(\bar\X_t)_\sharp\mu_\eps=\int\b\,\d\mu_\eps-t$ valid for any $t\in\R$  yield $|t_\eps|=|\int\b\,\d\mu_{\eps}|\leq \eps$.

Thus for any $t\in\R$ we have
\[
f(0)\leq \eps+f_\eps(0)\leq \eps+f_\eps(t_\eps)+|t_\eps|\leq 2\eps +f_\eps(t)\leq 3\eps+f(t)
\] 
so that letting $\eps\downarrow0$ we conclude $f(0)\leq f(t)$ for any $t\in\R$, as desired.
\end{proof}

\subsection{The quotient measure $\mm'$ and basic properties of $(X',\sfd',\mm')$}
Theorem \ref{thm:embed} has a number of simple consequences about the structure of $X'$. We start defining the natural maps from $X'\times\R$ to $X$ and viceversa.
\begin{definition}[From $X'\times\R$ to $X$ and viceversa]\label{def:maumad} The maps $\mau:X'\times\R\to X$ and $\mad:X\to X'\times\R$ are defined by
\[
\begin{split}
\mau(x',t)&:=\bar\X_{-t}(\iota(x')),\\
\mad(x)&:=(\pi(x),\b(x)).
\end{split}
\]
\end{definition}

\begin{proposition}[$\mau$ and $\mad$ are homeomorphisms]\label{prop:homeo} The maps $\mau,\mad$ are homeomorphisms each one inverse of the other. Furthermore it holds
\begin{equation}
\label{eq:bilip}
\begin{split}
\frac1{\sqrt 2}\sqrt{\sfd'(x_1',x_2')^2+|t_1-t_2|^2}\leq \sfd\big(\mau(x'_1,t_1),\mau(x'_2,t_2)\big)&\leq\sqrt 2\sqrt{\sfd'(x_1',x_2')^2+|t_1-t_2|^2},\\
\end{split}
\end{equation}
for any $x_1',x_2'\in X'$, $t_1,t_2\in\R$.
\end{proposition}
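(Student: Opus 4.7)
The plan is to first verify algebraically that $\mau$ and $\mad$ are two-sided inverses, and then to establish the bi-Lipschitz estimate \eqref{eq:bilip}, from which continuity of both maps follows automatically.

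For the inverse relations, the key observation is that along the flow $\b$ decreases at unit rate: from \eqref{eq:treno2} we have $\b(x)-\b(\bar\X_s(x))=s$ for every $s\in\R$ and every $x\in X$ (the fact that this holds for \emph{every} such $x$, not just $\mm$-a.e., comes from the continuity established in Theorem \ref{thm:gfpresdist}, together with the continuity of $\b$). Combining this with $\b(\iota(x'))=0$, I would compute
\[
\mad(\mau(x',t))=\bigl(\pi(\bar\X_{-t}(\iota(x'))),\b(\bar\X_{-t}(\iota(x')))\bigr)=\bigl(\pi(\iota(x')),\b(\iota(x'))+t\bigr)=(x',t),
\]
and conversely $\iota(\pi(x))=\bar\X_{\b(x)}(x)$ (the unique point on the orbit of $x$ with vanishing Busemann function), so that $\mau(\mad(x))=\bar\X_{-\b(x)}(\bar\X_{\b(x)}(x))=x$ by the group property in (iii) of Theorem \ref{thm:gfpresdist}.

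For the bi-Lipschitz bound, set $a:=\iota(x'_1)$, $b:=\iota(x'_2)$ and $s:=t_1-t_2$. Using that $\bar\X_{t_1}$ is an isometry and the group property, I can rewrite
\[
\sfd\bigl(\mau(x'_1,t_1),\mau(x'_2,t_2)\bigr)=\sfd\bigl(\bar\X_{-t_1}(a),\bar\X_{-t_2}(b)\bigr)=\sfd(a,\bar\X_s(b)).
\]
The upper bound is then an immediate triangle inequality using that $r\mapsto \bar\X_r(b)$ is an isometry from $\R$ into $X$ (Theorem \ref{thm:gfpresdist} (ii)) and the identity $\sfd(a,b)=\sfd'(x'_1,x'_2)$ provided by the isometric embedding Theorem \ref{thm:embed}:
\[
\sfd(a,\bar\X_s(b))\leq \sfd(a,b)+\sfd(b,\bar\X_s(b))=\sfd'(x'_1,x'_2)+|s|\leq \sqrt 2\sqrt{\sfd'(x'_1,x'_2)^2+|s|^2}.
\]
For the lower bound I would combine two separate estimates. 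First, by definition of $\sfd'$ as the infimum over the orbits,
\[
\sfd(a,\bar\X_s(b))\geq \inf_{r\in\R}\sfd(a,\bar\X_r(b))=\sfd'(x'_1,x'_2).
\]
Second, since $\b$ is $1$-Lipschitz and $\b(a)=0$, $\b(\bar\X_s(b))=\b(b)-s=-s$, one gets $\sfd(a,\bar\X_s(b))\geq |s|$. Taking the maximum and using the elementary bound $\max(p,q)\geq \frac{1}{\sqrt 2}\sqrt{p^2+q^2}$ yields the lower half of \eqref{eq:bilip}.

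Finally, the estimate \eqref{eq:bilip} shows that $\mau$ is Lipschitz when $X'\times\R$ is endowed with the Euclidean product distance, while the reverse inequality (read on $\mad$) shows that $\mad$ is Lipschitz too; since they are set-theoretic inverses, both are homeomorphisms. There is no real obstacle here: every ingredient (the group/isometry property of $\bar\X$, the isometric embedding of $X'$, the fact that $\b$ is $1$-Lipschitz with unit decrease along $\bar\X$) has already been established, so the proposition is essentially a bookkeeping consequence of the previous sections.
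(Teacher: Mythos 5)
Your proof is correct and follows essentially the same approach as the paper: the upper bound comes from the triangle inequality together with the isometric embeddings $\iota$ and $r\mapsto\bar\X_r(\cdot)$, and the lower bound from the fact that $\pi$ and $\b$ are $1$-Lipschitz (your use of the defining infimum of $\sfd'$ is the same fact phrased slightly differently). The only difference is that you spell out the algebraic verification that $\mau$ and $\mad$ are inverses and the extension of \eqref{eq:treno2} to all $x$ and all $s$, which the paper treats as obvious.
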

\begin{proof} It is clear that $\mau\circ\mad={\rm Id}_{X}$ and $\mad\circ\mau={\rm Id}_{X'\times\R}$, thus we only need to prove \eqref{eq:bilip}. 

For the first inequality notice that since both  $\pi:(X,\sfd)\to (X',\sfd')$ and $\b:(X,\sfd)\to (\R,\sfd_{\rm Eucl})$ are 1-Lipschitz, it holds
\[
\begin{split}
\sfd\big(\mau(x'_1,t_1),\mau(x'_2,t_2)\big)^2\geq\max\{\sfd'(x_1',x_2')^2,|t_1-t_2|^2\}\geq\frac12\big(\sfd'(x_1',x_2')^2+|t_1-t_2|^2\big).
\end{split}
\]
The second follows from:
\[
\begin{split}
\sfd\big(\mau(x'_1,t_1),\mau(x'_2,t_2)\big)&=\sfd\big(\X_{-t_1}(\iota(x'_1)),\X_{-t_2}(\iota(x'_2))\big)= \sfd\big(\X_{t_2-t_1}(\iota(x'_1)),\iota(x'_2)\big)\\
&\leq  \sfd\big(\X_{t_2-t_1}(\iota(x'_1)),\iota(x'_1)\big)+ \sfd\big(\iota(x'_1),\iota(x'_2)\big)=|t_2-t_1|+\sfd'(x'_1,x'_2)\\
&\leq \sqrt 2\sqrt{\sfd'(x_1',x_2')^2+|t_1-t_2|^2}.
\end{split}
\]
\end{proof}
We can now introduce the natural measure on $X'$ as follows:
\begin{definition}[The measure $\mm'$]\label{def:mmp} We define the measure $\mm'$ on $(X',\sfd')$ as:
\[
\mm'(E):=\mm\big(\pi^{-1}(E)\cap \b^{-1}([0,1])\big),\qquad\forall E\subset X'\ \textrm{Borel}.
\]
\end{definition}
Notice that the definition is well posed because from Proposition \ref{prop:homeo} we know that for $E\subset X'$ Borel the set $\pi^{-1}(E)\subset X$ is also Borel. Also, the definition is made in such a way that the identity
\begin{equation}
\label{eq:rettangoli}
\mad_\sharp\mm(E\times I)=\mm'(E)\mathcal L^1(I),
\end{equation}
holds for every $E\subset X'$ Borel and every interval $I$ of the form $I=[a,a+1)$, $a\in\R$. Then a simple dichotomy argument based on the measure preservation property of $\bar\X_t$ shows that \eqref{eq:rettangoli} also holds for $I$ of the form $[a,a+\frac{1}{2^n})$, $a\in\R$, $n\in\N$. Thus, by density, it holds for any interval $I\subset \R$ and since the class of sets of the form $E\times I$, with $E\subset X'$ Borel and $I\subset \R$ interval, is closed under finite intersection and generates the $\sigma$-algebra of $X'\times\R$, by general results of measure theory  (see e.g. Corollary 1.6.3 in \cite{Cohn80}) we deduce that 
\begin{equation}
\label{eq:mesprod}
\mad_\sharp\mm=\mm'\times\mathcal L^1\qquad\qquad\text{and}\qquad\qquad\mau_\sharp(\mm'\times\mathcal L^1)=\mm.
\end{equation}

The metric information given by Theorem \ref{thm:embed}  and the measure theoretic one  which we just proved  grant natural relations   between Sobolev functions on $X$ and   $X'$. To emphasize the fact that the minimal weak upper gradients depend on the space and to help keeping track of spaces themselves, we write $|\nabla f|_X$ (resp. $|\nabla f|_{X'}$) for functions $f\in \s^2_{\rm loc}(X)$ (resp. in $\s^2_{\rm loc}(X')$). 
\begin{proposition}\label{prop:sezioni1} The following holds.
\begin{itemize}
\item[i)]  Let $f\in \s^2_{\rm loc}(X)$ and for $t\in\R$ let $f^{(t)}:X'\to\R$ be given by $f^{(t)}(x'):=f(\mau(x',t))$. Then for $\mathcal L^1$-a.e. $t$ it holds $f^{(t)}\in \s^2_{\rm loc}(X')$ and
\[
|\nabla f^{(t)}|_{X'}(x')\leq |\nabla f|_X(\mau(x',t)),\qquad \mm'\times\mathcal L^1\ae\ (x',t)\in X'\times\R.
\]
\item[ii)] Let $g\in \s^2_{\rm loc}(X')$ and define $f:X\to\R$ by $f(x):=g\circ\pi$. Then $f\in \s^2_{\rm loc}(X)$ and
\[
|\nabla f|_{X}(x)=|\nabla g|_{X'}(\pi(x)),\qquad\mm\ae\ x\in X.
\]
\end{itemize}
\end{proposition}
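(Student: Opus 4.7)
The plan is to prove $(i)$ first via Lipschitz approximation combined with a Fubini slicing along the $\R$-factor, and then to derive $(ii)$ by pairing $(i)$ with a direct test-plan argument for the reverse inequality.

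For part $(i)$, the crucial structural fact is that $\mau(\cdot, t): X' \to X$ is an isometric embedding for every $t \in \R$, being the composition $\bar\X_{-t}\circ \iota$ of the isometry $\bar\X_{-t}$ granted by Theorem \ref{thm:gfpresdist} and the isometric embedding $\iota$ given by Theorem \ref{thm:embed}. Consequently for any Lipschitz $h: X \to \R$ one has
\[
\lip(h\circ \mau(\cdot,t))(x') \leq (\lip h)(\mau(x', t)),\qquad\forall x' \in X'.
\]
After a routine truncation/cutoff reduction I may assume $f \in W^{1,2}(X)$ and apply Theorem \ref{thm:stronglip} to obtain compactly supported Lipschitz $f_n$ with $f_n \to f$ in $L^2(X)$ and $\lip f_n \to |\nabla f|_X$ in $L^2(X)$. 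Using $\mau_\sharp(\mm'\times\mathcal L^1) = \mm$ from \eqref{eq:mesprod}, the pullbacks $\tilde f_n := f_n \circ \mau$ and $\lip f_n \circ \mau$ converge in $L^2(X'\times\R)$ respectively to $\tilde f := f \circ \mau$ and to $|\nabla f|_X \circ \mau$. Passing to a subsequence and invoking Fubini, for $\mathcal L^1$-a.e.\ $t$ both convergences hold slice-wise in $L^2(X')$. For such $t$, the sequence $f_n^{(t)} := \tilde f_n(\cdot, t)$ consists of Lipschitz functions on $X'$ whose minimal weak upper gradients are, by \eqref{eq:lipweak}, bounded by $\lip f_n \circ \mau(\cdot, t)$; hence $\{|\nabla f_n^{(t)}|_{X'}\}$ is uniformly bounded in $L^2(X')$ and any weak $L^2$-limit $G_t$ satisfies $G_t \leq |\nabla f|_X\circ\mau(\cdot,t)$ $\mm'$-a.e. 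The lower semicontinuity \eqref{eq:lscwug} then delivers $f^{(t)} \in \s^2_{\rm loc}(X')$ with $|\nabla f^{(t)}|_{X'}(x') \leq |\nabla f|_X(\mau(x', t))$ $\mm'$-a.e.

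For the $\leq$ direction of $(ii)$ I would take a test plan $\ppi$ on $X$ with curves in a bounded set. Since $\pi:(X,\sfd)\to(X',\sfd')$ is $1$-Lipschitz by the very definition of $\sfd'$, $\pi_\sharp\ppi$ has finite $2$-action; the set identity $\pi^{-1}(E)\cap \mau(B'\times[-T,T]) = \mau((E\cap B')\times[-T,T])$ together with \eqref{eq:mesprod} yields a bound $(\e_s)_\sharp(\pi_\sharp\ppi)\leq 2TC\mm'$ (with $B'$ and $T$ depending on the support of the curves of $\ppi$ and $C$ on the marginal bound of $\ppi$), so $\pi_\sharp\ppi$ is a test plan on $X'$. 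Applying the Sobolev inequality for $g$ to $\pi_\sharp\ppi$ and using $|\dot{(\pi\gamma)}_s|\leq |\dot\gamma_s|$ shows that $|\nabla g|_{X'}\circ\pi$ is a weak upper gradient of $f := g\circ\pi$, hence $f\in\s^2_{\rm loc}(X)$ with $|\nabla f|_X \leq |\nabla g|_{X'}\circ\pi$. For the $\geq$ direction, note that $\pi\circ\mau(x', t) = x'$, so the slices $f^{(t)}$ coincide with $g$ for every $t$; applying $(i)$ to the Sobolev function $f$ just constructed gives $|\nabla g|_{X'}(x') \leq |\nabla f|_X(\mau(x', t))$ on a full-measure subset of $X'\times\R$, which via \eqref{eq:mesprod} reads $|\nabla g|_{X'}\circ\pi \leq |\nabla f|_X$ $\mm$-a.e., closing the equality.

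The main delicacy sits in the slicing step of $(i)$. A more direct Fubini applied to test plans on $X'$ (lifting them to $X$ via $\mau(\cdot, t)$ averaged against a bump in $t$, then applying the Sobolev inequality for $f$ on $X$) produces the desired estimate only in integrated form in $t$, with the resulting null set of bad $t$'s \emph{a priori} depending on the chosen test plan on $X'$, which prevents a pointwise-in-$t$ conclusion valid against all test plans at once. The Lipschitz-approximation route circumvents this because the sequence $(f_n)$ is chosen independently of test plans and the gradient bound on each slice is recovered by intrinsic lower semicontinuity on $X'$.
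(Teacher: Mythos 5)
Your argument is correct, and part (i) follows the same strategy as the paper's sketch: the pointwise inequality $\lip_{X'}\big(f^{(t)}\big)(x')\leq \lip_X(f)(\mau(x',t))$ (which you derive from the isometric-embedding property of $\mau(\cdot,t)$, exactly the mechanism behind the paper's restricted limsup), then Lipschitz energy approximation via Theorem~\ref{thm:energylip}, measure preservation \eqref{eq:mesprod}, a Fubini passage to $\mathcal L^1$-a.e.\ slices along a subsequence, and lower semicontinuity \eqref{eq:lscwug} slice-by-slice. Your closing remark about why the Lipschitz-approximation route avoids the difficulty of a plan-dependent exceptional set of $t$'s is precisely the point that justifies this choice.

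For part (ii) you diverge slightly from the paper. The paper obtains the $\leq$ direction by ``relaxing'' the inequality $\lip_X(g\circ\pi)\leq \lip_{X'}(g)\circ\pi$, i.e.\ by once more approximating $g$ with Lipschitz functions via Theorem~\ref{thm:stronglip} and passing to the limit with \eqref{eq:lscwug}. You instead work directly from the test-plan definition of the Sobolev class: you push an arbitrary test plan $\ppi$ on $X$ forward through $\pi$, verify it remains a test plan on $X'$ (using the $1$-Lipschitzness of $\pi$ for the metric-speed bound and \eqref{eq:mesprod} together with your set identity to control the marginals by $\mm'$), and then read off from the Sobolev inequality for $g$ that $|\nabla g|_{X'}\circ\pi$ is a weak upper gradient of $g\circ\pi$. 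Both routes are correct and of comparable length; yours has the merit of not invoking the density-in-energy theorem a second time and of exposing directly the duality between $1$-Lipschitz submersions and test plans, while the paper's is more symmetric with the treatment of (i) and delegates the work to the already-proved approximation result. For the $\geq$ direction you rely on part (i) applied to $f=g\circ\pi$, using $\pi\circ\mau(x',t)=x'$; this is implicit in the paper as well.
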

\begin{sketch} Denote by $\lip_X(f)$ (resp. $\lip_{X'}(g)$) the local Lipschitz constant in the space $(X,\sfd)$ (resp. $(X',\sfd')$) of a real valued function $f$ on $X$ (resp. $g$ on $X'$).

For point $(i)$ observe that we have the  simple inequality
\[
\begin{split}
\lip_X(f)(x)&=\lims_{y\to x}\frac{|f(x)-f(y)|}{\sfd(x,y)}\geq\!\!\! \lims_{y\to x\atop \b(y)=\b(x)}\!\!\!\frac{|f^{(\b(x))}(\pi(x))-f^{(\b(x))}(\pi(y))|}{\sfd'(\pi(x),\pi(y))}=\lip_{X'}(f^{(\b(x))})(\pi(x)),
\end{split}
\]
then approximate a generic $f\in W^{1,2}(X)$ with Lipschitz functions as in Theorem \ref{thm:energylip}, apply the inequality above to the approximating sequence and observe that by construction the leftmost side converges to $|\nabla f|_X$ in $L^2(X)$, while the measure preservation property \eqref{eq:mesprod} and the semicontinuity property \eqref{eq:lscwug} ensure that any weak limit of the rightmost side bounds $\mm'$-a.e. from above  $|\nabla f^{(t)}|_{X'}$ for $\mathcal L^1$-a.e. $t$, where $t=\b(x)$. The case of general $f\in \s^2_{\rm loc}$ is the obtained with a cut-off argument using the locality of minimal weak upper gradients.

Similarly, point $(ii)$ follows from point $(i)$ and from the relaxation of the inequality
\[
\begin{split}
\lip_X( g\circ \pi)(x)=\lims_{y\to x}\frac{|g( \pi(y))-g(\pi(x))|}{\sfd(x,y)}\leq \lims_{y\to x}\frac{|g(\pi(y))-g(\pi(x))|}{\sfd'(\pi(x),\pi(y))}=
\lip_{X'}(g)(\pi(x)).
\end{split}
\]
\end{sketch}
It is now easy to prove the following:
\begin{corollary}\label{cor:xp} $(X',\sfd',\mm')$ is an $\RCD(0,N)$ space.
\end{corollary}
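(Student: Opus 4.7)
The plan is to verify both defining ingredients of the $\RCD(0,N)$ condition for the quotient $(X',\sfd',\mm')$, namely infinitesimal Hilbertianity of $W^{1,2}(X')$ and the $\CD(0,N)$ inequality, by transferring them from $(X,\sfd,\mm)$ via the lifting/projection maps $\iota,\pi,\mau,\mad$. The three essential facts that make this possible are that $\iota\colon X'\to X$ is an isometric embedding (Theorem \ref{thm:embed}), that $\bar\X_s$ is a one-parameter group of measure-preserving isometries of $X$ (Theorem \ref{thm:gfpresdist}), and that $\mau_\sharp(\mm'\times\mathcal L^1)=\mm$ by \eqref{eq:mesprod}.

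For infinitesimal Hilbertianity, given $g_1,g_2\in\s^2(X')$, I would set $f_i:=g_i\circ\pi$, which by Proposition \ref{prop:sezioni1}(ii) belongs to $\s^2_{\mathrm{loc}}(X)$ with $|\nabla f_i|_X=|\nabla g_i|_{X'}\circ\pi$ $\mm$-a.e. Since $(X,\sfd,\mm)$ is infinitesimally Hilbertian, the bilinearity of $\la\nabla\cdot,\nabla\cdot\ra$ from Theorem \ref{thm:calculus} produces the pointwise parallelogram identity
\[
|\nabla(f_1+f_2)|_X^2+|\nabla(f_1-f_2)|_X^2=2\bigl(|\nabla f_1|_X^2+|\nabla f_2|_X^2\bigr)
\]
$\mm$-a.e. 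Noting that $f_1\pm f_2=(g_1\pm g_2)\circ\pi$ and invoking Proposition \ref{prop:sezioni1}(ii) again, this identity descends to the same one $\mm'$-a.e. on $X'$, and integrating yields the parallelogram law for the $L^2(X')$-norms of minimal weak upper gradients, so $W^{1,2}(X')$ is Hilbert.

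For the $\CD(0,N)$ condition, the approach is to reduce to $\CD(0,N)$ on $X$ via horizontal lifting. Given $\mu'_0,\mu'_1\in\probt{X'}$ absolutely continuous w.r.t.\ $\mm'$ and with bounded supports, I would fix $\eps>0$, set $\chi_\eps:=\eps^{-1}\mathbbm 1_{[0,\eps]}$, and define the horizontal lifts $\mu_i:=\mau_\sharp(\mu'_i\times\chi_\eps\mathcal L^1)\in\probt{X}$, which are absolutely continuous w.r.t.\ $\mm$ by \eqref{eq:mesprod}. Since each $\mau(\cdot,s)=\bar\X_{-s}\circ\iota$ is an isometric embedding of $X'$ into $X$, any $W_2^{X'}$-optimal geodesic plan between $\mu'_0$ and $\mu'_1$ lifts by $(\gamma',s)\mapsto(t\mapsto\mau(\gamma'_t,s))$ to a plan on $\geo(X)$ between $\mu_0$ and $\mu_1$ of cost exactly $W_2^{X'}(\mu'_0,\mu'_1)^2$. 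Combined with the trivial bound $W_2^X(\mu_0,\mu_1)\geq W_2^{X'}(\mu'_0,\mu'_1)$ from the $1$-Lipschitz property of $\pi$, this gives $W_2^X(\mu_0,\mu_1)=W_2^{X'}(\mu'_0,\mu'_1)$ and identifies the horizontal lift as a $W_2^X$-optimal geodesic plan. At this point I would invoke the uniqueness of optimal geodesic plans between absolutely continuous measures (Theorem \ref{thm:exp}, which uses $\RCD(0,\infty)$) to conclude that the unique $W_2^X$-geodesic $(\mu_t)$ must be of product form $\mau_\sharp(\mu'_t\times\chi_\eps\mathcal L^1)$, where $(\mu'_t)$ is the projected $W_2^{X'}$-geodesic. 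A direct Fubini computation then gives
\[
\u_{N'}^X(\mu_t)=\eps^{1/N'}\,\u_{N'}^{X'}(\mu'_t)\quad(N'<\infty),\qquad \u_\infty^X(\mu_t)=\u_\infty^{X'}(\mu'_t)-\log\eps,
\]
so the convexity of $t\mapsto\u_{N'}^X(\mu_t)$ guaranteed by $\CD(0,N)$ on $X$ transfers directly to the desired convexity of $t\mapsto\u_{N'}^{X'}(\mu'_t)$ for every $N'\in[N,\infty]$. A standard approximation argument, choosing nicely smoothed approximations with bounded supports and using the lower semicontinuity of $\u_{N'}$ on a common bounded set, extends the convexity to general $\mu'_0,\mu'_1\in\probt{X'}$ with bounded support.

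The main obstacle is the identification of the horizontal lift with the genuine unique $W_2^X$-optimal geodesic. If this identification failed the argument would collapse: the inequality $\u_{N'}^X(\mu_t)\geq\eps^{1/N'}\u_{N'}^{X'}(\pi_\sharp\mu_t)$ is not available for a generic $\mu_t$ (the sign depends on the direction of Jensen's inequality for $z\mapsto z^{1-1/N'}$ relative to the $s$-support), so without knowing that $\mu_t$ has exact product form, convexity of $\u_{N'}^X$ along $(\mu_t)$ cannot be propagated to $\u_{N'}^{X'}$. Thus the critical input is precisely the combination of Theorem \ref{thm:embed} (to exhibit a horizontal optimal plan that matches the $W_2^X$-cost) with the $\RCD(0,\infty)$-uniqueness in Theorem \ref{thm:exp} (to force this to be the only optimal plan, hence of product form).
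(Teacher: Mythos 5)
Your argument follows the paper's sketch essentially verbatim: infinitesimal Hilbertianity is descended from $X$ to $X'$ via Proposition \ref{prop:sezioni1}(ii) and the factorization through $\mad_\sharp\mm=\mm'\times\mathcal L^1$, while the $\CD(0,N)$ inequality is transferred through the lift $\mu'\mapsto\mau_\sharp(\mu'\times\chi_\eps\mathcal L^1)$ by checking it is a $W_2$-isometry onto its image, relates the R\'enyi entropies, and--via the uniqueness in Theorem \ref{thm:exp}--traps the optimal geodesic inside the lifted image. The only cosmetic deviation is your extra fibre parameter $\eps$, which the paper fixes at $1$ so that the entropy identity becomes exact rather than carrying the factor $\eps^{1/N'}$; this changes nothing in the convexity statement.
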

\begin{sketch}$\ $\\
\noindent{\bf Infinitesimal Hilbertianity} Let $f',g'\in \s^2_{\rm loc}(X')$ and define $f,g:X\to\R$ as $f(x):=f'(\pi(x))$, $g(x):=g'(\pi(x))$. By Proposition \ref{prop:sezioni1} above we know that $f,g\in \s^2_{\rm loc}(X)$, hence, since $(X,\sfd,\mm)$ is infinitesimally Hilbertian, we have
\[
|\nabla(f+g)|^2_X+|\nabla (f-g)|^2_X=2\big(|\nabla f|_X^2+|\nabla g|_X^2\big),\qquad\mm\ae.
\]
Then noticing that $(f\pm g)(x)=(f'\pm g')(\pi(x))$, using the measure preservation property \eqref{eq:mesprod} and Fubini's theorem we deduce
\[
|\nabla(f'+g')|^2_{X'}+|\nabla (f'-g')|^2_{X'}=2\big(|\nabla f'|_{X'}^2+|\nabla g'|_{X'}^2\big),\qquad\mm'\ae,
\]
which, by the arbitrariness of $f',g'\in \s^2_{\rm loc}(X)$, yields the claim.

\noindent{\bf Curvature Dimension condition} Define $\mathcal I:\probt{X'}\to\probt{X}$ by putting
\[
\mathcal I(\mu'):=\mau_\sharp(\mu'\times\mathcal L^1\restr{[0,1]}),\qquad\forall\mu'\in\probt{X'}.
\]
Recalling that $\sfd'(x',y')=\sfd(\mau(x,t),\mau(y,t))\leq \sfd(\mau(x',t),\mau(y',s))$ for any $x',y'\in X'$ and $t,s\in\R$,  it is easy to see that $\mathcal I$ is an isometry of $(\probt{X'},W_2)$ with its image in $(\probt X,W_2)$. Denoting by  $\u_{N'}(\cdot|\mm)$ and $\u_{N'}(\cdot|\mm')$ the R\'enyi entropies functional on $\prob X$, $\prob{X'}$ respectively, it is also immediate to check that $\u_{N'}(\mathcal I(\mu')|\mm)=\u_{N'}(\mu'|\mm')$ for any $\mu'\in\probt{X'}$. Furthermore, by the uniqueness part of Theorem \ref{thm:exp} we also get that the only geodesic connecting absolutely continuous measures in $\mathcal I(\probt {X'})$ completely lies in $\mathcal I(\probt {X'})$.

The conclusion then follows by reading the $\CD(0,N)$-inequality on $X'$ as an inequality on $X$ via the map $\mathcal I$ and then recalling that the latter is a $\CD(0,N)$ space by assumption.
\end{sketch}

\subsection{\underline{Things to know:}\ Sobolev spaces and Ricci bounds over product spaces}
It is a simple exercise to check that  the standard definition of Sobolev space $W^{1,2}(\R)$ coincides with the one given by the formula \eqref{eq:w12} in the metric measure space $(\R,\sfd_{\rm Eucl},\mathcal L^1)$, $\sfd_{\rm Eucl}$ being the Euclidean distance, and that for $f\in W^{1,2}(\R)$ its minimal weak upper gradient coincides with the modulus of its distributional derivative.  To keep consistency of the notation we shall denote this object by $|\nabla f|_\R$.

We endow the set $X'\times\R$ with the product measure $\mm'\times\mathcal L^1$ and the product distance $\sfd'\times\sfd_{\rm Eucl}$ defined by
\[
\sfd'\times\sfd_{\rm Eucl}\big((x',t),(y',s)\big):=\sqrt{\sfd'(x',y')^2+|t-s|^2}
\]
Our next goal is to show that $(X'\times\R,\sfd'\times\sfd_{\rm Eucl},\mm'\times\mathcal L^1)$ is isomorphic to $(X,\sfd,\mm)$. To this aim, it is of course necessary to know how the structures of $X'$ and $\R$ reflect in the one of $X'\times\R$.

We shall use  the following result, proved in \cite{AmbrosioGigliSavare11-2}, which we restate to match the current setting.
\begin{theorem}\label{thm:xpr}
The space  $(X'\times\R,\sfd'\times\sfd_{\rm Eucl},\mm'\times\mathcal L^1)$  is $\RCD(0,\infty)$. Furthermore, the following holds:
\begin{itemize}
\item[i)] Let $f\in \s^2_{\rm loc}(X'\times\R)$ and for $t\in\R$ denote by $f^{(t)}:X'\to\R$ the function $f^{(t)}(x'):=f(x',t)$ and similarly for $x'\in X'$ let $f^{(x')}:\R\to\R$ be given by   $f^{(x')}(t):=f(x',t)$. Then:
\begin{itemize}
\item for $\mathcal L^1$-a.e. $t$ we have $f^{(t)}\in \s^2_{\rm loc}(X')$,
\item for $\mm'$-a.e. $x'$ we have $f^{(x')}\in \s^2_{\rm loc}(\R)$,
\item the formula
\begin{equation}
\label{eq:prodgrad}
|\nabla f|^2_{X'\times\R}(x',t)=|\nabla f^{(t)}|_{X'}^2(x')+|\nabla f^{(x')}|_{\R}^2(t),
\end{equation}
holds $\mm'\times\mathcal L^1$-a.e..
\end{itemize}
\item[ii)] Let $g\in \s^2_{\rm loc}(X')$ and define $f:X'\times\R\to\R$ by $f(x',t):=g(x')$. Then $f\in\s^2_{\rm loc}(X'\times\R)$ and $|\nabla f|_{X'\times\R}(x',t)=|\nabla g|_{X'}(x')$ for $\mm'\times\mathcal L^1$-a.e. $(x',t)$.
\item[iii)] Let $h\in \s^2_{\rm loc}(\R)$ and define $f:X'\times\R\to\R$ by $f(x',t):=h(t)$. Then $f\in\s^2_{\rm loc}(X'\times\R)$ and $|\nabla f|_{X'\times\R}(x',t)=|\nabla h|_{\R}(t)$ for $\mm'\times\mathcal L^1$-a.e. $(x',t)$.
\end{itemize}
\end{theorem}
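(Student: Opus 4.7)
My plan is to first prove the gradient decomposition formula~\eqref{eq:prodgrad}, then extract infinitesimal Hilbertianity of the product from it, and finally invoke the tensorization of $\CD(0,\infty)$ due to Sturm~\cite{Sturm06II} to get the full $\RCD(0,\infty)$ conclusion. Items~(ii) and~(iii) will follow by applying~\eqref{eq:prodgrad} to the specific cases $f(x',t)=g(x')$ (so $f^{(x')}$ is constant and $|\nabla f^{(x')}|_\R=0$) and $f(x',t)=h(t)$.

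For the upper bound in~\eqref{eq:prodgrad}, I would first treat Lipschitz $f$ with compact support and verify by a slope-Pythagoras estimate that
\[
\lip(f)^2(x_0',t_0)\leq \lip(f^{(t_0)})^2(x_0')+\lip(f^{(x_0')})^2(t_0);
\]
this follows from splitting $f(x',t)-f(x_0',t_0)$ into a horizontal and a vertical difference, bounding each by the corresponding partial Lipschitz constant, and applying Cauchy--Schwarz using the product formula for $\sfd$. Approximating a general $f\in\s^2_{\rm loc}(X'\times\R)$ by Lipschitz functions via Theorem~\ref{thm:stronglip}, applying~\eqref{eq:lipweak} to the approximants, and passing to the limit by Fubini combined with the lower semicontinuity~\eqref{eq:lscwug} gives the desired inequality on $\s^2_{\rm loc}$. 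For the lower bound, the idea is to construct ``directional'' test plans on $X'\times\R$: given any test plan $\ppi'$ on $X'$ and a bounded compactly supported density $\rho$ on $\R$, the horizontal lift sending $(\gamma,t)\mapsto(s\mapsto(\gamma_s,t))$ pushes $\ppi'\otimes\rho\mathcal L^1$ forward to a test plan on the product; testing~\eqref{eq:defsob} for $f$ against this plan and using Fubini gives $f^{(t)}\in\s^2_{\rm loc}(X')$ for a.e.\ $t$ with an upper bound on $|\nabla f^{(t)}|_{X'}$ in terms of $|\nabla f|_{X'\times\R}$, and a symmetric vertical construction handles the $\R$-sections. The sharpness of the additive lower bound is then obtained by polarization: one writes $|\nabla(f+\varepsilon(\alpha(x')+\beta(t)))|^2$ via~\eqref{eq:limite}, expands using the linearity in the first argument of $\la\nabla\cdot,\nabla\cdot\ra$ (Theorem~\ref{thm:calculus}), and invokes the cases~(ii), (iii) already known for one-variable functions to read off the Pythagorean identity at $\varepsilon=0$.

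Once~\eqref{eq:prodgrad} is established, infinitesimal Hilbertianity of $X'\times\R$ follows quickly: by~\eqref{eq:prodgrad} and Fubini, the Cheeger energy on the product splits as the sum of the Cheeger energies of the $t$-slices of $f$ in $X'$ and of the $x'$-slices of $f$ in $\R$. Both summands are quadratic forms in $f$---the first because $X'$ is infinitesimally Hilbertian by Corollary~\ref{cor:xp}, the second trivially---so~\eqref{eq:infhil} holds on the product. The $\CD(0,\infty)$ condition on the product is Sturm's tensorization~\cite{Sturm06II}: geodesics in $\probt{X'\times\R}$ can be built as products of geodesics in each factor, the relative entropy tensorizes additively, and $0$-convexity is preserved.

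The main obstacle is proving the lower bound in~\eqref{eq:prodgrad} with the correct Pythagorean sum rather than just a $\max$. The plain bound $|\nabla f|^2_{X'\times\R}\geq\max\{|\nabla f^{(t)}|^2_{X'},|\nabla f^{(x')}|^2_\R\}$ is straightforward from the horizontal and vertical test plan constructions alone, but extracting the sum genuinely needs the infinitesimal Hilbertianity of each factor. Indeed, it is precisely here that Finsler-type products fail to satisfy~\eqref{eq:prodgrad} with equality, reflecting the well-known failure of tensorization in non-Riemannian geometries.
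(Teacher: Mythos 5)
Your high-level plan matches the actual architecture---prove the gradient decomposition \eqref{eq:prodgrad}, deduce infinitesimal Hilbertianity of the product, and combine with tensorization of $\CD(0,\infty)$---but the paper does not prove Theorem \ref{thm:xpr}: it cites \cite{AmbrosioGigliSavare11-2} and explicitly flags \eqref{eq:prodgrad} as ``surprisingly difficult to obtain'' and as relying on ``fine regularizing properties of the heat flow.'' Your proposed proof of \eqref{eq:prodgrad} avoids the heat flow, and the difficulty the paper points to shows up as two concrete gaps.

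For $|\nabla f|^2_{X'\times\R}\leq|\nabla f^{(t)}|^2_{X'}+|\nabla f^{(x')}|^2_\R$: even granting a pointwise slope-Pythagoras estimate for Lipschitz $f$ (itself delicate, since your horizontal/vertical split evaluates the horizontal slope at a slice $t\ne t_0$, and slice slopes need not vary upper semicontinuously in the slice parameter), the passage to general $f\in\s^2_{\rm loc}(X'\times\R)$ does not work as described. First, Theorem \ref{thm:stronglip} requires infinitesimal Hilbertianity of $X'\times\R$, which is what you are trying to prove; the applicable tool is the weaker Theorem \ref{thm:energylip}. Second, and decisively, either density theorem produces a single sequence $f_n$ with $\lip(f_n)\to|\nabla f|_{X'\times\R}$ in $L^2$ but gives no control whatsoever on $\lip(f_n^{(t)})$ or $\lip(f_n^{(x')})$: a sequence optimal for the ambient Cheeger energy need not be good slice-by-slice, so you cannot identify the limit of the right-hand side with $|\nabla f^{(t)}|^2_{X'}+|\nabla f^{(x')}|^2_\R$. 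This is precisely where the heat flow enters: it tensorizes, and the Bakry--\'Emery estimate controls, for $\h_\eps f$, the ambient and the slice gradients simultaneously.

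For the reverse inequality: as you note, the directional test plans yield only the $\max$ bound, and the polarization you invoke to upgrade it to the Pythagorean sum is circular. Expanding $|\nabla(f+\eps(\alpha+\beta))|^2_{X'\times\R}$ and isolating first-order terms produces $\la\nabla f,\nabla\alpha\ra_{X'\times\R}$ and $\la\nabla f,\nabla\beta\ra_{X'\times\R}$; to ``read off Pythagoras at $\eps=0$'' you would need these to localize as $\la\nabla f^{(t)},\nabla\alpha\ra_{X'}$ and $\la\nabla f^{(x')},\nabla\beta\ra_\R$, a statement about the bilinear form which is essentially equivalent to \eqref{eq:prodgrad} itself and is not implied by the norm identities (ii), (iii). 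Likewise, the Fubini splitting of the product Cheeger energy into slice energies that you use for the Hilbertianity step already presupposes the very direction of \eqref{eq:prodgrad} under discussion. Your $\CD(0,\infty)$-tensorization step and the closing observation about Finsler products are both sound, but they constitute the easy half of the theorem.
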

We remark that the proof of the curvature bound is quite simple to obtain once Theorem \ref{thm:maprcd} is at disposal, following the original argument given in \cite{Sturm06I}. On the other hand the structure of minimal weak upper gradients in the product space provided by formula \eqref{eq:prodgrad} (which is the one granting that the product space is infinitesimally Hilbertian) seems surprisingly difficult to obtain and currently relies on some fine regularizing  properties of the heat flow.

\subsection{The space splits}\label{se:pit}

Aim of this section is to prove that $(X,\sfd)$ and $(X'\times\R,\sfd'\times\sfd_{\rm Eucl})$ are isometric and we will prove this with a duality argument based on Theorem \ref{thm:dual}. Our goal is therefore to put in relation the Sobolev norm in $X$ with the one in $X'\times\R$.  We start with the following statement, analogous to  Proposition  \ref{prop:sezioni1}:
\begin{proposition}\label{prop:sezioni2} The following holds.
\begin{itemize}
\item[i)] Let $f\in \s^2_{\rm loc}(X)$ and for $x'\in X'$ let $f^{(x')}:\R\to\R$ be given by $f^{(x')}(t):=f(\mau(x',t))$. Then  for $\mm'$-a.e. $x'$ it holds $f^{(x')}\in \s^2_{\rm loc}(\R)$ and
\[
|\nabla f^{(x')}|_{\R}(t)\leq |\nabla f|_X(\mau(x',t)),\qquad \mm'\times\mathcal L^1\ae\ (x',t)\in X'\times\R.
\]
\item[ii)] Let $h\in \s^2_{\rm loc}(\R)$ and define $f:X\to\R$ by $f(x):=h\circ\b$. Then $f\in \s^2_{\rm loc}(X)$ and
\[
|\nabla f|_{X}(x)=|\nabla h|_{\R}(\b(x)),\qquad\mm\ae \ x\in X.
\]
\end{itemize}
\end{proposition}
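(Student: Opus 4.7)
My plan mirrors the strategy used for Proposition \ref{prop:sezioni1}, exploiting the fact that for every $x'\in X'$ the curve $t\mapsto\mau(x',t)=\bar\X_{-t}(\iota(x'))$ is a unit-speed isometric embedding of $\R$ into $X$ by Theorem \ref{thm:gfpresdist}(ii), and that by \eqref{eq:mesprod} the map $\mau$ transports $\mm'\times\mathcal L^1$ onto $\mm$.

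For part $(i)$ I would first use Theorem \ref{thm:energylip} to pick, on any bounded open set, Lipschitz functions $f_n$ with $f_n\to f$ and $\lip(f_n)\to|\nabla f|_X$ in $L^2$. Since $\mau(x',\cdot)$ is a unit-speed isometry, the section $f_n^{(x')}(t):=f_n(\mau(x',t))$ is Lipschitz in $t$ with the pointwise bound $\lip(f_n^{(x')})(t)\leq\lip(f_n)(\mau(x',t))$, so that \eqref{eq:lipweak} in $(\R,\sfd_{\rm Eucl},\mathcal L^1)$ gives
\[
|\nabla f_n^{(x')}|_\R(t)\leq\lip(f_n)(\mau(x',t)),\qquad\mathcal L^1\ae\ t.
\]
By \eqref{eq:mesprod} and Fubini, $\int_{X'}\int_\R|f_n^{(x')}-f^{(x')}|^2\,\d t\,\d\mm'=\int_X|f_n-f|^2\,\d\mm\to 0$ and analogously $\lip(f_n)\circ\mau\to|\nabla f|_X\circ\mau$ in $L^2(X'\times\R)$. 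Extracting a subsequence, for $\mm'$-a.e. $x'$ both convergences hold in $L^2_{\rm loc}(\R)$, and the lower semicontinuity \eqref{eq:lscwug} applied on $\R$ yields $f^{(x')}\in\s^2_{\rm loc}(\R)$ together with the desired bound. A final Fubini step phrases this as an $\mm'\times\mathcal L^1$-a.e. inequality.

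For part $(ii)$ I would work locally on $\b^{-1}(I)$ for a bounded interval $I\subset\R$ and approximate $h$ on a slightly larger interval by Lipschitz $h_n$ with $h_n\to h$ and $h_n'\to h'$ in $L^2$. Then the chain rule stated in Section \ref{se:sob} applies with Lipschitz $\varphi=h_n$ and $\b\in\s^2_{\rm loc}(X)$ to give $h_n\circ\b\in\s^2_{\rm loc}(X)$ with $|\nabla(h_n\circ\b)|_X=|h_n'|\circ\b\cdot|\nabla\b|_X=|h_n'|\circ\b$ $\mm$-a.e., thanks to \eqref{eq:nablab}. From \eqref{eq:mesprod} one checks that on any bounded $A\subset X$ the pushforward $\b_\sharp(\mm\restr A)$ is dominated by a multiple of $\mathcal L^1$, so $h_n\circ\b\to h\circ\b$ and $|h_n'|\circ\b\to|h'|\circ\b$ in $L^2_{\rm loc}(X)$; the semicontinuity \eqref{eq:lscwug} then puts $h\circ\b$ in $\s^2_{\rm loc}(X)$ with $|\nabla(h\circ\b)|_X\leq|h'|\circ\b$. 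The reverse inequality is deduced from part $(i)$: since \eqref{eq:treno2} extended to every $s\in\R$ yields $\b(\bar\X_s(x))=\b(x)-s$, and by definition of $\iota$ one has $\b(\iota(x'))=0$, we obtain $\b(\mau(x',t))=t$ for every $(x',t)$, whence $f^{(x')}(t)=h(t)$ and $|\nabla f^{(x')}|_\R=|h'|$. Part $(i)$ then reads $|h'|(t)\leq|\nabla(h\circ\b)|_X(\mau(x',t))$ $\mm'\times\mathcal L^1$-a.e., which by \eqref{eq:mesprod} is $|h'|\circ\b\leq|\nabla(h\circ\b)|_X$ $\mm$-a.e.

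The main obstacle is not conceptual---the substantive ingredients (measure preservation \eqref{eq:mesprod}, isometry of $\bar\X_t$, the chain rule and lower semicontinuity of weak gradients, and the identity $|\nabla\b|=1$) are all at hand---but bookkeeping: one must handle the passage from global $L^2$ convergence on $X$ to $\mathcal L^1$-a.e. pointwise bounds on $\mathcal L^1$-a.e. fiber via careful Fubini arguments and subsequence extraction, and one must cut off functions appropriately to reduce from the $\s^2_{\rm loc}$ setting to the $\s^2$ setting on bounded open sets where Theorem \ref{thm:energylip} and the elementary $L^2$-approximation of $h$ by Lipschitz functions are directly available.
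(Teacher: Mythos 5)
Your proposal is correct and follows essentially the same route the paper intends: the paper's proof is just a pointer to the sketch of Proposition \ref{prop:sezioni1}, relying on the same three ingredients you identify (the fiber isometry $t\mapsto\bar\X_t(x)$, the measure identities \eqref{eq:mesprod}, and the density/lower-semicontinuity package from Theorem \ref{thm:energylip} and \eqref{eq:lscwug}). The one minor variation is in part $(ii)$: where the sezioni1 analogue would proceed by relaxing the pointwise Lipschitz-constant inequality $\lip_X(h\circ\b)\leq\lip_\R(h)\circ\b$ coming from $\b$ being $1$-Lipschitz, you instead invoke the chain rule together with $|\nabla\b|=1$ from \eqref{eq:nablab} to get the sharp identity at the Lipschitz level before passing to the limit — a slightly cleaner route to the same upper bound, with the reverse inequality obtained from part $(i)$ exactly as the paper intends.
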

\begin{sketch}
The same arguments used in the proof of Proposition \ref{prop:sezioni1} can be applied also in this case recalling that the following are true:
\begin{itemize}
\item[-] for any $t,s\in \R$ it holds $|t-s|=\min_{x\in \b^{-1}(t),\ y\in \b^{-1}(s)}\sfd(x,y)$,
\item[-] for any $x\in X$ the map $t\mapsto \bar\X_t(x)$ provides an isometric embedding of $\R$ in $X$,
\item[-] it holds $\mau_\sharp(\mm'\times\mathcal L^1)=\mm$ and $\mad_\sharp\mm=\mm'\times\mathcal L^1$.
\end{itemize}
We omit the details.
\end{sketch} 
Now we introduce the following class of functions:
\[
\begin{split}
\mathcal G&:=\Big\{g:X'\times\R\to\R\ :\  g(x',t)=\tilde g(x')\textrm{ for some }  \tilde g\in \s^2\cap L^\infty(X') \Big\},\\
\mathcal H&:=\Big\{h:X'\times\R\to\R\ :\  h(x',t)=\tilde h(t)\textrm{ for some }  \tilde h\in \s^2\cap L^\infty(\R) \Big\}.
\end{split}
\]
Notice that both $\mathcal G$ and $\mathcal H$ are algebras, i.e. are closed w.r.t. linear combinations and products.

Using Theorem \ref{thm:xpr} and Proposition \ref{prop:sezioni1} we get
\begin{equation}
\label{eq:gradg}
g\in \mathcal G\qquad\Rightarrow\qquad\left\{\begin{array}{l} g\in \s^2_{\rm loc}(X'\times\R),\ g\circ\mad\in \s^2_{\rm loc}(X)\textrm{ and }\\
\\
|\nabla g|_{X'\times\R}\circ\mad=|\nabla (g\circ\mad)|_X\quad \mm\ae.
\end{array}\right.
\end{equation}
Similarly,  Theorem  \ref{thm:xpr} and Proposition \ref{prop:sezioni2} give
\begin{equation}
\label{eq:gradh}
h\in \mathcal H\qquad\Rightarrow\qquad\left\{\begin{array}{l} h\in \s^2_{\rm loc}(X'\times\R),\ h\circ\mad\in \s^2_{\rm loc}(X)\textrm{ and }\\
\\
|\nabla h|_{X'\times\R}\circ\mad=|\nabla (h\circ\mad)|_X\quad \mm\ae.
\end{array}\right.
\end{equation}
Now we introduce the algebra of functions $\mathcal A$ as:
\[
\mathcal A:=\textrm{ algebra generated by }\mathcal G\cup\mathcal H.
\]
Notice that $\mathcal A\subset \s^2_{\rm loc}(X'\times\R)$. $\mathcal A$ has two crucial properties which will allow us to prove that the Dirichlet energy of a function $f$ on $X'\times\R$ is the same as the energy of $f\circ\mad$ in $X$:
\begin{itemize}
\item[i)] such invariance property is easy to establish for functions in $\mathcal A$ once we realize that  functions in $\mathcal G$ and $\mathcal H$ have  `orthogonal gradients' in $W^{1,2}(X'\times\R)$ (by formula \eqref{eq:prodgrad}) and - after a right composition with $\mad$ - also in $W^{1,2}(X)$ (by the differentiation formula \eqref{eq:derfiga}).
\item[ii)] $\mathcal A\cap W^{1,2}(X'\times\R)$ is dense in $W^{1,2}(X'\times\R)$ and similarly $\mathcal A\circ\mad\cap W^{1,2}(X)$ is dense in $W^{1,2}(X)$. The case of $X'\times\R$ follows by a simple approximation arguments, then the one of $X$ makes use of the measure preservation property \eqref{eq:mesprod} and fact that the distances on $X'\times\R$ and $X$ are, after a composition with $\mad$, equivalent (recall Proposition \ref{prop:homeo}). 
\end{itemize}
We shall denote by $\mathcal E_X:L^2(X)\to[0,+\infty]$ the Dirichlet energy on $(X,\sfd,\mm)$ and by $\mathcal E_{X'\times\R}$ the one on $(X'\times\R,\sfd'\times\sfd_{\rm Eucl},\mm'\times\mathcal L^1)$.
\begin{proposition}\label{prop:astable} With the same notation as above, we have
\[
\mathcal E_{X'\times\R}(f)=\mathcal E_X(f\circ\mad),\qquad\forall f\in\mathcal A\cap L^2(X'\times\R).
\]
\end{proposition}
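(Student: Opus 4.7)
The plan is to establish the stronger pointwise identity
\begin{equation*}
|\nabla f|^2_{X'\times\R}\circ\mad \;=\; |\nabla(f\circ\mad)|^2_X\qquad \mm\ae,
\end{equation*}
for every $f\in\mathcal A$; integrating and using $\mad_\sharp\mm=\mm'\times\mathcal L^1$ from \eqref{eq:mesprod} then yields the proposition. Since $\mathcal G$ and $\mathcal H$ are each closed under pointwise products, every $f\in\mathcal A$ is a finite linear combination $\sum_i g_i h_i$ with $g_i\in\mathcal G$, $h_i\in\mathcal H$. Applying the Leibniz and bilinearity rules for $\la\nabla\cdot,\nabla\cdot\ra$ from Theorem~\ref{thm:calculus} on both $X'\times\R$ and $X$ (and using that composition with $\mad$ is multiplicative), the pointwise identity reduces to the bilinear version
\begin{equation*}
\la\nabla f_1,\nabla f_2\ra_{X'\times\R}\circ\mad \;=\; \la\nabla(f_1\circ\mad),\nabla(f_2\circ\mad)\ra_X\qquad\mm\ae,
\end{equation*}
checked for $f_1,f_2$ ranging over the generators $\mathcal G\cup\mathcal H$.

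When $f_1,f_2$ are both in $\mathcal G$ (resp.\ both in $\mathcal H$), the identity follows from \eqref{eq:gradg} (resp.\ \eqref{eq:gradh}) by polarization, applying that formula to $f_1+f_2$, $f_1$, $f_2$ separately and subtracting. The mixed case $g\in\mathcal G$, $h\in\mathcal H$ is the heart of the matter: I claim \emph{both} sides vanish. On $X'\times\R$, write $g(x',t)=\tilde g(x')$ and $h(x',t)=\tilde h(t)$; the sections of $g+h$ satisfy $|\nabla(g+h)^{(t)}|_{X'}=|\nabla\tilde g|_{X'}$ and $|\nabla(g+h)^{(x')}|_{\R}=|\nabla\tilde h|_\R$, so the product formula \eqref{eq:prodgrad} gives $|\nabla(g+h)|^2_{X'\times\R}=|\nabla g|^2_{X'\times\R}+|\nabla h|^2_{X'\times\R}$, whence $\la\nabla g,\nabla h\ra_{X'\times\R}\equiv 0$. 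On $X$, the function $g\circ\mad=\tilde g\circ\pi$ is constant along orbits of $\bar\X_t$ since $\pi\circ\bar\X_t=\pi$ by definition of $X'$; the first-order differentiation formula \eqref{eq:derfiga} thus forces $\la\nabla(g\circ\mad),\nabla\b\ra_X=0$ $\mm$-a.e. When $\tilde h$ is Lipschitz, $h\circ\mad=\tilde h\circ\b$ and the chain rule \eqref{eq:chainf} gives $\la\nabla(g\circ\mad),\nabla(h\circ\mad)\ra_X=(\tilde h'\circ\b)\la\nabla(g\circ\mad),\nabla\b\ra_X=0$, as required.

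The remaining and, I expect, main technical obstacle is handling a general $\tilde h\in\s^2\cap L^\infty(\R)$, because neither $\tilde h$ nor its Lipschitz approximants need lie in $W^{1,2}(\R)$ globally, and $h\circ\b$ is typically not in $W^{1,2}(X)$. My plan is to argue locally in $\b$: for fixed $R>0$, use Theorem~\ref{thm:stronglip} applied on the interval $(-R-1,R+1)$ to find bounded Lipschitz $\tilde h_n$ with $\tilde h_n\to\tilde h$ in $W^{1,2}((-R-1,R+1))$, then apply Proposition~\ref{prop:sezioni2}(ii) together with the measure identity \eqref{eq:mesprod} and Fubini to deduce that $\tilde h_n\circ\b\to\tilde h\circ\b$ in $W^{1,2}$ on the band $\b^{-1}([-R,R])$. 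The $L^2$-continuity of $\la\nabla\cdot,\nabla\cdot\ra$ in one argument stated in \eqref{eq:contg}, combined with the locality \eqref{eq:local}, then lets me pass to the limit and deduce the orthogonality on $\b^{-1}([-R,R])$; letting $R\uparrow\infty$ closes the mixed case. Bilinear Leibniz expansion of $\la\nabla(\sum_i g_ih_i),\nabla(\sum_j g_jh_j)\ra$ on both sides of $\mad$ then upgrades the generator-pair identities to the full pointwise identity for every $f\in\mathcal A$, completing the proof.
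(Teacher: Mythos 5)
Your argument mirrors the paper's proof: after writing $f=\sum_i g_i h_i$ and expanding $|\nabla f|^2$ bilinearly on both $X'\times\R$ and $X$ (with \eqref{eq:gradg}/\eqref{eq:gradh} handling the pure $\mathcal G$--$\mathcal G$ and $\mathcal H$--$\mathcal H$ pairs by polarization), everything reduces to the two orthogonality identities $\la\nabla g,\nabla h\ra_{X'\times\R}=0$, obtained from the product formula \eqref{eq:prodgrad}, and $\la\nabla(g\circ\mad),\nabla(h\circ\mad)\ra_X=0$, obtained from the chain rule together with the derivation rule \eqref{eq:derfiga} applied to the $\bar\X_t$-invariant function $g\circ\mad$. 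The extra care you devote to a possibly non-Lipschitz $\tilde h$ and the localization-in-$\b$ approximation scheme is a sound way to fill in the ``simple cut-off argument based on the local nature of the claim, we omit the details'' with which the paper's proof ends.
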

\begin{proof} A generic element $f$ of $\mathcal A$ can be written as $f=\sum_{i\in I}g_ih_i$ for some finite set $I$ of indexes and functions $g_i\in \mathcal G$, $h_i\in \mathcal H$, $i\in I$. The fact that $f\circ\mad\in\s^2_{\rm loc}(X)$ is a direct consequence of \eqref{eq:gradg} and \eqref{eq:gradh}.  The infinitesimal Hilbertianity of $X'\times\R$ (Theorem \ref{thm:xpr}) gives
\[
\begin{split}
|\nabla f|^2_{X'\times\R}&=\sum_{i,j\in I} g_ig_j\la\nabla h_i,\nabla h_j\ra_{X'\times\R}+ g_ih_j\la\nabla h_i,\nabla g_j\ra_{X'\times\R}\\
&\qquad\qquad+ h_ig_j\la\nabla g_i,\nabla h_j\ra_{X'\times\R}+ h_ih_j\la\nabla g_i,\nabla g_j\ra_{X'\times\R},
\end{split}
\]
similarly, writing $\bar f,\bar g_i,\bar h_i$ in place of $f\circ\mad,g_i\circ\mad,h_i\circ\mad$ for simplicity, from the infinitesimal Hilbertianity of $X$ we have
\[
\begin{split}
|\nabla \bar f|^2_{X}&=\sum_{i,j\in I}\bar g_i\bar g_j\la\nabla\bar h_i,\nabla\bar h_j\ra_{X}+ \bar g_i\bar h_j\la\nabla\bar h_i,\nabla\bar g_j\ra_{X}+\bar h_i\bar g_j\la\nabla\bar g_i,\nabla\bar h_j\ra_{X}+\bar h_i\bar h_j\la\nabla\bar g_i,\nabla\bar g_j\ra_{X}.
\end{split}
\]
Taking into account the relations \eqref{eq:gradg} and \eqref{eq:gradh}, we see that to conclude it is sufficient to show that for any $g\in\mathcal G$ and $h\in\mathcal H$ it holds
\begin{equation}
\label{eq:perp1}
\la \nabla g,\nabla h\ra_{X'\times\R}=0,\qquad\mm'\times\mathcal L^1\ae,
\end{equation}
and
\begin{equation}
\label{eq:perp3}
\la\nabla( g\circ\mad),\nabla(h\circ\mad) \ra_X=0,\qquad\mm\ae.
\end{equation}
To check \eqref{eq:perp1} let $\tilde g\in \s^2\cap L^\infty(X')$ and $\tilde h\in \s^2\cap L^\infty(\R)$ be such that $g(x',t)=\tilde g(x')$ and $h(x',t)=\tilde h(t)$. Then  apply point $(i)$ of Theorem \ref{thm:xpr} to the function $g+h$ and points $(ii),(iii)$ to $\tilde g,\tilde h$ to get
\[
2\la g,h\ra_{X'\times\R}=|\nabla(g+h)|^2_{X'\times\R}(x',t)- |\nabla \tilde g|_{X'}^2(x')- |\nabla \tilde h|_{\R}^2(t)=0,\qquad\mm'\times\mathcal L^1\ae \ (x',t).
\]
To get  \eqref{eq:perp3}, notice that  the chain rule \eqref{eq:chainf} (and the symmetry relation \eqref{eq:simm}) and the trivial identity $ h\circ\mad=\tilde h\circ\b$ grants that $\la \nabla ( g\circ\mad ),\nabla( h\circ\mad) \ra_X=\tilde h'\circ\b\la\nabla (g\circ\mad ),\nabla\b\ra_X$ $\mm$-a.e.. Hence to conclude  it is sufficient to show that $\la\nabla (g\circ\mad),\nabla\b\ra_X=0$ $\mm$-a.e.. If $g\circ\mad\in\s^2(X)$ then the result follows from the derivation rule   \eqref{eq:derfiga} applied to $f:=g\circ\mad$, indeed in this case the left hand side of \eqref{eq:derfiga} is identically 0. The general case follows by a simple cut-off argument based on the local nature of the claim,  we omit the details.
\end{proof}
\begin{proposition}\label{prop:approximation} With the same notation as above, the set $\mathcal A\cap W^{1,2}(X'\times\R)$ is dense in $W^{1,2}(X'\times\R)$ and the set $\mathcal A\circ\mad\cap W^{1,2}(X)$ is dense in $W^{1,2}(X)$, where by $\mathcal A\circ \mad$ we intend the set of functions of the kind $f\circ\mad$ with $f\in\mathcal A$.
\end{proposition}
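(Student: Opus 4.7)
Both density statements reduce to approximating Lipschitz functions with compact support. Since $(X'\times\R,\sfd'\times\sfd_{\rm Eucl},\mm'\times\mathcal L^1)$ is $\RCD(0,\infty)$ by Theorem~\ref{thm:xpr}, Theorem~\ref{thm:stronglip} applies both on $X$ and on $X'\times\R$, so it suffices to approximate $f\in\mathrm{Lip}_c(X'\times\R)$ (resp.\ $F\in\mathrm{Lip}_c(X)$) by elements of $\mathcal{A}\cap W^{1,2}(X'\times\R)$ (resp.\ $\mathcal{A}\circ\mad\cap W^{1,2}(X)$).

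For the first density, let $f\in\mathrm{Lip}_c(X'\times\R)$ have support in $X'\times[-N,N]$. The idea is to decompose $f$ as a tensor product in the $t$-variable via Fourier series. Fix $M>N$ and set
\[
c_k(x') := \frac{1}{2M}\int_{-M}^{M} f(x',t)\,e^{-i\pi k t/M}\,\d t,\qquad k\in\Z.
\]
Since $f$ is Lipschitz in $x'$ uniformly in $t$ and has uniformly bounded support, each $c_k$ belongs to $\mathrm{Lip}_c(X')\subset \s^2\cap L^\infty(X')$. Choose $\chi\in C^\infty_c(\R)$ identically $1$ on $[-N,N]$ and supported in $(-M,M)$, and set
\[
S_K(x',t) := \chi(t)\sum_{|k|\leq K} c_k(x')\,e^{i\pi k t/M}.
\]
Splitting real and imaginary parts, each $S_K$ is a finite linear combination of products of an element of $\mathcal{G}$ and one of $\mathcal{H}$, hence $S_K\in\mathcal{A}\cap W^{1,2}(X'\times\R)$. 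Convergence $S_K\to f$ in $W^{1,2}(X'\times\R)$ will follow from the product gradient formula \eqref{eq:prodgrad} together with Parseval-type identities applied in the $t$-variable to $f$, to $\partial_t f$, and to the ``$X'$-component'' of $|\nabla f|_{X'\times\R}$.

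The one delicate compatibility is that the $X'$-gradient of the Fourier coefficient $c_k$ coincides with the $k$-th Fourier coefficient of the $X'$-gradient of $f$; the cleanest way to handle this is to first pre-regularise $f$ by a standard convolution in the $t$-variable alone (legitimate since $\R$ is Euclidean), producing a map that is $C^\infty$ in $t$ with values in $W^{1,2}(X')$, and then invoke Fubini together with points (i)--(iii) of Theorem~\ref{thm:xpr}. This is the main technical obstacle of the proof.

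For the second density, let $F\in\mathrm{Lip}_c(X)$. By the bilipschitz estimates \eqref{eq:bilip} the composition $F\circ\mau$ lies in $\mathrm{Lip}_c(X'\times\R)$, so the first part produces $(\phi_n)\subset\mathcal{A}\cap W^{1,2}(X'\times\R)$ with $\phi_n\to F\circ\mau$ in $W^{1,2}(X'\times\R)$. Since $\mathcal{A}$ is a vector space, $\phi_n-\phi_m\in\mathcal{A}$, so Proposition~\ref{prop:astable} together with the measure preservation \eqref{eq:mesprod} gives
\[
\|\phi_n\circ\mad-\phi_m\circ\mad\|_{W^{1,2}(X)}^2 \;=\; \|\phi_n-\phi_m\|_{W^{1,2}(X'\times\R)}^2 \;\longrightarrow\; 0.
\]
Hence $(\phi_n\circ\mad)$ is Cauchy in $W^{1,2}(X)$, and its $L^2(X)$-limit is $F$ by \eqref{eq:mesprod}, so $\phi_n\circ\mad\to F$ in $W^{1,2}(X)$. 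Thus $F$ belongs to the $W^{1,2}(X)$-closure of $\mathcal{A}\circ\mad\cap W^{1,2}(X)$, and by density of $\mathrm{Lip}_c(X)$ the claim follows. The passage from the first density to the second is therefore purely formal, relying only on the already-established Proposition~\ref{prop:astable} and on the good bilipschitz/measure-preserving behaviour of $\mad$.
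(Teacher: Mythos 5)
Your argument for the \emph{second} density statement is correct and is genuinely different from (and arguably cleaner than) the one in the paper. The paper transfers density from $X'\times\R$ to $X$ by exploiting the bilipschitz estimates \eqref{eq:bilip} and a scaling argument for Sobolev norms under a change of (comparable) distance. You instead observe that Proposition~\ref{prop:astable} makes the composition map an $\mathcal E$-isometry on the vector space $\mathcal A\cap L^2(X'\times\R)$, so that $(\phi_n\circ\mad)$ inherits the Cauchy property from $(\phi_n)$; together with measure preservation \eqref{eq:mesprod} to identify the $L^2$-limit, this closes the argument. Note that $\phi_n-F\circ\mau\notin\mathcal A$, so the Cauchy trick (rather than a direct norm estimate) is indeed needed — and it works.

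For the \emph{first} density statement, however, your Fourier-series decomposition carries a real gap which you have identified but not resolved, and which is harder than you suggest. The issue is not just Fubini: you need $|\nabla(S_K-f)^{(t)}|_{X'}\to 0$ in $L^2(\mm'\times\mathcal L^1)$, and for this you must commute the carr\'e du champ $\la\nabla\cdot,\nabla\cdot\ra_{X'}$ with the integral in $t$ defining the coefficients $c_k$, i.e.\ realise $c_k$ as a $W^{1,2}(X')$-valued Bochner integral so that $\la\nabla c_j,\nabla c_k\ra_{X'}=\frac{1}{4M^2}\iint\la\nabla f^{(s)},\nabla f^{(t)}\ra_{X'} e^{-i\pi(js-kt)/M}\,\d s\,\d t$ and a Parseval identity becomes available. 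Pre-regularising in $t$ makes the map $t\mapsto f(\cdot,t)$ smooth into $L^2(X')$, but not obviously (strongly) smooth into $W^{1,2}(X')$; one can argue via Pettis measurability (weak continuity into the separable Hilbert space $W^{1,2}(X')$ from $L^2$-continuity, uniform $W^{1,2}$-boundedness and lower semicontinuity), but this is a non-trivial extra layer, not a routine Fubini step. By contrast, the paper's decomposition uses local averages $g_{i,n}(x')=n\int_{i/n}^{(i+1)/n}f(x',s)\,\d s$ glued by piecewise-linear tent functions $h_{i,n}$: the one-sided Sobolev bound $\int|\nabla f_n^{(t)}|_{X'}^2\leq\int|\nabla f^{(t)}|_{X'}^2$ then follows from Jensen's inequality and the $\mm'$-a.e.\ convexity of $\rho\mapsto|\nabla\rho|_{X'}$, with no Bochner-integral machinery; combined with the lower semicontinuity \eqref{eq:lscwug} and the Hilbert structure of $W^{1,2}(X'\times\R)$, this gives $W^{1,2}$-convergence. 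In short: your Fourier route can likely be completed, but it is more technically demanding than the paper's local-average route precisely at the step you flagged, and a complete proof would need to supply the Bochner-integral argument rather than deferring it.
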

\begin{sketch} We start with the first claim. With a diagonalization argument it is sufficient to prove that for $f\in W^{1,2}(X'\times\R)$ bounded with compact support there exists a sequence $(f_n)\subset\mathcal A\cap W^{1,2}(X'\times\R)$ converging to $f$ in $W^{1,2}(X'\times\R)$. Fix such $f$ and for $n\in\N$ and $i\in \Z$ define
\[
g_{i,n}(x'):=n\int_{i/n}^{(i+1)/n}f(x',s)\,\d s,\qquad\text{ and }\qquad h_{i,n}(t):=\nchi_n(t-i/n),
\]
where $\nchi_n:\R\to\R$ is given by
\[
\nchi_n(t):=\left\{\begin{array}{ll}
0,&\qquad\textrm{ if }t<-1/n,\\
nt+1,&\qquad\textrm{ if }-1/n\leq t< 0,\\
1-nt,&\qquad\textrm{ if }0\leq t<1/n,\\
0,&\qquad\textrm{ if }1/n<t.\\
\end{array}
\right.
\]
Then define $f_n:X'\times\R\to\R$ by $f_n(x',t):=\sum_{i\in \Z}h_{i,n}(t)g_{i,n}(x')$.
It is obvious that  $f_n\in \mathcal A\cap W^{1,2}(X'\times\R)$ and with simple computations we also see that
\begin{align*}
\|f_n\|_{L^2(X'\times\R)}&\leq \|f\|_{L^2(X'\times\R)},&&\forall n\in\N\\
\lim_{n\to\infty}\int \varphi f_n \,\d\mm'\,\d \mathcal L^1&=\int \varphi f\,\d\mm'\,\d \mathcal L^1,\quad &&\forall \varphi:X'\times\R\to\R\textrm{ Lipschitz with compact support,}
\end{align*}
which ensures that $f_n\to f$ in $L^2(X'\times\R)$.

Also, some algebraic manipulation - we omit the details - shows that
\[
\begin{split}
\int_{X'\times\R}|\nabla f_n^{(t)} |_{X'}^2(x')\,\d(\mm'\times\mathcal L^1)(x',t)\leq\int_{X'\times\R}|\nabla f^{(t)}|^2_{X'}(x')\,\d(\mm'\times\mathcal L^1)(x',t),
\end{split}
\]
and
\[
\int_{X'\times\R}|\nabla f_n^{(x')}|^2_{\R}(t)\,\d(\mm'\times\mathcal L^1)(x',t) \leq \int_{X'\times\R}|\nabla f^{(x')}|^2_{\R}(t)\,\d(\mm'\times\mathcal L^1)(x',t).
\]
Taking into account the characterization of the Sobolev space $W^{1,2}(X'\times\R)$ given in Theorem \ref{thm:xpr} and the $L^2$-lower semicontinuity of the $W^{1,2}$-norm we deduce that $\|f_n\|_{W^{1,2}}\to\|f\|_{W^{1,2}}$. Since $W^{1,2}(X'\times\R)$ is Hilbert, $L^2$-convergence  plus convergence of the Sobolev norm yield $W^{1,2}$-convergence.

For the second part of the proof notice that directly from the definition of Sobolev space we have that if $(Y,\sfd_Y,\mm_Y)$ is a metric measure space and $\sfd_Y'\leq \sfd_Y$ is another distance on $Y$ inducing the same topology of $\sfd_Y$, then for every $f\in W^{1,2}(Y,\sfd_Y',\mm_Y)$ it holds $f\in W^{1,2}(Y,\sfd_Y,\mm_Y)$ with $|\nabla f|_{(Y,\sfd_Y,\mm_Y)}\leq |\nabla f|_{(Y,\sfd_Y',\mm_Y)}$ $\mm_Y$-a.e.. Similarly, if a distance is scaled by a factor $\lambda>0$, the corresponding gradient part of the Sobolev norm is scaled by $\frac1\lambda$. The conclusion then comes from the first part of the proof, the identity   $\mau_\sharp(\mm'\times\mathcal L^1)=\mm$ and the inequalities \eqref{eq:bilip}.
\end{sketch}

The main theorem of this section now follows easily.
\begin{theorem}[``Pythagoras' theorem'' holds]\label{thm:pitagora} The maps $\mau,\mad$ are isomorphisms of the spaces $(X,\sfd,\mm)$ and $(X'\times\R,\sfd'\times\sfd_{\rm Eucl},\mm'\times\mathcal L^1)$, i.e. they are measure preserving isometries.
\end{theorem}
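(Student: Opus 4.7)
\bigskip

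\noindent\emph{Proof proposal.} The plan is to invoke the duality-style rigidity Theorem \ref{thm:dual} with $X_1=X$, $X_2=X'\times\R$, and $T=\mad$, and then upgrade the resulting a.e.\ isometry to a pointwise one by continuity. The hypotheses on the ambient spaces are already in place: $(X,\sfd,\mm)$ is $\RCD(0,N)\subset\RCD(0,\infty)$ with $\mm$ doubling thanks to Bishop--Gromov \eqref{eq:BG}, while $(X'\times\R,\sfd'\times\sfd_{\rm Eucl},\mm'\times\mathcal L^1)$ is $\RCD(0,\infty)$ by Theorem \ref{thm:xpr} and has doubling measure (Corollary \ref{cor:xp} gives that $(X',\sfd',\mm')$ is $\RCD(0,N)$, hence $\mm'$ is doubling, and the product of doubling measures is doubling). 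Invertibility of $\mad$ with inverse $\mau$ is Proposition \ref{prop:homeo}, and the measure preservation $\mad_\sharp\mm=\mm'\times\mathcal L^1$ was recorded in \eqref{eq:mesprod}. Thus the remaining task is to establish the Dirichlet-energy identity
\[
\mathcal E_X(f\circ\mad)=\mathcal E_{X'\times\R}(f),\qquad \forall f\in L^2(X'\times\R).
\]

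To prove this identity, I would combine Proposition \ref{prop:astable} (which yields the equality for $f\in\mathcal A\cap L^2$) with the two density statements of Proposition \ref{prop:approximation}. Using $\mad_\sharp\mm=\mm'\times\mathcal L^1$ and Proposition \ref{prop:astable}, the pullback $f\mapsto f\circ\mad$ is a bijection between $\mathcal A\cap W^{1,2}(X'\times\R)$ and $\mathcal A\circ\mad\cap W^{1,2}(X)$ preserving $L^2$-norm and Dirichlet energy. For $f\in W^{1,2}(X'\times\R)$, Proposition \ref{prop:approximation} provides $f_n\in\mathcal A\cap W^{1,2}(X'\times\R)$ with $f_n\to f$ in $W^{1,2}(X'\times\R)$; then $f_n\circ\mad\to f\circ\mad$ in $L^2(X)$ with $\mathcal E_X(f_n\circ\mad)=\mathcal E_{X'\times\R}(f_n)\to\mathcal E_{X'\times\R}(f)$, and the $L^2$-lower semicontinuity of $\mathcal E_X$ yields $\mathcal E_X(f\circ\mad)\leq \mathcal E_{X'\times\R}(f)$. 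Exchanging the roles via the density of $\mathcal A\circ\mad\cap W^{1,2}(X)$ in $W^{1,2}(X)$ gives the reverse inequality (and shows simultaneously that $f\circ\mad\in W^{1,2}(X)$ iff $f\in W^{1,2}(X'\times\R)$, so the identity extends to all $f\in L^2$ with the convention $\infty=\infty$).

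With the energy equality in hand, Theorem \ref{thm:dual} produces a Borel $\mm$-negligible set $\mathcal N\subset X$ and a 1-Lipschitz map $\tilde T:X\to X'\times\R$ such that $\tilde T=\mad$ on $X\setminus\mathcal N$; the same argument applied to the inverse yields the opposite 1-Lipschitz estimate, so $\tilde T$ is an isometric embedding. The point now is that $\mad$ is itself continuous by Proposition \ref{prop:homeo}, and $\supp(\mm)=X$ by the $\CD(0,N)$ condition; hence any two continuous maps coinciding $\mm$-a.e.\ must agree on all of $X$, and so $\mad=\tilde T$ is globally an isometry. Combined with $\mad_\sharp\mm=\mm'\times\mathcal L^1$ this shows $\mad$, and therefore its inverse $\mau$, are measure-preserving isometries.

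The only nontrivial obstacle is the passage from the ``algebraic'' energy identity on $\mathcal A$ to the full Sobolev identity on $L^2$; everything else is a repackaging of results already proved. Here the crucial structural fact is the orthogonality relations \eqref{eq:perp1} and \eqref{eq:perp3} established in Proposition \ref{prop:astable}, which reduce the identification of the two energies to the one-dimensional calculus along the flow of $\b$, the latter being encoded in the first-order differentiation formula \eqref{eq:derfiga} and ultimately in the Euler equation for the Busemann function.
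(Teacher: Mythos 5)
Your proposal is correct and follows essentially the same route as the paper: apply Theorem \ref{thm:dual} with $X_1=X$, $X_2=X'\times\R$, $T=\mad$, after checking the doubling/$\RCD(0,\infty)$ hypotheses, the measure preservation \eqref{eq:mesprod}, and the energy identity obtained by combining Proposition \ref{prop:astable} with the density statements of Proposition \ref{prop:approximation}, then upgrade the a.e.\ isometry via the continuity of $\mad$, $\mau$ from Proposition \ref{prop:homeo}. You flesh out two points the paper leaves terse — the two-sided lower-semicontinuity argument (in both $L^2(X)$ and $L^2(X'\times\R)$) needed to pass from $\mathcal A$ to all of $L^2$ with the convention $\infty=\infty$, and the explicit appeal to $\supp\mm=X$ to promote a.e.\ agreement of two continuous maps to everywhere agreement — but these are elaborations, not deviations.
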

\begin{proof} We already know that
\begin{equation}
\label{eq:mp2}
\mau_\sharp\mm=\mm'\times\mathcal L^1,\qquad\qquad\text{and}\qquad\qquad\mad_\sharp(\mm'\times\R)=\mm,
\end{equation}
and  by Proposition \ref{prop:astable} we know that the equality
\begin{equation}
\label{eq:en2}
\mathcal E_{X'\times\R}(f)=\mathcal E_{X}(f\circ\mad),
\end{equation}
holds for every $f\in\mathcal A\cap L^2(X'\times\R)$. Hence  using \eqref{eq:mp2} and Proposition \ref{prop:approximation} we deduce that \eqref{eq:en2} holds for every $f\in L^2(X'\times\R)$.

Now recall that  $(X,\sfd,\mm)$ is an $\RCD(0,N)$ space and thus $\RCD(0,\infty)$ with  the measure $\mm$ being doubling. Similarly, we know by Theorem \ref{thm:xpr} that $(X'\times\R,\sfd'\times\sfd_{\rm Eucl},\mm'\times\mathcal L^1)$ is $\RCD(0,\infty)$ and from the fact that both $\mm'$ and $\mathcal L^1$ are doubling measures it is easy to get that $\mm'\times\mathcal L^1$ is doubling as well. 

Hence we can apply Theorem \ref{thm:dual} to deduce that $\mau,\mad$ have 1-Lipschitz representatives. Given that we already know that they are continuous (Proposition \ref{prop:homeo}), the proof is complete.
\end{proof}

\subsection{The quotient space has dimension $N-1$}
It remains to prove that the quotient space $(X',\sfd',\mm')$ has `1 dimension less' than $(X,\sfd,\mm)$. This, of course, should be interpreted in terms of the synthetic treatment of curvature-dimension bounds, the precise statement being given below. Notice that our argument for such dimension reduction is in fact the same used by Cavalletti-Sturm in \cite{Cavalletti-Sturm12}.
\begin{theorem}[The quotient space has dimension $N-1$]\label{thm:dimm1} The following holds.
\begin{itemize}
\item[i)] If $N\geq 2$, then $(X',\sfd',\mm')$ is an $\RCD(0,N-1)$ space.
\item[ii)] If $N\in[1,2)$, then $X'$ contains exactly one point.
\end{itemize}
\end{theorem}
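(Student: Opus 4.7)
My plan is to use the Cavalletti--Sturm dimensional reduction, leveraging the product splitting $X\cong X'\times\R$ of Theorem \ref{thm:pitagora} to transfer the $\CD(0,N)$ condition on $X$ into a $\CD(0,N-1)$ condition on $X'$. Infinitesimal Hilbertianity of $X'$ is already given by Corollary \ref{cor:xp}, so only the dimension bound needs improving. The heart of the argument is a \emph{slab lift} construction: for $\mu=\rho\mm'\in\probt{X'}$ with bounded support and continuous strictly positive density $\rho$ on its support, define on $X$ the measure
\[
\bar\mu:=\rho(x')^{\frac{N}{N-1}}\,\mathbbm{1}_{\{0\le s\le\rho(x')^{-\frac{1}{N-1}}\}}(x',s)\,\d(\mm'\otimes\mathcal L^1)(x',s).
\]
A direct integration gives $\bar\mu(X)=1$ and the dimension-shifting identity
\[
\u_N(\bar\mu\,|\,\mm'\otimes\mathcal L^1)=-\int\rho^{1-\frac{1}{N-1}}\,\d\mm'=\u_{N-1}(\mu\,|\,\mm'),
\]
since the factor $(\rho^{N/(N-1)})^{(N-1)/N}=\rho$ integrated over a slab of width $\rho^{-1/(N-1)}$ produces exactly $\rho^{1-1/(N-1)}$.

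For (i), take $\mu_0,\mu_1$ as above and let $(\mu_t)$ be the unique $W_2$-geodesic on $X'$ given by Theorem \ref{thm:exp} (available since $X'$ is $\RCD(0,\infty)$). Form the slab lifts $\bar\mu_t$ for each $t$. I would show that the curve $t\mapsto\bar\mu_t$ in $\probt{X}$ is the unique $W_2$-geodesic between $\bar\mu_0$ and $\bar\mu_1$ by constructing an explicit transport plan on $X\cong X'\times\R$ that combines the $X'$-optimal map $T$ with a fiberwise linear rescaling sending $[0,\rho_0(x')^{-1/(N-1)}]$ onto $[0,\rho_1(T(x'))^{-1/(N-1)}]$. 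Granting this identification, the $\CD(0,N)$ inequality on $X$ applied along this geodesic combined with the key identity at $t=0,\,t,\,1$ yields
\[
\u_{N-1}(\mu_t)=\u_N(\bar\mu_t)\le(1-t)\u_N(\bar\mu_0)+t\u_N(\bar\mu_1)=(1-t)\u_{N-1}(\mu_0)+t\u_{N-1}(\mu_1),
\]
which is exactly the $\CD(0,N-1)$ inequality on $X'$. A standard approximation argument in the spirit of Proposition \ref{thm:enin} (truncating and regularizing densities) extends this to arbitrary $\mu_0,\mu_1\in\probt{X'}$ with bounded support.

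For (ii), the same slab-lift calculation is run with $N\in[1,2)$: the extracted ``$\CD(0,N-1)$'' inequality with exponent $N-1<1$ is sufficiently restrictive to rule out any nontrivial $X'$. Concretely, if $X'$ had at least two distinct points, one could choose $\mu_0,\mu_1$ supported in disjoint shrinking balls around them and, by tracking the scaling of the slab widths (which blow up as $N\downarrow 1$) together with the corresponding entropies, derive a contradiction between the displacement-convexity inequality and the concentration behavior of the intermediate geodesic $\mu_t$. The main technical obstacle throughout is the identification of the slab curve $(\bar\mu_t)$ with the $W_2$-geodesic on $X$: the fiberwise rescaling factor $\rho_1(T(x'))^{-1/(N-1)}/\rho_0(x')^{-1/(N-1)}$ depends nontrivially on $x'$, so optimal transport does not tensorize in the usual sense, and one must either construct a Kantorovich potential on $X'\times\R$ that simultaneously encodes the $X'$-transport and this $x'$-dependent fiberwise rescaling, or else bypass the identification by working with the integrated entropy inequality alone and using a Jensen-type bound $\u_N(\bar\nu)\ge\u_{N-1}(\nu)$, valid whenever the $\R$-conditional of $\bar\nu$ is supported in an interval of length at most $\nu(x')^{-1/(N-1)}$, whose support control must then be shown to propagate along the actual $W_2$-geodesic between slab lifts.
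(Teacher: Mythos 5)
Your slab-lift idea is the right starting point (it is, as you may suspect, the Cavalletti--Sturm heuristic the paper cites), and the dimension-shifting identity $\u_N(\bar\mu\mid\mm'\otimes\mathcal L^1)=\u_{N-1}(\mu\mid\mm')$ is a correct and useful observation. However, you have correctly flagged the fatal difficulty and then not resolved it: the slab width $\rho(x')^{-1/(N-1)}$ depends on $x'$, so the lifted measures $\bar\mu_t$ are \emph{not} product measures, the tensor of the $X'$-optimal plan with a fiberwise rescaling is not obviously optimal on $X'\times\R$, and the curve $t\mapsto\bar\mu_t$ is not obviously a $W_2$-geodesic. Your proposed workarounds (Kantorovich potential encoding both transports, or a Jensen-type bound plus support propagation along the true geodesic between lifts) are sketches, not arguments; in particular the support-propagation step you would need is precisely as hard as the geodesic identification you are trying to avoid.

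The paper's actual argument sidesteps this entirely by never using $x'$-dependent widths at the level of measures. It lifts with \emph{constant}-width uniform measures $\nu_0=\frac1\alpha\mathcal L^1\restr{[0,\alpha]}$, $\nu_1=\frac1\beta\mathcal L^1\restr{[0,\beta]}$, for which the tensor of optimal plans is genuinely optimal and the product densities along the geodesic are explicit. Then it invokes the \emph{pointwise} density-convexity inequality of Proposition \ref{prop:varphi} (inequality \eqref{eq:boundpoint}) on the $\RCD(0,N)$ space $X'\times\R\cong X$ to get, for $\ppi$-a.e.\ $\gamma$ and every $\alpha,\beta>0$,
\[
\Big(\tfrac{\rho_t(\gamma_t)}{(1-t)\alpha+t\beta}\Big)^{-1/N'}\geq(1-t)\Big(\tfrac{\rho_0(\gamma_0)}{\alpha}\Big)^{-1/N'}+t\Big(\tfrac{\rho_1(\gamma_1)}{\beta}\Big)^{-1/N'},
\]
and \emph{only then} chooses $\alpha=\rho_0(\gamma_0)^{-1/(N'-1)}$, $\beta=\rho_1(\gamma_1)^{-1/(N'-1)}$, which after simplification yields $\rho_t(\gamma_t)^{-1/(N'-1)}\geq(1-t)\rho_0(\gamma_0)^{-1/(N'-1)}+t\rho_1(\gamma_1)^{-1/(N'-1)}$. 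In other words, the $x'$-dependent slab width appears only as a choice of free parameters in a pointwise inequality, not as a geometric object whose optimality must be established; a continuity-in-$(\alpha,\beta)$ argument handles the exchange of ``for a.e.\ $\gamma$'' and ``for all $\alpha,\beta$''. This is the missing idea in your proposal.

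For part (ii) your argument is also not the paper's, and as stated it is not a proof: you invoke a ``$\CD(0,N-1)$ inequality with exponent $N-1<1$'', but the R\'enyi functionals $\u_{N'}$ are only defined for $N'\geq1$ in this framework, and the ``blow-up of slab widths'' heuristic is not quantified. The paper's proof of (ii) is elementary and purely metric: if $X'$ had two points it would contain an isometric copy of an interval, so $X'\times\R\cong X$ would have Hausdorff dimension at least $2$, contradicting the fact that the Bishop--Gromov estimate \eqref{eq:BG} forces $\mathcal H^{N'}(B_R(x_0))=0$ for every $N'>N$ when $N<2$. You should replace your sketch with this doubling/Hausdorff-dimension argument.
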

\begin{proof} $\ $\\
\noindent{$\mathbf{ (i)}$}
We already know by Corollary \ref{cor:xp} that $(X',\sfd',\mm')$ is an   $\RCD(0,N)$ space and a simple approximation argument ensures that to conclude it is sufficient to check the $\CD(0,N-1)$ condition for given   $\mu_0,\mu_1\in\probt{X'}$ with bounded support and absolutely continuous w.r.t. $\mm'$, say $\mu_i=\rho_i\mm'$, $i=0,1$. By Proposition \ref{prop:varphi} we know that there exists a unique $\ppi\in\gopt(\mu_0,\mu_1)$, and   that the measures $\mu_t:=(\e_t)_\sharp\ppi$ are absolutely continuous w.r.t. $\mm'$, say $\mu_t=\rho_t\mm'$, for every $t\in[0,1]$.

Let $\alpha,\beta>0$  be arbitrary, put $\nu_0:=\frac1\alpha\mathcal L^1\restr{[0,\alpha]}$, $\nu_1:=\frac1\beta\mathcal L^1\restr{[0,\beta]}$ so that $\nu_0,\nu_1\in\probt \R$, let $t\mapsto\nu_t=\frac{1}{(1-t)\alpha+t\beta}\mathcal L^1\restr{[0,(1-t)\alpha+t\beta]}$ be the unique geodesic connecting $\nu_0$ to $\nu_1$ and $\tilde\ppi$  the unique element of $\gopt(\nu_0,\nu_1)$. 

Define   $\mathcal J:C([0,1],X')\times C([0,1],\R)\to C([0,1],X'\times\R)$ by
\[
\mathcal J(\gamma_1,\gamma_2)_t:=(\gamma_{1,t},\gamma_{2,t}),
\]
and the plan $\ppi\otimes\tilde\ppi\in\prob{C([0,1],X'\times \R)}$ as $\mathcal J_\sharp(\ppi\times\tilde\ppi)$. It is then immediate to verify that $\ppi\otimes\tilde\ppi\in \gopt(\mu_0\times\nu_0,\mu_1\times\nu_1)$ and that it   satisfies  $(\e_t)_\sharp(\ppi\otimes\ppi')=\mu_t\times\nu_t$. Thus
\begin{equation}
\label{eq:densprod}
\frac{\d(\e_t)_\sharp(\ppi\otimes\ppi')}{\d(\mm'\times\mathcal L^1)}(\gamma_t,\tilde\gamma_t)=\frac{\rho_t(\gamma_t)}{(1-t)\alpha+t\beta},\qquad\ppi\times\ppi'\ae \ (\gamma,\tilde\gamma).
\end{equation}
By assumption we know that $(X,\sfd,\mm)$ is an $\RCD(0,N)$ space and by Theorem \ref{thm:pitagora} that it is isomorphic to $(X'\times\R,\sfd'\times\sfd_{\rm Eucl},\mm'\times\mathcal L^1)$. Thus the latter is an  $\RCD(0,N)$ space and T  by \eqref{eq:densprod} and Proposition \ref{prop:varphi} applied to the plan $\ppi\otimes\tilde\ppi$  we get that for every $t\in[0,1]$, $\alpha,\beta>0$ and $\ppi$-a.e. $\gamma$ it holds
\begin{equation}
\label{eq:inQ}
\left(\frac{\rho_t(\gamma_t)}{(1-t)\alpha+t\beta}\right)^{-\frac1{N'}}\geq(1-t)\left(\frac{\rho_0(\gamma_0)}{\alpha}\right)^{-\frac1{N'}}+t\left(\frac{\rho_1(\gamma_1)}{\beta}\right)^{-\frac1{N'}},\qquad\forall N'\geq N.
\end{equation}
In particular, for every $t\in[0,1]$ the inequality \eqref{eq:inQ} holds for  $\ppi$-a.e. $\gamma$ and every $\alpha,\beta\in\Q$, $\alpha,\beta>0$. Being the terms in \eqref{eq:inQ}  continuous on $\alpha,\beta\in\Q$, $\alpha,\beta>0$, \eqref{eq:inQ} also holds for  $\ppi$-a.e. $\gamma$ and every $\alpha,\beta\in\R$, $\alpha,\beta>0$. Choosing $\alpha:=(\rho_0(\gamma_0))^{-\frac1{N'-1}}$ and $\beta:=(\rho_1(\gamma_1))^{-\frac1{N'-1}}$, after little algebraic manipulation we obtain 
\[
\rho_{t}(\gamma_{t})^{-\frac1{N'-1}}\geq (1-t)\rho_0(\gamma_0)^{-\frac1{N'-1}}+t\rho_1(\gamma_1)^{-\frac1{N'-1}},\qquad\ppi\ae \ \gamma,
\]
which integrated w.r.t. $\ppi$ yields $\u_{N'-1}(\mu_t)\leq (1-t)\u_{N'-1}(\mu_0)+t\u_{N'-1}(\mu_1)$ for every $N'\geq N$, as desired.

\noindent{$\mathbf{ (ii)}$}  It is clear that $X'$ is non empty. Assume by contradiction that it contains more than one point. Then, since $(X',\sfd')$ is geodesic, it contains an isometric copy $I\subset X'$ of some non-trivial interval in $\R$. Given that  $X'\times\R\supset I\times\R$, the Hausdorff dimension of $X'\times\R$ is at least 2 and since by Theorem \ref{thm:pitagora} we know that $(X'\times\R,\sfd'\times\sfd_{\rm Eucl})$ is isometric to $(X,\sfd)$, to conclude it is sufficient to show that for any $R>0$ and  $N'\in(N,2)$ we have $\mathcal H^{N'}(B_R(x_0))=0$, where $\mathcal H^{N'}$ is the $N'$-dimensional Hausdorff measure and $x_0\in X$ a fixed point. As pointed out in \cite{Sturm06II}, this is a standard consequence of the doubling condition \eqref{eq:BG}. We sketch the argument. We have
\[
\begin{split}
\mathcal H^{N'}_\delta(B_R(x_0))&=\omega_{N'}\inf\bigg\{\sum_{i\in \N}\left(\frac{{\rm diam}(E_i)}{2}\right)^{N'}: {\rm diam}(E_i)\leq\delta,\ \forall i\in \N\ \text{ and }\ B_R(x_0)\subset\bigcup_{i\in \N}E_i\bigg\}\\
&\leq \omega_{N'}\left(\frac\delta 2\right)^{N'}\inf\bigg\{k\in\N : \exists\, x_1,\ldots,x_k\in X\text{ with }B_R(x_0)\subset\bigcup_{j\in 1}^k B_{\delta/2}(x_i)\bigg\}.
\end{split}
\]
The inequality \eqref{eq:BG} grants that there are at most $(8R/\delta)^{N}$ disjoint balls of radius $\delta/4$ with center in $B_R(x_0)$. Fixing a maximal package of such disjoint balls, the balls with same centers and radius $\delta/2$ cover $B_R(x_0)$ and therefore $\mathcal H^{N'}_\delta(B_R(x_0))\leq \delta^{N'-N}\frac{(8R)^N}{2^{N'}}$. The conclusion follows recalling that $\mathcal H^{N'}(B_R(x_0))=\lim_{\delta\downarrow0}\mathcal H^{N'}_\delta(B_R(x_0))$.
\end{proof}

\def\cprime{$'$}

\end{document}